\newtheorem{Proposition}{Proposition}[section]
\newtheorem{Lemma}[Proposition]{Lemma}
\newtheorem{Theorem}[Proposition]{Theorem}
\newtheorem{Corollary}[Proposition]{Corollary}
\theoremstyle{definition}
  \newtheorem{Remark}[Proposition]{Remark}
\def\C{{\mathbb C}}
\def\Z{{\mathbb Z}}
\def\eps{\epsilon}
\def\sgn{{\operatorname{sgn}}}
\def\rdet{\operatorname{rdet}}
\def\row{\operatorname{row}}
\def\col{\operatorname{col}}
\def\gr{\operatorname{gr}}
\def\ev{\operatorname{ev}}
\def\pr{\operatorname{pr}}
\def\trace{\operatorname{tr}}
\def\Ann{{\operatorname{Ann}}}
\def\RA{{\operatorname{RA}}}
\def\RB{{\operatorname{RB}}}
\def\CL{{\operatorname{CL}}}
\def\CR{{\operatorname{CR}}}
\def\Tab{{\operatorname{Tab}}}
\def\Row{{\operatorname{Row}}}
\def\Col{{\operatorname{Col}}}
\def\Std{{\operatorname{Std}}}
\def\ad{{\operatorname{ad}}}
\def\Prim{{\operatorname{Prim}}}
\newdimen\hoogte    \hoogte=14pt    
\newdimen\breedte   \breedte=15pt   
\newdimen\dikte     \dikte=0.5pt    
\newenvironment{young}{\begingroup
       \def\vr{\vrule height0.8\hoogte width\dikte depth 0.2\hoogte}
       \def\fbox##1{\vbox{\offinterlineskip
                    \hrule height\dikte
                    \hbox to \breedte{\vr\hfill##1\hfill\vr}
                    \hrule height\dikte}}
       \vbox\bgroup \offinterlineskip \tabskip=-\dikte \lineskip=-\dikte
            \halign\bgroup &\fbox{##\unskip}\unskip  \crcr }
       {\egroup\egroup\endgroup}
\def\ydiagram#1{\relax\ifmmode\vcenter{\,\begin{young}#1\end{young}\,}\else%
              $\vcenter{\,\begin{young}#1\end{young}\,}$\fi}
\def\mf {\mathfrak}
\newcommand{\set}[2]{\ensuremath{\{#1\:|\:#2\}}}
\begin{document}

\title[Representation theory of rectangular finite $W$-algebras]
{\boldmath Representation theory of rectangular finite $W$-algebras}
\author{Jonathan Brown}
\address{School of Mathematics, University of Birmingham, Edgbaston, Birmingham, B15 2TT, UK}
\email{brownjs@maths.bham.ac.uk}

\begin{abstract}
We classify the finite dimensional irreducible representations of
rectangular finite $W$-algebras, i.e., the finite 
$W$-algebras $U(\mf{g}, e)$ where
$\mf{g}$ is a symplectic or orthogonal Lie algebra and $e \in \mf{g}$ 
is a nilpotent element with
Jordan blocks all the same size.  
\end{abstract}

\maketitle

\section{Introduction}

This paper concerns the representation theory
of the finite $W$-algebra $U(\mathfrak{g}, e)$ associated to a nilpotent
element $e$ in a reductive Lie algebra $\mathfrak{g}$. 
The main focus of this paper is the
representation theory of the finite
$W$-algebras associated to nilpotent elements in the 
symplectic or orthogonal Lie algebras whose Jordan blocks are all the same size.
We refer to these simply as {\em rectangular} finite $W$-algebras.

The general definition of finite $W$-algebras
is due to Premet in \cite{P1}, though in some cases they
had been introduced much earlier by Lynch in \cite{Ly} following Kostant's
celebrated work on Whittaker modules in \cite{Ko}.
The terminology ``finite $W$-algebra'' comes from the mathematical
physics literature,
where finite $W$-algebras are the finite type
analogs of the vertex $W$-algebras
defined and studied for example by Kac, Roan, and Wakimoto in \cite{KRW}.
The precise identification between the definitions
in \cite{P1} and \cite{KRW}
was made only recently by D'Andrea, De Concini, De Sole, Heluani, and Kac  in \cite{DDDHK}.

There are many remarkable connections between finite $W$-algebras and other areas of mathematics.
The finite $W$-algebra $U(\mf{g},e)$ possesses two natural filtrations,
the {\em Kazhdan} and {\em loop} filtrations.
The main structure theorem for finite $W$-algebras, proved in \cite{P1} and reproved
in \cite{GG}, is
that the associated graded algebra to $U(\mf{g},e)$ with respect to
the Kazhdan
filtration is isomorphic to the coordinate algebra
of the Slodowy slice, i.e. $U(\mf{g},e)$ is a {\em quantization} of the Slodowy
slice through the nilpotent
orbit containing $e$.
On the other hand, by \cite{P2} the
associated graded algebra with respect to
the loop filtration is isomorphic to $U(\mf{g}^e)$, the universal enveloping algebra of
the centralizer of $e$ in $\mf{g}$.
Because of this, the structure of $U(\mathfrak{g},e)$ is intimately
related to the invariant theory of the centralizer $\mf{g}^e$.
In \cite{BB} this connection
was used to construct a system of
algebraically independent generators for the center of the universal
enveloping algebra $U(\mf{g}^e)$
in the case $\mf{g}=\mf{gl}_n(\C)$,
giving a constructive proof of the freeness of
this center (which
had been established earlier by Panyushev, Premet and Yakimova in \cite{PPY}
by a different method) and also verifying \cite[Conjecture 4.1]{PPY}.

The work of Premet in \cite{P2,P3}, Losev in \cite{Lo1,Lo2},
and Ginzburg in \cite{Gi} has
highlighted the importance of the study of finite dimensional
representations of $U(\mf{g},e)$, revealing an intimate relationship
with the theory of primitive ideals of the
universal enveloping algebra $U(\mf{g})$ itself.
At the heart of this connection is an equivalence of
categories due to Skryabin in \cite{Skryabin} between the category of $U(\mf{g},e)$-modules
and a certain category of generalized Whittaker modules for $\mf{g}$.
For other recent results about the representation theory of finite $W$-algebras
see e.g. \cite{Lo3}, \cite{Lo4}, \cite{Go}, \cite{GRU}.

\subsection{Statement of the main results}

Throughout this paper we denote the general linear, 
symplectic, and orthogonal Lie algebras $\mf{gl}_n(\C)$,
$\mf{sp}_n(\C)$, and $\mf{so}_n(\C)$ as 
$\mf{g}_n$, $\mf{g}_n^-$, and $\mf{g}_n^-$ for short, 
assuming that $n$ is even if $\mf{g} = \mf{sp}_n(\C)$.
We will also need the following index set defined in terms of 
a positive integer $n$:
\[
  \mathcal{I}_n = \{ 1-n, 3-n, \dots, n-1 \}.
\]
Let $Y_n^+$ and  $Y_n^-$ denote the twisted Yangians associated to
$\mf{g}_n^+$ and $\mf{g}_n^-$, respectively.
These are certain associative algebras with generators 
$\set{S_{i,j}^{(r)}} { i, j \in \mathcal{I}_n, r \in \Z_{>0}}$;
see
 \cite{MNO}
 for the full relations.
Fix positive integers $n$ and $l$, and a sign $\eps \in \{\pm\}$,
now let $\mf{g} = \mf{g}_{nl}^\eps$.
Let $e$ be a nilpotent element of Jordan type $(l^n)$ in
$\mf{g}$.
In order to ensure that such a nilpotent exists one must further assume that
if $\eps = +$ and $l$ is even then $n$ is even, and that
if $\eps = -$ and $l$ is odd then $n$ is even.
Let $U(\mf{g},e)$ be the finite $W$-algebra attached to $\mf{g}$ and the nilpotent
element $e$; see $\S$\ref{section2} below for the general definition.
We will also need another sign $\phi$
defined to be $\eps$ if $l$ is odd, and $-\eps$ if $l$ is even.
Set $Y=Y_n^\phi$.
The main result of \cite{Br}
is the following theorem:
\begin{Theorem} \label{th_ch3}
There exists a surjective algebra homomorphism
  $Y \twoheadrightarrow U(\mf{g},e)$
with kernel generated by the elements
\begin{equation} \label{ker2}
\begin{array}{rl}
  \left\{S_{i,j}^{(r)} \:\Big|\: i,j \in \mathcal{I}_n, r > l\right\}_{\phantom{S}}
    & \text{if $l$ is even;} \\  \\
\left\{ S_{i,j}^{(r)} +\frac{\phi}{2}S_{i,j}^{(r-1)}\: \Big| \: i,j \in \mathcal{I}_n, 
       r > l\right\}_{\phantom{S}} & \text{if $l$ is odd.}
\end{array}
\end{equation}
\end{Theorem}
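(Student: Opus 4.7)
The plan is to construct the surjection explicitly, and then to identify its kernel via a comparison of associated graded algebras with respect to the two natural filtrations on $U(\mf{g},e)$.

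First, I would construct an algebra homomorphism $\Phi: Y_n^\phi \to U(\mf{g},e)$ by specifying the images $\Phi(S_{i,j}^{(r)})$ as explicit elements of the finite $W$-algebra. The most natural route is to start from the Brundan--Kleshchev-type surjection $Y_n \twoheadrightarrow U(\mf{g}_{nl}, e)$ for the rectangular type $A$ finite $W$-algebra and then to restrict to fixed points of a suitable involution $\sigma$. The involution $\sigma$ on $\mf{g}_{nl} = \mf{gl}_{nl}(\C)$ whose fixed subalgebra is $\mf{g} = \mf{g}_{nl}^{\eps}$ can be chosen to fix $e$, whence $\sigma$ induces involutions on both $U(\mf{g}_{nl},e)$ and $Y_n$, and the $\sigma$-fixed points of $Y_n$ are by construction $Y_n^\phi$. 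The parity of $l$ controls the sign of the involution induced on $\mf{g}^e$, which is what dictates the prescription $\phi = \eps$ for $l$ odd and $\phi = -\eps$ for $l$ even.

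Second, once the candidate map is written down, one must verify that the defining relations of $Y_n^\phi$ (the ternary RTT relation together with the reflection/symmetry relation from \cite{MNO}) are satisfied by the images. This is the main technical obstacle. I would reduce the verification to the associated graded level with respect to the Kazhdan filtration: by the Premet--Gan--Ginzburg theorem $\gr U(\mf{g},e) \cong \C[e+\mf{g}^f]$, and in these commutative coordinates the twisted Yangian relations degenerate to identities that can be checked directly by a calculation on the Slodowy slice, using the rectangular structure of $\mf{g}^e$ to keep the bookkeeping manageable.

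Finally, I would prove surjectivity and identify the kernel simultaneously. For surjectivity, it suffices to show that the Kazhdan symbols of $\{\Phi(S_{i,j}^{(r)}) : 1 \leq r \leq l\}$ generate $\C[e+\mf{g}^f]$; this follows from standard invariant-theoretic generation results for the rectangular classical centralizer $\mf{g}^e$. Let $K$ denote the two-sided ideal of $Y_n^\phi$ generated by the elements listed in (\ref{ker2}). A direct Kazhdan-symbol calculation shows $K \subseteq \ker \Phi$, so $\Phi$ factors through $\bar\Phi: Y_n^\phi/K \twoheadrightarrow U(\mf{g},e)$. To conclude that $\bar\Phi$ is injective I would switch to the loop filtration: a PBW-type argument identifies $\gr(Y_n^\phi/K)$ with $U(\mf{g}^e)$, which matches $\gr^{\text{loop}} U(\mf{g},e)$ by \cite{P2}, forcing $\bar\Phi$ to be an isomorphism. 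The shift of $\frac{\phi}{2}S_{i,j}^{(r-1)}$ appearing in the $l$ odd case is a reflection of the fact that the natural loop-graded generators are centred differently according to the parity of $l$; verifying that this shift is precisely what places the listed elements in $\ker\Phi$ is the subtlest point of this last step.
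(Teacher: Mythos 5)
Your overall architecture for the endgame (construct the map, check the listed elements lie in the kernel, then compare associated graded algebras for the loop filtration with $U(\mf{g}^e)$ to force the induced map to be an isomorphism) is the right shape, and your remark that the shift by $\frac{\phi}{2}S_{i,j}^{(r-1)}$ for $l$ odd reflects a parity-dependent centring is essentially correct: in the paper it comes from the factor $(u+\frac{\phi}{2})^{-1}$ in the middle-column evaluation map \eqref{ev0}. But the first two steps contain genuine errors. First, $Y_n^\phi$ is \emph{not} the fixed-point subalgebra of $Y_n$ under the involution $\tau$: it is the coideal subalgebra generated by the coefficients of $S_{i,j}(u)=\sum_k\tau(T_{i,k}(u))T_{k,j}(u)$, which are quadratic in the $T$'s and are not themselves $\tau$-fixed (they satisfy the symmetry relation \eqref{eq_y_rel2} instead). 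Likewise $U(\mf{g}_{nl}^\eps,e)$ is not the $\sigma$-fixed subalgebra of $U(\mf{gl}_{nl},e)$: the fixed points of an involution of an enveloping algebra $U(\mf{a})$ are far larger than $U(\mf{a}^\sigma)$, so no folding of the Brundan--Kleshchev surjection produces the desired map. The construction actually used (from \cite{Br}, restated here as Theorem \ref{main1}) is different: one builds $\kappa_l:Y\to U(\mf{h})$ by composing the iterated comultiplication $\Delta^{(m)}$ with evaluation homomorphisms $\ev_p$ attached to the columns of the pyramid, and proves $\kappa_l(Y)=\mu(U(\mf{g},e))$ where $\mu$ is the Miura transform, injective on $U(\mf{g},e)$; the surjection of the theorem is then $\mu^{-1}\circ\kappa_l$ and its kernel is $\ker\kappa_l$, computed from the polynomiality in $u^{-1}$ of the images $\kappa_l(S_{i,j}(u))$.

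Second, you cannot verify the defining relations of $Y_n^\phi$ ``at the associated graded level with respect to the Kazhdan filtration'': the associated graded algebra is commutative, so every commutator relation degenerates to a triviality there, and checking the degenerate identities on the Slodowy slice says nothing about whether the relations hold in $U(\mf{g},e)$ itself. In the actual proof this issue never arises, because $\kappa_l$ is by construction a composite of algebra homomorphisms; the hard work is instead in showing that its image coincides with $\mu(U(\mf{g},e))$, which is done by exhibiting explicit elements $s_{i,j}(\omega_r)\in U(\mf{g},e)$ (row determinants, Theorem \ref{gens}) satisfying $\mu(s_{i,j}(\omega_r))=\kappa_l(S_{i,j}^{(r)})$ and checking that their top symbols for the good filtration generate $\gr U(\mf{g},e)=U(\mf{g}^e)$.
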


Results along these lines were first noticed by Ragoucy in \cite{R}, where
he observed that a similar homomorphism exists in the case that $l$ is odd
for certain commutative analogs of these algebras.

The main aim of the present  
article is to combine this theorem with Molev's classification
of the finite dimensional
irreducible representations of twisted Yangians
from \cite{M}
to deduce a classification
of finite dimensional irreducible representations of the
rectangular finite $W$-algebras.
The main combinatorial objects in this classification are
skew-symmetric $n \times l$ tableaux.
A {\em skew-symmetric $n \times l$ tableaux}
is an
 $n \times l$ matrix of complex
numbers, with rows labeled in order from top to bottom by the set
$\mathcal{I}_n$ and columns labeled in order from left to right by the set
$\mathcal{I}_l$,  and which is skew-symmetric with respect to the center of the
matrix,
that is, if $A=(a_{i,j})_{i \in \mathcal{I}_n, j \in \mathcal{I}_l}$
is a skew-symmetric $n \times l$ tableaux then $a_{i,j} = -a_{-i,-j}$. 
Let $\Tab_{n,l}$ denote the set of skew-symmetric $n \times l$ tableaux. 
We say that two skew-symmetric $n \times l$ tableaux are 
{\em row equivalent} if one can be obtained
from the other by permuting entries within rows.
Let $\Row_{n,l}$ denote the set of row equivalence classes of 
skew-symmetric $n \times l$ tableaux.  

In the following definition (and from here on) we use the 
partial order $\geq$ on $\C$ defined by $a \geq b
$ if $a-b \in \Z_{\geq 0}$.
A skew-symmetric $n \times l$ tableaux
$A = (a_{i,j})_{i \in \mathcal{I}_n, j \in \mathcal{I}_l}$ 
is {\em $\eps$-column strict} if 
\begin{itemize}
\item[-] the entries in every column except for the middle column
  (which exists only when $l$ is odd) are strictly 
  decreasing from top to bottom, i.e., 
   $a_{1-n,j} > a_{3-n, j} > \dots > a_{n-1,j}$ 
   for all $0 \ne j \in \mathcal{I}_l$;
\item[-] if $l$ is odd and $n$ is even 
then the entries in the middle column satisfy 
$a_{1-n,0} > a_{3-n,0} > \cdots > a_{-1,0}$, 
and they also satisfy $a_{-1,0} > 0$ if
$\eps=-$, and they satisfy 
$a_{-3,0} + a_{-1,0} > 0$ 
if $\eps=+$ and $n \ge 4$;
\item[-] if $l$ is odd and $n$ is odd then the entries in
the middle column satisfy
$a_{1-n,0} > a_{3-n,0} > \cdots > a_{-2,0}$, 
and they also satisfy $2 a_{-2,0} > 0$.
\end{itemize}
Let $\Col_{n,l}^\eps$ denote the set of all $\eps$-column strict skew-symmetric
$n \times l$ tableaux, and
let $\Std_{n,l}^\eps$ denote the set of 
elements of $\Row_{n,l}$
which have a representative
in $\Col^\eps_{n,l}$.  

We relate these sets to certain representations of the twisted Yangian $Y$.
It is convenient to use the power series
\begin{equation}
  S_{i,j} (u) = \sum_{r \geq 0} S_{i,j}^{(r)} u^{-r} \in Y[[u^{-1}]],
\end{equation}
where $S_{i,j}^{(0)} = \delta_{i,j}$.
A $Y$-module $V$ is called a {\em highest weight module} if
it generated by a vector $v$ such that $S_{i,j}(u) v = 0$ for all $i
< j$ and 
if for all $i$ we have that 
$S_{i,i}(u) v = \mu_i(u) v$ 
for some power series 
$\mu_i(u) \in 1 + u^{-1}\C[[u^{-1}]]$. 
To a skew-symmetric $n \times l$
tableaux $A = (a_{i,j})_{i \in I_n, j \in I_l}$ we associate a
(unique up to isomorphism) irreducible
highest weight $Y$-module 
generated by a highest weight  
vector $v$
for which
\[
  (u - \frac{i}{2})^l S_{i,i}(u-\frac{i}{2}) v = (u+a_{i,1-l})(u+a_{i,3-l}) \dots (u+a_{i,l-1}) v
\]
 if $l$ is even and $i \ge 0$, or
\begin{align*}
  (&u - \frac{i}{2})^{l-1} (u+\frac{\phi-i}{2}) S_{i,i}(u-\frac{i}{2}) v  \\
   &= 
   (u+a_{i,1-l})(u+a_{i,3-l}) \dots (u+a_{i,-2})(u+a_{i,0} + 
\delta_{i,0} /2) (u+a_{i,2}) \dots (u+a_{i,l-1}) v 
\end{align*}
if $l$ is odd and $i \ge 0$.
This $Y$-module factors through the
surjection $Y \twoheadrightarrow U(\mf{g},e)$  from
Theorem \ref{th_ch3} to yield a
(not necessarily finite dimensional)
irreducible $U(\mf{g},e)$-module denoted $L(A)$ for each $A \in \Row_{n,l}$. 
Moreover these are the only highest weight $Y$-modules which descend to 
$U(\mf{g},e)$,
so the
problem of classifying the finite dimensional irreducible  
representations is reduced to determining exactly which $L(A)$'s are
finite dimensional, which can be deduced from Molev's results in \cite{M}. 
The following is the main theorem of this paper:
\begin{Theorem} \label{t_class}
Suppose $A \in \Row_{n,l}$.
\begin{enumerate}
\item
If $l$ is odd or if $l$ is even and $\eps=+$
then 
$L(A)$ is finite dimensional if and only if 
$A$ has a representative in $\Col_{n,l}^\eps$.
Hence
\[
  \set{ L(A)} { A \in \Std_{n,l}^\eps}
\]
is a complete set of isomorphism classes of 
the finite dimensional irreducible representations of 
$U(\mf{g},e)$.

\item
If $l$ is even and $\eps = -$ then 
$L(A)$ is finite dimensional if and only if 
$A^+$ has a representative in $\Col_{n,l+1}^+$.
Hence
\[
  \set{ L(A) }{ A \in \Row_{n,l}, 
    A^+ \in \Std^+_{n,l+1} }
\]
is a complete set of isomorphism classes of 
the finite dimensional
irreducible representations of $U(\mf{g},e)$.
\end{enumerate}
\end{Theorem}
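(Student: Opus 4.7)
The plan is to leverage Theorem \ref{th_ch3} to transport the problem to the twisted Yangian $Y = Y_n^\phi$, where Molev's classification \cite{M} of finite dimensional irreducible representations applies. The surjection $Y \twoheadrightarrow U(\mf{g},e)$ identifies the finite dimensional irreducible $U(\mf{g},e)$-modules with those finite dimensional irreducible $Y$-modules annihilated by the kernel elements in (\ref{ker2}). For a highest weight $Y$-module, this annihilation condition is equivalent to requiring that each $(u - i/2)^l S_{i,i}(u - i/2)$ (or its odd-$l$ analogue) act on the highest weight vector as a polynomial in $u$ of degree exactly $l$, and such a polynomial factorisation is precisely the data encoded by an $A \in \Row_{n,l}$ via the normalisations in the definition of $L(A)$. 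Consequently every irreducible highest weight $Y$-module that descends to $U(\mf{g},e)$ is some $L(A)$, and the theorem reduces to deciding which $L(A)$ are finite dimensional.

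The second step is to feed the highest weight of $L(A)$ into Molev's criterion. That criterion characterises finite dimensionality by the existence of monic polynomials $P_i(u)$ for $1 \le i \le n-1$ controlling ratios of consecutive components $\mu_i(u)$, together with an additional constraint on $\mu_0(u)$ coming from the twisted relation. Plugging in the explicit polynomials dictated by $A$, the $P_i$ conditions become the requirement that for each pair of adjacent (same-parity) rows there is a bijection, realised by permuting within rows, under which the entries of the upper row exceed those of the lower row in the $\geq$ partial order on $\C$. This is precisely column-strictness away from the middle column. The three middle-column inequalities in the definition of $\Col_{n,l}^\eps$ (for $l$ odd) then emerge as the combinatorial shadow of Molev's extra constraint on $\mu_0(u)$, whose form depends on $\phi$ and on the parity of $n$: a direct expansion of this constraint should yield $a_{-1,0} > 0$ when $n$ is even and $\eps = -$, $a_{-3,0} + a_{-1,0} > 0$ when $n$ is even and $\eps = +$, and $2 a_{-2,0} > 0$ when $n$ is odd. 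Combining these with the row-to-row comparisons proves part (i).

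The hardest part will be part (ii), the case $l$ even and $\eps = -$. Here $\phi = +$, so Molev's criterion for $Y_n^+$ still carries an extra constraint on $\mu_0(u)$, but with $l$ even the tableau $A$ has no middle column on which to record it. The remedy I would use is to pass to the auxiliary tableau $A^+ \in \Row_{n,l+1}$ obtained by inserting a central column of zeros, and then verify that the prescribed polynomial for $S_{0,0}(u)v$ on $L(A)$ matches the polynomial that would arise from applying the odd-$(l+1)$ prescription to $A^+$. Once that matching is in hand, Molev's extra $\mu_0$-constraint for $Y_n^+$ translates precisely into the middle-column condition defining $\Col^+_{n,l+1}$, while the remaining row-to-row comparisons in $A^+$ coincide with those in $A$ because the inserted zero column contributes the same factor to every row. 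This bookkeeping between the even-$l$ and odd-$(l+1)$ prescriptions is the delicate step; once it is confirmed, part (ii) reduces to applying part (i) to $A^+$.
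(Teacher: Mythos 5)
Your overall architecture coincides with the paper's: reduce to $Y=Y_n^\phi$ via Theorem \ref{th_ch3}, use the degree-at-most-$l$ condition coming from \eqref{ker2} to see that the highest weight $Y$-modules descending to $U(\mf{g},e)$ are exactly the $L(A)$ for $A\in\Row_{n,l}$, and then translate Molev's finite-dimensionality criteria into column-strictness. But you mislocate, and then skip, the central difficulty. For $n$ even there is no component $\mu_0(u)$ at all; the ``extra'' condition in Molev's classification is a reflection condition on $\mu_1$, and for $Y_n^+$ with $n>2$ even (which is the case $l$ odd, $\eps=+$, where the inequality $a_{-3,0}+a_{-1,0}>0$ appears) it is a disjunction of \emph{four} alternatives (Theorem \ref{y_n_plus}), two of which involve the twist $\mu_1^\sharp$ of $\mu_1$ by the automorphism $\psi$. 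Converting ``one of these four conditions holds'' into that single inequality is the bulk of the paper's work: it needs the well-definedness of $\mu^\sharp(u)$, the $\sharp'$-special element combinatorics (Lemmas \ref{l_sharp_max} and \ref{l_sharp0}), the four translation lemmas \ref{l_sharp1}--\ref{l_sharp4}, and, in the converse direction, a separate argument that $\mu_1^\sharp(u)\rightarrow\mu_3(u)$. ``A direct expansion of this constraint should yield\dots'' does not engage with any of this, so the $\eps=+$, $l$ odd, $n>2$ case is not proved. A second, smaller omission: Molev's conditions are existential statements about auxiliary polynomials, and to turn them into statements about the specific entries of $A$ one needs the normalization lemmas (\ref{lpoly}, \ref{lanyevenpoly}, \ref{l_cond}) showing that a single $\gamma(u)$ makes all the $\mu_i$ polynomial simultaneously and that \emph{any} such polynomial normalization admits the required orderings and pairings; you assert the resulting bijections but do not supply this step.

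In part (ii) your description of $A^+$ is wrong: the inserted middle column is not a column of zeros but the staircase $\tfrac n2-1,\dots,1,0,0,-1,\dots,1-\tfrac n2$; equivalently $c_{i,0}=\tfrac12$ for every $i$, which is exactly what cancels the factor $(1+\tfrac{\phi}{2}u^{-1})^{-1}$ in the odd-length prescription. A zero middle column is never $+$-column strict (the top-half entries must be strictly decreasing), so with your $A^+$ the criterion ``$A^+$ has a representative in $\Col^+_{n,l+1}$'' would be vacuously false. The correct mechanism, which is the paper's Lemma \ref{plus1} via Corollary \ref{pluswalg}, is that factoring through $\zeta$ drops the admissible degree from $l+1$ to $l$, forcing each row of the $(l+1)$-column tableau to contain an entry with $c_{i,k}=\tfrac12$; permuting these into the middle column produces precisely the staircase. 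Your ``same factor in every row'' intuition is correct once the column is corrected, but as written this step of the argument fails.
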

In the theorem
$A^+$ denotes the skew-symmetric $n \times (l+1)$ tableaux
obtained by inserting a middle column into $A$ with entries
\[
  \frac{n}{2} - 1, \frac{n}{2} -2, \dots, 1, 0,0, -1, -2, \dots, 1-\frac{n}{2}
\] 
if $n$ is even and 
\[
  \frac{n}{2} - 1, \frac{n}{2} -2, \dots, \frac{1}{2}, 0,
   -\frac{1}{2}, -\frac{3}{2},  \dots, 1-\frac{n}{2}
\]
if $n$ is odd
down the middle column.

The classification in Theorem \ref{t_class} meshes well with 
the general framework of highest weight theory for finite $W$-algebras
developed in \cite{BGK}.
Under this framework for each $A \in \Row_{n,l}$ one can associate
an irreducible $U(\mf{g},e)$-module.  In $\S$\ref{section5} we 
show that this module is isomorphic to $L(A)$ for each $A \in \Row_{n,l}$.

The theorem also helps illuminate the connection
between $U(\mf{g})$-modules and $U(\mf{g},e)$-modules via primitive ideals.
For an algebra $A$ let $\Prim \: A$ denote the set of primitive ideals in $A$.
In \cite{Lo2} Losev showed that there exists a 
surjective map
\[
  \dagger : \Prim_{\operatorname{fin}}U(\mf{g},e) \to \Prim_{\overline{G.e}}U(\mf{g}).
\]
Here 
$G$ is the adjoint group of $\mf{g}$,  
$\Prim_{\operatorname{fin}}U(\mf{g},e)$ denotes the primitive ideals of
$U(\mf{g},e)$ of finite co-dimension, and 
\[
 \Prim_{\overline{G.e}}U(\mf{g}) =
\{ I \in \Prim \: U(\mf{g}) \:|\:  \mathcal{VA}(I) = \overline{G.e} \},
\]
where $\mathcal{VA}(I)$ denotes the associated variety of an
ideal $I$ in $U(\mf{g})$.
Moreover, Losev showed that the
fibers of the map $\dagger$ are $C$-orbits, 
where $C=C_G(e) / C_G(e)^\circ$ is the component
group associated to the nilpotent element $e$, which acts naturally as
automorphisms on $U(\mf{g},e)$ (induced ultimately 
by its adjoint action on $U(\mf{g})$).

In our special cases we can calculate explicitly the action of 
$C$ on the set of 
finite dimensional irreducible $U(\mf{g},e)$-modules, and therefore on
$\Prim_{\operatorname{fin}} U(\mf{g},e)$.
By \cite[Chapter 13]{C} the only rectangular finite $W$-algebras for which $C$ is not trivial
are the ones where $\eps=-$, and $n$ and $l$ are both even, in which
case $C \cong \Z_2$.
To explicitly state the $C$-action 
we need to define the notion of a {\em $\sharp$-special} element of a list of 
complex numbers.
Given a list  $(a_1,\dots,a_{2k+1})$ of complex numbers
let $\{(a_1^{(i)},\dots,a_{2k+1}^{(i)})\:|\:i \in I\}$
be the set of all permutations of this list which satisfy
$a^{(i)}_{2j-1}+a^{(i)}_{2j} >$ 0 for each $j=1,\dots,k$.
Assuming such rearrangements exist, we define the {\em $\sharp$-special  
element} of $(a_1,\dots,a_{2k+1})$
to be the unique maximal element of the set $\{a_{2k+1}^{(i)}\:|\:i  
\in I\}$.
On the other hand, if no such rearrangements exist, we say that the 
$\sharp$-special element of $(a_1,\dots,a_{2k+1})$ is undefined.
For example, the $\sharp$-special element of $(-3, -1, 2)$ is $-3$, whereas  
the $\sharp$-special element of $(-3,-2,1)$ is  
undefined.

We define an action of $\Z_2$ on $\Row_{n,l}$ as follows.
Let $A = (a_{i,j})_{i \in \mathcal{I}_n, j \in \mathcal{I}_l}
\in \Row_{n,l}$, 
let $a$ be the $\sharp$-special element of 
$(0, a_{-1, l-1}, a_{-1, l-3}, \dots, a_{-1, l-1})$, and
let $c$ denote the generator of $\Z_2$.
If $a$ is undefined or $a=0$ then we declare that $c \cdot A = A$.
Otherwise we declare that $c \cdot A = B$ 
where $B \in \Row_{n,l}$ has the same rows as $A$, 
except with one occurrence of $a$ replaced with $-a$ in row $-1$, 
and one occurrence of $-a$ replaced with $a$
in row $1$.
It is an immediate corollary of Lemma \ref{l_sharp0} below that this 
action is well defined.
For example,
\[
  c \cdot 
  \ydiagram{
     {-3\phantom{:}} & {1} & 2 & 4 \cr 
 {-4\phantom{:}}     & -2 & -1 & 3\cr
}
   = 
  \ydiagram{
     {-3\phantom{:}} & {-2} & 1 & 4 \cr 
 {-4\phantom{:}}     & {-1} & 2 & 3 \cr
}.
\]
since the $\sharp$-special element of $(0,-3,1,2,4)$ is $2$.

In $\S$\ref{section6} we prove the following theorem:
\begin{Theorem} \label{t_c}
   Suppose that $n$ and $l$ are even positive integers and
   $\eps=-$.
   Let $A = (a_{i,j})_{i \in \mathcal{I}_n, j \in \mathcal{I}_l}
\in \Row_{n,l}$ be such that 
$A^+ \in \Std_{n,l}^+$ and let
$L(A)$ denote the corresponding finite
dimensional irreducible representation of $U(\mf{g},e)$.  
Then the $\sharp$-special element of 
$(0, a_{-1, l-1}, a_{-1, l-3}, \dots, a_{-1, l-1})$ is defined,
and $c \cdot L(A) = L(c \cdot A)$.
\end{Theorem}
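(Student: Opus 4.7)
The plan is to realize the generator $c \in C \cong \Z_2$ as an explicit automorphism of $U(\mf{g},e)$, transport it through the surjection $Y \twoheadrightarrow U(\mf{g},e)$ of Theorem \ref{th_ch3}, and then apply Molev's classification to read off its effect on the highest weight of $L(A)$. Since $C = C_G(e)/C_G(e)^\circ$, one first chooses a concrete lift $g \in C_G(e)$ whose image generates $C$. In the orthogonal rectangular case treated here (with $n, l$ even and $\eps=-$), such a $g$ can be taken block-diagonal, preserving the Jordan structure of $e$ while swapping two distinguished vectors in the Jordan basis; this is the nontrivial class in the component group computed in \cite[Chapter 13]{C}. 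The element $g$ then induces an automorphism $\operatorname{Ad}(g)$ of $\mf{g}$ fixing $e$, and hence an automorphism of $U(\mf{g},e)$.

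Next, I transport $\operatorname{Ad}(g)$ through the surjection from Theorem \ref{th_ch3}. A direct computation with the explicit generators for $U(\mf{g},e)$ from \cite{Br} identifies the transported automorphism with an involution of (the image of) $Y$ that permutes the indices $1$ and $-1$ while fixing the others. Applying this to $L(A)$ yields the twisted module $L(A)^{\operatorname{Ad}(g)}$, which is again irreducible but whose original generating vector is no longer a highest weight vector for the standard ordering. One then runs the usual straightening: apply the off-diagonal $S_{i,j}(u)$'s until a new highest weight vector is produced, and read off the new power series $\mu_i(u)$. The outcome differs from the old one only in rows $\pm 1$, and by Molev's analysis in \cite{M} the resulting data correspond to a tableau obtained from $A$ by swapping a single entry in row $-1$ with its negative in row $1$.

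The main obstacle is matching the highest weight produced by this straightening with the $\sharp$-special recipe defining $c \cdot A$. The nontrivial content here is twofold. First, one must show that under the hypothesis $A^+ \in \Std_{n,l+1}^+$ the $\sharp$-special element of $(0, a_{-1,1-l}, a_{-1,3-l}, \ldots, a_{-1,l-1})$ is actually defined; this reduces to a combinatorial check using the dominance conditions built into $A^+$ being $+$-column strict (in particular the condition $a_{-3,0}+a_{-1,0}>0$ applied to $A^+$ produces exactly the pairings needed to realize a $\sharp$-special rearrangement). Second, one must verify that the entry isolated by the straightening coincides with this $\sharp$-special element. This is essentially forced by the uniqueness clause in Molev's classification combined with Theorem \ref{t_class}(ii): the exchanged entry is the unique choice for which the resulting tableau $c \cdot A$ still satisfies $(c \cdot A)^+ \in \Std_{n,l+1}^+$, and this is precisely the content of the $\sharp$-special construction. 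Reconciling Molev's Drinfeld-polynomial parametrization with the tableau parametrization used here is the technical heart of the argument.
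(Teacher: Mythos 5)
Your overall strategy matches the paper's: the generator of $C$ is realized as an explicit element of $C_G(e,h,f)$ swapping rows $\pm 1$ of the rectangular array, its action on the generators $s_{i,j}(\omega(u))$ is computed to coincide with the twisted Yangian involution $\psi$ interchanging the indices $1$ and $-1$, and Molev's description of $\psi$-twists of highest weight modules is then invoked. Up to that point the proposal is sound.

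The gap is in how you identify which entry of row $-1$ gets negated. You argue that the exchanged entry is ``the unique choice for which the resulting tableau $c\cdot A$ still satisfies $(c\cdot A)^+\in\Std^+_{n,l+1}$,'' and that this uniqueness forces it to be the $\sharp$-special element. That uniqueness claim is false. Take $n=l=2$, $\eps=-$, and $A$ with row $-1$ equal to $(1,2)$ (so row $1$ is $(-2,-1)$). Then $A^+\in\Std^+_{2,3}$ and the $\sharp$-special element of $(0,2,1)$ is $2$; but negating $1$ instead of $2$ produces the tableau with row $-1$ equal to $(-1,2)$, whose extension is also in $\Std^+_{2,3}$ (pair $-1$ with $2$ across the outer columns and put $0$ in the middle). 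So membership in $\Std^+_{n,l+1}$ does not single out the flipped entry, and your argument cannot conclude. What actually pins it down is the explicit formula behind Molev's analysis, i.e.\ Theorem \ref{th_sharp} together with \eqref{eq_musharp}: the $\psi$-twist replaces $\mu_1(u)$ by $\mu_1^\sharp(u)$, in which the factor $(1-a_{2k+1}u^{-1})$ indexed by the $\sharp'$-special element $a_{2k+1}$ of the Drinfeld data is replaced by $(1+(1+a_{2k+1})u^{-1})$; after undoing the $\rho$-shift ($a_{1,0}=c_{1,0}-\tfrac{1}{2}$) this is exactly negation of the $\sharp$-special entry, via Lemma \ref{l_sharp_max}. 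You need to invoke that formula rather than a uniqueness-of-standard-flip argument. The definedness of the $\sharp$-special element, which you correctly flag as needing proof, does follow from $A^+\in\Std^+_{n,l+1}$ roughly as you indicate, but the relevant constraints are the pairings $a_{-1,j}+a_{-1,-j}>0$ forced by column strictness of the outer columns (equivalently, the degree bound forcing a factor $c_{i,k}=\tfrac{1}{2}$), not the middle-column inequality $a_{-3,0}+a_{-1,0}>0$, which concerns the fixed inserted column and holds automatically.
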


Understanding the $C$-action for the rectangular finite $W$-algebras turns 
out to be key to understanding the $C$-action for more complicated finite $W$-algebras.
In the forthcoming paper \cite{BrG} we use these results as well as the results in \cite{BGK} to classify the finite dimensional irreducible representations of $U(\mf{g},e)$ 
for a large class of nilpotent elements in the symplectic and orthogonal Lie algebras.

\vspace{.1 in}

\noindent {\em Acknowledgements.} The author would like to thank Jonathan Brundan for many enlightening conversations,
and he would like to thank Simon Goodwin for pointing out
and correcting 
an error in $\S$\ref{section6}.

\section{Rectangular finite $W$-algebras}

\subsection{Overview of finite $W$-algebras} \label{section2}
Throughout this subsection $\mf{g}$ denotes a reductive Lie algebra and
$e$ denotes a nilpotent element of $\mf{g}$.
To define the finite $W$-algebra $U(\mf{g},e)$, one first applies
the Jacobson-Morozov Theorem to embed $e$
into an $\mf{sl}_2$-triple $(e,h,f)$.  Now the $\ad\ h$ eigenspace 
decomposition gives a grading on 
$\mf{g}$: 
\begin{equation} \label{eq_grading}
  \mf{g} = \bigoplus_{i \in \Z} \mf{g}(i),
\end{equation}
where $\mf{g}(i) = \set{ x \in \mf{g} }{ [h,x] = i x}$.
Finite $W$-algebras are defined for any grading, however
to simplify the definition of $U(\mf{g},e)$, we
assume that this grading is an even grading, i.e., $\mf{g}(i) = 0$ if
$i$ is odd.
Define a character $\chi : \mf{g} \to \C$ by
$\chi(x) = (x,e)$, where $(.\,,.)$ is a fixed non-degenerate symmetric
invariant bilinear form on $\mf{g}$.
Let $\mf{m} = \bigoplus_{i < 0} \mf{g}(i)$,
and let 
$\mf{p} = \bigoplus_{i \ge 0} \mf{g}(i)$.
Let $I$ be the left ideal of $U(\mf{g})$ generated by
$\set{m - \chi(m) }{ m \in \mf{m}}$.
By the PBW Theorem, 
\begin{equation} \label {eq_decomp}
  U(\mf{g}) = U(\mf{p}) \oplus I.
\end{equation}
Define $\pr : 
U(\mf{g}) \to U(\mf{p})$ to be the 
projection along this direct sum decomposition.
Now we define
\[
U(\mf{g},e)
= \set{u \in U(\mf{p}) }{ \pr([m,u]) = 0\text{ for all }m \in \mathfrak{m} },
\] 
so  $U(\mf{g},e)$ is a subalgebra of $U(\mf{p})$ in these even grading cases.

The finite $W$-algebra $U(\mf{g},e)$
possesses two natural filtrations.
The first of these, the {\em Kazhdan filtration},
is the filtration on $U(\mf{g},e)$ induced by the filtration
on $U(\mathfrak{g})$ generated by declaring that
each element $x \in \mf{g}(i)$ in the grading (\ref{eq_grading})
is of
degree $i+2$.
The fundamental {\em PBW theorem} for finite $W$-algebras
asserts that the associated
graded algebra to $U(\mf{g},e)$
under the Kazhdan filtration
is canonically isomorphic to the coordinate
algebra of the Slodowy slice  at
$e$; see e.g. \cite[Theorem 4.1]{GG}.

The second important filtration is called the {\em good filtration}.
The good filtration is the filtration induced
on $U(\mf{g},e)$ by the grading \eqref{eq_grading}
on $U(\mf{p})$.
According to this definition, the associated graded algebra $\gr U(\mf{g},e)$
is identified with a graded subalgebra of $U(\mf{p})$.
The fundamental result about the good filtration, which is a
consequence of the PBW theorem and \cite[(2.1.2)]{P2}, is that
\begin{equation} \label{assgr}
\gr U(\mf{g},e) = U(\mf{g}^e)
\end{equation}
as graded subalgebras of $U(\mathfrak{p})$,
where $\mf{g}^e$ denotes the centralizer of $e$ in $\mf{g}$;
see also \cite[Theorem 3.5]{BGK}.

\subsection{Rectangular finite $W$-algebras and twisted Yangians} \label{section2.1}
Recall that a rectangular finite $W$-algebra is a finite $W$-algebra 
$U(\mf{g},e)$ for which $\mf{g}$ is $\mf{so}_n(\C)$ 
or $\mf{sp}_n(\C)$ and $e$ has Jordan blocks all the same size.
We need to recall the many of the results from \cite{Br} 
about the relationship between twisted Yangians and
rectangular finite $W$-algebras.
We begin by fixing explicit matrix realizations for the
classical Lie algebras. 
Recall that for any integer $n \geq 1$, we 
have defined the index set $\mathcal{I}_n = \{1-n, 3-n, \dots, n-1\}$.
Let $\mathfrak{g}_n = \mathfrak{gl}_n(\C)$ 
with standard basis given by the matrix units $\{e_{i,j} \:|\: i,j \in \mathcal{I}_n\}$.
Let $J_n^+$
be the $n \times n$ matrix with $(i,j)$ entry equal to $\delta_{i,-j}$, and
set
$$
\mf{g}_n^{+} = 
\mf{so}_n(\C)=
\{x\in \mf{g}_n\:|\:
x^T J_n^{+} + J_n^{+} x = 0\},
$$
where $x^T$ denotes the usual transpose of an $n\times n$ matrix.
Assuming in addition that $n$ is even, let
$J_n^-$ be the $n \times n$ matrix with $(i,j)$ entry equal to
$\delta_{i,-j}$ if $j > 0$ and $-\delta_{i,-j}$ if $j < 0$, and set
\begin{equation} \label{eq_J}
\mf{g}_n^{-} = 
\mf{sp}_n(\C)
=\{x \in \mf{g}_n\:|\:
x^T J_n^{-} + J_n^{-} x = 0\}.
\end{equation}

We 
adopt the following convention regarding signs.
For $i \in \mathcal I_n$, define
$\hat \imath \in \Z /2 \Z$ by
\begin{equation}
\hat \imath = 
\begin{cases}
0&\text{if $i \geq 0$;}\\
1&\text{if $i < 0$.}
\end{cases}
\end{equation}
We will often identify a sign $\pm$ 
with the integer
$\pm 1$ when writing formulae.
For example, $\eps^{\hat \imath}$ denotes $1$ if $\eps = +$ or $\hat \imath = 0$,
and it denotes $-1$ if $\eps = -$ and $\hat \imath = 1$.
With this notation, $\mf{g}_n^\eps$ is spanned by the matrices
$\{ f_{i,j} \:|\:i,j \in \mathcal{I}_n\}$, where
\[
  f_{i,j} = e_{i,j} - \eps^{\hat \imath + \hat \jmath} e_{-j,-i}.
\]

Next
we fix integers
$n, l \geq 1$ and signs $\epsilon, \phi \in \{\pm\}$, assuming
that 
$\phi = \epsilon$ if $l$ is odd,
$\phi =- \epsilon$ if $l$ is even, and
$\phi = +$ if $n$ is odd;
now let $\mathfrak{g} = \mathfrak{g}_{nl}^\epsilon$.
To define a nilpotent element $e \in \mf{g}$ of Jordan type
$(l^n)$ we
introduce an $n \times l$ rectangular array of boxes, labeling
rows in order from top to bottom by the index set $\mathcal I_n$ and 
columns in order from left to right by the index set $\mathcal I_l$.
Also label the individual boxes in the array with the elements of the set
$\mathcal I_{nl}$. For $a \in \mathcal I_{nl}$ we let
$\row(a)$ and $\col(a)$ denote the row and column numbers of the box in which 
$a$ appears. We require that the boxes are labeled skew-symmetrically
in the sense that $\row(-a) = -\row(a)$ and $\col(-a) = -\col(a)$.
If $\epsilon = -$
we require in addition that $a > 0$ either if
$\col(a) > 0$ or if $\col(a) = 0$ and $\row(a) > 0$;
this additional restriction streamlines
some of the signs appearing in formulae below. 
For example, if $n= 3, l = 2$ and $\epsilon = -, \phi = +$, 
one could pick the labeling
\[
  \ydiagram{
     {-5\phantom{:}} & {1} \cr 
 {-3\phantom{:}}     & {3} \cr
 {-1\phantom{:}}     & {5} \cr
   }
\]
and get that $\row(1) = -2$ and $\col(1) = 1$.
We remark that the above arrays are a special case of the {\em pyramids}
introduced by Elashvili and Kac in \cite{EK}; see also \cite{BG}.

Having made these choices, we let $e \in \mathfrak{g}$ denote the
following nilpotent matrix of Jordan type $(l^n)$:
\begin{equation} \label{eq_e}
  e = 
     \sum_{
        \substack{a,b \in \mathcal{I}_{nl} \\ \row(a) = \row(b) \\ 
           \col(a) +2= \col(b) \geq 2} }
       f_{a,b}
+
\sum_{
     \substack{a,b \in \mathcal{I}_{nl} \\ 
       \row(a) = \row(b) > 0\\ \col(a)+2=\col(b)=1}}
       f_{a,b}
+
\sum_{
     \substack{a,b \in \mathcal{I}_{nl} \\ 
            \row(a) = \row(b) = 0 \\ 
            \col(a)+2=\col(b)=1 } }
       \textstyle{\frac{1}{2}}f_{a,b}.
\end{equation}
In the above example,
$e=f_{-1,5} + \frac{1}{2} f_{-3,3} = e_{-1,5} + e_{-5,1} + e_{-3,3}$.
Also define an even grading 
\begin{equation} \label{grading}
\mf{g} = \bigoplus_{r \in \Z} \mf{g} (r)
\end{equation}
with $e \in \mf{g}(2)$
by declaring that $\deg(f_{a,b}) = \col(b) - \col(a)$.
Note this grading coincides with the grading obtained
by embedding $e$ into the $\mf{sl}_2$-triple $(e,h,f)$
where
\begin{equation} \label{eq_h}
  h = \sum_{a \in \mathcal{I}_{nl}} \col(-a) e_{a,a}
\end{equation}
and considering the
$\operatorname{ad} h$-eigenspace decomposition of $\mf{g}$.  
Let $\mf{p} = \bigoplus_{r \ge 0} \mf{g}(r)$
and $\mf{m} = \bigoplus_{r < 0} \mf{g}(r)$.
For the non-degenerate symmetric invariant bilinear form on $\mf{g}$
we use the form $(x,y) = \frac{1}{2} \trace(x y)$.
Define $\chi : \mf{m} \to \C$ by $x \mapsto (e, x)$.
An explicit calculation using the formula for the nilpotent matrix $e$ 
recorded above
shows that
\begin{equation} \label{prchi}
\chi(f_{a,b}) = -\eps^{\hat a + \hat b} \chi(f_{-b,-a}) = 1
\end{equation}
if $\row(a) = \row(b),
\col(a) = \col(b)+2$ and either $\col(a) \geq 2$ or
$\col(a) = 1$, $\row(a) \geq 0$; all other $f_{a,b} \in \mf{m}$ satisfy 
$\chi(f_{a,b}) = 0$.
Now we have our rectangular finite $W$-algebra
\[
U(\mf{g},e)
= \set{u \in U(\mf{p}) }{ \pr([x,u]) = 0\text{ for all }x \in \mathfrak{m} }
\] 
where $\pr : U(\mf{g}) \to U(\mf{p})$ is projection along the the decomposition from
\eqref{eq_decomp}.

To make the connection between $U(\mf{g},e)$ and the twisted Yangians, we 
exploit a shifted version of the Miura
transform, which we define as follows.
Let $\mf{h} = \mf{g}(0)$ be the Levi factor 
of $\mf{p}$ coming from
the grading. 
It is helpful
to bear in mind that
there is an isomorphism
\begin{equation}\label{h}
\qquad\:\:\mathfrak{h} \cong \begin{cases}
\mathfrak{g}_n^{\oplus m}&\text{if $l = 2m$;}\\
\mathfrak{g}_n^\eps \oplus \mathfrak{g}_n^{\oplus m}&\text{if $l = 2m+1$,}
\end{cases}
\end{equation}
which maps $f_{a,b} \in \mathfrak{h}$
to $f_{\row(a), \row(b)}\in \mathfrak{g}_n^\eps$ if $\col(a)=\col(b)=0$ or to
$e_{\row(a),\row(b)}$ in the $\lceil \frac{\col(a)}{2}\rceil$th copy of
$\mathfrak{g}_n$ if $\col(a)=\col(b)>0$.
For $q \in \mathcal{I}_l$, let
\begin{equation} \label{rho}
  \rho_q = 
   \begin{cases}  
      \frac{n q  - \epsilon}{2} & \text{if $ q > 0$;} \\
      \frac{n q + \epsilon}{2} & \text{if $ q < 0$;} \\
      0 & \text{if $q=0$}.
   \end{cases}
\end{equation}
Let $\eta$ be the automorphism of $U(\mf{h})$ defined on generators by
$\eta(f_{a,b}) = f_{a,b} - \delta_{a,b} \rho_{\col(a)}$.
Let $\xi : U(\mf{p}) \twoheadrightarrow U(\mf{h})$ be the algebra homomorphism induced by the
natural projection $\mf{p} \twoheadrightarrow \mf{h}$.  
The {\em Miura transform} $\mu: U(\mf{p}) \rightarrow U(\mf{h})$
is the composite map
\begin{equation} \label{miura}
  \mu = \eta \circ \xi. 
\end{equation}
By \cite[$\S$2.3]{Ly} (or \cite[Theorem 3.4]{Br}) the restriction of $\mu$ 
to $U(\mf{g},e)$ is injective.

Now we turn our attention to the twisted Yangian $Y=Y_n^\phi$,
recalling that $\phi = -\eps$ if $l$ is even and $\phi = \eps$ if $l$ is odd.
By definition, $Y$ is a subalgebra of the Yangian $Y_n$.
The Yangian $Y_n$
is a Hopf algebra over $\C$ with countably many generators 
$\set{T_{i,j}^{(r)} }{ i,j \in \mathcal{I}_n, r \in \Z_{>0}}$.
To give the defining relations and other data for the Yangian
it is convenient to use the power series
\[
  T_{i,j}(u) = \sum_{r \ge 0} T_{i,j}^{(r)}u^{-r} \in Y_n[[u^{-1}]]
\]
where $T_{i,j}^{(0)} = \delta_{i,j}$. 
Now the defining relations are
\[
  (u-v) [T_{i,j}(u), T_{k,l}(u)] 
     = T_{k,j}(u) T_{i,l}(v) - T_{k,j}(v) T_{i,l}(u).
\]
This and subsequent formulae involving generating functions
should be interpreted by
equating coefficients of the indeterminates $u$ and $v$ 
on both sides of equations,
as discussed in detail in \cite[$\S$1]{MNO}.  
For example,
the comultiplication 
$\Delta:Y_n \rightarrow Y_n \otimes Y_n$ making $Y_n$ into a Hopf algebra 
is defined by the
formula
\begin{equation}\label{com0}
  \Delta(T_{i,j}(u)) = \sum_{k \in \mathcal{I}_n}
     T_{i,k}(u) \otimes T_{k,j}(u).
\end{equation}

By  \cite[$\S$3.4]{MNO}, there exists an 
automorphism $\tau:Y_n \rightarrow Y_n$
of order $2$
defined by 
$$
\tau(T_{i,j}(u)) = \phi^{\hat \imath + \hat \jmath} T_{-j,-i}(-u).
$$
We define the twisted Yangian $Y$ to be the subalgebra of $Y_n$ 
generated by the elements $\{S_{i,j}^{(r)} \:|\: i,j \in \mathcal{I}_n, r \in \Z_{>0}\}$
coming from the expansion
\begin{equation}\label{siju}
  S_{i,j} (u) = \sum_{r \geq 0} S_{i,j}^{(r)} u^{-r}
= 
\sum_{k \in \mathcal{I}_n} 
    \tau(T_{i,k}(u)) T_{k,j}(u) \in Y_n[[u^{-1}]].
\end{equation}
This is not the same embedding of $Y$ into $Y_n$ as used in
\cite[$\S$3]{MNO}: we have twisted the embedding there 
by the automorphism $\tau$.
The relations for the twisted Yangian are given by
\begin{align} \label{eq_y_rel1}
 (u^2-v^2)  [S_{i,j}(u),S_{k,l}(v)] &= 
     (u+v) (S_{k,j}(u) S_{i,l}(v) - S_{k,j}(v) S_{i,l}(u)) - \\
     & \quad (u-v) (\phi^{\hat k + \widehat{- \jmath}} S_{i,-k}(u) S_{-j,l}(v) -
            \phi^{\hat k + \widehat{-l} } S_{k,-i}(v) S_{-l,j}(u)) + \notag\\
     & \quad 
           \phi^{\hat i + \widehat{-\jmath}} S_{k,-i}(u) S_{-j,l}(v) -
            \phi^{\hat i + \widehat{-\jmath} } S_{k,-i}(v) S_{-j,l}(u) \notag
\end{align}
and
\begin{equation} \label{eq_y_rel2}
  \phi^{\hat i + \hat j} S_{-j,-i}(-u) = 
    S_{i,j}(u) + \phi \frac{S_{i,j}(u) - S_{i,j}(-u)}{2u}.
\end{equation}

Because of the fact that $\tau$ is a coalgebra antiautomorphism
of $Y_n$, we get from \cite[$\S$4.17]{MNO} that the restriction
of $\Delta$ to $Y$ has image contained in $Y \otimes Y_n$
and
\begin{equation}\label{com}
\Delta(S_{i,j}(u)) = \sum_{h,k \in \mathcal{I}_n} S_{h,k}(u) 
\otimes \tau(T_{i,h}(u)) T_{k,j}(u).
\end{equation}
We let 
$\Delta^{(m)}:Y_n \rightarrow Y_n^{\otimes (m+1)}$ 
denote the $m$th iterated comultiplication.
The preceding formula shows that it maps
$Y$ into $Y \otimes Y_n^{\otimes m}$.

By \cite[$\S$1.16]{MNO} there is an {evaluation homomorphism}
$Y_n \rightarrow U(\mathfrak{g}_n)$.
In view of this and (\ref{h}), we 
obtain for every $0 < p \in \mathcal{I}_l$ 
a homomorphism
\begin{equation}\label{evk}
\ev_p:Y_n \rightarrow U(\mathfrak{h}),
\qquad
T_{i,j}(u) \mapsto \delta_{i,j} +u^{-1} f_{a,b},
\end{equation}
where $a,b \in \mathcal{I}_{nl}$ are defined
from $\row(a) = i, \row(b) = j$ and $\col(a)=\col(b) = p$.
The image of this map 
is contained in the subalgebra of $U(\mathfrak{h})$
generated by the $\lceil p/2 \rceil$th copy of $\mathfrak{g}_n$ from the
decomposition (\ref{h}).
There is also an evaluation homomorphism
$Y \rightarrow U(\mathfrak{g}_n^\phi)$
defined in \cite[$\S$3.11]{MNO}.
If we assume that $l$ is odd (so $\eps = \phi$),
we can therefore define another homomorphism
\begin{equation}\label{ev0}
\ev_0:Y \rightarrow U(\mathfrak{h}),
\qquad
S_{i,j}(u) \mapsto \delta_{i,j} + (u+{\textstyle\frac{\phi}{2}})^{-1} f_{a,b},
\end{equation}
where 
$\row(a) = i, \row(b) = j$ and $\col(a) = \col(b) = 0$;
if $\eps = -$ this depends on our convention for labeling boxes as specified 
above.
The image of this map is contained in the 
subalgebra of $U(\mathfrak{h})$
generated by the subalgebra
$\mathfrak{g}_n^\eps$ in the decomposition (\ref{h}).
Putting all these things together, we deduce that there is 
a homomorphism
$$
\kappa_l:Y \rightarrow U(\mathfrak{h})
$$
defined by
\begin{equation} \label{kappa}
\kappa_l = 
\begin{cases}
\ev_1 \bar\otimes \ev_3 \bar\otimes\cdots\bar\otimes \ev_{l-1}
\circ \Delta^{(m)}&\text{if $l=2m+2$;}\\
\ev_0 \bar\otimes \ev_2 \bar\otimes\cdots\bar\otimes \ev_{l-1}
\circ \Delta^{(m)}&\text{if $l=2m+1$,}
\end{cases}
\end{equation}
where $\bar\otimes$ indicates composition with the natural multiplication 
in $U(\mathfrak{h})$.

Theorem \ref{th_ch3} is a corollary of the following theorem:
\begin{Theorem}[{\cite[Theorem 1.1]{Br}}]
\label{main1} 
$\mu(U(\mf{g},e))
=
\kappa_l(Y).$
\end{Theorem}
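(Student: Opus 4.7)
The approach is to verify the equality $\mu(U(\mf{g},e)) = \kappa_l(Y)$ as concrete subalgebras of $U(\mf{h})$ by showing each inclusion separately, exploiting the injectivity of $\mu$ on $U(\mf{g},e)$ and a filtered comparison argument. I would prove $\kappa_l(Y) \subseteq \mu(U(\mf{g},e))$ by an explicit construction of lifts into $U(\mf{p})$, and then upgrade this to equality using the good filtration and the PBW-type identification \eqref{assgr}.

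For the inclusion $\kappa_l(Y) \subseteq \mu(U(\mf{g},e))$, I would construct a homomorphism $\tilde\kappa_l : Y \to U(\mf{p})$ with $\mu \circ \tilde\kappa_l = \kappa_l$ whose image lies in $U(\mf{g},e)$. Expanding $\kappa_l$ via the iterated coproduct \eqref{com} and the evaluation maps \eqref{evk}, \eqref{ev0} writes $\kappa_l(S_{i,j}(u))$ as an explicit polynomial in the matrix units $f_{a,b}$ from the various copies of $\mf{g}_n$ sitting in the Levi $\mf{h}$ via \eqref{h}. Each such $f_{a,b}$ has a canonical lift to $U(\mf{p})$; replace each factor by this lift, pre-compensate for the shift $\eta$ using the constants $\rho_q$ from \eqref{rho} so that composition with $\mu$ reproduces $\kappa_l$, and then verify the $W$-algebra condition $\pr([x, \cdot]) = 0$ for all $x \in \mf{m}$ directly. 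Using \eqref{prchi}, the verification becomes a formal check of the twisted Yangian relations \eqref{eq_y_rel1}--\eqref{eq_y_rel2} modulo $\mf{m}$, with the sign factors $\eps,\phi$ bookkept by the choice of evaluation maps.

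For the reverse inclusion $\mu(U(\mf{g},e)) \subseteq \kappa_l(Y)$, I would pass to associated graded algebras under the good filtration. By \eqref{assgr} we have $\gr U(\mf{g},e) = U(\mf{g}^e)$ as a graded subalgebra of $U(\mf{p})$. Equip $Y$ with its loop filtration $\deg S_{i,j}^{(r)} = r-1$, compute $\gr \kappa_l(Y)$ in $U(\mf{h})$, and pull back via $\mu$ to a graded subalgebra of $U(\mf{p})$. Because $e$ is rectangular, $\mf{g}^e$ admits a clean description: a basis is given by $\eps$-twisted sums of matrix units along skew diagonals of the $n \times l$ pyramid, and these basis elements appear as the top-degree components of $\tilde\kappa_l(S_{i,j}^{(r)})$. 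This yields $\gr \mu(U(\mf{g},e)) \subseteq \gr \kappa_l(Y)$; combined with the first inclusion, a standard filtered argument promotes this to equality at every filtered level, giving the theorem.

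The main obstacle will be the explicit combinatorial matching of top-degree terms of $\kappa_l(S_{i,j}^{(r)})$ with the skew-diagonal basis of $\mf{g}^e$. This splits into four cases governed by the parities of $n$ and $l$ and the sign $\eps$, each bringing its own conventions: in particular the distinction between $\ev_0$ and $\ev_p$ for $p > 0$, and the asymmetric behavior of $\rho_q$ at the middle column $q = 0$ when $l$ is odd. Tracking all the signs and shifts consistently is the principal technical cost; once the combinatorics is organized (the pyramid notation and the indexing by $\mathcal{I}_n, \mathcal{I}_l$ are chosen with exactly this in mind), the remaining steps are routine.
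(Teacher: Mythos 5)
This theorem is only quoted here from \cite{Br} rather than proved, but your outline matches the strategy of that proof as reflected in this paper: the explicit lifts you describe are precisely the row-determinant elements $s_{i,j}(\omega_r)$ of Theorem \ref{gens}, whose membership in $U(\mf{g},e)$ and the identity $\mu(s_{i,j}(\omega_r))=\kappa_l(S_{i,j}^{(r)})$ give $\kappa_l(Y)\subseteq\mu(U(\mf{g},e))$, while the reverse inclusion follows exactly as you say from $\gr U(\mf{g},e)=U(\mf{g}^e)$ under the good filtration together with the fact that the basis elements $f_{i,j;r}$ of $\mf{g}^e$ occur as the top-degree components of the $s_{i,j}(\omega_{r+1})$. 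The one caveat is that the correct lift is not obtained by naively lifting the expansion of $\kappa_l(S_{i,j}(u))$ factor by factor --- it is the row determinant of \eqref{omega_even}, including the correction series $\sum_{r\ge 1}(-2\phi u)^{-r}\rdet\bar\Omega(u)$ in \eqref{omegadef} when $l$ is odd --- but you flag exactly this sign-and-shift bookkeeping as the main technical cost.
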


This implies the following:
\begin{Corollary} \label{pluswalg}
When $l$ is even there is a surjection
\[
  \zeta : U(\mf{g'}, e') \twoheadrightarrow 
   U(\mf{g}, e)
\]
where $\mf{g}' = \mf{g}_{n(l+1)}^{-\eps}$ and $e'$ is a nilpotent element 
in  $\mf{g}'$
of Jordan type $((l+1)^n)$ such that the following diagram commutes:
\[
\begin{diagram}
Y  & \rTo^{\kappa_{l+1}}   & U(\mf{g}', e')  \\
             & \rdTo_{\kappa_l} & \dTo_{\zeta} \\
&  &  U(\mf{g}, e)
\end{diagram}
\]
\end{Corollary}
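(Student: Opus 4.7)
The plan is to descend $\zeta$ from a natural projection of Levi subalgebras through the Miura isomorphisms of Theorem~\ref{main1}. First note that since $l$ is even, the sign governing $U(\mf{g},e)$ is $\phi=-\eps$, while for $U(\mf{g}',e')$ (where $l'=l+1$ is odd and the base sign is $\eps'=-\eps$) one has $\phi'=\eps'=-\eps$ as well; hence both rectangular $W$-algebras are images of the \emph{same} twisted Yangian $Y=Y_n^{-\eps}$.

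By \eqref{h}, the Levi subalgebras decompose as $\mf{h}\cong \mf{g}_n^{\oplus l/2}$ and $\mf{h}'\cong \mf{g}_n^{-\eps}\oplus \mf{g}_n^{\oplus l/2}$. Let $\pi:\mf{h}'\twoheadrightarrow \mf{h}$ be the Lie algebra projection that kills the ``column zero'' summand $\mf{g}_n^{-\eps}$ of $\mf{h}'$ and sends the $k$th copy of $\mf{g}_n$ in $\mf{h}'$ (the summand at column $2k$) to the $k$th copy of $\mf{g}_n$ in $\mf{h}$ (the summand at column $2k-1$) via the isomorphism matching row indices, for each $k=1,\dots,l/2$. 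Extend $\pi$ to an algebra surjection $U(\mf{h}')\twoheadrightarrow U(\mf{h})$. From \eqref{ev0}, the composite $\pi\circ\ev_0$ sends $S_{i,j}(u)\mapsto \delta_{i,j}$ and therefore equals the counit $\varepsilon$ of $Y$, since $\ev_0$ takes values in $U(\mf{g}_n^{-\eps})$, which $\pi$ annihilates. From \eqref{evk}, $\pi\circ \ev_{2k}=\ev_{2k-1}$ for $k\geq 1$, since both evaluation maps send $T_{i,j}(u)\mapsto \delta_{i,j}+u^{-1}e_{i,j}$ in the identified copies of $\mf{g}_n$. Combining these observations with the iterated counit axiom $(\varepsilon\otimes \mathrm{id}^{\otimes l/2})\circ \Delta^{(l/2)}=\Delta^{(l/2-1)}$ yields
\begin{align*}
\pi\circ \kappa_{l+1}
&= \varepsilon \bar\otimes \ev_1 \bar\otimes \ev_3 \bar\otimes \cdots \bar\otimes \ev_{l-1} \circ \Delta^{(l/2)}\\
&= \ev_1 \bar\otimes \ev_3 \bar\otimes \cdots \bar\otimes \ev_{l-1} \circ \Delta^{(l/2-1)} = \kappa_l
\end{align*}
as maps $Y\to U(\mf{h})$.

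By Theorem~\ref{main1}, $\mu(U(\mf{g},e))=\kappa_l(Y)$ and $\mu'(U(\mf{g}',e'))=\kappa_{l+1}(Y)$, so the identity $\pi\circ \kappa_{l+1}=\kappa_l$ shows that $\pi$ restricts to a surjection $\bar\pi:\mu'(U(\mf{g}',e'))\twoheadrightarrow \mu(U(\mf{g},e))$. Since the Miura transforms are injective on the respective $W$-algebras, $\zeta:=\mu^{-1}\circ\bar\pi\circ\mu'$ gives a well-defined algebra surjection $U(\mf{g}',e')\twoheadrightarrow U(\mf{g},e)$, and commutativity of the stated diagram is immediate. The main subtlety is purely bookkeeping: one must carefully match the column-label offset $2k\leftrightarrow 2k-1$ between the Levis of $\mf{g}'$ and $\mf{g}$, and verify that the twisted evaluation $\ev_0$ collapses to the counit of $Y$ after the $\mf{g}_n^{-\eps}$ factor is killed; once these identifications are in hand, the counit axiom does the rest.
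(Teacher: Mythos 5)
Your argument is correct, and it is in fact more explicit than anything the paper writes down: the paper deduces Corollary \ref{pluswalg} from Theorem \ref{main1} with no further comment, and the shortest route it has in mind goes through the kernel description \eqref{ker2} of Theorem \ref{th_ch3}. Since $l$ is even and $l+1$ is odd with base sign $-\eps$, both $W$-algebras are quotients of the same twisted Yangian $Y=Y_n^{-\eps}$ (as you observe); the kernel of $Y \twoheadrightarrow U(\mf{g}',e')$ is generated by the elements $S_{i,j}^{(r)}+\frac{\phi}{2}S_{i,j}^{(r-1)}$ with $r>l+1$, each of which visibly lies in the kernel of $Y \twoheadrightarrow U(\mf{g},e)$, generated by all $S_{i,j}^{(r)}$ with $r>l$; hence the first surjection factors through the second, and the factoring map is $\zeta$. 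Your proof instead re-derives the containment $\ker\kappa_{l+1}\subseteq\ker\kappa_l$ from scratch by exhibiting the Levi projection $\pi$ with $\pi\circ\kappa_{l+1}=\kappa_l$ and invoking the counit axiom. The identifications you need --- $\pi\circ\ev_0=\varepsilon|_Y$ (both kill the generators $S_{i,j}^{(r)}$, $r\geq 1$) and $\pi\circ\ev_{2k}=\ev_{2k-1}$ with the column offset $2k\leftrightarrow 2k-1$ --- check out against \eqref{ev0}, \eqref{evk} and \eqref{kappa}, and $(\varepsilon\otimes\mathrm{id}^{\otimes m})\circ\Delta^{(m)}=\Delta^{(m-1)}$ holds on $Y$ because $(\varepsilon\otimes\mathrm{id})\circ\Delta$ restricted to $Y$ is the inclusion $Y\hookrightarrow Y_n$ by \eqref{com}. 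What your approach buys is a concrete realization of $\zeta$ on the Miura images, at the cost of the bookkeeping you acknowledge; the kernel argument is shorter but leaves $\zeta$ implicit. Both are valid proofs of the corollary.
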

Note that this corollary does not apply when $\eps = +$ and $n$ is odd since
in this case the nilpotents $e$ and $e'$ do not exist.

The proof of Theorem~\ref{main1}
requires
an explicit formula for the generators of 
$U(\mf{g},e)$ corresponding to the elements $S_{i,j}^{(r)}\in Y$,
which we will use again later on.
Given $i,j \in \mathcal{I}_n$ 
and $p,q \in \mathcal{I}_l$,
let $a,b$ be the elements of $\mathcal{I}_{n l}$ such that
$\col(a) = p$,
$\col(b) = q$, 
$\row(a) = i$, and
$\row(b) = j$.
Define a linear map
$s_{i,j} : \mf{g}_l \rightarrow \mf{g}$ by setting
\begin{equation} \label{sij}
    s_{i,j}(e_{p,q}) = \phi^{\hat \imath \hat p + \hat \jmath \hat q} 
f_{a,b}.
\end{equation}
Let $M_n$ denote the algebra of $n \times n$ matrices
over $\C$, with rows
and columns labeled by the index set $\mathcal{I}_n$ as usual,
and let $T(\mf{g}_l)$ be the tensor algebra on the vector space
$\mf{g}_l$.
Let 
\begin{equation} \label{S}
  s : T(\mf{g}_l) \to M_n \otimes U(\mf{g})
\end{equation}
be the algebra homomorphism that maps a generator
$x \in \mf{g}_l$ to
$\sum_{i,j \in \mathcal{I}_n} e_{i,j} \otimes s_{i,j}(x)$.
This in turn defines linear maps
\begin{equation} \label{eq_sij}
  s_{i,j} :  T(\mf{g}_l) \to U(\mf{g})
\end{equation}
such that
\[
s(x) = \sum_{i,j \in \mathcal{I}_n} e_{i,j} \otimes s_{i,j}(x)
\]
for every $x \in T(\mf{g}_l)$.
Note for any $x, y \in T(\mf{g}_l)$ that
\begin{equation} \label{sijmult}
  s_{i,j}(x y) = \sum_{k \in \mathcal{I}_n} s_{i,k}(x) s_{k,j}(y)
\end{equation}
and also
$s_{i,j}(1) = \delta_{i,j}$.

If $A$ is an $l \times l$ 
matrix with entries in some ring,
we define its {\em row determinant} $\rdet A$
to be the usual Laplace expansion of determinant, but keeping
the (not necessarily commuting) monomials that arise in {\em row order}; 
see e.g. \cite[(12.5)]{BKshifted}.
For $q \in \mathcal{I}_l$ and an indeterminate $u$,
let 
$$
u_q = u+ e_{q,q} +\rho_q \in T(\mf{g}_l)[u],
$$
recalling the definition of $\rho_q$ from \eqref{rho}.
Define $\Omega(u)$ to be the $l \times l$ matrix with
entries in $T(\mf{gl}_l)[u]$ whose $(p,q)$ entry for $p,q \in \mathcal I_l$
is equal to
\begin{equation}\label{omega_even}
\Omega(u)_{p,q} =
\left\{
\begin{array}{ll}
e_{p,q}&\text{if $p < q$;}\\
u_q&\text{if $p=q$;}\\
-1&\text{if $p = q+2 < 0$;}\\
-\phi&\text{if $p  = q+2 = 0$;}\\
1&\text{if $p = q+2 > 0$;}\\
0&\text{if $p  > q+2$.}
\end{array}\right.
\end{equation}
If $l$ is odd we also need the $l \times l$ matrix
$\bar\Omega(u)$ defined by
\begin{equation} \label{bar_omega}
  \bar\Omega(u)_{p,q} = 
   \begin{cases}
     \Omega(u)_{p,q} & \text{if $p \ne 0$ or $q \ne 0$;} \\
     e_{0,0}   & \text{if $p = q = 0$.}
    \end{cases}
\end{equation}
See \cite[$\S$1]{Br} for examples of $\Omega(u)$ and $\bar \Omega(u)$.
Now let
\begin{equation}\label{omegadef}
  \omega(u) = 
\sum_{r = -\infty}^{l} \omega_{l-r} u^{r}  = 
\left\{
\begin{array}{ll}
\rdet \Omega(u)&\text{if $l$ is even;}\\
\displaystyle\rdet \Omega(u) + \sum_{r=1}^{\infty} (-2\phi u)^{-r} 
\rdet \bar\Omega(u)  &\text{if $l$ is odd.}
\end{array}\right.
\end{equation}
This defines elements $\omega_r \in T(\mathfrak{g}_l)$,
hence elements $s_{i,j}(\omega_r) \in U(\mathfrak{g})$
for $i,j \in \mathcal{I}_n$ and $r \geq 1$.
It is obvious from the definition that
each $s_{i,j}(\omega_r)$ actually belongs to $U(\mathfrak{p})$.

\begin{Theorem}
[{\cite[Theorem 1.2]{Br}}]
\label{gens}
The elements 
$\{s_{i,j}(\omega_r)\:|\:i, j \in \mathcal{I}_n, r \geq 1\}$
generate the subalgebra $U(\mf{g},e)$. Moreover,
$\mu(s_{i,j}(\omega_r)) = \kappa_l(S_{i,j}^{(r)})$.
\end{Theorem}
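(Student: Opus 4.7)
The plan is to prove the identity $\mu(s_{i,j}(\omega_r)) = \kappa_l(S_{i,j}^{(r)})$ and, separately, that each $s_{i,j}(\omega_r)$ lies in $U(\mf{g},e)$; generation then follows formally from Theorem~\ref{main1} and the injectivity of $\mu$ on $U(\mf{g},e)$ recorded in \S\ref{section2.1}. Indeed, once those two ingredients are in place, the fact that $\{S_{i,j}^{(r)}\}$ generates $Y$ as an algebra implies that $\{\kappa_l(S_{i,j}^{(r)})\} = \{\mu(s_{i,j}(\omega_r))\}$ generates $\kappa_l(Y) = \mu(U(\mf{g},e))$, and injectivity of $\mu$ on $U(\mf{g},e)$ then forces the preimages $s_{i,j}(\omega_r)$ to generate the entire subalgebra.

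To verify the invariance $s_{i,j}(\omega_r) \in U(\mf{g},e)$, I would check the defining condition $\pr([x, s_{i,j}(\omega_r)]) = 0$ for every weight vector $x \in \mf{m}$ directly. Via the multiplicativity \eqref{sijmult} together with Leibniz, this reduces to a family of commutator identities for $\rdet \Omega(u)$ (and for $\rdet \bar\Omega(u)$ when $l$ is odd) against the root vectors $f_{c,d} \in \mf{m}$. The entries of $\Omega(u)$ from \eqref{omega_even} are designed precisely so that these cancellations occur: the $\pm 1$'s on the shifted subdiagonal absorb the contribution of $\chi$ coming from \eqref{prchi} when one commutes with the generators of $\mf{m}$, row by row. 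In the odd $l$ case, the geometric-series correction $\sum_{r \geq 1} (-2\phi u)^{-r} \rdet \bar\Omega(u)$ handles the central column, where the role of the symplectic or orthogonal structure is asymmetric.

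The identity $\mu(s_{i,j}(\omega_r)) = \kappa_l(S_{i,j}^{(r)})$ is then proved by computing both sides and matching them column by column. On the Miura side, the projection $\xi$ kills every homogeneous element of $U(\mf{p})$ of strictly positive degree; by \eqref{sij} and the grading \eqref{grading}, an entry $s_{i,j}(e_{p,q})$ of the matrix $s(\Omega(u))$ has degree $q - p$, so only the diagonal entries $s_{i,j}(u_q)$ and the shifted-subdiagonal scalars survive in the row-determinant expansion of $s(\rdet \Omega(u))$. These surviving summands factor naturally according to the columns of the pyramid, and applying $\eta$ produces exactly the $\rho_q$ shifts on the diagonal. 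On the Yangian side, unfolding \eqref{kappa} using the comultiplication \eqref{com} iteratively and then applying the evaluation maps \eqref{evk} and \eqref{ev0} repackages $\kappa_l(S_{i,j}(u))$ as a product of matrices of the form $I + u^{-1}(f_{a,b})$, one for each column of the pyramid, together with a $\tau$-twisted block for each negative column; this is the same product that appeared on the Miura side.

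The main obstacle is the bookkeeping of signs and twists. The signs $\phi^{\hat\imath\hat p + \hat\jmath\hat q}$ in \eqref{sij}, the entry $-\phi$ appearing in $\Omega(u)$ when $p=q+2=0$, the $\tau$-twist built into the embedding \eqref{siju} of $Y$ into $Y_n$, and the labeling conventions for the pyramid when $\eps=-$ all conspire to make the column-by-column matching delicate. Handling the odd $l$ and even $l$ cases uniformly via the two halves of \eqref{omegadef}, while keeping the correction $\rdet \bar\Omega(u)$ synchronized with the shifted evaluation $\ev_0$, is where the argument requires the most care.
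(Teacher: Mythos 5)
This theorem is quoted from \cite{Br} and the present paper gives no proof of it, so the comparison is really with the argument in the cited reference. Your outline is correct and its two computational ingredients (the invariance $\pr([x,s_{i,j}(\omega_r)])=0$ via the cancellation of the $\chi$-values of \eqref{prchi} against the subdiagonal scalars of $\Omega(u)$, and the column-by-column matching of $\mu(s_{i,j}(u^{-l}\omega(u)))$ with the unfolding of $\kappa_l(S_{i,j}(u))$ through the iterated comultiplication) are exactly the substance of the proof in \cite{Br}. One small sharpening on the Miura side: since $\xi$ kills every $f_{a,b}$ with $\col(b)>\col(a)$ and the row determinant is a sum over permutations $\sigma$ with $\sigma(p)\le p+2$, the only permutation all of whose factors survive $\xi$ is the identity (any $\sigma$ using a subdiagonal scalar must also use a strictly superdiagonal entry $e_{p,q}$, $p<q$, which dies), so what survives is precisely the product of the diagonal entries $u_q$, which then factors by \eqref{sijmult} into one matrix per column as you describe. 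The genuine structural difference is in how you get generation: you deduce it formally from Theorem~\ref{main1} plus injectivity of $\mu$, which is valid inside this paper where Theorem~\ref{main1} is a black box, but would be circular if one unwound the citations, since in \cite{Br} the hard inclusion $\mu(U(\mf{g},e))\subseteq\kappa_l(Y)$ is itself established by first proving this theorem. The self-contained route is instead to compute the top terms of the $s_{i,j}(\omega_r)$ with respect to the good filtration, check that they give the basis elements $f_{i,j;r-1}$ of $\mf{g}^e$, and invoke \eqref{assgr}: this yields generation directly and is what makes Theorem~\ref{main1} a consequence rather than an input. As an outline your proposal is sound, but the two computations are only sketched, and (as you acknowledge) the sign and twist bookkeeping in the matching step is where all the actual work lies.
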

It will be useful to note this theorem implies that
for all $i,j \in \mathcal{I}_n$
\begin{equation} \label{seriesgens}
s_{i,j}(u^{-l} \omega(u)) = \kappa_l(S_{i,j}(u)).
\end{equation} 

\section{Representation theory of Yangians and twisted Yangians}

To prove Theorem \ref{t_class} we need to review 
the representation theory of Yangians and twisted Yangians from \cite{M}.

\subsection{Representation theory of Yangians}

We say a $Y_n$-module $V$ is a {\em highest weight module} if
it is generated by a vector $v$ such that 
$T_{i,j}(u)v = 0$ for all $i < j$, 
and if for all $i$ we have that $T_{i,i}(u)v = \lambda_i(u)v$ 
for some power series $\lambda_i \in 1 + u^{-1} \C[[u^{-1}]]$,
in which case we say that $V$ is of highest weight
\begin{equation} \label{barlambda}
\bar \lambda(u) = (\lambda_{1-n}(u), \lambda_{3-n}(u), \dots, \lambda_{n-1}(u)).
\end{equation}
For the rest of this paper
we consider $\bar \lambda(u) \in (1 + u^{-1}\C[[u^{-1}]])^n$ to be indexed
by the set $\mathcal{I}_n$ as in \eqref{barlambda}.

The following theorem is contained in \cite[$\S$2]{M}.
\begin{Theorem}
For each weight
$
  \bar \lambda(u) \in
  (1 + u^{-1}\C[[u^{-1}]])^n
$
there is a unique (up to isomorphism) irreducible 
highest weight $Y_n$-module 
$L(\bar \lambda(u))$
of highest weight $\bar \lambda(u)$.
\end{Theorem}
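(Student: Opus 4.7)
The plan is to construct $L(\bar\lambda(u))$ as the unique irreducible quotient of a Verma-type universal highest weight module, and then to deduce uniqueness by observing that every irreducible highest weight module of weight $\bar\lambda(u)$ must arise as such a quotient. This mirrors the standard argument for highest weight modules over a complex semisimple Lie algebra.

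First, I would exploit the triangular decomposition of $Y_n$. Let $Y_n^+$, $Y_n^0$, and $Y_n^-$ be the subalgebras generated respectively by $\{T_{i,j}^{(r)} : i<j, r>0\}$, $\{T_{i,i}^{(r)} : i\in\mathcal{I}_n, r>0\}$, and $\{T_{i,j}^{(r)} : i>j, r>0\}$. The Yangian PBW theorem (see \cite{MNO}) yields a vector space isomorphism $Y_n^- \otimes Y_n^0 \otimes Y_n^+ \stackrel{\sim}{\to} Y_n$ via multiplication. I declare a one-dimensional $Y_n^{\geq 0} := Y_n^0 Y_n^+$-module $\C v^+$ by setting $T_{i,j}(u) v^+ = 0$ for $i<j$ and $T_{i,i}(u) v^+ = \lambda_i(u)v^+$; compatibility with the defining relations has to be checked, but it is immediate because all relations among the $T_{i,i}(u)$ and $T_{i,j}(u)$ with $i<j$ reduce to commutations of diagonal series and to annihilations of $v^+$ by the upper part. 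I then form the induced module $M(\bar\lambda(u)) = Y_n \otimes_{Y_n^{\geq 0}} \C v^+$.

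Second, I would establish that $M(\bar\lambda(u))$ has a unique maximal proper submodule. By the PBW decomposition, $M(\bar\lambda(u))$ is free of rank one over $Y_n^-$ as a left $Y_n^-$-module, generated by $1 \otimes v^+$; in particular the subspace $\C v^+$ is a canonically defined one-dimensional ``top'' weight space. Any submodule containing $v^+$ equals all of $M(\bar\lambda(u))$, so proper submodules meet $\C v^+$ trivially; consequently the sum $N(\bar\lambda(u))$ of all proper submodules also has zero intersection with $\C v^+$, making it the unique maximal proper submodule. I set $L(\bar\lambda(u)) := M(\bar\lambda(u))/N(\bar\lambda(u))$, which is irreducible and visibly of highest weight $\bar\lambda(u)$.

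Third, for uniqueness suppose $V$ is any irreducible highest weight $Y_n$-module of highest weight $\bar\lambda(u)$, generated by some $v \in V$. By the universal property of $M(\bar\lambda(u))$ there is a surjective $Y_n$-homomorphism $\pi : M(\bar\lambda(u)) \twoheadrightarrow V$ with $\pi(v^+) = v$. Its kernel is a proper submodule, hence $\ker \pi \subseteq N(\bar\lambda(u))$. Irreducibility of $V$ then forces $\ker \pi = N(\bar\lambda(u))$, giving $V \cong L(\bar\lambda(u))$.

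The principal obstacle is the triangular decomposition and PBW theorem for $Y_n$ underpinning both the nonvanishing of $M(\bar\lambda(u))$ and the one-dimensionality of its top weight space; these are standard but rest on careful analysis of the relations among the generating series $T_{i,j}(u)$ as recorded in \cite{MNO}. Once these structural facts are in hand, the construction and uniqueness arguments are formal and parallel the classical Verma module construction.
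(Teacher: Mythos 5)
Your overall strategy---build a Verma-type module, extract its unique irreducible quotient, and show every irreducible highest weight module of weight $\bar\lambda(u)$ is that quotient---is the standard one and is exactly the route of the source the paper cites here (the paper offers no proof of its own, deferring to \cite[$\S$2]{M}). But two of your supporting claims do not hold as stated. The ``triangular decomposition'' $Y_n^-\otimes Y_n^0\otimes Y_n^+\stackrel{\sim}{\to}Y_n$ is false for the subalgebras \emph{generated} by the strictly lower, diagonal, and strictly upper modes: for $n\ge 3$ and $i<j<k$ the defining relations give
\[
[T^{(2)}_{j,i},\,T^{(2)}_{k,j}] \;=\; -\,T^{(3)}_{k,i}\;+\;T^{(1)}_{k,i}T^{(2)}_{j,j}\;-\;T^{(2)}_{k,i}T^{(1)}_{j,j},
\]
so $Y_n^-$ already contains $T^{(1)}_{k,i}T^{(2)}_{j,j}-T^{(2)}_{k,i}T^{(1)}_{j,j}$, an element whose PBW expansion involves diagonal generators; consequently the multiplication map above has a nonzero kernel. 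What is true, and what you actually need, is the PBW theorem for $Y_n$ with the generators ordered lower, then diagonal, then upper. Relatedly, the diagonal series $T_{i,i}(u)$ do \emph{not} commute with one another, so the existence of the one-dimensional module $\C v^+$ over the ``Borel'' part is not ``immediate''; the clean fix is to define $M(\bar\lambda(u))$ directly as the quotient of $Y_n$ by the left ideal generated by the $T^{(r)}_{i,j}$ with $i<j$ and the $T^{(r)}_{i,i}-\lambda^{(r)}_i$, and to use PBW to see that this quotient is nonzero and spanned by lower monomials applied to $v^+$.

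The more serious gap is the step ``proper submodules meet $\C v^+$ trivially; consequently the sum of all proper submodules also has zero intersection with $\C v^+$.'' That is a non sequitur: a sum of subspaces each meeting a given line trivially can perfectly well contain that line (two distinct lines in the plane, say). The missing ingredient is the weight space decomposition of $M(\bar\lambda(u))$ with respect to the Cartan subalgebra $\mathfrak{h}=\operatorname{span}\{T^{(1)}_{i,i}\}$ of $\mathfrak{gl}_n\subset Y_n$: each $T^{(r)}_{i,j}$ has $\operatorname{ad}\mathfrak{h}$-weight $\epsilon_i-\epsilon_j$, so $M(\bar\lambda(u))=\bigoplus_\mu M_\mu$ with $M_\lambda=\C v^+$, every submodule is $\mathfrak{h}$-graded, and a proper submodule therefore lies in the \emph{fixed} proper subspace $\bigoplus_{\mu\ne\lambda}M_\mu$. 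Only with that observation does the sum of all proper submodules remain proper, yielding the unique maximal submodule and hence both the existence and the uniqueness of $L(\bar\lambda(u))$.
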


\begin{Theorem}[{ \cite[Theorem 2.3]{M}}]
Every irreducible finite dimensional $Y_n$-module is a highest weight module.
\end{Theorem}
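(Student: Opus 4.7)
The plan is to imitate the classical finite dimensional highest weight argument from Lie theory. Let $V$ be an irreducible finite dimensional $Y_n$-module. The first step exploits the fact that the degree-one coefficients $T_{i,j}^{(1)}$ satisfy the commutation relations of $\mf{gl}_n$, so in particular the operators $\{T_{i,i}^{(1)} \:|\: i \in \mathcal{I}_n\}$ pairwise commute and give a weight space decomposition $V = \bigoplus_\mu V_\mu$ indexed by a finite set of weights $\mu \in \C^n$. Reading off the commutator $[T_{k,k}^{(1)}, T_{i,j}^{(r)}] = (\delta_{k,i} - \delta_{k,j}) T_{i,j}^{(r)}$ from the defining $RTT$ relation shows that each generator $T_{i,j}^{(r)}$ carries $\mf{gl}_n$-weight $\epsilon_i - \epsilon_j$, so $T_{i,j}^{(r)} V_\mu \subseteq V_{\mu + \epsilon_i - \epsilon_j}$.

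Next I would pick a weight $\mu$ of $V$ that is maximal in the standard dominance order. For $i < j$ the space $V_{\mu + \epsilon_i - \epsilon_j}$ lies strictly above $\mu$, hence is zero, so $V_\mu$ is contained in the subspace of singular vectors
\[
V^+ = \{v \in V \:|\: T_{i,j}(u) v = 0 \text{ for all } i < j \}.
\]
In particular $V^+ \ne 0$.

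The crux is to upgrade the weight decomposition on $V^+$ to a decomposition into joint eigenspaces for the full diagonal series $T_{i,i}(u)$: I would show that these series preserve $V^+$ and pairwise commute on $V^+$. Commutativity comes from writing the quadratic relation
\[
(u-v)[T_{i,i}(u), T_{k,k}(v)] = T_{k,i}(u) T_{i,k}(v) - T_{k,i}(v) T_{i,k}(u),
\]
and, depending on whether $i < k$ or $i > k$ (using the symmetric form of the same relation obtained by swapping $(i,u) \leftrightarrow (k,v)$), observing that in each monomial on the right one of the two factors is a raising series $T_{a,b}$ with $a < b$, which kills $V^+$. Preservation of $V^+$ is a more delicate application of the analogous relation for $[T_{i,i}(u), T_{k,l}(w)]$ with $k < l$, broken into cases according to whether $i < l$, $i = l$, or $i > l$ and combined with an induction on the order of the coefficient in $u$. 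This step is the main obstacle and is where essentially all of the Yangian-specific input enters; the argument is carried out in detail in \cite{M}.

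Granting this, the coefficients of the series $T_{i,i}(u)$ form a commuting family of linear operators on the nonzero finite dimensional space $V^+$, so they admit a common eigenvector $v$. By the normalization $T_{i,i}^{(0)} = \delta_{i,i}$, the corresponding eigenvalues $\lambda_i(u)$ lie in $1 + u^{-1}\C[[u^{-1}]]$, and $v$ is then a highest weight vector in the sense of the definition preceding the statement. Irreducibility of $V$ forces $V = Y_n v$, so $V$ is a highest weight module of highest weight $\bar\lambda(u) = (\lambda_{1-n}(u), \ldots, \lambda_{n-1}(u))$, as required.
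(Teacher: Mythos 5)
The paper does not prove this statement: it is quoted verbatim from Molev, with the citation \cite[Theorem 2.3]{M} serving as the entire justification, so there is no in-paper argument to compare against. Your sketch is the standard proof from the cited source and its overall structure is sound: the identities $[T_{k,k}^{(1)}, T_{i,j}^{(r)}] = (\delta_{k,i}-\delta_{k,j})T_{i,j}^{(r)}$ and the vanishing of the right-hand side of the quadratic relation on singular vectors are both correct consequences of the $RTT$ relation. The one place you stop short is the stability of $V^+$ under the diagonal series, which you call the main obstacle and defer wholesale to \cite{M}; as a self-contained proof that is a gap, but it is also an avoidable one. Since the coefficients $T_{i,i}^{(r)}$ have $\mathfrak{gl}_n$-weight zero, they automatically preserve each weight space $V_\mu$; taking $\mu$ maximal you already showed $V_\mu \subseteq V^+$, and your commutator argument applies verbatim to vectors of $V_\mu$ (the rightmost factor in each monomial is a raising series $T_{a,b}(u)$ with $a<b$, which kills $V_\mu$). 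So the $T_{i,i}^{(r)}$ form a commuting family of operators \emph{on $V_\mu$ itself}, a common eigenvector there is a highest weight vector, and the delicate question of whether all of $V^+$ is preserved never needs to be answered. With that rerouting your argument closes completely without invoking \cite{M} for the hard step.
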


To specify which irreducible highest weight modules
are finite dimensional, following Molev, we introduce the following notation.
Given two power series $\lambda_1(u), \lambda_2(u) \in 1+u^{-1} \C[[u^{-1}]]$ we
write $\lambda_1(u) \rightarrow \lambda_2(u)$  
if there exists a monic polynomial
$P(u) \in \C[u]$ such that 
\[
  \frac{\lambda_1(u)}{\lambda_2(u)} = \frac{P(u+1)}{P(u)}.
\]
In fact $P(u)$ must then be unique because if $Q(u)$ is another monic polynomial
satisfying
$
  \frac{\lambda_1(u)}{\lambda_2(u)} = \frac{Q(u+1)}{Q(u)}
$
then
$
\frac{Q(u)}{P(u)} = \frac{Q(u+1)}{P(u+1)},
$
thus $\frac{Q(u)}{P(u)}$ is periodic, which implies $P(u)=Q(u)$.

Here is the main classification theorem for finite dimensional
irreducible representations of $Y_n$.
\begin{Theorem}
[Drinfeld, {\cite{Dr}}]
  \label{t_drin_y_n_class}
The $Y_n$-module 
$L(\bar \lambda(u))$ is finite
dimensional if and only if
$\lambda_{1-n}(u) \rightarrow \lambda_{3-n}(u) \rightarrow \dots \rightarrow \lambda_{n-1}(u)$.
\end{Theorem}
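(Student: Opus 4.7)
The plan is to follow Drinfeld's original argument, which reduces the classification to the rank-two case and then to known facts about $U(\mathfrak{sl}_2)$-modules and Drinfeld polynomials.

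First I would establish a reduction to $n=2$. For each $i \in \mathcal{I}_n$ with $i < n-1$, the elements $T_{j,k}^{(r)}$ with $j,k \in \{i, i+2\}$ and $r \geq 1$ generate a subalgebra of $Y_n$ isomorphic to $Y_2$; this follows directly from the defining RTT relation since the indices $i, i+2$ are closed under the relations involving only those two rows and columns. Let $v$ be the highest weight vector of $L(\bar\lambda(u))$, and let $V_i$ be the $Y_2$-submodule generated by $v$. Then $V_i$ is a highest weight $Y_2$-module with weight $(\lambda_i(u), \lambda_{i+2}(u))$, and it admits an irreducible quotient of the same highest weight. If $L(\bar\lambda(u))$ is finite dimensional, then so is this irreducible quotient. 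Conversely, for sufficiency, once each adjacent pair satisfies an arrow relation, I would construct the full module by tensoring evaluation-type modules and extracting the irreducible highest-weight subquotient generated by the tensor of highest weight vectors, using the coproduct formula \eqref{com0}.

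The core of the argument is therefore the $n=2$ case: $L(\lambda_{-1}(u), \lambda_1(u))$ is finite dimensional iff $\lambda_{-1}(u) \to \lambda_1(u)$. For \emph{necessity}, I would study the action of the lowering operators $T_{1,-1}^{(r)}$. The key identity obtained from the defining relations shows that the generating series $T_{-1,1}(u) v$, $T_{1,-1}(u) v$ and the Cartan series $T_{\pm 1, \pm 1}(u) v$ together satisfy a system analogous to the $\mathfrak{sl}_2$ triple; finite-dimensionality forces a certain ideal of $\C[u^{-1}]$ acting on the span of $(T_{1,-1}(u))^k v$ to stabilize a finite-dimensional subspace, and extracting the minimal monic polynomial $P(u)$ whose shifts annihilate this subspace yields exactly the identity $\lambda_{-1}(u)/\lambda_1(u) = P(u+1)/P(u)$. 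For \emph{sufficiency}, write $P(u) = \prod_{k=1}^d (u+a_k)$ and consider the $Y_2$-evaluation pullbacks of the two-dimensional $\mathfrak{gl}_2$-modules shifted by the $a_k$; their tensor product has a highest weight vector of weight proportional to $(\lambda_{-1}(u), \lambda_1(u))$ (after multiplying by an appropriate scalar series in $1+u^{-1}\C[[u^{-1}]]$, which can be adjusted using the central twist $T_{i,j}(u) \mapsto f(u) T_{i,j}(u)$), and the irreducible quotient of the cyclic submodule it generates is the desired finite-dimensional $L(\lambda_{-1}(u), \lambda_1(u))$.

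Finally, for general $n$ in the sufficiency direction, given the chain $\lambda_{1-n}(u) \to \lambda_{3-n}(u) \to \cdots \to \lambda_{n-1}(u)$ with associated Drinfeld polynomials $P_i(u)$, I would factor each $P_i$ into linear factors and realize $L(\bar\lambda(u))$ as the irreducible highest-weight subquotient of a tensor product of evaluation modules built from the $\mathfrak{gl}_n$-evaluation homomorphism $Y_n \to U(\mathfrak{g}_n)$ applied to skew Young modules. The main obstacle is this last construction: one must show that the resulting tensor product genuinely has an irreducible cyclic component with exactly the prescribed highest weight, which requires a careful analysis of the coproduct action on tensors of highest-weight vectors and the identification of the annihilator, and it is here that one typically invokes the classification of finite-dimensional $\mathfrak{gl}_n$-modules together with Tarasov's construction of fundamental $Y_n$-modules.
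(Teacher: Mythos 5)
The paper does not actually prove this statement: it is imported verbatim as Drinfeld's theorem from \cite{Dr} and used as a black box, so there is no internal proof to compare your attempt against. Your sketch reconstructs the standard argument (Drinfeld's, as presented for instance in Molev's work and in Chari--Pressley), and its architecture is right: restrict to the rank-two subalgebras generated by the $T^{(r)}_{j,k}$ with $j,k\in\{i,i+2\}$ to deduce necessity of each arrow from the $n=2$ case, and prove sufficiency by realizing $L(\bar\lambda(u))$ as the irreducible quotient of the cyclic span of the tensor product of highest weight vectors in a tensor product of shifted evaluation modules, using the coproduct \eqref{com0} and the central twist $T_{i,j}(u)\mapsto f(u)T_{i,j}(u)$ to absorb a common scalar series.

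The step you have not actually supplied is the heart of the theorem: necessity in the $n=2$ case. The phrase about ``a certain ideal of $\C[u^{-1}]$ \dots stabilizing a finite-dimensional subspace'' does not correspond to a computation. The standard proof takes $N$ maximal with $(T^{(1)}_{1,-1})^N v\neq 0$ and computes from the defining relations the eigenvalue series of $T_{-1,-1}(u)$ and $T_{1,1}(u)$ on each vector $(T^{(1)}_{1,-1})^k v$; comparing the eigenvalues at $k=0$ and $k=N$ produces the monic polynomial $P(u)$ of degree $N$ with $\lambda_{-1}(u)/\lambda_1(u)=P(u+1)/P(u)$. Without that computation the necessity direction is unproved. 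Two smaller points: the assertion that the $T^{(r)}_{j,k}$ with $j,k\in\{i,i+2\}$ generate a subalgebra \emph{isomorphic} to $Y_2$ does not follow ``directly from the RTT relation'' --- closure of the relations only yields a surjection from $Y_2$ onto that subalgebra, and injectivity needs the PBW theorem for $Y_n$ (though the surjection is all your argument actually uses); and in the sufficiency step for general $n$ it is cleaner to reduce to the case where each Drinfeld polynomial is linear, so that one tensors fundamental modules, rather than to invoke skew Young modules and Tarasov's construction wholesale.
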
 

The following lemmas give a more combinatorial description of this notation.
Recall that $\ge$ denotes the partial order on $\C$ where 
$a \ge b$ if $a-b \in \Z_{\ge 0}$.

\begin{Lemma} \label{larrow}
If $\lambda_1(u), \lambda_2(u) \in 1 + u^{-1} \C[[u^{-1}]]$ then
$\lambda_1(u) \rightarrow \lambda_2(u)$ if and only if there exists  
$\gamma(u) \in 1+ u^{-1}\C[[u^{-1}]]$ such that 
\begin{align*}
  \gamma(u) \lambda_1(u) &= (1+a_1 u^{-1}) \dots (1+a_k u^{-1}), \\
  \gamma(u) \lambda_2(u) &= (1+b_1 u^{-1}) \dots (1+b_k u^{-1})
\end{align*}
where
$a_i \ge b_i$ for $i = 1, \dots, k$.
\end{Lemma}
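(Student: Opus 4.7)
The plan is to prove both directions by explicit construction, converting between the ``monic polynomial'' formulation of $\to$ and the ``dominance on linear factors'' formulation.

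For the $(\Leftarrow)$ direction, suppose such a $\gamma(u)$ exists with $a_i \geq b_i$. Then $\frac{\lambda_1(u)}{\lambda_2(u)} = \prod_{i=1}^k \frac{u + a_i}{u + b_i}$ after clearing denominators. The integer $n_i := a_i - b_i$ lies in $\Z_{\geq 0}$, and the key observation is the telescoping identity
\[
\frac{u + a_i}{u + b_i} = \frac{Q_i(u+1)}{Q_i(u)}, \qquad Q_i(u) := \prod_{j=0}^{n_i - 1} (u + b_i + j),
\]
(with $Q_i = 1$ if $n_i = 0$). Taking $P(u) := \prod_{i=1}^k Q_i(u)$, which is monic, immediately gives $\frac{\lambda_1(u)}{\lambda_2(u)} = \frac{P(u+1)}{P(u)}$, i.e. $\lambda_1(u) \to \lambda_2(u)$.

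For the $(\Rightarrow)$ direction, suppose $\frac{\lambda_1(u)}{\lambda_2(u)} = \frac{P(u+1)}{P(u)}$ for some monic polynomial $P(u)$. Factor $P(u) = \prod_{i=1}^{k}(u + c_i)$ over $\C$. Then
\[
\frac{P(u+1)}{P(u)} = \prod_{i=1}^{k} \frac{u + 1 + c_i}{u + c_i} = \prod_{i=1}^k \frac{1 + (c_i + 1)u^{-1}}{1 + c_i u^{-1}}.
\]
Setting $a_i = c_i + 1$ and $b_i = c_i$ yields $a_i - b_i = 1 \in \Z_{\geq 0}$, so $a_i \geq b_i$. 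Defining $\gamma(u) := \prod_{i=1}^k (1 + b_i u^{-1}) / \lambda_2(u)$ (which lies in $1 + u^{-1}\C[[u^{-1}]]$ because this set is closed under multiplication and inversion) gives the desired factorizations.

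There is no serious obstacle: the result is essentially a bookkeeping translation, and the only point that requires a moment's thought is arranging matching numbers of linear factors in the $(\Leftarrow)$ direction (handled by allowing some $n_i = 0$, i.e.\ some $a_i = b_i$, so trivial factors can be inserted to pad either side as needed). I would place this lemma as a preliminary combinatorial reformulation and then apply it in the sequel to re-express Theorem~\ref{t_drin_y_n_class} in terms of column-strict tableaux.
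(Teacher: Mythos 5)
Your proof is correct and follows essentially the same route as the paper's: in one direction you set $\gamma(u)=\prod_i(1+c_iu^{-1})/\lambda_2(u)=P(u)u^{-k}/\lambda_2(u)$ so that the two factorizations are $P(u)u^{-k}$ and $P(u+1)u^{-k}$ with $a_i=b_i+1$, and in the other you build $P(u)$ as a product of telescoping factors $Q_i(u)=(u+b_i)(u+b_i+1)\cdots(u+a_i-1)$, exactly as in the paper. No gaps.
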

\begin{proof}
First assume that $\lambda_1(u) \rightarrow \lambda_2(u)$, so there exists
a monic polynomial $P(u)$ such that
\[ 
  \frac{\lambda_1(u)}{\lambda_2(u)} = \frac{P(u+1)}{P(u)}.
\]
Let $k$ be the degree of $P(u)$,
and let $\gamma(u) = \frac{P(u) u^{-k}}{\lambda_2(u)}$ .
So
$\gamma(u) \lambda_1(u) = P(u+1)u^{-k}$ and
$\gamma(u) \lambda_2(u) = P(u) u^{-k}$,
thus $\gamma(u)$
satisfies the
conclusions
of the lemma since 
we can now write 
$\gamma(u) \lambda_2(u) u^k = P(u) =
(u+b_1 ) \dots (u+b_k )
$
and
$\gamma(u) \lambda_1(u) u^k= P(u+1)
=
(u+b_1 +1) \dots (u+b_k +1)$.

Now assume there exists $\gamma(u) \in 1 + u^{-1}\C[[u^{-1}]]$ such that
$\gamma(u) \lambda_1(u) = (1+a_1 u^{-1}) \dots (1+a_k u^{-1})$ and
$\gamma(u) \lambda_2(u) = (1+b_1 u^{-1})\dots (1+b_k u^{-1})$
where $a_i \ge b_i$ for  $i=1, \dots, k$.
For  $i=1, \dots, k$ let
$P_i(u) = (u+a_i-1)(u+a_i-2) \dots (u+b_i+1)$.
Now 
\begin{align*}
\frac{\lambda_1(u)}{ \lambda_2(u)} = 
\frac{(u+a_1)P_1(u) \dots (u+a_k)P_k(u)}
{P_1(u)(u+b_1) \dots P_k(u) (u+b_k)},
\end{align*}
so $P(u) = P_1(u)(u+b_1) \dots P_k(u) (u+b_k)$
is the unique polynomial 
satisfying 
\[
 \frac{\lambda_1(u)}{\lambda_2(u)} = \frac{P(u+1)}{P(u)}.
\]
\end{proof}

\begin{Lemma} \label{lpoly}
Let $\lambda_1(u), \lambda_2(u) \in 1+u^{-1} \C[[u^{-1}]]$, and suppose that
$\lambda_1(u) \rightarrow \lambda_2(u)$.
If $\gamma(u) \in 1+u^{-1}\C[[u^{-1}]]$ satisfies
$\gamma(u) \lambda_1(u), \gamma(u) \lambda_2(u) \in \C[u^{-1}]$ then we
can write
$\gamma(u) \lambda_1(u) = (1+a_1 u^{-1}) \dots (1+a_k u^{-1})$ and
$\gamma(u) \lambda_2(u) = (1+b_1 u^{-1}) \dots (1+b_k u^{-1})$ where
$a_i \ge b_i$ for  $i=1, \dots, k$.
\end{Lemma}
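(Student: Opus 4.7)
The plan is to translate the statement into an assertion about multisets of complex numbers and then apply a standard majorization argument within each coset of $\C/\Z$.

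First I would set up. Since $\gamma\lambda_i \in 1 + u^{-1}\C[[u^{-1}]]$ and lies in $\C[u^{-1}]$, we may write $\gamma(u)\lambda_i(u) = Q_i(u)\, u^{-k_i}$ where $Q_i(u)\in\C[u]$ is monic of degree $k_i$. Setting $k := \max(k_1,k_2)$ and replacing $Q_i$ by $u^{k-k_i}Q_i$ (which amounts to adjoining factors $(1+0\cdot u^{-1})=1$ to the product), we may assume $Q_1,Q_2$ are both monic of degree $k$, with roots $-a_1,\dots,-a_k$ and $-b_1,\dots,-b_k$ respectively. The goal is then to rearrange so that $a_i\ge b_i$, which is equivalent to finding a bijection between the root multisets sending each root of $Q_1$ to a root of $Q_2$ that dominates it in the partial order $\ge$ on $\C$.

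Next I would exploit the hypothesis. Since $\lambda_1(u)\to\lambda_2(u)$, there is a monic $P(u)\in\C[u]$ with $\lambda_1(u)/\lambda_2(u) = P(u+1)/P(u)$, and this quotient also equals $Q_1(u)/Q_2(u)$ because of the shared factor $\gamma(u)$ and the compensating powers of $u$. Cross-multiplying yields the polynomial identity $Q_1(u)P(u) = Q_2(u)P(u+1)$. Writing $R_1,R_2,R$ for the root multisets of $Q_1,Q_2,P$ and $R-1$ for the shifted multiset (the roots of $P(u+1)$), this identity is equivalent to the multiset equality $R_1\sqcup R = R_2\sqcup(R-1)$ in $\C$. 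Since the shift $\alpha\mapsto\alpha-1$ preserves cosets of $\C/\Z$, we may fix a coset $C=c+\Z$ and work entirely inside it.

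Finally I would extract the majorization inequality. Letting $\mu_i(n)$ and $\nu(n)$ denote the multiplicities of $c+n$ in $R_i\cap C$ and $R\cap C$, the multiset identity gives $\mu_1(n)-\mu_2(n)=\nu(n+1)-\nu(n)$; summing over $n\le N$ (a finite sum since $\nu$ has bounded support) yields $\sum_{n\le N}(\mu_1(n)-\mu_2(n))=\nu(N+1)\ge 0$, and letting $N\to\infty$ shows $|R_1\cap C|=|R_2\cap C|$. Enumerate the elements of $R_1\cap C$ and $R_2\cap C$ in increasing order along $c+\Z$ as $r_1\le\cdots\le r_m$ and $s_1\le\cdots\le s_m$; the partial-sum inequality forces $r_i\le s_i$ for every $i$, so the pairing $r_i\leftrightarrow s_i$ gives a bijection $R_1\cap C\to R_2\cap C$ with image dominating preimage. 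Taking the union of these bijections over all cosets produces a bijection $R_1\to R_2$ of the same kind, and re-indexing so that the paired roots are $-a_i$ and $-b_i$ gives $a_i\ge b_i$ for all $i$.

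The proof is essentially routine once one sees how to reduce to coset-by-coset majorization; the only minor subtlety is the bookkeeping with the padding in the first step, needed to ensure both factorizations have the same length $k$.
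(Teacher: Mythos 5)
Your proof is correct, but it takes a genuinely different route from the paper's. The paper deduces the lemma from Lemma~\ref{larrow}: that lemma manufactures one particular pair of factorizations $\gamma'\lambda_1=\prod(1+c_iu^{-1})$, $\gamma'\lambda_2=\prod(1+d_iu^{-1})$ with $c_i\ge d_i$ directly from the polynomial $P$, and the paper then argues that after cancelling common linear factors on both sides, the factorization coming from the given $\gamma$ must coincide (as multisets) with the one coming from $\gamma'$, so the domination transfers. You instead work straight from the identity $Q_1(u)P(u)=Q_2(u)P(u+1)$, read it as a multiset equation on roots, restrict to a single coset of $\C/\Z$, and extract the domination by a telescoping partial-sum (majorization) argument. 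Your version is self-contained, makes the combinatorial content of the condition $\lambda_1\rightarrow\lambda_2$ completely explicit, and handles cleanly two points the paper's proof glosses over: that the two products may a priori have different lengths (your padding by factors $1+0\cdot u^{-1}$), and that cancelling common factors preserves the domination pairing (which the paper uses implicitly and which otherwise needs a small transitivity argument, as in Lemma~\ref{l_min}). The paper's route is shorter on the page because it reuses Lemma~\ref{larrow}, but yours is the more robust argument.
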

\begin{proof}
We can write
$\gamma(u) \lambda_1(u) = (1+a_1 u^{-1}) \dots (1+a_k u^{-1})$ and
$\gamma(u) \lambda_2(u) = (1+b_1 u^{-1}) \dots (1+b_k u^{-1})$,
and by replacing $\gamma(u)$ we may assume that the sets
 $\{a_1, \dots, a_k\}$ and $\{b_1, \dots, b_k\}$ 
are disjoint.
By Lemma \ref{larrow} there exists $\gamma'(u) \in 1+ u^{-1} \C[[u^{-1}]]$
such that
$\gamma'(u) \lambda_1(u) = (1+c_1 u^{-1}) \dots (1+c_m u^{-1})$ and
$\gamma'(u) \lambda_2(u) = (1+d_1 u^{-1}) \dots (1+d_m u^{-1})$
where $c_i \ge d_i$ for $i=1, \dots, m$,
and by replacing $\gamma'(u)$ we may assume that the sets
 $\{c_1, \dots, c_m\}$ and $\{d_1, \dots, d_m\}$ 
are disjoint.
So we have that
\[
\frac{(1+a_1 u^{-1}) \dots (1+a_ku^{-1})} 
{(1+b_1 u^{-1} ) \dots (1+b_k u^{-1})}
=
\frac{(1+c_1 u^{-1}) \dots (1+c_m u^{-1})} 
{(1+d_1u^{-1}) \dots (1+d_mu^{-1})}.
\]
So $k=m$, and as unordered sets we have
$(a_1, \dots, a_k) = (c_1, \dots, c_k)$,
and
$(b_1, \dots, b_k) = (d_1, \dots, d_k)$.
Thus the lemma follows by re-indexing
$(a_1, \dots, a_k) $ and $(b_1, \dots, b_k) $.
\end{proof}

\begin{Lemma} \label{lcompat}
Let $\lambda_1(u), \dots \lambda_m(u) \in 1 + u^{-1} \C[[u^{-1}]]$.
If $\lambda_1(u) \rightarrow \lambda_2(u) \rightarrow \dots \rightarrow
\lambda_m(u)$
then there exists  $\gamma(u) \in 1 + u^{-1}\C[[u^{-1}]]$
such that $\gamma(u) \lambda_i(u) \in \C[u^{-1}]$ for $i = 1, \dots, m$.
\end{Lemma}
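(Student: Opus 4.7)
The plan is to construct $\gamma(u)$ explicitly from the monic polynomials that witness each step of the chain. For each $i = 1, \dots, m-1$, the relation $\lambda_i(u) \to \lambda_{i+1}(u)$ gives a (unique) monic polynomial $P_i(u) \in \C[u]$ with
\[
  \frac{\lambda_i(u)}{\lambda_{i+1}(u)} = \frac{P_i(u+1)}{P_i(u)}.
\]
Set $R(u) = \prod_{i=1}^{m-1} P_i(u)$ and $K = \deg R$, and define
\[
  \gamma(u) = \frac{R(u)\, u^{-K}}{\lambda_m(u)}.
\]

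First I would check that $\gamma(u) \in 1 + u^{-1}\C[[u^{-1}]]$: the factor $R(u) u^{-K}$ lies in $1 + u^{-1}\C[u^{-1}]$ because $R$ is monic of degree $K$, and $\lambda_m(u)^{-1}$ lies in $1 + u^{-1}\C[[u^{-1}]]$ because this is the multiplicative group of principal units in $\C[[u^{-1}]]$, which is closed under inversion. Hence $\gamma \in 1 + u^{-1}\C[[u^{-1}]]$ as a product of two elements in this group.

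Next I would verify that $\gamma(u) \lambda_i(u) \in \C[u^{-1}]$ for every $i$. Telescoping the defining relations gives
\[
  \frac{\lambda_i(u)}{\lambda_m(u)} = \prod_{j=i}^{m-1} \frac{P_j(u+1)}{P_j(u)},
\]
so
\[
  \gamma(u)\lambda_i(u) = R(u)\, u^{-K} \prod_{j=i}^{m-1} \frac{P_j(u+1)}{P_j(u)} = u^{-K}\, \prod_{j=1}^{i-1} P_j(u) \prod_{j=i}^{m-1} P_j(u+1),
\]
where the factors $\prod_{j=i}^{m-1} P_j(u)$ in the denominator cancel against the corresponding factors in $R(u)$. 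The right hand side is a polynomial in $u$ of degree at most $K$ times $u^{-K}$, hence a polynomial in $u^{-1}$.

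There is no substantive obstacle here; the argument is just a bookkeeping computation that exploits the telescoping of the ratios $\lambda_i/\lambda_{i+1}$. The only choice that requires care is using $\lambda_m$ as the ``base'' of the chain (so that $\gamma$ is built from $\lambda_m^{-1}$ times a polynomial in $u^{-1}$), which is what guarantees both that $\gamma$ lies in $1 + u^{-1}\C[[u^{-1}]]$ and that the polynomial factors combine cleanly for every intermediate $\lambda_i$.
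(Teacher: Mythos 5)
Your proposal is correct and follows essentially the same route as the paper: both define $\gamma(u) = u^{-K}\prod_{i=1}^{m-1}P_i(u)/\lambda_m(u)$ from the witnessing monic polynomials and use the telescoping of the ratios $\lambda_i/\lambda_{i+1}$ to see that each $\gamma(u)\lambda_i(u)$ is a polynomial in $u^{-1}$. Your write-up just spells out the cancellation and the membership of $\gamma$ in $1+u^{-1}\C[[u^{-1}]]$ a bit more explicitly than the paper does.
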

\begin{proof}
Assume that 
$\lambda_1(u) \rightarrow \lambda_2(u) \rightarrow \dots \rightarrow
\lambda_m(u)$, and for $i = 1, \dots, m-1$ let $P_i(u)$ be the monic
polynomial
so that 
\[
\frac{\lambda_i(u)}{\lambda_{i+1}(u)} = \frac{P_i(u+1)}{P_i(u)}.
\]
Note  for $i=1, \dots, m-1$ that
\[
  \lambda_i(u) = \frac{P_i(u+1) P_{i+1}(u+1) \dots P_{m-1}(u+1)
    \lambda_m(u)}
    {P_i(u) P_{i+1}(u) \dots P_{m-1}(u)}.
\]
So
\[
\gamma(u) = \frac{u^{-k} P_1(u) \dots P_{m-1}(u)}{\lambda_m(u)},
\] where $k= \sum_{i=1}^{m-1} \deg(P_i(u))$,
satisfies the conclusion of the lemma.
\end{proof}

\subsection{Representation theory of twisted Yangians}
Recall that 
a $Y$-module $V$ is called a {\em highest weight module} if
it generated by a vector $v$ such that $S_{i,j}(u) v = 0$ for all $i
< j$ and 
if for all $i$ we have that 
$S_{i,i}(u) v = \mu_i(u) v$ 
for some power series 
$\mu_i(u) \in 1 + u^{-1}\C[[u^{-1}]]$. 
The following theorem is contained in \cite[Chapter 3]{M}.
\begin{Theorem} \label{th_tyhw1}
For a weight 
$\bar \mu(u) = (\mu_{1}(u), \mu_{3}(u), \dots, \mu_{n-1}(u)) \in (1+u^{-1}\C[[u^{-1}]])^{n/2}$ 
if $n$ is even or
$\bar \mu(u) = (\mu_{0}(u), \mu_{2}(u), \dots, \mu_{n-1}(u))  \in (1+u^{-1}\C[[u^{-1}]])^{(n+1)/2}$ if
$n$ is odd,
there is a unique (up to isomorphism) irreducible 
highest weight $Y$-module $L(\bar \mu(u))$ of highest weight $\bar \mu(u)$.
\end{Theorem}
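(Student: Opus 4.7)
The plan is to follow the standard Verma-module construction for an algebra with a triangular decomposition. Let $J(\bar \mu(u))$ denote the left ideal of $Y$ generated by all $S_{i,j}^{(r)}$ with $i,j \in \mathcal{I}_n$, $i < j$, $r > 0$, together with all elements $S_{i,i}^{(r)} - \mu_i^{(r)}$ for $i \ge 1$ when $n$ is even (respectively $i \ge 0$ when $n$ is odd) and $r > 0$, where $\mu_i^{(r)}$ is the coefficient of $u^{-r}$ in $\mu_i(u)$. Set $M(\bar \mu(u)) = Y / J(\bar \mu(u))$, and let $v_{\bar \mu}$ be the image of $1$. By construction, any highest weight $Y$-module of weight $\bar \mu(u)$ is a quotient of $M(\bar \mu(u))$, so it suffices to show that $M(\bar \mu(u))$ possesses a unique maximal proper submodule.

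For this I would invoke the triangular decomposition of $Y$ from \cite[Chapter 3]{M}. That is, the twisted Yangian has subalgebras $Y^-, Y^0, Y^+$ generated respectively by the $S_{i,j}^{(r)}$ with $i>j$, $i = j$, and $i<j$, and the multiplication $Y^- \otimes Y^0 \otimes Y^+ \to Y$ is a linear isomorphism. Moreover, the symmetry relation \eqref{eq_y_rel2} shows that the diagonal series $S_{i,i}(u)$ with $i < 0$ (or $i \le 0$ in the odd case) are determined by those with index in the non-negative half specified in the theorem, so that fixing the action of the latter on $v_{\bar \mu}$ via $\bar \mu(u)$ already pins down the action of all of $Y^0$. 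The triangular decomposition therefore gives a $\C$-linear isomorphism $M(\bar \mu(u)) \cong Y^-$, which proves $v_{\bar \mu} \ne 0$ and that the $\bar \mu(u)$-weight space of $M(\bar \mu(u))$ is exactly $\C v_{\bar \mu}$.

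From here the argument is formal. Any proper submodule of $M(\bar \mu(u))$ must miss $v_{\bar \mu}$, so the sum $N(\bar \mu(u))$ of all proper submodules is itself proper and is the unique maximal proper submodule. Define $L(\bar \mu(u)) = M(\bar \mu(u)) / N(\bar \mu(u))$; this is irreducible of highest weight $\bar \mu(u)$. For uniqueness up to isomorphism, let $V$ be any irreducible highest weight $Y$-module with highest weight $\bar \mu(u)$ and cyclic vector $v$. The universal property of $M(\bar \mu(u))$ produces a surjection $\pi : M(\bar \mu(u)) \twoheadrightarrow V$ with $\pi(v_{\bar \mu}) = v$. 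Its kernel is a proper submodule (since $v \ne 0$), so $\ker \pi \subseteq N(\bar \mu(u))$, and the irreducibility of $V$ forces $\ker \pi = N(\bar \mu(u))$, whence $V \cong L(\bar \mu(u))$.

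The main obstacle is the triangular decomposition used in the second paragraph. This does not follow formally from the defining relations \eqref{eq_y_rel1}--\eqref{eq_y_rel2}; the standard route is to filter $Y$ by degree of the generators and identify $\gr Y$ with a polynomial algebra on the twisted current Lie algebra via the symmetrization map, from which a PBW basis for $Y$ and hence the triangular decomposition can be lifted. Since the entire analysis is already carried out in \cite[Chapter 3]{M}, one simply cites the required statements from there rather than redoing them.
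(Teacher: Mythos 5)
The paper offers no proof of this statement---it is quoted from \cite[Chapter 3]{M}---and your Verma-module construction (quotient of $Y$ by the left ideal $J(\bar\mu(u))$, nonvanishing and one-dimensionality of the top weight space via the PBW/triangular decomposition of the twisted Yangian, unique maximal proper submodule, uniqueness by the universal property) is exactly the argument carried out there, with the one genuinely non-formal input, the triangular decomposition, correctly isolated and attributed to Molev. The only caveat worth recording is that for $n$ odd the relation \eqref{eq_y_rel2} with $i=j=0$ forces $S_{0,0}(-u)=S_{0,0}(u)$ in $Y$ itself, so a highest weight module exists only when $\mu_0(u)\in 1+u^{-2}\C[[u^{-2}]]$ (for other $\mu_0(u)$ your $M(\bar\mu(u))$ is the zero module and the isomorphism with $Y^-$ fails); this restriction is implicit in the theorem as stated, and the paper itself points it out just before Theorem \ref{y_n_odd}.
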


For the rest of this paper we consider 
$\bar \mu(u) \in (1+u^{-1}\C[[u^{-1}]])^{n/2}$ if $n$ is even or
$\bar \mu(u) \in (1+u^{-1}\C[[u^{-1}]])^{(n+1)/2}$ if $n$ is odd
to be indexed by the sets $\{1, 3, \dots, n-1\}$ and 
$\{0,2, \dots, n-1\}$, respectively, as in Theorem \ref{th_tyhw1}.

The following is part of \cite[Theorem 3.3]{M}.
\begin{Theorem}  \label{th_tyhw2}
Every irreducible finite dimensional $Y$-module 
is a highest weight module.
\end{Theorem}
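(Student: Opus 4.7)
My plan is to mimic Drinfeld's classical triangular-decomposition argument for $Y_n$, adapted to the twisted Yangian via the relations \eqref{eq_y_rel1}--\eqref{eq_y_rel2}. First, I would set up a triangular decomposition $Y = Y^- \cdot Y^0 \cdot Y^+$, where $Y^+$ (respectively $Y^-$) is the subalgebra generated by the coefficients of $S_{i,j}(u)$ with $i<j$ (respectively $i>j$), and $Y^0$ is generated by the coefficients of $S_{i,i}(u)$. The symmetry \eqref{eq_y_rel2} allows one to cut the generating set down to a system of class representatives for the involution $(i,j) \leftrightarrow (-j,-i)$, after which the PBW theorem for $Y$ (see \cite[Chapter 3]{M}) delivers ordered monomials that span $Y$ in the required factored form.

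Now let $V$ be a finite-dimensional irreducible $Y$-module. Expanding the definition \eqref{siju} to leading order gives $S_{i,i}^{(1)} = T_{i,i}^{(1)} - T_{-i,-i}^{(1)}$, and because the $T_{j,j}^{(1)}$ commute pairwise in $Y_n$ (a direct check from the Yangian relations), the family $\{S_{i,i}^{(1)} : i \in \mathcal{I}_n\}$ commutes on $V$. Decompose $V = \bigoplus_\lambda V_\lambda^{\mathrm{gen}}$ into generalized common eigenspaces; only finitely many summands are nonzero. Extracting the $u^{-1}$-coefficient of $[S_{k,k}^{(1)}, S_{i,j}(u)]$ from \eqref{eq_y_rel1} shows that, for $i<j$, the operator $S_{i,j}^{(r)}$ shifts weights by a fixed nonzero vector in the positive cone spanned by the root $\alpha_{i,j}$ of the underlying classical Lie algebra. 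Pick $\lambda$ maximal in this partial order with $V_\lambda^{\mathrm{gen}} \ne 0$; then any nonzero $v_0 \in V_\lambda^{\mathrm{gen}}$ satisfies $S_{i,j}(u) v_0 = 0$ for all $i<j$.

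Set $V^+ := \{v \in V : S_{i,j}(u)v = 0 \text{ for all } i<j\}$, a nonzero finite-dimensional subspace preserved by $Y^0$. Processing the generators $S_{i,i}^{(r)}$ in turn and taking successive common generalized eigenspaces on $V^+$ should produce a simultaneous eigenvector $v$ for the entire family $\{S_{i,i}(u)\}$; this $v$ is then a highest weight vector. The triangular decomposition gives $Yv = Y^- v$, a nonzero $Y$-submodule of $V$, which equals $V$ by irreducibility. The main obstacle is ensuring that a single common eigenvector for all $S_{i,i}(u)$ genuinely exists on $V^+$: the series $S_{i,i}(u)$ and $S_{k,k}(v)$ do not commute globally in $Y$, so one must show that the off-diagonal terms on the right-hand side of \eqref{eq_y_rel1} with $i=j$, $k=l$ become locally nilpotent modulo the filtration by generalized eigenspaces when restricted to $V^+$, so that a Fitting-type argument still yields a joint eigenvector.
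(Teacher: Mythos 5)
First, note that the paper does not prove this statement at all: it is quoted verbatim as part of \cite[Theorem 3.3]{M}, so there is no in-paper argument to compare against. Your proposal is an attempt to reconstruct Molev's proof, and its overall shape (nonzero singular space via a weight argument for the commuting family $S_{i,i}^{(1)}=T_{i,i}^{(1)}-T_{-i,-i}^{(1)}$, then a joint eigenvector for the diagonal series, then irreducibility) is indeed the right strategy. Two remarks on the periphery: the triangular decomposition $Y=Y^-Y^0Y^+$ in your first paragraph is not actually needed here, since the paper's definition of highest weight module only asks for a generating vector killed by the $S_{i,j}(u)$, $i<j$, and diagonalizing the $S_{i,i}(u)$ --- irreducibility makes any nonzero vector a generator; and by \eqref{eq_y_rel2} only the series $S_{i,i}(u)$ with $i\ge 0$ need to be diagonalized, the rest coming for free.

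The genuine gap is exactly the step you flag as "the main obstacle" and then leave unresolved: producing a common eigenvector for the $S_{i,i}(u)$ on $V^+=\{v: S_{i,j}(u)v=0,\ i<j\}$. Your proposed mechanism (local nilpotence of the error terms modulo a filtration by generalized eigenspaces, plus a Fitting-type argument) is not established and is not how the difficulty is actually overcome. The correct observation is that no approximation is needed: specializing \eqref{eq_y_rel1} to $i=j$, $k=l$ with $i,k\ge 0$, every term on the right-hand side has as its \emph{rightmost} factor an operator $S_{a,b}(\cdot)$ with $a<b$ (namely $S_{i,k}$, $S_{-i,k}$, $S_{-k,i}$, $S_{-i,k}$ with $-i\le i\le k$, $-k\le k$), so the entire right-hand side annihilates $V^+$ and one gets $(u^2-v^2)[S_{i,i}(u),S_{k,k}(v)]\xi=0$, i.e.\ exact commutativity of the diagonal series on $V^+$. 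A companion computation of the same kind is needed to show that the $S_{i,i}(u)$ actually preserve $V^+$ --- an assertion you make without proof when you say $V^+$ is "preserved by $Y^0$", and which does not follow from the definitions alone. Without these two index-by-index verifications from \eqref{eq_y_rel1}, the argument does not close; with them, the finite dimensionality of $V^+$ immediately yields the desired simultaneous eigenvector and the proof is complete.
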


Following Molev, to specify which irreducible highest weight modules
are finite dimensional, we introduce the following notation.
For power series $\mu(u), \nu(u) \in 1 + u^{-1}\C[[u^{-1}]]$, we write
$\mu(u) \Rightarrow \nu(u)$ if there exists a monic polynomial
$P(u) \in \C[u]$ such that $P(u) = P(1-u)$ and
\[
  \frac{\mu(u)}{\nu(u)} = \frac{P(u+1)}{P(u)}.
\]
Note that $P(u) = P(1-u)$ is equivalent to $P(u)$ being of even degree 
and the roots of $P(u)$ being
symmetric about $\frac{1}{2}$.

Here is the classification of the finite dimensional irreducible
representations of $Y_n^-$:
\begin{Theorem} 
[{\cite[Theorem 4.8]{M}}]
\label{y_n_minus} 
  The $Y_n^-$-module $L(\bar \mu(u))$ is finite
dimensional if and only if  
\[
\mu_1(-u) \Rightarrow \mu_1(u) \rightarrow \mu_3(u) \rightarrow \dots \rightarrow \mu_{n-1}(u).
\]
\end{Theorem}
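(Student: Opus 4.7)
The plan is to prove both implications separately: necessity by restriction to rank-one and rank-two subalgebras of $Y = Y_n^-$ whose highest-weight theory is already understood, and sufficiency by constructing the required modules as irreducible quotients of tensor products of evaluation modules, using the restricted coproduct $\Delta: Y \to Y \otimes Y_n$ from \eqref{com}.

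For necessity, suppose $L(\bar\mu(u))$ is finite dimensional with highest weight vector $v$. First, inspection of \eqref{eq_y_rel1}--\eqref{eq_y_rel2} shows that the subalgebra generated by $\{S_{i,j}^{(r)} : i,j \in \{-1,1\},\ r \ge 1\}$ is a copy of $Y_2^-$; the cyclic submodule $Y_2^- v$ is then a finite dimensional highest weight $Y_2^-$-module of weight $\mu_1(u)$, and a direct calculation at rank one yields $\mu_1(-u) \Rightarrow \mu_1(u)$. Second, for each pair of consecutive positive odd indices $i < i+2$, I would construct a homomorphism from the ordinary Yangian $Y_2$ into $Y$ sending $T_{k,l}^{(r)}$ essentially to $S_{k',l'}^{(r)}$ with $k',l' \in \{i,i+2\}$ (with lower-order corrections coming from the quadratic terms in \eqref{eq_y_rel1}); finite dimensionality of the resulting cyclic $Y_2$-submodule of $v$, combined with Drinfeld's Theorem \ref{t_drin_y_n_class}, gives the chain $\mu_i(u) \to \mu_{i+2}(u)$.

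For sufficiency, given $\bar\mu(u)$ satisfying the stated conditions, Lemma \ref{lcompat} supplies a common denominator $\gamma(u)$ clearing all $\mu_i(u)$ to polynomials, and Lemmas \ref{larrow}, \ref{lpoly} let us match roots across the $\to$-chain. I would then construct a finite dimensional $Y$-module with highest weight $\bar\mu(u)$ as a tensor product $V_0 \otimes W_1 \otimes \cdots \otimes W_k$, where $V_0$ is pulled back from a finite dimensional irreducible $\mf{g}_n^-$-module along an evaluation homomorphism $Y \to U(\mf{g}_n^-)$ (carrying the $\Rightarrow$-symmetric part of $\mu_1$), and each $W_i$ is a Yangian evaluation module built from \eqref{evk} (supplying one Drinfeld-polynomial factor of the $\to$-chain); this tensor product becomes a $Y$-module via the iterated restricted coproduct, and its irreducible quotient at the tensor of highest weight vectors realizes $L(\bar\mu(u))$. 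The main obstacle is twofold: verifying that the coproduct calculation really produces the claimed weight $\bar\mu(u)$ (requiring a careful chase through \eqref{com} and the antiautomorphism $\tau$), and reconciling the rank-one $\Rightarrow$-condition on $\mu_1$ with the $\to$-chain, which I would handle by induction on $n$ using an embedding $Y_{n-2}^- \hookrightarrow Y_n^-$ that strips off the outermost indices $\pm(n-1)$.
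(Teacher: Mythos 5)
This theorem is not proved in the paper at all: it is Molev's classification, quoted verbatim as \cite[Theorem 4.8]{M} and used as a black box, so there is no internal argument to compare yours against. Measured against Molev's actual proof, your outline does track his strategy: necessity by restricting to the rank-one twisted Yangian $Y_2^-$ (giving $\mu_1(-u)\Rightarrow\mu_1(u)$) and to $\mathfrak{gl}_2$-Yangian data attached to consecutive positive indices (giving the chain of $\rightarrow$'s), and sufficiency by realizing $L(\bar\mu(u))$ inside a tensor product of an evaluation module for $U(\mathfrak{g}_n^-)$ with finite dimensional $Y_n$-modules via the restricted coproduct \eqref{com}.

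Two caveats. First, the step you describe as ``a homomorphism from the ordinary Yangian $Y_2$ into $Y$ sending $T^{(r)}_{k,l}$ essentially to $S^{(r)}_{k',l'}$ with lower-order corrections'' does not exist as stated: there is no such algebra embedding, and this is precisely where the quadratic terms in \eqref{eq_y_rel1} cannot be absorbed into corrections. What Molev proves is a module-level statement: on the subspace of $L(\bar\mu(u))$ annihilated by the appropriate raising operators, the operators $S_{i,j}(u)$ with $i,j$ running over a pair of consecutive positive indices satisfy the defining relations of $Y(\mathfrak{gl}_2)$, so that subspace becomes a highest weight $Y(\mathfrak{gl}_2)$-module to which Drinfeld's Theorem \ref{t_drin_y_n_class} applies. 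Second, the sufficiency half as you present it defers the hard part --- computing the highest weight of the irreducible subquotient generated by the tensor product of highest weight vectors and verifying that it equals $\bar\mu(u)$ --- to a ``careful chase,'' which is where most of Molev's work actually lies. As a blueprint your proposal is faithful to the literature proof, but it is not self-contained, and within this paper the intended move is simply to cite \cite{M}.
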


To obtain a more combinatorial description of the finite dimensional
irreducible representations of $Y_n^-$, 
we prove the following lemmas.

\begin{Lemma} \label{lRightarrow}
If $\mu(u) \in 1+ u^{-1} \C[[u^{-1}]]$ then
$\mu(-u) \Rightarrow \mu(u) $ if and only if there exists 
$\gamma(u) \in 1 + u^{-2}\C[[u^{-2}]]$ such that
$\gamma(u) \mu(u) = (1-a_1 u^{-1})(1-a_2 u^{-1}) \dots (1-a_{2k}u^{-1})$ 
where 
\begin{equation} \label{eq_pair}
a_{2i-1} + a_{2i} \ge 0 \text{ for } i=1, \dots, k.
\end{equation}
\end{Lemma}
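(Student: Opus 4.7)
The plan is to prove the two directions by complementary constructions, both resting on the identity $\mu(u) P(u+1) = \mu(-u) P(u)$ that encodes $\mu(-u) \Rightarrow \mu(u)$ combined with the palindromic condition $P(u) = P(1-u)$, which pairs the roots of $P$ as $\{\alpha, 1-\alpha\}$.

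For the forward direction, given such a $P$ of degree $2k$, I would set $\gamma(u) := P(u)\, u^{-2k}/\mu(u)$. Since $P(u) = P(1-u)$ implies $P(-u) = P(u+1)$, a short manipulation using the defining identity shows $\gamma(-u) = \gamma(u)$, so $\gamma \in 1 + u^{-2}\C[[u^{-2}]]$. Then $\gamma(u)\mu(u) = P(u)\, u^{-2k}$ factors via the root pairing as $\prod_{i=1}^k (1 - \alpha_i u^{-1})(1 - (1-\alpha_i) u^{-1})$; taking $a_{2i-1} := \alpha_i$ and $a_{2i} := 1-\alpha_i$ gives pair sums equal to $1 \geq 0$. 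The only delicate point is the choice of $\gamma$: the naive alternative $P(u+1)\,u^{-2k}/\mu(u)$ is also even but produces pair sums of $-1$ instead of $+1$.

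For the backward direction, evenness of $\gamma$ applied to the given factorization immediately yields $\gamma(u)\mu(-u) = \prod_i (1 + a_i u^{-1})$, hence
\[
\frac{\mu(-u)}{\mu(u)} = \frac{\prod_i(u + a_i)}{\prod_i(u - a_i)}.
\]
I would build $P = \prod_i P_i$ pair by pair: for $(a_{2i-1}, a_{2i})$ with $d_i := a_{2i-1} + a_{2i} \in \Z_{\geq 0}$, set
\[
P_i(u) := \prod_{j=0}^{d_i-1}(u - a_{2i-1} + j)(u - a_{2i} + j),
\]
with $P_i := 1$ if $d_i = 0$. A telescoping computation gives $P_i(u+1)/P_i(u) = (u+a_{2i-1})(u+a_{2i})/\bigl((u-a_{2i-1})(u-a_{2i})\bigr)$, and the relation $a_{2i-1}+a_{2i}=d_i$ makes $\alpha \mapsto 1-\alpha$ interchange the two ladders of roots of $P_i$, giving $P_i(u) = P_i(1-u)$. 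The main challenge here is recognizing that the correct length of each ladder is precisely $d_i$: any other choice breaks either the shift relation or the palindrome symmetry. Once these choices are made, everything reduces to straightforward telescoping and bookkeeping of roots.
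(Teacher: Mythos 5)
Your proposal is correct and follows essentially the same route as the paper: the forward direction uses the identical choice $\gamma(u) = P(u)u^{-2k}/\mu(u)$ with the same evenness argument via $P(-u)=P(u+1)$ and the same pairing of roots about $\tfrac{1}{2}$, and in the backward direction your ladder polynomial $P_i$ is exactly the paper's product $P_i(u)(u-a_{2i})Q_i(u)(u-a_{2i-1})$, verified by the same telescoping. No gaps.
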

\begin{proof}
Assume $\mu(-u) \Rightarrow \mu(u)$, so there exists
a monic polynomial $P(u)$ of even degree
so that $P(u) = P(1-u)$ and
\[
  \frac{\mu(-u)}{\mu(u)} = \frac{P(u+1)}{P(u)}.
\]
Let $2k$ be the degree of $P(u)$,
and let
\[ 
  \gamma(u) = \frac{P(u)u^{-2k}}{\mu(u)}.
\]
So $\gamma(u) \mu(-u) = P(u+1)u^{-2k}$ and $\gamma(u) \mu(u) = P(u)u^{-2k}$.
Since the roots of $P(u)$ are symmetric about $\frac{1}{2}$, we can write
\[
  \gamma(u) \mu(u) = (1-b_1 u^{-1})(1-(1-b_1)u^{-1}) \dots
   (1-b_k u^{-1}) (1-(1-b_k)u^{-1}).
\]
Now it is clear that $\gamma(u)\mu(u)$ satisfies \eqref{eq_pair},
so it remains to see that
$\gamma(u) \in \C[[u^{-2}]]$.  Note that the roots of 
$P(u+1) = u^{2k} \gamma(u) \mu(-u)$ are $b_1-1, -b_1, \dots, b_k-1, -b_k$.
Now since these are also the roots of $P(-u) = u^{2k} \gamma(-u) \mu(-u)$, 
we have that
$\gamma(-u)\mu(-u) = \gamma(u) \mu (-u)$,  so
$\gamma(-u) = \gamma(u)$, and thus $\gamma(u) \in \C[[u^{-2}]]$.

Conversely, we now assume that there exists $\gamma(u) \in 1+ u^{-2}\C[[u^{-2}]]$ 
such that
$\gamma(u) \mu(u) = (1-a_1 u^{-1})(1-a_2 u^{-1}) \dots (1-a_{2k}u^{-1})$, 
where $a_{2i-1} + a_{2i} \ge 0$ for $i=1, \dots, k$.
Let 
$P_i(u) = (u+a_{2i-1}-1) (u+a_{2i-1}-2) \dots (u-a_{2i}+1)$, 
and let
$Q_i(u) = (u+a_{2i}-1) (u+a_{2i}-2) \dots (u-a_{2i-1}+1)$.
Now it is 
the case that
\[
  \frac{\mu(-u)}{\mu(u)} = 
   \frac{(u+a_1) P_1(u)(u+a_2)Q_1(u) \dots (u+a_{2k-1}) P_k(u) (u+a_{2k})Q_k(u)}
    { P_1(u)(u-a_2) Q_1(u) (u-a_1) \dots P_k(u) (u-a_{2k})  Q_k(u)(u-a_{2k-1})},
\] 
so 
$P(u) = P_1(u)(u-a_2) Q_1(u) (u-a_1) \dots P_k(u) (u-a_{2k})  Q_k(u)(u-a_{2k-1})$
is the unique monic polynomial of even degree such that $P(u) = P(1-u)$ and
\[
  \frac{\mu(-u)}{\mu(u)} = \frac{P(u+1)}{P(u)}.
\]
\end{proof}

\begin{Lemma} \label{l_min}
Suppose that $\mu(u) \in 1 + u^{-1} \C[u^{-1}]$, $\mu(-u) \Rightarrow \mu(u)$, and there exists
$\gamma(u) \in 1 + u^{-2}\C[[u^{-2}]]$ such that
$\gamma(u) \mu(u) = (1-a_1 u^{-1})(1-a_2 u^{-1}) \dots (1-a_{2k}u^{-1})$ 
where $a_{2i-1} + a_{2i} \ge 0$ for $i=1, \dots, k$.
Then there exists $\gamma'(u) \in 1 + u^{-2}\C[[u^{-2}]]$ such that after re-indexing
$(a_1, \dots, a_{2k})$ we can write
$\gamma'(u) \mu(u) = (1-a_1 u^{-1})(1-a_2 u^{-1}) \dots (1-a_{2m}u^{-1})$
where 
$m \le k$, 
for each $i \ne j \in \{1, \dots, 2 m\}$ we have that $a_i \ne - a_j$,
and
$(a_1, \dots, a_{2m})$ satisfies
$a_{2i-1} + a_{2i} \ge 0$ for $i=1, \dots, m$.
\end{Lemma}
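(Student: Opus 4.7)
The plan is to induct on $k$, peeling off one pair of indices $i \neq j$ with $a_i = -a_j$ at a time, while keeping a version of $\gamma$ inside $1 + u^{-2}\C[[u^{-2}]]$ and simultaneously re-pairing the surviving entries so that the non-negative pair-sum hypothesis still holds.

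For the base case, if no two distinct indices satisfy $a_i = -a_j$ then the given $\gamma$ already witnesses the conclusion with $m = k$. Otherwise, pick such $p \neq q$. The key algebraic observation is that
\[
(1 - a_p u^{-1})(1 - a_q u^{-1}) = 1 - a_p^2 u^{-2}
\]
is a unit in $1 + u^{-2}\C[[u^{-2}]]$. Hence dividing by this factor yields
\[
\gamma''(u) := \gamma(u) / (1 - a_p^2 u^{-2}) \in 1 + u^{-2}\C[[u^{-2}]],
\]
and $\gamma''(u)\mu(u)$ is precisely the product of the $2k-2$ surviving factors $1 - a_t u^{-1}$.

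To invoke the inductive hypothesis on this shortened list, I need a pairing of the surviving entries whose pair sums remain non-negative. Let $p'$ denote the partner of $p$ in the original pairing (so $\{p, p'\} = \{2i-1, 2i\}$ for some $i$), and similarly $q'$ for $q$. If $p' = q$ then the removal eliminates exactly one original pair and all remaining pairs are untouched. Otherwise $\{p, p'\}$ and $\{q, q'\}$ are distinct original pairs, and I combine $(a_{p'}, a_{q'})$ into a new pair, leaving every other pair unchanged; this works because
\[
a_{p'} + a_{q'} \geq (-a_p) + (-a_q) = -(a_p + a_q) = 0.
\]
Applying the inductive hypothesis to this instance of size $k-1$ produces the desired $\gamma'$.

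The argument is a clean reduction, so there is no truly hard step; the only subtleties are checking that the factor being divided out genuinely lies in $1 + u^{-2}\C[u^{-2}]$ (immediate from $a_p = -a_q$, which makes the mixed $u^{-1}$ term cancel) and that the re-pairing preserves the pair-sum condition even when $p$ and $q$ originate in different original pairs (handled by the short inequality above). Both are settled by the two displayed calculations.
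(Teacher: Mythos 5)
Your proposal is correct and follows essentially the same route as the paper: induct on $k$, cancel a factor $(1-a_pu^{-1})(1-a_qu^{-1})=1-a_p^2u^{-2}$ when $a_p=-a_q$, and when $p,q$ lie in different original pairs, re-pair their partners using $a_{p'}+a_{q'}\ge -(a_p+a_q)=0$. The paper's version merely re-indexes so that the two cases become $a_1=-a_2$ and $a_1=-a_3$, with the same inequality $a_2+a_4=a_1+a_2+a_3+a_4\ge 0$ playing the role of your re-pairing step.
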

\begin{proof}
We proceed by induction on $k$, and assume that $a_i = -a_j$ for some
$i \ne j \in \{1, \dots, 2k\}$.
After re-indexing we may assume that $a_1 = -a_2$ or $a_1 = -a_3$.
If $a_1 = -a_2$ then
\[
\frac{\gamma(u)\mu(u)}{1-a_1^2 u^{-2}} = (1-a_3 u^{-1}) \dots (1-a_{2k}u^{-1})
\]
satisfies the hypotheses of the lemma, so the lemma follows by induction.
If $a_1 = -a_3$ then we have that $a_2 + a_4 = a_1+a_2 +a_3+a_4 \ge 0$,
so 
\[
\frac{\gamma(u) \mu(u)}{1-a_1^2 u^{-2}} = (1-a_2 u^{-1}) (1-a_4 u^{-1}) \dots (1-a_{2k}u^{-1})
\]
satisfies the hypotheses of the lemma, so the lemma follows by induction.
\end{proof}

\begin{Lemma} \label{lanyevenpoly}
Let $\mu(u) \in 1+ u^{-1} \C[[u^{-1}]]$.
If $\mu(-u) \Rightarrow \mu(u)$ and $\gamma(u) \in 1+u^{-2}\C[[u^{-2}]]$ is
such that $\gamma(u) \mu(u) \in \C[u^{-1}]$ then we can write
$\gamma(u) \mu(u) = (1-a_1 u^{-1})(1-a_2 u^{-1}) \dots (1-a_{2k}u^{-1})$
so that
$a_{2i-1} + a_{2i} \ge 0$ for $i=1, \dots, k$.
\end{Lemma}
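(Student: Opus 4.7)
The plan is to mimic the strategy of Lemma \ref{lpoly}, but now accommodating the symmetry imposed by $\Rightarrow$ (as opposed to $\to$). I will compare the factorization of $\gamma(u)\mu(u)$ coming from the hypothesis with a canonical ``minimal'' factorization produced via Lemmas \ref{lRightarrow} and \ref{l_min}, and then extract the desired pairing combinatorially.

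First I would apply Lemma \ref{lRightarrow} to produce some $\gamma_0(u) \in 1 + u^{-2}\C[[u^{-2}]]$ with $\gamma_0(u)\mu(u) = \prod_{i=1}^{2m}(1 - b_i u^{-1})$ such that $b_{2i-1} + b_{2i} \ge 0$ for each $i$. By Lemma \ref{l_min} I may further assume that $b_i \ne -b_j$ for all $i \ne j$. Given the hypothesis, write $\gamma(u)\mu(u) = \prod_{i=1}^{2k}(1 - a_i u^{-1})$; the goal is to re-index $(a_1, \ldots, a_{2k})$ so that $a_{2i-1} + a_{2i} \ge 0$ for $i = 1, \ldots, k$.

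The key step is to exploit the fact that both $\gamma(u)$ and $\gamma_0(u)$ are invariant under $u \mapsto -u$. Combined with the two factorizations of $\mu(u)$, this gives
\[
\frac{\prod_{i=1}^{2k}(1+a_i u^{-1})}{\prod_{i=1}^{2k}(1-a_i u^{-1})} \;=\; \frac{\mu(-u)}{\mu(u)} \;=\; \frac{\prod_{i=1}^{2m}(1+b_i u^{-1})}{\prod_{i=1}^{2m}(1-b_i u^{-1})}.
\]
Clearing denominators yields the multiset identity
\[
\{-a_1, \ldots, -a_{2k}\} \sqcup \{b_1, \ldots, b_{2m}\} \;=\; \{a_1, \ldots, a_{2k}\} \sqcup \{-b_1, \ldots, -b_{2m}\}.
\]
Since by construction no $b_i$ equals any $-b_j$, each $b_i$ appearing on the left must already occur among the $a_j$'s on the right. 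Thus $\{b_1, \ldots, b_{2m}\}$ is a sub-multiset of $\{a_1, \ldots, a_{2k}\}$, and after re-indexing I may assume $a_i = b_i$ for $i = 1, \ldots, 2m$. Cancelling these common terms from both sides forces the residual multiset $\{a_{2m+1}, \ldots, a_{2k}\}$ to be invariant under negation.

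At this point the pairing is immediate: for $i = 1, \ldots, m$ pair $a_{2i-1}$ with $a_{2i}$ (sums $\ge 0$ by the $b$-condition), and for $i > m$ pair the remaining $a_j$'s as $\pm c$ couples (sums equal to $0$). The only subtle point is that zero entries in the residual multiset must be paired with each other, but since the nonzero entries of a $-$-invariant multiset come in $\pm c$ pairs and $2k - 2m$ is even, the multiplicity of $0$ in that residual multiset is automatically even. The main obstacle is simply bookkeeping with multisets, but no ideas beyond those already established in Lemmas \ref{lRightarrow} and \ref{l_min} are required.
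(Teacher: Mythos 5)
Your proposal is correct and follows essentially the same route as the paper's proof: both compare the given factorization of $\gamma(u)\mu(u)$ with the canonical one supplied by Lemmas \ref{lRightarrow} and \ref{l_min}, use the evenness of $\gamma(u)/\gamma_0(u)$ to force the $b_i$ to occur among the $a_i$, and observe that the leftover factors form negation-invariant (hence zero-sum) pairs. The only difference is presentational — the paper pre-extracts the $\pm$-pairs among the $a_i$ as quadratic factors $(1-a^2u^{-2})$ before matching, whereas you match first and identify the negation-invariant residue afterwards.
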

\begin{proof}
By Lemmas \ref{lRightarrow} and \ref{l_min}
there exists $\gamma'(u) \in 1+u^{-2} \C[[u^{-2}]]$ such that
$\gamma'(u)\mu(u) = (1-b_1 u^{-1}) \dots (1-b_{2m} u^{-1})$ so that
$b_i \ne -b_j$ for all $i \ne j \in \{1, \dots, 2m\}$ and so that 
$b_{2i-1} + b_{2i} \ge 0$ for $i=1, \dots, m$.
Write 
\[
  \gamma(u)\mu(u)=
   (1-a_1 u^{-1}) \dots (1-a_p u^{-1}) (1- a_{p+1}^2 u^{-2}) \dots (1-a_q^2 u^{-2}) 
\]
such that 
$a_i \ne -a_j$ for all $i \ne j \in \{1, \dots, p\}$.
Thus
\[
 \frac{\gamma(u)}{\gamma'(u)}
 = \frac{(1-a_1 u^{-1}) \dots (1-a_p u^{-1}) (1- a_{p+1}^2 u^{-2}) \dots (1-a_q^2 u^{-2}) }
{(1-b_1 u^{-1}) \dots (1-b_{2m} u^{-1})}
\]
which implies
\begin{align*}
    &(1-a_1 u^{-1}) \dots (1-a_p u^{-1}) (1+b_1 u^{-1}) \dots (1+b_{2m} u^{-1})  \\
   & \quad  = \frac{\gamma(u) (1-b_1^2 u^{-2})\dots (1-b^2_{2m} u^{-2})}
     {\gamma'(u)  (1- a_{p+1}^2 u^{-2}) \dots (1-a_q^2 u^{-2})} \in \C[[u^{-2}]],
\end{align*}
and thus $p=2m$ and after re-indexing we must have that 
$a_i = b_i$ for all $i \in \{1, \dots, 2m\}$.
\end{proof}

\begin{Lemma} \label{l_cond}
Let $\mu_1(u), \mu_2(u), \dots, \mu_m(u) \in 
1 + u^{-1}\C[[u^{-1}]]$.
Suppose $\mu_1(-u) \Rightarrow \mu_1(u) \rightarrow \mu_2(u)
\rightarrow \dots \rightarrow \mu_m(u)$.  Then there exists
$\gamma(u) \in 1 + u^{-2}\C[[u^{-2}]]$ such that
$\gamma(u) \mu_i(u) \in \C[u^{-1}]$
for $i = 1, \dots, m$.
\end{Lemma}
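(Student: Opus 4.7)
The plan is to construct $\gamma(u)$ explicitly as a product of two factors: an even series extracted from $\mu_1(-u) \Rightarrow \mu_1(u)$, and an even polynomial correction that handles the $\rightarrow$-chain.

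First, by Lemma \ref{lRightarrow} applied to $\mu_1$, there exists $\delta(u) \in 1 + u^{-2}\C[[u^{-2}]]$ such that $R(u^{-1}) := \delta(u)\mu_1(u) \in \C[u^{-1}]$. Next, for each $1 \leq j \leq m-1$, the relation $\mu_j \rightarrow \mu_{j+1}$ provides a unique monic polynomial $P_j(u)$ with $\mu_j(u)/\mu_{j+1}(u) = P_j(u+1)/P_j(u)$; telescoping gives
\[
\mu_i(u) = \mu_1(u) \prod_{j=1}^{i-1} \frac{P_j(u)}{P_j(u+1)}.
\]

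Set $Q(u) = \prod_{j=1}^{m-1} P_j(u+1)$, a monic polynomial of degree $D = \sum_{j=1}^{m-1} \deg P_j$. The key observation is that $Q(u)Q(-u)$ is an even polynomial in $u$ of degree $2D$ with leading coefficient $(-1)^D$, so
\[
A(u) := \frac{(-1)^D \, Q(u) Q(-u)}{u^{2D}}
\]
lies in $1 + u^{-2}\C[u^{-2}]$. I would then define $\gamma(u) := \delta(u) A(u)$, which is manifestly in $1 + u^{-2}\C[[u^{-2}]]$.

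To verify $\gamma(u)\mu_i(u) \in \C[u^{-1}]$, split $Q(u) = \prod_{j=1}^{i-1} P_j(u+1) \cdot \prod_{j=i}^{m-1} P_j(u+1)$; combining with the telescoping formula yields
\[
\gamma(u)\mu_i(u) = R(u^{-1}) \cdot \frac{(-1)^D \, Q(-u) \prod_{j=i}^{m-1} P_j(u+1) \prod_{j=1}^{i-1} P_j(u)}{u^{2D}}.
\]
The polynomial factor in the numerator has total degree in $u$ equal to $D + (D - \sum_{j<i} \deg P_j) + \sum_{j<i} \deg P_j = 2D$, so division by $u^{2D}$ produces a polynomial in $u^{-1}$, and multiplication by $R(u^{-1}) \in \C[u^{-1}]$ preserves this. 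There is no real obstacle; the only care needed is the sign normalization $(-1)^D$ ensuring that $A(u)$ has constant term $1$, together with the degree bookkeeping that makes the resulting Laurent polynomial in $u$ contain no positive powers of $u$.
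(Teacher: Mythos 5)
Your proof is correct. You verify the evenness of $A(u)$ and the degree count accurately: $Q(u)Q(-u)$ is invariant under $u\mapsto -u$, its leading term is $(-1)^D u^{2D}$, so $A(u)\in 1+u^{-2}\C[u^{-2}]$; and after cancelling $\prod_{j<i}P_j(u+1)$ against part of $Q(u)$, the remaining numerator has degree exactly $2D$, so dividing by $u^{2D}$ lands in $\C[u^{-1}]$. The route is genuinely different from the paper's. The paper first invokes its Lemma \ref{lcompat} to produce a (not necessarily even) $\upsilon(u)$ clearing all denominators, symmetrizes it to $\upsilon'(u)=\upsilon(u)\prod(1-b_ju^{-1})$ so that $\upsilon'(u)\mu_1(u)\in\C[u^{-2}]$, takes $\eta(u)$ from Lemma \ref{lRightarrow}, and sets $\gamma(u)=\eta(u)\upsilon'(u)\mu_1(u)$; the punchline is the factorization $\gamma(u)\mu_i(u)=\bigl(\eta(u)\mu_1(u)\bigr)\bigl(\upsilon'(u)\mu_i(u)\bigr)$ as a product of two polynomials, with no degree bookkeeping needed. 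You instead bypass Lemma \ref{lcompat}, re-derive its telescoping identity directly from the $P_j$'s, and symmetrize the polynomial correction itself via $Q(u)Q(-u)$ rather than symmetrizing $\upsilon(u)\mu_1(u)$. Your version is more explicit and self-contained (the even correction is written down in closed form), at the cost of the degree count at the end; the paper's is shorter because the factorization trick makes polynomiality automatic. Both ultimately rest on the same two ingredients: Lemma \ref{lRightarrow} for the even series attached to $\mu_1(-u)\Rightarrow\mu_1(u)$, and a symmetrization $f(u)\mapsto f(u)f(-u)$ to force evenness of the chain contribution.
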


\begin{proof}
By Lemma \ref{lcompat} there exists
$\upsilon(u) \in 1+u^{-1}\C[[u^{-1}]]$ such that
$\upsilon(u) \mu_i(u) \in \C[u^{-1}]$.
So we can write $\upsilon(u) \mu_1(u) = (1+b_1 u^{-1}) \dots (1+b_s u^{-1})$.
Let $\upsilon'(u) = \upsilon(u) (1-b_1 u^{-1}) \dots (1-b_s u^{-1})$,
so $\upsilon'(u) \mu_1(u) \in \C[u^{-2}]$, and 
$\upsilon'(u) \mu_i(u) \in \C[u^{-1}]$ for $i=1, \dots, m$.
By Lemma \ref{lRightarrow} there exists
$\eta(u) \in 1+u^{-2} \C[[u^{-2}]]$ such that
$\eta(u) \mu_1(u) \in \C[u^{-1}]$.
Let $\gamma(u) = \eta(u) \upsilon'(u) \mu_1(u)$.  
Now $\gamma(u) \in 1 + u^{-2} \C[[u^{-2}]]$
and $\gamma(u) \mu_i(u) \in \C[u^{-1}]$ for $i=1, \dots, m$.
\end{proof}

This lemma is key to giving a more combinatorial description 
of the finite dimensional irreducible representations of $Y$ which is done (in
the context of representations of finite $W$-algebras) in $\S$\ref{section4} below.

Next we turn our attention to the classification of finite dimensional irreducible
representations of $Y_n^+$ when $n$ is even.
The $n=2$ case needs to be treated separately from the 
$n>2$ cases.
\begin{Theorem} 
[{\cite[Proposition 5.3]{M}}]
\label{y_2} 
The $Y_2^+$-module $L((\mu_1(u)))$ is finite dimensional if and
only if
there exists $\gamma(u) \in 1 + u^{-2}\C[[u^{-2}]]$ such that
\[
  (1+\frac{1}{2} u^{-1}) \gamma(u) \mu_1(u) = 
  (1 - a_1 u^{-1}) (1-a_2 u^{-1}) \dots (1-a_{2k+1} u^{-1}) 
\]
where $a_{2i-1} + a_{2i} \ge 0$ for $i=1, \dots, k$. 
\end{Theorem}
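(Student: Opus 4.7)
The plan is to derive Theorem \ref{y_2} from Molev's polynomial-ratio criterion in \cite[Proposition 5.3]{M} by translating it into the combinatorial form stated above, following the template of Lemmas \ref{lRightarrow}--\ref{lanyevenpoly} used in the $Y_n^-$ case but adapted to the special $n=2$, $\eps=+$ setting. Molev's original criterion asserts that $L((\mu_1(u)))$ is finite dimensional if and only if there exists a monic polynomial $P(u) \in \C[u]$ of even degree satisfying a palindromic symmetry $P(u) = P(-u)$ such that an appropriate ratio involving $\mu_1(u)$, $\mu_1(-u)$, and the shift factor $(2u \pm 1)$ equals $P(u+1)/P(u)$. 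The prefactor $(1 + \tfrac{1}{2} u^{-1})$ appearing in the statement of our theorem is the footprint of this $1/2$-shift that distinguishes $Y_2^+$ from $Y_n^-$.

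In the forward direction, starting from the polynomial $P(u)$ of degree $2k$ supplied by Molev's criterion, I would set $\gamma(u) = P(u) u^{-2k}/((1+\tfrac12 u^{-1})\mu_1(u))$, so that $(1+\tfrac12 u^{-1})\gamma(u)\mu_1(u) = P(u) u^{-2k}$. The symmetry $P(u) = P(-u)$ forces $\gamma(u) \in 1+u^{-2}\C[[u^{-2}]]$, and the roots of $P(u)$, combined with the root $-1/2$ contributed by the prefactor, produce the $2k+1$ linear factors $(1-a_i u^{-1})$. The pairing condition $a_{2i-1}+a_{2i}\ge 0$ is obtained by arranging roots of $P(u)$ into pairs symmetric about $0$, exactly as in the proof of Lemma \ref{lRightarrow}, with the argument of Lemma \ref{l_min} used to eliminate trivial pairs $\{a,-a\}$.

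In the converse direction, given $\gamma(u) \in 1+u^{-2}\C[[u^{-2}]]$ and $a_1, \dots, a_{2k+1}$ with $a_{2i-1}+a_{2i}\ge 0$, I would explicitly construct $P(u)$ by inserting filler factors $(u+a_{2i-1}-1)(u+a_{2i-1}-2)\cdots (u-a_{2i}+1)$ between each pair and forming the product with its symmetric completion, thereby producing a polynomial of even degree with $P(u) = P(-u)$ and satisfying Molev's ratio identity. The extra factor $(1-a_{2k+1} u^{-1})$ is precisely what is consumed by the $(1+\tfrac12 u^{-1})$ prefactor after cancellation with $\gamma(u)$, reproducing exactly the ``shift-by-$1/2$'' contribution in Molev's criterion.

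The main obstacle is keeping track of the parity discrepancy between the even degree of Molev's palindromic polynomial and the odd number of linear factors in our factorization: the two must be reconciled precisely via the prefactor $(1+\tfrac12 u^{-1})$, and verifying that the pairing condition translates correctly in both directions requires care analogous to Lemma \ref{l_min}, especially when handling the redundancy of $\gamma(u)$ modulo multiplication by elements of $1+u^{-2}\C[[u^{-2}]]$.
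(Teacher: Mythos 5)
The paper does not actually prove Theorem \ref{y_2}: it is Molev's Proposition 5.3, imported verbatim as a black box. The closest in-paper analogue of what you are attempting is the proof of Lemma \ref{molev5.3}, which derives a refinement of Theorem \ref{y_2} from Molev's Theorem 5.4 (the polynomial-ratio criterion) via the substitution $\lambda(u) = \mu_1(u)(1+cu^{-1})(1+\frac{1}{2}u^{-1})$ and Lemmas \ref{lRightarrow} and \ref{lanyevenpoly}. Your overall strategy --- translate the ratio criterion into the factorization form using that machinery --- is therefore reasonable and consistent with the paper's methods, but your recollection of Molev's criterion is wrong in ways that break the argument.

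Concretely: (1) the symmetry in Molev's Theorem 5.4, as quoted in the paper's proof of Lemma \ref{molev5.3}, is $P(u)=P(-u+1)$ (roots symmetric about $\frac{1}{2}$), not $P(u)=P(-u)$; the latter is not even the condition used for the relation $\Rightarrow$ elsewhere in the paper, which requires $P(u)=P(1-u)$. (2) You omit the factor $(u+c)(2u+1)/\bigl((u-c)(2u-1)\bigr)$ and the condition $P(-c)\neq 0$ from Molev's criterion. This omission is fatal to the parity bookkeeping: your construction $\gamma(u)=P(u)u^{-2k}/\bigl((1+\frac{1}{2}u^{-1})\mu_1(u)\bigr)$ yields $(1+\frac{1}{2}u^{-1})\gamma(u)\mu_1(u)=P(u)u^{-2k}$, a product of exactly $2k$ linear factors, not $2k+1$; and if $P(u)=P(-u)$ held, those factors would pair into $(b,-b)$ with sum zero, reducing the statement to the trivially satisfied $Y_2^-$-type condition and erasing the entire content of the theorem, namely the unconstrained odd factor $a_{2k+1}$. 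In the correct accounting the unpaired root $a_{2k+1}$ comes from the parameter $c$ (essentially $a_{2k+1}=-c$), and the prefactor $(1+\frac{1}{2}u^{-1})$ pairs off against the $(2u+1)/(2u-1)$ part of the ratio --- it does not ``contribute the root $-1/2$'' to the right-hand side, since it sits on the left of the displayed identity. (3) The claim that $P(u)=P(-u)$ ``forces'' $\gamma(u)\in 1+u^{-2}\C[[u^{-2}]]$ is asserted without the computation that is actually needed (compare the argument in Lemma \ref{lRightarrow}, where one compares the roots of $P(u+1)$ and $P(-u)$ through the functional equation for $\mu$). To repair the proof, restate Molev's Theorem 5.4 correctly, pass to $\lambda(u)=\mu_1(u)(1+cu^{-1})(1+\frac{1}{2}u^{-1})$ so that $\lambda(-u)\Rightarrow\lambda(u)$, and then apply Lemmas \ref{lRightarrow}, \ref{l_min} and \ref{lanyevenpoly} to the resulting $2k+2$ roots, extracting the pair containing $-c$ to leave the $2k+1$ factors of the statement.
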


We need a slight generalization of this theorem.
\begin{Lemma} \label{molev5.3}
   Let $\mu_1(u) \in 1 + u^{-1} \C[[u^{-1}]]$.  
If the $Y_2^+$-module $L((\mu_1(u)))$ is finite dimensional and
$\gamma(u) \in 1 +   u^{-2}\C[[u^{-2}]]$ is such that
$(1+\frac{1}{2} u^{-1}) \gamma(u) \mu_1(u) \in \C[u^{-1}]$ then we can write
\[
  (1+\frac{1}{2} u^{-1}) \gamma(u) \mu_1(u) = 
  (1 - a_1 u^{-1}) (1-a_2 u^{-1}) \dots (1-a_{2k+1} u^{-1}),
\]
where
$a_{2i-1} + a_{2i} \ge 0$ for $i=1, \dots, k$
\end{Lemma}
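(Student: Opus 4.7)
The plan is to mimic the proof of Lemma~\ref{lanyevenpoly} almost verbatim, replacing its appeal to Lemma~\ref{lRightarrow} with an appeal to the original Molev result Theorem~\ref{y_2}. I begin by invoking Theorem~\ref{y_2} (using the hypothesis that $L((\mu_1(u)))$ is finite dimensional) to produce some $\gamma'(u) \in 1+u^{-2}\C[[u^{-2}]]$ together with a factorization
\[
(1+\tfrac{1}{2}u^{-1})\gamma'(u)\mu_1(u) = (1-b_1 u^{-1})\cdots(1-b_{2m+1} u^{-1})
\]
with $b_{2i-1}+b_{2i} \geq 0$ for $i=1,\dots,m$.

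The step I expect to be the main technical obstacle is an odd-length analog of Lemma~\ref{l_min}: after possibly modifying $\gamma'$, we may further arrange that $b_i \neq -b_j$ for all $i \neq j$. Whenever $b_i = -b_j$ with both indices at most $2m$, the sub-cases appearing in the proof of Lemma~\ref{l_min} go through verbatim, absorbing the factor $(1-b_i^2 u^{-2})$ into $\gamma'$. The genuinely new case is $j = 2m+1$: if for instance $b_{2s-1} = -b_{2m+1}$, one re-pairs by moving $b_{2m+1}$ into the pair previously containing $b_{2s-1}$, so that this new pair sums to $0$ and $b_{2s}$ becomes the leftover; the remaining pairings still satisfy the sign condition, and the number of factors has dropped by two. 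Iterating finitely many times, we reduce to a factorization with no such $\pm$ coincidences among the $b_j$'s.

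With this cleaned-up $\gamma'(u)$ in hand, I factor the given product
\[
(1+\tfrac{1}{2}u^{-1})\gamma(u)\mu_1(u) = (1-a_1 u^{-1})\cdots(1-a_p u^{-1})(1-a_{p+1}^2 u^{-2})\cdots(1-a_q^2 u^{-2})
\]
so that no two of $a_1,\dots,a_p$ are negatives of each other, by collecting all perfect $\pm$-pairs into the squared factors. Since both $\gamma(u)$ and $\gamma'(u)$ belong to $1+u^{-2}\C[[u^{-2}]]$, their quotient lies in $1+u^{-2}\C[[u^{-2}]]$ as well; clearing denominators in the identity $(1+\tfrac{1}{2}u^{-1})\gamma(u)\mu_1(u) = (\gamma(u)/\gamma'(u)) \cdot (1+\tfrac{1}{2}u^{-1})\gamma'(u)\mu_1(u)$ then yields
\[
(1-a_1 u^{-1})\cdots(1-a_p u^{-1})(1+b_1 u^{-1})\cdots(1+b_{2m+1} u^{-1}) \in \C[[u^{-2}]],
\]
exactly as at the end of the proof of Lemma~\ref{lanyevenpoly}. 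Since no two $a_i$'s are negatives of each other and likewise for the $b_j$'s, unique factorization forces $p = 2m+1$ and, after re-indexing, $a_i = b_i$ for all $i$. Re-expanding each $(1-a_{p+j}^2 u^{-2})$ as $(1-a_{p+j}u^{-1})(1+a_{p+j}u^{-1})$ (a trivial pair summing to $0$) then produces the desired linear factorization with $2k+1$ factors satisfying the pairing condition, with the nontrivial pairings coming from the $b_j$'s and the trivial pairings coming from the squared factors.
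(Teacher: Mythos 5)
Your argument is correct, but it is not the route the paper takes. The paper's proof is a two--line reduction to Lemma \ref{lanyevenpoly}: it invokes a \emph{different} result of Molev (his Theorem 5.4, not the Proposition 5.3 quoted as Theorem \ref{y_2}), which says that finite dimensionality of the $Y_2^+$-module $L((\mu_1(u)))$ is equivalent to the existence of a monic $P(u)$ with $P(u)=P(1-u)$ and a scalar $c$ with $P(-c)\ne 0$ such that $\mu_1(-u)/\mu_1(u)=P(u+1)(u+c)(2u+1)/\bigl(P(u)(u-c)(2u-1)\bigr)$. Setting $\lambda(u)=\mu_1(u)(1+cu^{-1})(1+\frac{1}{2}u^{-1})$ one checks $\lambda(-u)\Rightarrow\lambda(u)$, and since $\gamma(u)\lambda(u)=(1-a_1u^{-1})\cdots(1-a_{2k+1}u^{-1})(1+cu^{-1})$ is an even-length product, Lemma \ref{lanyevenpoly} applies directly to the augmented list $(a_1,\dots,a_{2k+1},-c)$; deleting $-c$ from its pair leaves the desired arrangement. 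Your proof instead stays with Theorem \ref{y_2}, establishes an odd-length analogue of Lemma \ref{l_min} (your re-pairing step for a coincidence $b_{2s-1}=-b_{2m+1}$ is valid: the new pair sums to $0$ and $b_{2s}$ becomes the leftover), and then reruns the unique-factorization comparison from the end of the proof of Lemma \ref{lanyevenpoly} in the odd setting. The trade-off: the paper's argument is much shorter but imports a characterization of Molev's that is stated nowhere else in the paper, whereas yours is self-contained relative to the results the paper actually quotes, at the cost of one extra combinatorial lemma. Both treat the bookkeeping of zero entries and padding by trivial factors with the same (acceptable) level of informality as Lemmas \ref{l_min} and \ref{lanyevenpoly} themselves.
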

\begin{proof}
Suppose that such a $\gamma(u)$ exists.
By \cite[Theorem 5.4]{M}
$L((\mu_1(u)))$ is finite dimensional if and only if there exists a monic
polynomial
$P(u) \in \C[u]$ with $P(u) = P(-u+1)$ and $c \in \C$ such that
$P(-c) \ne 0$ and
\[ \frac{\mu_1(-u)}{\mu_1(u)}
   = 
   \frac{P(u+1) (u+c)(2u+1)}{P(u)(u-c)(2u-1)}.
\]
Let $\lambda(u) = \mu_1(u) (1+c u^{-1}) (1 + \frac{1}{2} u^{-1})$.
Thus we have that
   $\lambda(-u) \Rightarrow \lambda(u)$, and  
   since $\gamma(u) \lambda(u) = 
(1 - a_1 u^{-1}) (1-a_2 u^{-1}) \dots (1-a_{2k+1} u^{-1})
(1+c u^{-1})$,
by Lemma \ref{lanyevenpoly} 
after re-indexing we have that $a_{2i-1} + a_{2 i} \ge 0$ 
for $i = 1, \dots, k$.
\end{proof}

Next we will give the classification of  
finite dimensional irreducible $Y_n^+$-modules
for $n$ even, $n > 2$.
This depends on a certain $Y_n^+$ automorphism $\psi$:
\begin{equation} \label{eq_psi}
    \psi: Y_n^+ \to Y_n^+, \quad S_{i,j}(u) \mapsto S_{i',j'}(u),
\end{equation}
where
$i' = i$ if $i \ne \pm 1$, and $i' = -i$ if $i=\pm 1$.

If $L$ is a $Y^+_n$-module, we let $L^\sharp$ denote the module created
by twisting with $\psi$, that is, if $v \in L$, $y \in Y^+_n$, then
$L^\sharp$ is the module created by the action $y.v = \psi(y)v$, 
where $\psi(y)v$ denotes the action given by $L$.
Of course, if $L(\bar \mu(u))$ is a
finite dimensional $Y$-module, then so is
$L(\bar \mu(u))^\sharp$, and by
Theorem \ref{th_tyhw2}
$L(\bar \mu(u))^\sharp$ is another highest 
weight module.
To determine which highest weight module, 
we need to define the notation of a {\em $\sharp'$-special} element of a list of 
complex numbers.
A list $(a_1, a_2, \dots, a_{2k+1})$ 
of complex numbers can be indexed 
so that the following condition is satisfied:
\begin{align} \label{eq_sharp}
  &\text{for every $i=1, \dots, k$ we have:} \notag \\
  & \text{if the set $\set{a_p + a_q }{ 2 i -1 \le p < q \le 2k+1} \cap \Z_{\ge 0}$
      is non-empty} \\
  & \text{then $a_{2i-1} + a_{2i}$ is its minimal element.} \notag
\end{align}
For an element $a$ in a list $(a_1, a_2, \dots, a_{2k+1})$ of complex numbers,
we say that $a$ is a {\em $\sharp'$-special} element of $(a_1, a_2, \dots, a_{2k+1})$ if
$a=a_{2k+1}$ when $(a_1, \dots, a_{2k+1})$ is indexed so that \eqref{eq_sharp} holds.

Recall the definition of the a $\sharp$-special element of 
a list of complex numbers
from the introduction.
The following lemma shows that the concepts of the $\sharp$-special and $\sharp'$-special elements of a list are nearly identical.
\begin{Lemma} \label{l_sharp_max}
Let $(a_1, \dots, a_{2k+1})$ be a list of complex numbers.
If the $\sharp$-special element of the list
$(a_1+1/2, \dots, a_{2k+1} +1/2)$ is defined then
$a_{2k+1}+1/2$ is the $\sharp$-special element of this list
if and only if 
$a_{2k+1}$ is the $\sharp'$-special element of the list
$(a_1, \dots, a_{2k+1})$.
In particular, the $\sharp'$-special element 
is unique in these
circumstances.
\end{Lemma}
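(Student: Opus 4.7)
My plan is to reduce the lemma to a standard exchange argument on pairings. The first observation is that the shift by $1/2$ converts the pair condition $(a^{(i)}_{2j-1}+1/2) + (a^{(i)}_{2j}+1/2) \in \Z_{>0}$ in the $\sharp$-definition into $a^{(i)}_{2j-1} + a^{(i)}_{2j} \in \Z_{\ge 0}$, which is exactly the condition appearing in the $\sharp'$-definition; consequently, the set of valid orderings for the two definitions coincides, and the $\sharp$-special element of the shifted list is defined if and only if a $\sharp'$-ordering of the original list exists. Moreover, any two candidate leftovers differ by an integer (since their difference equals the difference of two integer pairing-sums), so the $\sharp$-special element is a bona fide maximum under the partial order $\ge$; write it as $M+1/2$ and work throughout with $M$.

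The core claim I would establish is: the last entry $a_{2k+1}$ of every ordering satisfying the $\sharp'$-condition equals $M$. I would prove this by induction on $k$ using the following reduction step: if $\{a,b\}$ is any pair of elements with $a+b$ the minimum value of $a_p+a_q$ over pairs with $a_p+a_q\in\Z_{\ge 0}$, then the maximum leftover of the full list equals the maximum leftover of the sublist obtained by removing $\{a,b\}$. Granting this, the first pair of any $\sharp'$-ordering is by definition such a minimum pair, and inductively the last entry remains $M$.

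To prove the reduction step, take any valid pairing $\mathcal{P}$ of the full list with leftover $M$. If $\{a,b\}\in\mathcal{P}$ already, removing it finishes the argument. Otherwise I split into cases. If $a=M$ (the case $b=M$ is symmetric), then $b$ is paired in $\mathcal{P}$ with some $y$; minimality of $a+b$ gives $a+b\le b+y$, so $y\ge a=M$ in the partial order, hence deleting the single pair $\{b,y\}$ from $\mathcal{P}$ yields a valid pairing of the sublist with leftover $y\ge M$. Since the maximum leftover of the sublist is at most $M$ (any such pairing, together with $\{a,b\}$, extends to a valid pairing of the full list), this forces $y=M$. If neither $a$ nor $b$ equals $M$, then $\{a,x\}$ and $\{b,y\}$ are distinct pairs of $\mathcal{P}$; minimality gives $x\ge b$ and $y\ge a$, hence $x+y\ge a+b\ge 0$, and since $(a+x)+(b+y)$ and $a+b$ are integers, so is $x+y$. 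Thus $x+y\in\Z_{\ge 0}$, and the pairing obtained from $\mathcal{P}$ by swapping $\{a,x\},\{b,y\}$ for $\{a,b\},\{x,y\}$ is still valid with leftover $M$; removing $\{a,b\}$ now produces the desired valid pairing of the sublist.

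The main obstacle is the first case ($a=M$) of the exchange argument, where one cannot swap pairs and must instead invoke the equality $y=M$ via maximality of $M$ together with the integrality built into the partial order $\ge$ on the relevant set of leftovers. Once the core claim is established, the lemma follows immediately: $a_{2k+1}+1/2$ is the $\sharp$-special element of the shifted list if and only if $a_{2k+1}=M$ if and only if $a_{2k+1}$ is the $\sharp'$-special element of the original list, and the uniqueness of the $\sharp'$-special element is automatic from the fact that every $\sharp'$-ordering has the same last entry $M$.
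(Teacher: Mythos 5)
Your proof is correct, and while it shares the paper's basic strategy (induction on $k$ driven by the minimality of the leading pair in a $\sharp'$-ordering), it is organized around a different key lemma. The paper fixes a $\sharp'$-ordering $(a_1,\dots,a_{2k+1})$ that is also assumed to be a valid pairing and compares it head-to-head with an arbitrary competitor ordering $(b_1,\dots,b_{2k+1})$: it re-indexes so that $b_1=a_{2k+1}$, uses minimality of $a_i+a_{i+1}$ to conclude $a_{i+1}\le a_{2k+1}$, and then deletes one pair from each list so that induction applies to the two shortened lists simultaneously. You instead introduce the maximum leftover $M$ and prove a self-contained reduction step --- removing any minimal-sum pair $\{a,b\}$ preserves $M$ --- via the swap of $\{a,x\},\{b,y\}$ for $\{a,b\},\{x,y\}$ inside a single optimal pairing. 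Your packaging makes explicit two points the paper glosses over: that all achievable leftovers are pairwise comparable (they differ by integers), so the ``unique maximal element'' in the definition of the $\sharp$-special element really is a maximum and the inequality $y\ge M$ can legitimately be upgraded to $y=M$; and that, under the hypothesis, the greedy $\sharp'$-construction never gets stuck, i.e.\ every $\sharp'$-ordering is automatically a valid pairing (the paper simply builds this into its choice of $(a_1,\dots,a_{2k+1})$). The only imprecision is that your case split ``$a=M$'' should be read as ``$a$ is the leftover position of $\mathcal{P}$'' rather than as an equality of values, since $M$ may occur more than once in the list; the argument is unaffected.
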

\begin{proof}
We proceed by induction on $k$, the case $k=0$ being clear.
Let $(a_1, \dots, a_{2k+1})$ be a list for which $a_{2i-1}+a_{2i} \ge 0$ for
$i =1, \dots, k$, and for which \eqref{eq_sharp} holds.
Let $(b_1, \dots, b_{2k+1})$ be a re-indexing of $(a_1, \dots, a_{2k+1})$ such that
 $b_{2i-1}+b_{2i} \ge 0$ for $i =1, \dots, k$.
Assume that $b_{2k+1} \ne a_{2k+1}$.  Then after re-indexing we may assume that
 $b_1 =a_{2k+1}$.  Let $i$ be such that  $a_{i} = b_2$.
We assume that $i$ is odd, as the case that $i$ is even is proved similarly.
Since $(a_1, \dots, a_{2k+1})$ satisfies \eqref{eq_sharp}, we have that
$a_i+a_{i+1} \le a_i + a_{2k+1}$, so
$a_{i+1} \le a_{2k+1}$.  
If $k=1$ then we must have that $a_{i+1} = b_3$, so the lemma holds in this case.  
If $k > 1$ then
after re-indexing we may assume that
 $a_{i+1} = b_3$,
so
$a_{2k+1} + b_4 \ge 0$.
Now we have that  the lists $(a_1, \dots, a_{i-1}, a_{i+2}, \dots, a_{2k+1})$
and 
$(a_{2k+1}, b_4, \dots, b_{2k +1} )$ also satisfy the hypotheses of the lemma,
so by induction $b_{2k+1} \le a_{2k+1}$.
\end{proof}

Suppose $\mu(u) \in 1+ u^{-1} \C[[u^{-1}]]$ is such that there exists
$\gamma(u) \in 1+u^{-2}\C[[u^{-2}]]$ such that
$(1+\frac{1}{2} u^{-1}) \gamma(u) \mu(u) = 
(1-a_1 u^{-1}) \dots (1-a_{2k+1} u^{-1})$, where
$a_{2i-1} + a_{2i} \ge 0$ for $i=1, \dots, k$ and 
$(a_1, \dots, a_{2k+1})$
satisfies \eqref{eq_sharp}.
If these conditions are met then we say that {\em $\mu^\sharp(u)$ is well-defined}.
Now we define 
\begin{equation} \label{eq_musharp}
  \mu^\sharp(u) = \gamma(u)^{-1} (1+\frac{1}{2} u^{-1})
   (1-a_1 u^{-1}) \dots (1-a_{2k} u^{-1}) (1+(1+a_{2k+1})u^{-1})
\end{equation}

\begin{Lemma}
  The definition of $\mu^\sharp(u)$ is well-defined, that is, it does not depend on $\gamma(u)$.
\end{Lemma}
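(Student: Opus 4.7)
The plan is to observe that $\mu^\sharp(u)$ depends on $\gamma(u)$ only through the single entry $a_{2k+1}$ and then to show that this entry is intrinsic to $\mu(u)$. From the definition of $\mu^\sharp(u)$ together with the identity $(1+\tfrac{1}{2}u^{-1})\gamma(u)\mu(u) = \prod_{i=1}^{2k+1}(1-a_iu^{-1})$ a direct computation gives
\[
\mu^\sharp(u) \;=\; \mu(u)\left(1+\tfrac{1}{2}u^{-1}\right)^{\!2}\frac{u+1+a_{2k+1}}{u-a_{2k+1}}.
\]
The M\"obius transformation $a\mapsto (u+1+a)/(u-a)$ is injective in $a$ (by a short cross-multiplication), so it is enough to prove that for any two valid choices $\gamma_1(u),\gamma_2(u)$, producing lists $(a_1,\dots,a_{2k+1})$ and $(b_1,\dots,b_{2m+1})$ respectively, one has $a_{2k+1}=b_{2m+1}$.

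For the comparison step I would observe that $\gamma_1(u)/\gamma_2(u)\in 1+u^{-2}\C[[u^{-2}]]$ is even, while it also equals $\prod_i(1-a_iu^{-1})/\prod_j(1-b_ju^{-1})$. Imposing the invariance $u\mapsto -u$ on this rational function yields the multiset identity
\[
\{a_1,\dots,a_{2k+1}\}\cup\{-b_1,\dots,-b_{2m+1}\} \;=\; \{-a_1,\dots,-a_{2k+1}\}\cup\{b_1,\dots,b_{2m+1}\},
\]
equivalently the symmetric multiplicity condition $\alpha_c-\alpha_{-c}=\beta_c-\beta_{-c}$ for every $c\in\C$, where $\alpha,\beta$ are the multiplicity functions of the two lists. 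A short bookkeeping argument then shows that, after cancellation of common entries, the remaining parts of the two lists are themselves $\pm$-symmetric multisets, so that $(b_j)$ can be obtained from $(a_i)$ by a sequence of insertions and deletions of pairs of the form $\{c,-c\}$.

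It therefore remains to prove that the $\sharp'$-special element is invariant under inserting (or deleting) such a pair. Via Lemma~\ref{l_sharp_max} this translates, after shifting by $\tfrac{1}{2}$, into the statement that the $\sharp$-special element of the shifted list is unchanged when one inserts a pair $\{c+\tfrac{1}{2},-c+\tfrac{1}{2}\}$, whose sum equals $1>0$. The main obstacle of the plan is this last combinatorial step: given any valid permutation of the augmented shifted list, one must exhibit a re-pairing in which the two inserted elements form a single pair (automatically valid since their sum is $1$) without altering the last element, and conversely any valid permutation of the original list extends. The delicate case arises when the inserted entries are already paired with partners whose pair sums are only marginally positive, so that a naive swap may produce a pair of negative sum. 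Handling this exchange argument carefully, in the spirit of the inductive proof of Lemma~\ref{l_sharp_max}, is where the bulk of the work will lie, and it yields $a_{2k+1}=b_{2m+1}$ and hence the well-definedness of $\mu^\sharp(u)$.
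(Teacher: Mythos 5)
Your reduction is sound and matches the paper's strategy: the explicit formula $\mu^\sharp(u)=\mu(u)(1+\tfrac12u^{-1})^2\frac{u+1+a_{2k+1}}{u-a_{2k+1}}$ correctly isolates the only datum that could depend on $\gamma(u)$, and your parity argument on $\gamma_1(u)/\gamma_2(u)\in 1+u^{-2}\C[[u^{-2}]]$, showing the two lists differ by insertion and deletion of pairs $\{c,-c\}$, is essentially the computation in the paper (which separates each product into the factors not occurring in $\pm$-pairs and the quadratic factors $1-a^2u^{-2}$, and concludes the non-paired parts coincide). So far, so good.

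The genuine gap is the final step, which you explicitly defer (``where the bulk of the work will lie''): you never prove that the $\sharp'$-special element is unchanged by inserting a pair $\{a,-a\}$, and the route you sketch — re-pairing an arbitrary valid permutation of the augmented list so that the inserted entries pair with each other — is both unproved and harder than necessary. The step is in fact immediate from the definitions already in place, and this is exactly how the paper closes the argument. The condition \eqref{eq_sharp} is a greedy minimality condition on pair sums in $\Z_{\ge 0}$, and the inserted pair has sum $a+(-a)=0$, which is automatically the minimal element of any non-empty subset of $\Z_{\ge 0}$ containing it; hence if $(a_1,\dots,a_{2k+1})$ satisfies \eqref{eq_sharp} with $a_{2i-1}+a_{2i}\ge 0$, then $(a,-a,a_1,\dots,a_{2k+1})$ satisfies \eqref{eq_sharp} verbatim, with the same final entry $a_{2k+1}$. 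The uniqueness of the $\sharp'$-special element already established in Lemma \ref{l_sharp_max} then forces the $\sharp'$-special element of the augmented list to be $a_{2k+1}$ (and, again by uniqueness, deletion of a $\pm$-pair also preserves it). There is no need to shift by $\tfrac12$, pass to the $\sharp$-special framework, or perform any exchange argument on marginal pairings; you should replace your deferred combinatorial step with this one-line verification.
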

\begin{proof}
  First we make the following observation.
If $(a_1, \dots, a_{2k+1})$ satisfies
\eqref{eq_sharp} and $a_{2i-1}+a_{2i} \ge 0$ for
$i=1, \dots, k$ then for any $a \in \C$ 
the list
$(a, -a, a_1, \dots, a_{2k+1})$
also satisfies \eqref{eq_sharp}.
Note this also implies that if $a$ and $-a$ both occur
in $(a_1, \dots, a_{2k+1})$ then the $\sharp'$-special
element of the list $(a_1, \dots, a_{2k+1})$ with one occurrence of $a$
and $-a$ removed is also $a_{2k+1}$.

Now suppose that for 
$\gamma(u) \in 1+u^{-2} \C[[u^{-2}]]$ that
$(1+\frac{1}{2} u^{-1}) \gamma(u) \mu(u) = 
(1-a_1 u^{-1}) \dots (1-a_{2k+1} u^{-1})$, where
$a_{2i-i} + a_{2i} \ge 0$ for $i=1, \dots, k$ and 
$(a_1, \dots, a_{2k+1})$
satisfies \eqref{eq_sharp}.
Also suppose for some $\gamma'(u) \in 1+u^{-2} \C[[u^{-2}]]$ that
$(1+\frac{1}{2} u^{-1}) \gamma'(u) \mu(u) = 
(1-b_1 u^{-1}) \dots (1-b_{2k'+1} u^{-1})$, where
$b_{2i-i} + b_{2i} \ge 0$ for $i=1, \dots, k'$ 
and 
$(b_1, \dots, b_{2k'+1})$
satisfies \eqref{eq_sharp}.

By re-indexing we may write
\[
(1-a_1 u^{-1}) \dots (1-a_{2k+1} u^{-1})
 = (1-a_1 u^{-1}) \dots (1-a_p u^{-1}) 
  (1-a_{p+1}^2 u^{-2}) \dots (1-a_q^2 u^{-2})
\]
and
\[
(1-b_1 u^{-1}) \dots (1-b_{2k'+1} u^{-1})
 = (1-b_1 u^{-1}) \dots (1-b_{p'} u^{-1}) 
  (1-b_{p'+1}^2 u^{-2}) \dots (1-b_{q'}^2 u^{-2}),
\]
where
$a_i \ne a_j$ for all $i \ne j \in \{1, \dots, p\}$ and
$b_i \ne b_j$ for all $i \ne j \in \{1, \dots, p'\}$.
So
\[
\frac{(1-a_1 u^{-1}) \dots (1-a_p u^{-1}) 
  (1-a_{p+1}^2 u^{-2}) \dots (1-a_q^2 u^{-2})}
{(1-b_1 u^{-1}) \dots (1-b_{p'} u^{-1}) 
  (1-b_{{p'}+1}^2 u^{-2}) \dots (1-b_{q'}^2 u^{-2})}
 = \frac{\gamma(u)}{\gamma'(u)},
\]
so
\begin{align*}
&(1-a_1 u^{-1}) \dots (1-a_p u^{-1}) 
(1+b_1 u^{-1}) \dots (1+b_{p'} u^{-1})  \\
   & \quad =
  \frac{\gamma(u) (1-b_1^2 u^{-2}) \dots (1-b_{q'}^2 u^{-2})}
    { \gamma'(u) (1-a_{p+1}^2 u^{-2}) \dots (1-a_q^2 u^{-2})}
   \in \C[[u^{-2}]].
\end{align*}
Thus $p=p'$, and after re-indexing,
$a_i=b_i$ for $i=1, \dots, p$.
Now the lemma follows from the above observation.
\end{proof}

The following theorem is contained in the proof of
\cite[Theorem 5.9]{M}.
\begin{Theorem}
\label{th_sharp}
Let $\mu_1(u), \mu_3(u), \dots, \mu_{n-1}(u) \in 1+ u^{-1} \C[[u^{-1}]]$
where $\mu_1^\sharp(u)$ is well-defined.
Then
$L((\mu_1(u), \mu_3(u), \dots, \mu_{n-1}(u)))^\sharp =
L((\mu_1^\sharp(u), \mu_3(u), \dots, \mu_{n-1}(u)))$.
\end{Theorem}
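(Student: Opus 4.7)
The plan is to reduce the problem to the $n=2$ case and to exploit the rigidity of finite-dimensional irreducible $Y_2^+$-modules given by Theorem \ref{y_2} and Lemma \ref{molev5.3}.

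The first step is to observe that the automorphism $\psi$ of \eqref{eq_psi} fixes every generator $S_{i,j}(u)$ with $|i|, |j| \ne 1$. Hence $\psi$ acts as the identity on the subalgebra $Y' \subseteq Y_n^+$ generated by $\{S_{i,j}(u) : i,j \in \mathcal{I}_n \setminus \{-1,1\}\}$, which is naturally isomorphic to a twisted Yangian of smaller rank. Consequently, if $w$ is a highest weight vector for $L^\sharp = L((\mu_1(u),\mu_3(u),\dots,\mu_{n-1}(u)))^\sharp$, then the raising conditions $\psi(S_{i,j}(u))w = S_{i,j}(u) w = 0$ for $i < j$ with $|i|, |j| \geq 3$ and the diagonal relations $\psi(S_{i,i}(u)) w = S_{i,i}(u) w$ for $i \geq 3$ are identical to those that hold for $L$. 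By uniqueness of irreducible highest weight modules for the subalgebra $Y'$, the components $\lambda_3(u),\dots,\lambda_{n-1}(u)$ of the highest weight of $L^\sharp$ coincide with $\mu_3(u),\dots,\mu_{n-1}(u)$. This reduces the problem to computing the single component $\lambda_1(u) =: \mu_1^\sharp(u)$.

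The second step is to localize the $\mu_1$ computation to a $Y_2^+$-subalgebra. Although the subalgebra generated by $\{S_{\pm 1,\pm 1}(u)\}$ is not literally closed because of the mixing terms $S_{i,-k}(u) S_{-j,l}(v)$ in \eqref{eq_y_rel1}, these terms annihilate the highest-weight-like vectors we care about (since the other $S_{i,j}$'s raise). Thus on a suitable cyclic $Y'$-quotient of $L$ the action of $\{S_{\pm 1,\pm 1}(u)\}$ factors through a genuine finite-dimensional irreducible $Y_2^+$-module of highest weight $\mu_1(u)$, and the problem becomes: given a finite-dimensional irreducible $Y_2^+$-module $L(\mu_1(u))$, identify the highest weight of $L(\mu_1(u))^\sharp$.

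The third step is the $Y_2^+$ calculation itself. For $Y_2^+$ the twist by $\psi$ sends $S_{1,1}(u) \leftrightarrow S_{-1,-1}(u)$ and $S_{1,-1}(u) \leftrightarrow S_{-1,1}(u)$, so the new highest weight vector in $L(\mu_1(u))^\sharp$ is (up to scalars) the $Y_2^+$-lowest weight vector of $L(\mu_1(u))$, i.e.\ the vector annihilated by $S_{1,-1}(u)$. Using the symmetry relation \eqref{eq_y_rel2} with $\phi=+$, $i=j=1$, namely
\[
  S_{-1,-1}(-u) = S_{1,1}(u) + \frac{S_{1,1}(u) - S_{1,1}(-u)}{2u},
\]
one computes the eigenvalue of $S_{1,1}(u) \cdot {-} = S_{-1,-1}(u)$ on this lowest weight vector in terms of the roots $a_1,\dots,a_{2k+1}$ of the normalized polynomial $(1+\tfrac{1}{2}u^{-1})\gamma(u)\mu_1(u)$. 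The result is precisely the expression \eqref{eq_musharp}: the last root $a_{2k+1}$ is replaced by $-(1+a_{2k+1})$, and a compensating shift factor $(1+\tfrac{1}{2}u^{-1})$ appears because the chain of roots in the $Y_2^+$-module structure is offset by $\tfrac{1}{2}$.

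The main obstacle will be the $Y_2^+$ computation of the eigenvalue of $S_{-1,-1}(u)$ on the lowest weight vector of $L(\mu_1(u))$. This requires either an explicit realization of $L(\mu_1(u))$ as a subquotient of a tensor product of two-dimensional Yangian evaluation modules restricted to $Y_2^+$, following Molev's approach in \cite[Chapter 5]{M}, or an inductive argument on the degree of the normalizing polynomial. The well-definedness of $\mu_1^\sharp(u)$, which the preceding lemma establishes, guarantees that the answer is independent of the choice of the auxiliary factor $\gamma(u)$, so one may freely pick a convenient normalization for the calculation.
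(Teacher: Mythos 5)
The paper does not actually prove this statement: it is quoted verbatim from the proof of \cite[Theorem 5.9]{M}, so the only honest comparison is with Molev's argument. Your outline follows the same broad strategy Molev uses (twisting by $\psi$ amounts to passing to the vector of $L$ that is extremal for the Borel with $1$ and $-1$ interchanged, and the computation localizes to the rank-one subalgebra generated by the $S_{\pm1,\pm1}^{(r)}$), but the proposal has a genuine gap exactly where the content of the theorem lies. The entire point of \eqref{eq_musharp} is the precise formula for $\mu_1^\sharp(u)$: that among the roots $a_1,\dots,a_{2k+1}$ it is the $\sharp'$-special one that gets replaced, and that it is replaced by $-1-a_{2k+1}$ rather than $-a_{2k+1}$. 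Your third step asserts this but does not derive it, and the tool you invoke cannot do the job: relation \eqref{eq_y_rel2} is an identity of operators, so applied to the vector $w$ annihilated by $S_{1,-1}(u)$ it merely relates the two unknown eigenvalues of $S_{1,1}(u)$ and $S_{-1,-1}(-u)$ on $w$ to each other; it determines neither. Computing the eigenvalue of $S_{-1,-1}(u)$ on the lowest weight vector of $L((\mu_1(u)))$ requires an actual model of the module --- Molev realizes it inside tensor products of evaluation modules and traces the lowest weight vector through that realization --- and this is precisely the step you label ``the main obstacle'' and leave open. A proof that defers its central computation is an outline, not a proof.

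There is a secondary gap in your first step. The new highest weight vector $w$ of $L^\sharp$ is a different vector from the highest weight vector $v$ of $L$, and the restriction of $L$ to the subalgebra $Y'$ generated by $\{S_{i,j}^{(r)} : |i|,|j|\geq 3\}$ is not irreducible, so ``uniqueness of irreducible highest weight modules for $Y'$'' does not by itself show that $S_{i,i}(u)w=\mu_i(u)w$ for $i\geq 3$. What one actually needs is that $w$ lies in the $Y'$-weight space obtained from that of $v$ by operators carrying only $\pm1$-indices (so that the $S_{i,i}^{(1)}$-eigenvalues for $|i|\geq 3$ are unchanged), which again requires tracking the weight decomposition of $L$ rather than a formal uniqueness argument. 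Your second step's concern about the $\{S_{\pm1,\pm1}^{(r)}\}$ not spanning a subalgebra is, by contrast, unfounded: all indices appearing in \eqref{eq_y_rel1} for $i,j,k,l\in\{\pm1\}$ remain in $\{\pm1\}$, and this rank-reduction is a standard fact about twisted Yangians, so no appeal to annihilation of highest-weight-like vectors is needed there.
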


We will also need the following lemma.
\begin{Lemma} \label{l_sharp0}
If $(a_1, \dots, a_{2k+1})$ satisfies
$a_{2i-1} + a_{2i} \ge 0$ for $i=1, \dots, k$ 
and $a_{2k+1}$ is 
the $\sharp'$-special element of 
$(a_1, \dots, a_{2k+1})$ 
then
$-1-a_{2k+1}$ is the $\sharp'$-special element of
$(a_1, \dots, a_{2k}, -1-a_{2k+1})$.
\end{Lemma}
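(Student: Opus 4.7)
My plan is to apply Lemma~\ref{l_sharp_max} to reformulate the claim. Concretely, because the hypothesis supplies a re-indexing of $(a_1,\ldots,a_{2k+1})$ with pair sums in $\Z_{\ge 0}$, the shifted list $(a_1+\tfrac12,\ldots,a_{2k+1}+\tfrac12)$ has its $\sharp$-special element defined; the same will hold for the new list $(a_1,\ldots,a_{2k},b)$ with $b:=-1-a_{2k+1}$. Hence both $\sharp'$-special elements exist and equal the unique maxima (in the partial order $\ge$ on $\C$) of the sets of last entries of valid re-indexings, where ``valid'' means all pair sums lie in $\Z_{\ge 0}$. It therefore suffices to show: (i) $b$ is a valid last entry of $(a_1,\ldots,a_{2k},b)$, and (ii) any valid last entry $c$ of $(a_1,\ldots,a_{2k},b)$ satisfies $c\le b$.

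Part (i) is immediate from the identity indexing, whose pair sums are the original $a_{2i-1}+a_{2i}\ge 0$. For (ii), I would fix a valid re-indexing $(b_1,\ldots,b_{2k+1})$ with $b_{2k+1}=c$ and reduce to the nontrivial case $c=a_t$ for some $t\le 2k$. Then $b$ appears as some $b_j$ with partner $d:=b_{j'}\in\{a_1,\ldots,a_{2k}\}\setminus\{c\}$, and the constraint $b+d\ge 0$ unpacks to $d-a_{2k+1}\in\Z_{\ge 1}$, so $d>a_{2k+1}$ strictly in the partial order.

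The first key observation is integrality: summing the $k$ pair sums of $(b_1,\ldots,b_{2k+1})$ gives $\sum_{i=1}^{2k}a_i+b-c\in\Z_{\ge 0}$, and the original pairing forces $\sum_{i=1}^{2k}a_i\in\Z_{\ge 0}$. Subtracting yields $b-c\in\Z$, i.e.\ $c+a_{2k+1}\in\Z$.

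The second key observation is a replacement argument: I would construct a candidate re-indexing of the \emph{original} list $(a_1,\ldots,a_{2k+1})$ by retaining the $k-1$ pairs of $(b_1,\ldots,b_{2k+1})$ that avoid $b$ and $c$, installing the new pair $(a_{2k+1},c)$ in place of $(b,d)$, and placing $d$ as the last entry. Should $a_{2k+1}+c\ge 0$ hold, this candidate would be valid, making $d$ a valid last entry of the original list and contradicting the maximality of $a_{2k+1}$ via $d>a_{2k+1}$. Hence $a_{2k+1}+c\notin\Z_{\ge 0}$, which combined with $c+a_{2k+1}\in\Z$ forces $c+a_{2k+1}\le -1$, equivalently $c\le b$, completing (ii). The main obstacle is reconciling the partial order on $\C$ (which only compares elements differing by a nonnegative integer) with the $\Z_{\ge 0}$ sum conditions: without the integrality step, ruling out $a_{2k+1}+c\ge 0$ alone would not suffice to conclude $c\le b$ in this partial order.
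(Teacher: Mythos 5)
Your argument is correct, but it takes a genuinely different route from the paper's. The paper proves this lemma by representation theory: it packages the list into the highest weight of a finite dimensional $Y_2^+$-module $L((\mu_1(u)))$ using Theorem~\ref{y_2} and Lemma~\ref{molev5.3}, computes $L((\mu_1(u)))^\sharp$ via \eqref{eq_musharp} and Theorem~\ref{th_sharp}, and then exploits that $\psi$ is an involution, so $(L^\sharp)^\sharp = L$; comparing the two highest weights forces the $\sharp'$-special element of $(a_1,\dots,a_{2k},-1-a_{2k+1})$ to be $-1-a_{2k+1}$ after handling the degenerate case $a_{2k+1}=-\tfrac{1}{2}$. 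Your proof is instead purely combinatorial: you use Lemma~\ref{l_sharp_max} to recast ``$\sharp'$-special'' as ``unique maximum among last entries of pairings with pair sums in $\Z_{\ge 0}$,'' and then run an exchange argument, splicing the pair $(a_{2k+1},c)$ into a hypothetical pairing of the new list to produce a valid last entry $d>a_{2k+1}$ of the original list, contradicting maximality. The integrality observation ($c+a_{2k+1}\in\Z$, obtained by summing pair sums over both pairings) is essential and you supply it correctly; without it, knowing only $a_{2k+1}+c\notin\Z_{\ge 0}$ would not yield $c\le -1-a_{2k+1}$ in the partial order on $\C$. What your route buys is independence from Molev's classification and from the module identity $(L^\sharp)^\sharp=L$, making the lemma a self-contained combinatorial fact; what the paper's route buys is brevity given machinery already in place, and it exhibits the lemma as the combinatorial shadow of $\psi$ being an involution.
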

\begin{proof}
Theorem \ref{y_2} and Lemma \ref{molev5.3} 
imply that
the $Y_2^+$-module 
$L((\mu_1(u)))$, where
$\mu_1(u) = (1-a_1 u^{-1})  \dots (1-a_{2k+1} u^{-1}) (1+\frac{1}{2} u^{-1})^{-1})$,
is finite dimensional, and by
\eqref{eq_musharp} and Theorem \ref{th_sharp} 
\[
L((\mu_1(u)))^\sharp 
 = L(((1-a_1 u^{-1}) \dots (1-a_{2k} u^{-1}) (1+(1+a_{2k+1}) u^{-1}) 
   (1+\frac{1}{2} u^{-1})^{-1})).
\]
Since $\psi$ from \eqref{eq_psi} is an involution, we must have that
\[
  (L((\mu_1(u))) ^ \sharp)^\sharp
   = L(((1-a_1 u^{-1})  \dots (1-a_{2k+1} u^{-1}) (1+\frac{1}{2} u^{-1})^{-1})).
\]
Now suppose $\sharp'$-special element of 
$(a_1, \dots, a_{2k}, -1-a_{2k+1})$ is $a_j$ for some
$j \in \{1, \dots, 2k\}$.
So by Theorem \ref{th_sharp} 
$(a_1, \dots, a_{2k+1}) = (a_1, \dots, a_{j-1}, -1-a_j,
a_{j+1}, \dots, a_{2k}, -1-a_{2k+1})$, so we must have that
$(a_j, a_{2k+1}) = ( -1-a_j, -1-a_{2k+1})$.  Since
$a_j \ne -1-a_{2k+1}$, we must have that $a_{2k+1} = -1-a_{2k+1}$, which implies that
$a_{2k+1} = -\frac{1}{2} = -1-a_{2k+1}$, so by Lemma \ref{l_sharp_max} 
the $\sharp'$-special element
of $(a_1, \dots, a_{2k}, -1-a_{2k+1})$ is in fact $-1-a_{2k+1}$.
\end{proof}

Here is the classification of the finite dimensional 
irreducible representations of $Y_n^+$
for even $n >2$.
\begin{Theorem}[{\cite[Theorem 5.9]{M}}]
\label{y_n_plus}
Let $n >2$ be even.  Then the
$Y_n^+$-module 
$L(\bar \mu(u))$ is finite dimensional
if and only if $\mu_1^\sharp(u)$ is well defined 
and any of the following four conditions holds:
\begin{itemize}
\item[(i)]
   $ \mu_1(-u) \Rightarrow \mu_1(u) \rightarrow \mu_3(u) \rightarrow \dots
     \rightarrow \mu_{n-1}(u)$,
\item[(ii)]
   $ \frac{2u-1}{2u+1} \mu_1(-u) \Rightarrow \mu_1(u) \rightarrow \mu_3(u) \rightarrow \dots
     \rightarrow \mu_{n-1}(u)
$,
\item[(iii)] 
   $ \mu_1^\sharp(-u) \Rightarrow \mu_1^\sharp(u) \rightarrow \mu_3(u) \rightarrow \dots
     \rightarrow \mu_{n-1}(u)$,
\item[(iv)]
   $ \frac{2u-1}{2u+1} \mu_1^\sharp(-u) \Rightarrow \mu_1^\sharp(u) \rightarrow \mu_3(u) \rightarrow \dots
     \rightarrow \mu_{n-1}(u)$.
\end{itemize}
\end{Theorem}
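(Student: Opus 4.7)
The plan is to follow Molev's strategy by reducing to the two base cases already in hand: the rank-two case (Theorem \ref{y_2}) and the chain classifications for $Y_n$ (Theorem \ref{t_drin_y_n_class}) and $Y_n^-$ (Theorem \ref{y_n_minus}). The proof splits into necessity and sufficiency; both directions exploit the involutive automorphism $\psi$ from \eqref{eq_psi} together with the various sub-Yangian structures inside $Y_n^+$.

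For necessity, assume $L(\bar\mu(u))$ is finite dimensional with highest weight vector $v$. First, the subalgebra of $Y_n^+$ generated by $\{S_{i,j}^{(r)} \mid i,j \in \{-1,1\}\}$ is isomorphic to $Y_2^+$, and the cyclic $Y_2^+$-submodule $Y_2^+\cdot v$ is finite dimensional with highest weight $\mu_1(u)$. Theorem \ref{y_2} together with Lemma \ref{molev5.3} then forces $\mu_1^\sharp(u)$ to be well-defined. Second, to obtain the chain relations $\mu_3(u)\to\mu_5(u)\to\cdots\to\mu_{n-1}(u)$ and the $\Rightarrow$ condition tying the chain to $\mu_1(u)$ (or $\mu_1^\sharp(u)$), one restricts to a larger sub-Yangian isomorphic to $Y_{n-2}^-$ or to a twisted Yangian acting on a suitable weight subspace obtained by quotienting out a Levi-type ideal, then invokes Theorem \ref{y_n_minus} inductively on $n$. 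The possible appearance of $\mu_1^\sharp(u)$ in place of $\mu_1(u)$, and of the factor $\tfrac{2u-1}{2u+1}$, is explained by first twisting $L(\bar\mu(u))$ by $\psi$ before restriction; Theorem \ref{th_sharp} identifies the highest weight of the twisted module and accounts for the four alternatives.

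For sufficiency, one constructs $L(\bar\mu(u))$ explicitly as an irreducible subquotient of a tensor product of evaluation modules. Under condition (i), the chain $\mu_1(u)\to\mu_3(u)\to\cdots\to\mu_{n-1}(u)$ guarantees a finite dimensional $Y_n$-module $M$ via Drinfeld's construction; pulling $M$ back along the embedding $Y_n^+\hookrightarrow Y_n$ using the coproduct in \eqref{com}, and tensoring with an appropriate finite dimensional $Y_2^+$-module supplied by Theorem \ref{y_2} using the $\Rightarrow$ relation, yields a finite dimensional $Y_n^+$-module whose irreducible quotient from the top weight space is $L(\bar\mu(u))$. Condition (ii) is handled similarly but using the variant of the rank-two evaluation homomorphism responsible for the $\tfrac{2u-1}{2u+1}$ factor. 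Conditions (iii) and (iv) are reduced to (i) and (ii) by performing the analogous construction with $\mu_1^\sharp(u)$ in place of $\mu_1(u)$ and then twisting the result by $\psi$; Theorem \ref{th_sharp} guarantees that this twist returns a highest weight module with the prescribed $\bar\mu(u)$.

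The main obstacle is the necessity direction. The delicate point is to show that the four listed conditions exhaust all possibilities, since the $\sharp$-twist can simultaneously improve the relation between $\mu_1$ and $\mu_3$ and destroy the $\Rightarrow$ relation in $\mu_1$; one must argue that after at most one application of $\psi$ (and possibly absorbing a $\tfrac{2u-1}{2u+1}$ factor) one lands in one of the four canonical shapes. This requires the combinatorial control over $\mu_1^\sharp$ established in Lemmas \ref{l_sharp_max} and \ref{l_sharp0}, together with a careful analysis of how the $Y_2^+$-classification of $\mu_1(u)$ interacts with the $\rightarrow$ relation to $\mu_3(u)$ forced by the $Y_{n-2}^-$ sub-Yangian.
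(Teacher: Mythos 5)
The first thing to note is that the paper offers no proof of this statement: Theorem \ref{y_n_plus} is imported verbatim from Molev \cite[Theorem 5.9]{M}, so there is no in-paper argument to compare yours against, and a full reproof is not expected here. Judged as a reconstruction of Molev's argument, your outline has the right overall architecture --- restrict to the copy of $Y_2^+$ on the indices $\pm 1$ to control $\mu_1(u)$, extract the chain conditions from smaller subalgebras, use the involution $\psi$ together with Theorem \ref{th_sharp} to account for the four alternatives, and build modules from tensor products of evaluation modules for sufficiency --- but two of your steps would fail as written.

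First, in the necessity direction, the sub-twisted-Yangian of $Y_n^+$ obtained by discarding the indices $\pm 1$ is of type $+$, not type $-$: the sign $\phi$ in the symmetry relation \eqref{eq_y_rel2} is unchanged on corner subalgebras and on weight subspaces, so there is no copy of $Y_{n-2}^-$ to restrict to, and invoking Theorem \ref{y_n_minus} inductively is a type error. In Molev's argument the chain $\mu_1(u)\rightarrow\mu_3(u)\rightarrow\cdots\rightarrow\mu_{n-1}(u)$ is forced by copies of the ordinary Yangian $Y_2=Y(\mathfrak{gl}_2)$ (lowering operators built from $S_{i,j}(u)$ with adjacent nonnegative indices) together with Drinfeld's Theorem \ref{t_drin_y_n_class}; the twisted rank-two classification of Theorem \ref{y_2} enters only through the $\Rightarrow$ condition on $\mu_1$. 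Second, in the sufficiency direction the coideal property \eqref{com} reads $\Delta(Y)\subseteq Y\otimes Y_n$, so the factor you may tensor against a finite dimensional $Y_n$-module must already be a $Y_n^+$-module, not a $Y_2^+$-module; the substantive content of Molev's sufficiency proof is precisely the construction of a suitable finite dimensional $Y_n^+$-module in that left slot from one-dimensional and rank-two data. Your sketch assumes that step rather than supplying it.
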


In order to give a more combinatorial description 
of this classification we need the following four lemmas.  In each
of the lemmas we assume for some $\gamma(u) \in 1+u^{-2} \C[[u^{-2}]]$ that
\[
 \mu(u) = \gamma(u)^{-1} 
(1+\frac{1}{2} u^{-1})^{-1}
(1-a_1 u^{-1}) \dots (1-a_{2k+1} u^{-1}) 
\]
where 
$a_{2k+1}$ is the $\sharp'$-special element of 
$(a_1, \dots, a_{2k+1})$
and
$a_{2i-1} + a_{2i} \ge 0$ for $i=1, \dots, k$.

\begin{Lemma} \label{l_sharp1}
Let $\mu(u)$ be as above.  
Then $\mu(-u) \Rightarrow \mu(u)$ 
if and only if 
  $a_{2 k + 1} \ge - \frac{1}{2}$.
\end{Lemma}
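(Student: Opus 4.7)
My plan is to reduce both directions to the combinatorial pairing criterion
of Lemma~\ref{lRightarrow} (and its partner Lemma~\ref{lanyevenpoly}), after
first absorbing the offending factor $(1+\tfrac12 u^{-1})^{-1}$ into an even
multiplier. Concretely, set
$\tilde\gamma(u) = \gamma(u)\bigl(1-\tfrac14 u^{-2}\bigr)$, which lies in
$1+u^{-2}\C[[u^{-2}]]$ because of the factorization
$1-\tfrac14 u^{-2} = (1+\tfrac12 u^{-1})(1-\tfrac12 u^{-1})$. Then
\[
  \tilde\gamma(u)\mu(u) \;=\; (1-\tfrac12 u^{-1})
      (1-a_1 u^{-1})\dotsm(1-a_{2k+1}u^{-1}) \;\in\; \C[u^{-1}],
\]
so $\mu(-u)\Rightarrow\mu(u)$ becomes equivalent, via
Lemmas~\ref{lRightarrow} and~\ref{lanyevenpoly}, to the existence of a
pairing of the $(2k+2)$-element list
$\bigl(-\tfrac12,a_1,\dots,a_{2k+1}\bigr)$ into $k+1$ pairs all of whose
sums lie in $\Z_{\ge 0}$.

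For the ``if'' direction I would split into two sub-cases, exhausting the
possibilities for $a_{2k+1}\ge -\tfrac12$. If $a_{2k+1}=-\tfrac12$ then
$(1-a_{2k+1}u^{-1})$ cancels $(1+\tfrac12 u^{-1})$ outright, leaving
$\gamma(u)\mu(u)=\prod_{i=1}^{2k}(1-a_iu^{-1})$ already paired by
$(a_{2i-1},a_{2i})$, and Lemma~\ref{lRightarrow} applies to $\gamma(u)$
directly. If instead $a_{2k+1}\ge\tfrac12$, then the pair
$(-\tfrac12, a_{2k+1})$ sums to $a_{2k+1}-\tfrac12\in\Z_{\ge 0}$, so
appending this pair to $(a_1,a_2),\dots,(a_{2k-1},a_{2k})$ exhibits the
required pairing of $\tilde\gamma(u)\mu(u)$, and Lemma~\ref{lRightarrow}
applied to $\tilde\gamma(u)$ gives $\mu(-u)\Rightarrow\mu(u)$.

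The converse is where the $\sharp'$-special hypothesis really enters, and
this is the step I expect to be the main obstacle. Assuming
$\mu(-u)\Rightarrow\mu(u)$, Lemma~\ref{lanyevenpoly} applied to
$\tilde\gamma(u)$ produces some pairing $\pi$ of
$(-\tfrac12,a_1,\dots,a_{2k+1})$ with pair sums in $\Z_{\ge 0}$. Let
$\alpha$ be the element paired with $-\tfrac12$, so $\alpha\ge\tfrac12$.
If $\alpha=a_{2k+1}$ we are done immediately. Otherwise $\alpha=a_j$ for
some $j\le 2k$; removing the pair $(-\tfrac12,a_j)$ from $\pi$ leaves a
pairing of $\{a_1,\dots,a_{2k+1}\}\setminus\{a_j\}$, which I would re-index
as $(b_1,b_2),\dots,(b_{2k-1},b_{2k})$ with $b_{2i-1}+b_{2i}\in\Z_{\ge 0}$,
and then set $b_{2k+1}=a_j$. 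After shifting every entry by $\tfrac12$ (so
that the weak pair-sum conditions defining $\sharp'$-special become the
strict conditions defining $\sharp$-special), this exhibits an admissible
rearrangement in the sense of the $\sharp$-special definition, so
Lemma~\ref{l_sharp_max} forces $a_{2k+1}+\tfrac12 \ge a_j+\tfrac12$, whence
$a_{2k+1}\ge a_j\ge \tfrac12$ and in particular $a_{2k+1}\ge -\tfrac12$.
The bookkeeping subtlety I expect to be careful with is exactly this
translation between the two maximality notions, which is what
Lemma~\ref{l_sharp_max} is tailored for.
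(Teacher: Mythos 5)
Your proposal follows essentially the same route as the paper's proof: clear the factor $(1+\tfrac12 u^{-1})^{-1}$ by multiplying by $1-\tfrac14 u^{-2}$, reduce both directions to the pairing criterion of Lemmas \ref{lRightarrow} and \ref{lanyevenpoly}, and in the converse use the maximality characterization of the $\sharp'$-special element supplied by Lemma \ref{l_sharp_max} to force $a_{2k+1}\ge a_j$. The one thing to repair is a sign slip: in the convention of Lemmas \ref{lRightarrow} and \ref{lanyevenpoly} the factor $1-\tfrac12 u^{-1}$ contributes the element $+\tfrac12$ to the list, not $-\tfrac12$, so the relevant $(2k+2)$-element list is $(\tfrac12,a_1,\dots,a_{2k+1})$ and the pair containing the extra element has sum $\tfrac12+\alpha$. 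As written, your ``if'' direction applies Lemma \ref{lRightarrow} to the product built from the list $(-\tfrac12,a_1,\dots,a_{2k+1})$, which differs from your (correctly displayed) $\tilde\gamma(u)\mu(u)$ by the factor $(1+\tfrac12u^{-1})/(1-\tfrac12u^{-1})$; and your converse would deliver the intermediate bound $a_{2k+1}\ge\tfrac12$, which is actually false in general (the lemma permits $a_{2k+1}=-\tfrac12$, e.g.\ $k=0$, $a_1=-\tfrac12$, $\mu(u)=1$). With the sign corrected the argument goes through and simplifies: in the ``if'' direction the single pair $(\tfrac12,a_{2k+1})$ has sum $a_{2k+1}+\tfrac12\ge0$, which handles both of your sub-cases uniformly (this is exactly the paper's one-line argument), and in the converse the element $\alpha$ paired with $\tfrac12$ satisfies $\alpha\ge-\tfrac12$, whence $a_{2k+1}\ge\alpha\ge-\tfrac12$ by the Lemma \ref{l_sharp_max} comparison you describe, which is precisely the paper's argument.
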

\begin{proof}
Suppose $\mu(-u) \Rightarrow \mu(u)$.
Since $(1-\frac{1}{4} u^{-2}) \gamma(u) \mu(u) \in \C[u^{-1}]$,
by Lemma \ref{lanyevenpoly} the list $(a_1, \dots, a_{2k+1}, \frac{1}{2})$ 
can be re-indexed as
$(b_1, \dots, b_{2k+2})$ where
$b_{2i-1} + b_{2i} \ge 0$ for $i=1, \dots, k+1$.
So $b_j = \frac{1}{2}$ for some $j$. 
We assume that $j$ is odd, the proof when $j$ is even is similar.
By Lemma \ref{l_sharp_max} $b_{j+1} \le a_{2k+1}$, so
$0 \le b_{j+1} + \frac{1}{2} \le a_{2k+1} + \frac{1}{2}$.

To prove the converse note that
\[
\gamma(u)(1-\frac{1}{4} u^{-2}) 
\mu(u) = (1-a_1 u^{-1}) \dots (1-a_{2k+1} u^{-1}) (1-\frac{1}{2} u^{-1}),
\]
then apply Lemma \ref{lRightarrow}.

\end{proof}

\begin{Lemma} \label{l_sharp2}
  Let $\mu(u)$ be as above.  Then 
$\frac{2u-1}{2u+1} \mu(-u) \Rightarrow \mu(u)$ 
if and only if 
  $a_{2 k + 1} \ge 0$.
\end{Lemma}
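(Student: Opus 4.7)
The plan is to mirror the proof of Lemma~\ref{l_sharp1}, now with the factor $(1+\tfrac12 u^{-1})$ absorbing the role played by $(1-\tfrac14 u^{-2})$ there. First I would set $\lambda(u) = (1+\tfrac12 u^{-1})\mu(u)$ and observe, using $\tfrac{2u-1}{2u+1} = (1-\tfrac12 u^{-1})/(1+\tfrac12 u^{-1})$, that
\[
\frac{\tfrac{2u-1}{2u+1}\mu(-u)}{\mu(u)} = \frac{\lambda(-u)}{\lambda(u)},
\]
so $\tfrac{2u-1}{2u+1}\mu(-u) \Rightarrow \mu(u)$ is equivalent to $\lambda(-u) \Rightarrow \lambda(u)$. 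Since $\gamma(u)\lambda(u) = \prod_{i=1}^{2k+1}(1-a_iu^{-1}) \in \C[u^{-1}]$ with $\gamma \in 1+u^{-2}\C[[u^{-2}]]$, I can now apply Lemmas~\ref{lRightarrow} and~\ref{lanyevenpoly} directly to $\lambda$.

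For the ``if'' direction, if $a_{2k+1}\geq 0$ then I rewrite $\gamma(u)\lambda(u) = \prod_{i=1}^{2k+2}(1-c_iu^{-1})$ with $c_i = a_i$ for $i \leq 2k+1$ and $c_{2k+2} = 0$. Every consecutive pair sum is $\geq 0$ (the last being $a_{2k+1}+0 \geq 0$), so Lemma~\ref{lRightarrow} gives $\lambda(-u)\Rightarrow\lambda(u)$ immediately.

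For the ``only if'' direction, assuming $\lambda(-u) \Rightarrow \lambda(u)$, I apply Lemma~\ref{lanyevenpoly} to $\lambda$ and this same $\gamma$ to produce $\gamma(u)\lambda(u) = \prod_{i=1}^{2m}(1-c_iu^{-1})$ with all consecutive pair sums $\geq 0$. Comparing with $\prod_{i=1}^{2k+1}(1-a_iu^{-1})$ in $\C[u^{-1}]$, the non-zero multisets $\{a_i\}\setminus\{0\}$ and $\{c_i\}\setminus\{0\}$ coincide, and a parity check (the numbers of zeros on the two sides differ by an even integer) lets me transfer the $c$-pairing to a pairing $(b_1,\ldots,b_{2k+2})$ of the extended list $(a_1,\ldots,a_{2k+1},0)$ with all $b_{2i-1}+b_{2i}\geq 0$, matching any leftover zeros with each other. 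From here the endgame copies that of Lemma~\ref{l_sharp1}: some $b_j = 0$, WLOG with $j$ odd, so $b_{j+1}\geq 0$. Shifting every entry by $\tfrac12$ promotes all pair sums to $\geq 1 > 0$, so deleting the pair $(b_j+\tfrac12,\,b_{j+1}+\tfrac12)$ exhibits a valid $\sharp$-permutation of $(a_1+\tfrac12,\ldots,a_{2k+1}+\tfrac12)$ with last element $b_{j+1}+\tfrac12$. By Lemma~\ref{l_sharp_max}, $a_{2k+1}+\tfrac12$ is the maximum over such last elements, so $a_{2k+1}+\tfrac12 \geq b_{j+1}+\tfrac12 \geq \tfrac12$, giving $a_{2k+1}\geq 0$.

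The hard part will be the bookkeeping step that turns the factorization from Lemma~\ref{lanyevenpoly} (whose length $2m$ need not equal $2k+2$) into a legitimate consecutive pairing of the specific list $(a_1,\ldots,a_{2k+1},0)$; the parity of zeros on each side guarantees this is always possible. After that step, the argument is essentially the same $\tfrac12$-shift maneuver that drove the proof of Lemma~\ref{l_sharp1}, only with $0$ in place of $\tfrac12$ as the extra entry and the threshold $-\tfrac12$ simply shifted up to $0$.
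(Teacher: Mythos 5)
Your proposal is correct and follows essentially the same route as the paper: the same reduction of $\tfrac{2u-1}{2u+1}\mu(-u)\Rightarrow\mu(u)$ to $\lambda(-u)\Rightarrow\lambda(u)$ for $\lambda(u)=(1+\tfrac12 u^{-1})\mu(u)$, Lemma~\ref{lRightarrow} applied to $(a_1,\dots,a_{2k+1},0)$ for the ``if'' direction, and Lemma~\ref{lanyevenpoly} plus Lemma~\ref{l_sharp_max} for the ``only if'' direction. Your extra bookkeeping about the length of the factorization produced by Lemma~\ref{lanyevenpoly} is a point the paper passes over silently, but it does not change the argument.
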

\begin{proof}
Note that 
\begin{equation} \label{eq_l_sharp2}
\frac{2u-1}{2u+1} \mu(-u) \Rightarrow \mu(u) 
\text{ if and only if }
(1-\frac{1}{2} u^{-1}) \mu(-u) \Rightarrow (1+\frac{1}{2} u^{-1})\mu(u).
\end{equation}

Suppose $\frac{2u-1}{2u+1} \mu(-u) \Rightarrow \mu(u)$.
So by Lemma \ref{lanyevenpoly} and \eqref{eq_l_sharp2},
the set $(a_1, \dots, a_{2k+1}, 0)$ 
can be re-indexed as
$(b_1, \dots, b_{2k+2})$ where
$b_{2i-1} + b_{2i} \ge 0$ for $i=1, \dots, k+1$.
So  $b_j = 0$ for some $j$. 
We assume that $j$ is odd, the proof when $j$ is even is similar. 
By Lemma \ref{l_sharp_max} $b_{j+1} \le a_{2k+1}$, so
$0 \le b_{j+1} \le a_{2k+1}$.

The converse follows immediately from Lemma \ref{lRightarrow} 
and \eqref{eq_l_sharp2}
since
\[
\gamma(u) (1+\frac{1}{2} u^{-1}) 
\mu(u) = (1-a_1 u^{-1}) \dots (1-a_{2k+1} u^{-1})(1-0 u^{-1}).
\]
\end{proof}

\begin{Lemma} \label{l_sharp3}
   Let $\mu(u)$ be as above.  Then $\mu^\sharp(-u) \Rightarrow \mu^\sharp(u)$ 
if and only if 
  $a_{2 k + 1} \le -\frac{1}{2}$.
\end{Lemma}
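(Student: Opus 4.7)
The plan is to reduce Lemma \ref{l_sharp3} to Lemma \ref{l_sharp1} applied to $\mu^\sharp$ in place of $\mu$, using Lemma \ref{l_sharp0} as the key combinatorial bridge. The driving observation is that $\mu^\sharp(u)$, read off from \eqref{eq_musharp}, has essentially the same shape as the standard ansatz from the preamble preceding Lemma \ref{l_sharp1}, but with the tuple of parameters $(a_1,\dots,a_{2k+1})$ replaced by $(a_1,\dots,a_{2k},-1-a_{2k+1})$.

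First I would verify that $\mu^\sharp$ fits the standard-form hypothesis of the preamble: namely, write
\[
  \mu^\sharp(u) = \tilde\gamma(u)^{-1}(1+\tfrac{1}{2}u^{-1})^{-1}(1-\tilde a_1u^{-1})\cdots(1-\tilde a_{2k+1}u^{-1})
\]
with $\tilde\gamma(u)\in 1+u^{-2}\C[[u^{-2}]]$, $\tilde a_i=a_i$ for $i\le 2k$, and $\tilde a_{2k+1}=-1-a_{2k+1}$. A naive attempt sets $\tilde\gamma=\gamma(1+\tfrac{1}{2}u^{-1})^{-2}$, which fails because this is not even in $u^{-1}$; the remedy is to exploit the non-uniqueness of the representation (as in the proof of well-definedness of $\mu^\sharp$ just after \eqref{eq_musharp}) and adjust by factors of $(1-\tfrac{1}{4}u^{-2})^{\pm 1}\in 1+u^{-2}\C[[u^{-2}]]$ until the desired form appears. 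Lemma \ref{l_sharp0} then supplies the remaining ingredient: the new last parameter $\tilde a_{2k+1}=-1-a_{2k+1}$ is the $\sharp'$-special element of the new list, and the pairing conditions on $\tilde a_1,\dots,\tilde a_{2k}$ are automatic since they are inherited from $(a_i)$.

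With $\mu^\sharp$ now in the standard form, Lemma \ref{l_sharp1} applies directly and gives $\mu^\sharp(-u)\Rightarrow\mu^\sharp(u)$ if and only if $\tilde a_{2k+1}\ge -\tfrac{1}{2}$, i.e.\ $-1-a_{2k+1}\ge -\tfrac{1}{2}$. By the definition of the partial order $\ge$ on $\C$ (namely $x\ge y$ iff $x-y\in\Z_{\ge 0}$), this is equivalent to $a_{2k+1}\le -\tfrac{1}{2}$, which is the asserted criterion.

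The principal obstacle is the bookkeeping in the first step: namely, verifying that $\mu^\sharp$ can indeed be recast in the standard form without leaving the class of even $\gamma$. If that direct recasting turns out to be awkward, the backup plan is to mimic the proof of Lemma \ref{l_sharp1} line-by-line: choose an appropriate factor in $1+u^{-2}\C[[u^{-2}]]$ (a suitable power of $1-\tfrac{1}{4}u^{-2}$) so that the product $\gamma'(u)\mu^\sharp(u)$ is a polynomial, apply Lemma \ref{lanyevenpoly} to get a pairing of the resulting roots, and use Lemma \ref{l_sharp_max} together with the $\sharp'$-special property of $a_{2k+1}$ to extract the inequality $a_{2k+1}\le -\tfrac{1}{2}$ for the forward direction; the reverse direction is obtained by using Lemma \ref{l_sharp0} to construct an explicit pairing and then invoking Lemma \ref{lRightarrow}.
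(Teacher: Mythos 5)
Your proof is correct and follows the paper's own (one-line) argument exactly: the paper deduces Lemma \ref{l_sharp3} from \eqref{eq_musharp}, Lemma \ref{l_sharp0}, and Lemma \ref{l_sharp1}, precisely the reduction you carry out. The only step where you struggle --- recasting $\mu^\sharp$ in the standard form of the preamble --- is an artifact of a sign typo in \eqref{eq_musharp}: the exponent of $(1+\frac{1}{2}u^{-1})$ there should be $-1$ (as it is when the formula is invoked in the proof of Lemma \ref{l_sharp0}, and as the involutivity of $\sharp$ forces), after which $\mu^\sharp$ is already in standard form with the \emph{same} $\gamma(u)$ and parameters $(a_1,\dots,a_{2k},-1-a_{2k+1})$, so no adjustment is needed. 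Note that your proposed remedy of multiplying by powers of $(1-\frac{1}{4}u^{-2})$ could not repair the formula taken literally, since converting $(1+\frac{1}{2}u^{-1})$ into $(1+\frac{1}{2}u^{-1})^{-1}$ times linear factors forces the extra pair $(-\frac{1}{2},-\frac{1}{2})$, which violates the pairing condition $a_{2i-1}+a_{2i}\ge 0$; your backup plan of rerunning the proof of Lemma \ref{l_sharp1} directly would, however, go through.
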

\begin{proof}
This follows from \eqref{eq_musharp}, Lemma \ref{l_sharp0}, and Lemma \ref{l_sharp1}.
\end{proof}

\begin{Lemma} \label{l_sharp4}
  Let $\mu(u)$ be as above.  Then $\frac{2u-1}{2u+1} \mu^\sharp(-u) \Rightarrow \mu^\sharp(u)$ 
    if and only if 
  $ a_{2 k + 1} \le -1$.
\end{Lemma}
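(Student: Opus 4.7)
The plan is to follow exactly the template used for Lemma \ref{l_sharp3}: deduce the statement by applying Lemma \ref{l_sharp2} to $\mu^\sharp$ itself, treating $\mu^\sharp$ as another element in the same canonical form, using \eqref{eq_musharp} for the expression and Lemma \ref{l_sharp0} to identify its $\sharp'$-special entry.

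First I would set $a'_i := a_i$ for $i = 1, \dots, 2k$ and $a'_{2k+1} := -1-a_{2k+1}$, and note that $1 + (1+a_{2k+1})u^{-1} = 1 - a'_{2k+1} u^{-1}$. Then \eqref{eq_musharp} presents $\mu^\sharp$ in the canonical form
\[
  \mu^\sharp(u) = \gamma(u)^{-1}(1+\tfrac{1}{2}u^{-1})^{-1}\prod_{i=1}^{2k+1}(1 - a'_i u^{-1}),
\]
with the same $\gamma(u) \in 1 + u^{-2}\C[[u^{-2}]]$ and with the inherited pairing condition $a'_{2i-1} + a'_{2i} = a_{2i-1} + a_{2i} \ge 0$ for $i = 1, \dots, k$.

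Next, Lemma \ref{l_sharp0} tells us that $a'_{2k+1} = -1-a_{2k+1}$ is the $\sharp'$-special element of the list $(a'_1, \dots, a'_{2k+1})$, so $\mu^\sharp$ satisfies all of the standing hypotheses imposed on $\mu$ at the top of the block. Lemma \ref{l_sharp2} may then be invoked with $\mu$ replaced by $\mu^\sharp$ and with $a_{2k+1}$ replaced by $a'_{2k+1}$; it asserts that $\tfrac{2u-1}{2u+1}\mu^\sharp(-u) \Rightarrow \mu^\sharp(u)$ if and only if $a'_{2k+1} \ge 0$, which translates to $a_{2k+1} \le -1$, as required.

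Since the argument is a direct transfer of the analysis of $\mu$ to $\mu^\sharp$, there is no substantive obstacle; the only verification needed is the rewriting of \eqref{eq_musharp} in canonical form in the first step and the bookkeeping that both the $\sharp'$-special condition and the pairing condition pass from $(a_1,\dots,a_{2k+1})$ to $(a'_1,\dots,a'_{2k+1})$, both of which are immediate from Lemma \ref{l_sharp0} and the definition of the $a'_i$'s.
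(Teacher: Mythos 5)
Your proof is correct and is exactly the paper's argument: the paper's proof of Lemma \ref{l_sharp4} consists precisely of citing \eqref{eq_musharp}, Lemma \ref{l_sharp0}, and Lemma \ref{l_sharp2}, and you have simply spelled out the bookkeeping (including the sign/exponent on the factor $(1+\tfrac{1}{2}u^{-1})$, which \eqref{eq_musharp} as printed has as a typo). Nothing further is needed.
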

\begin{proof}
This follows from \eqref{eq_musharp}, Lemma \ref{l_sharp0}, and Lemma \ref{l_sharp2}.
\end{proof}

Next we give the classification of the finite dimensional irreducible 
$Y_n^+$-modules for $n$ odd.
Note that for a highest weight representation 
of highest weight
$\bar \mu(u) \in (1+u^{-1} \C[[u^{-1}]])^{(n+1)/2}$, 
by the relation
\eqref{eq_y_rel2}, we must have that $\mu_0(u) \in 1+u^{-2}\C[[u^{-2}]]$.

\begin{Theorem} 
[{\cite[Theorem 6.7]{M}}]
\label{y_n_odd} 
Assume that $n \in \Z_{> 0}$ is odd.  Then the $Y_n^+$-module
$L(\bar \mu(u))$ is finite dimensional 
if and only if either one of the the following two 
conditions holds:
\begin{itemize}
\item[(i)]
$
\mu_0(u) \rightarrow \mu_2(u) \rightarrow \dots \rightarrow \mu_{n-1}(u),
$
\item [(ii)]
$
\frac{2u}{2u+1} \mu_0(u) \rightarrow \mu_2(u) \rightarrow \dots \rightarrow \mu_{n-1}(u).
$
\end{itemize}
\end{Theorem}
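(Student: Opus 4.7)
The plan is to follow Molev's general strategy, combining reduction to small-rank subalgebras for necessity with explicit constructions via evaluation modules and tensor products for sufficiency.

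For the necessity direction, I would first observe that for each pair of positive indices $i, i+2 \in \{2, 4, \dots, n-1\}$, the elements $S_{p,q}^{(r)}$ with $p, q \in \{i, i+2\}$ generate a subalgebra of $Y_n^+$ isomorphic to (a quotient of) the untwisted Yangian $Y_2$, so restricting $L(\bar\mu(u))$ to this subalgebra yields a highest-weight module with weight $(\mu_i(u), \mu_{i+2}(u))$, whose irreducible quotient is still finite dimensional. Drinfeld's criterion (Theorem \ref{t_drin_y_n_class}) then forces $\mu_i(u) \rightarrow \mu_{i+2}(u)$, producing the common tail of both conditions (i) and (ii). The condition on $\mu_0(u)$ is extracted by restricting to the $Y_3^+ = Y(\mf{so}_3)$ subalgebra generated by the indices $\{-2, 0, 2\}$. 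Using the exceptional isomorphism $\mf{so}_3 \cong \mf{sl}_2$, this twisted Yangian is identified (up to a standard shift of the spectral parameter) with $Y(\mf{gl}_2)$, and Drinfeld's classification translates into the two alternative conditions; the dichotomy arises because the embedding $Y(\mf{sl}_2) \hookrightarrow Y(\mf{so}_3)$ is only unique up to the outer automorphism of $\mf{sl}_2$, which manifests as the factor $\frac{2u}{2u+1}$.

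For the sufficiency direction, I would construct the required modules as irreducible subquotients of tensor products of evaluation modules. Given $\bar\mu(u)$ satisfying (i), apply Lemma \ref{lcompat} to find $\gamma(u) \in 1 + u^{-1}\C[[u^{-1}]]$ such that each $\gamma(u)\mu_i(u)$ is polynomial; then factor these polynomials into linear pieces and assemble $L(\bar\mu(u))$ as an irreducible subquotient of a tensor product of elementary evaluation modules for $Y_n^+$, pulled back from $Y_n$ via the coproduct \eqref{com}. Case (ii) is obtained either by running the same construction with $\frac{2u}{2u+1}\mu_0(u)$ in place of $\mu_0(u)$, or by tensoring a module of type (i) with a suitable one-dimensional $Y_3^+$-module that shifts the central weight by $\frac{2u}{2u+1}$.

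The main obstacle will be the $Y_3^+$ base case: proving that both alternatives (i) and (ii) are genuinely necessary and that precisely the corresponding highest weights are realized by finite-dimensional modules. The symmetry relation \eqref{eq_y_rel2} forces $\mu_0(u) \in 1 + u^{-2}\C[[u^{-2}]]$, and reconciling this constraint with the shift $u \mapsto u + \phi/2$ built into the evaluation map \eqref{ev0} is where the factor $\frac{2u}{2u+1}$ enters most subtly; this interaction is also the structural reason why the odd-$n$ classification has a two-case shape distinct from the four-case shape of the even-$n$ classification in Theorem \ref{y_n_plus}.
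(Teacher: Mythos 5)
This theorem is not proved in the paper at all: it is imported verbatim from Molev (\cite[Theorem 6.7]{M}) and used as a black box, so there is no internal argument to compare yours against. Judged on its own terms, your sketch has the right global shape (Molev does obtain the chain $\mu_2(u)\rightarrow\cdots\rightarrow\mu_{n-1}(u)$ from $Y(\mf{gl}_2)$-type subalgebras attached to consecutive positive indices, does isolate the condition on $\mu_0(u)$ in the rank-one subalgebra on the indices $\{-2,0,2\}$, and does build the modules by combining evaluation modules through the coproduct), but the two steps you lean on hardest are not correct as stated.

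First, the base case. $Y_3^+$ is not isomorphic to $Y(\mf{gl}_2)$, even after a shift of spectral parameter: the exceptional isomorphism $\mf{so}_3\cong\mf{sl}_2$ does not lift, since the associated graded algebra of $Y_3^+$ is $U(\mf{gl}_3[x]^{\sigma})$, whose graded pieces alternate between dimensions $3$ and $6$, while $Y(\mf{gl}_2)$ degenerates to $U(\mf{gl}_2[x])$ with every piece of dimension $4$. So Theorem \ref{t_drin_y_n_class} cannot be applied to the pair $(\mu_0(u),\mu_2(u))$. Moreover $\mf{sl}_2$ has no outer automorphisms, so your proposed explanation of the dichotomy is vacuous; the two conditions in fact track the integral versus spinor families of $\mf{so}$-weights, and in Molev's treatment the rank-one case is settled by a direct analysis of lowering operators acting on the highest weight vector of a $Y_3^+$-Verma-type module. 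That analysis is exactly the ``main obstacle'' your last paragraph names and defers, so the heart of the theorem remains unproved in your plan.

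Second, sufficiency in case (ii). The multiplier $\frac{2u}{2u+1}=(1+\frac{1}{2}u^{-1})^{-1}$ is not an even power series, so it cannot be produced by the rescaling automorphisms $S_{i,j}(u)\mapsto\gamma(u)S_{i,j}(u)$ of $Y$, which exist only for $\gamma(u)\in 1+u^{-2}\C[[u^{-2}]]$ (and which in any case rescale every $\mu_i(u)$, not just $\mu_0(u)$). Nor can you ``tensor with a one-dimensional $Y_3^+$-module'': by \eqref{com} the twisted Yangian is only a left coideal of $Y_n$, so the available operation is tensoring a $Y$-module with a $Y_n$-module, not two $Y$-modules with each other. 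The case (ii) weights are actually reached by feeding spinor representations of $\mf{so}_n$ through the evaluation homomorphism \eqref{ev0} (compare how the half-integer shift in \eqref{ev0} interacts with the evenness of $\mu_0(u)$ forced by \eqref{eq_y_rel2}); some ingredient of this kind is indispensable, and without it your construction produces only the case (i) modules.
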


\section {Proof of the classification theorem} \label{section4}

In this section we prove Theorem \ref{t_class} on a case by case basis.
First we recall from the introduction that we associate 
$A = (a_{i,j})_{i \in I_n, j \in I_l} \in \Tab_{n,l}$ to an
irreducible
highest weight $Y$-module $L(A)$ with highest weight vector $v$
by declaring that 
\[
  (u - \frac{i}{2})^l S_{i,i}(u-\frac{i}{2}) v = (u+a_{i,1-l})(u+a_{i,3-l}) \dots (u+a_{i,l-1}) v
\]
   if $l$ is even and $i \ge 0$, or
\begin{align*}
  (&u - \frac{i}{2})^{l-1} (u+\frac{\phi-i}{2}) S_{i,i}(u-\frac{i}{2}) v  \\
   &= 
   (u+a_{i,1-l})(u+a_{i,3-l}) \dots (u+a_{i,-2})(u+a_{i,0} + 
\delta_{i,0} /2) (u+a_{i,2}) \dots (u+a_{i,l-1}) v
\end{align*}
if $l$ is odd and $i \ge 0$.
In other words, 
this means 
if 
\begin{equation} \label{lofa}
  L(A) =  L(\bar \mu(u))
\end{equation}
and  $l$ is even then
\[
  \mu_i(u) = (1+c_{i,1-l} u^{-1}) (1+c_{i,3-l} u^{-1}) \dots
  (1+c_{i,l-1} u^{-1})
\]
where $c_{i,j} = a_{i,j} + \frac{i}{2}$ for 
$i \in \mathcal{I}_n \cap \Z_{\ge 0}, j \in \mathcal{I}_l$.
If $l$ is odd then this means that 
\[
  \mu_i(u) = (1+\frac{\phi}{2} u^{-1})^{-1}
   (1+c_{i,1-l} u^{-1}) (1+c_{i,3-l} u^{-1}) \dots
  (1+c_{i,l-1} u^{-1})
\]
where $c_{i,j} = a_{i,j} + \frac{i+\delta_{i,0} \delta_{j,0}}{2}$ for 
$i \in \mathcal{I}_n \cap \Z_{\ge 0}, j \in \mathcal{I}_l$.

\begin{Lemma}
Theorem \ref{t_class} holds in the case that $\phi = -, \eps = +$, $n$ is even,
 and $l$ is even.
\end{Lemma}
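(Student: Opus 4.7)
The plan is to invoke Theorem \ref{y_n_minus} (here $\phi = -\epsilon = -$ since $\epsilon = +$ and $l$ is even, so $Y = Y_n^-$), which via the identification \eqref{lofa} reduces finite-dimensionality of $L(A) = L(\bar\mu(u))$ to the chain
\[
\mu_1(-u) \Rightarrow \mu_1(u) \to \mu_3(u) \to \dots \to \mu_{n-1}(u),
\]
and then to translate each clause into a combinatorial condition on the rows of $A$ that, taken together, match the definition of $\Col^+_{n,l}$. The critical observation up front is that every $\mu_i(u) = \prod_{j \in \mathcal{I}_l}(1 + c_{i,j}u^{-1})$ with $c_{i,j} = a_{i,j}+i/2$ is already a monic polynomial in $u^{-1}$ of the same even degree $l$, so the auxiliary factor $\gamma(u)$ appearing in Lemmas \ref{larrow}, \ref{lpoly}, \ref{lRightarrow}, and \ref{lanyevenpoly} may be taken to be $1$ throughout.

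With that simplification, for each $i \in \{1,3,\ldots,n-3\}$ Lemma \ref{lpoly} makes the arrow $\mu_i(u) \to \mu_{i+2}(u)$ equivalent to the existence of a bijection $\sigma$ of $\mathcal{I}_l$ with $c_{i,\sigma(j)} \ge c_{i+2,j}$ in the partial order on $\mathbb{C}$; since $c_{i,\sigma(j)} - c_{i+2,j} = a_{i,\sigma(j)} - a_{i+2,j} - 1$, this is the same as $a_{i,\sigma(j)} > a_{i+2,j}$ strictly for every $j$, i.e., rows $i$ and $i+2$ of $A$ can be rearranged so row $i$ dominates row $i+2$ entry by entry in the column-strict sense. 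Applying Lemmas \ref{lanyevenpoly} and \ref{lRightarrow} in the same way to $\mu_1(-u) \Rightarrow \mu_1(u)$ produces a reordering of the roots $\{-c_{1,j}\}_{j\in\mathcal{I}_l}$ into consecutive pairs summing to $\ge 0$ in the partial order, which in $a$-coordinates is a partition of $\{a_{1,j}\}_{j\in\mathcal{I}_l}$ into pairs $\{a_{1,p}, a_{1,q}\}$ with $a_{1,p} + a_{1,q} \le -1$.

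The final step is to stitch these conditions into an element of $\Col^+_{n,l}$ and to read the converse off a column-strict representative. I would first use the pair condition to rearrange row $1$ so that paired entries occupy mirror columns $\{j,-j\}$; skew-symmetry $a_{-1,-j} = -a_{1,j}$ then forces a matching rearrangement of row $-1$, and the inequality $a_{1,j}+a_{1,-j} \le -1$ rewrites as $a_{-1,j} - a_{1,j} \ge 1$, supplying the strict inequality at the middle of each column. Using the row-dominance translations of the chain, together with the standard observation that if a multiset $A$ dominates a multiset $B$ then for \emph{any} ordering of $A$ there is an ordering of $B$ making the pointwise comparison hold (a K\"onig--Hall type fact about perfect matchings in the dominance bipartite graph), I would then inductively rearrange rows $3, 5, \dots, n-1$ to make each adjacent pair column-wise strictly decreasing; skew-symmetry extends this to the top half, producing a representative in $\Col^+_{n,l}$. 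The converse is immediate from the definitions. The main obstacle is this compatibility step at the end: the rearrangement of row $1$ forced by the pair condition is already fixed, so one must verify the chain conditions can still be realized through the remaining rows. The K\"onig--Hall style matching argument precisely handles this.
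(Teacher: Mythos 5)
Your proposal is correct and follows the same route as the paper: reduce via \eqref{lofa} and \eqref{ker2} to Theorem \ref{y_n_minus}, use Lemmas \ref{lpoly} and \ref{lanyevenpoly} (with $\gamma(u)=1$, since the $\mu_i$ are already polynomials) to re-index the rows, and check that the resulting inequalities on the $c_{i,j}$ are exactly the $\Col^+_{n,l}$ conditions on the $a_{i,j}$, with Lemmas \ref{larrow} and \ref{lRightarrow} giving the converse. The only difference is that you spell out the compatibility of the row re-indexings, which the paper leaves implicit; as you note, this is immediate since the bijection supplied by Lemma \ref{lpoly} between consecutive rows can simply be composed with whatever permutation has already been applied to the upper row.
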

\begin{proof}
In this case, by \eqref{ker2}, an irreducible highest weight
$Y_n^-$-module  $L(\bar \mu(u))$
factors through
$\kappa_l$ 
if
$\mu_i(u)$ is a polynomial of degree $l$ or less
for all $i \in \{1, 3, \dots, n-1\}$. 
Furthermore, if $L(\bar \mu(u))$ 
is finite dimensional
then 
by Theorem \ref{y_n_minus},
Lemma \ref{lpoly}, and
Lemma \ref{lanyevenpoly}
we can write 
$\mu_i(u) = (1+c_{i,1-l} u^{-1}) (1+c_{i,3-l} u^{-1}) \dots 
(1+c_{i,l-1} u^{-1})$ 
for $i \in \{1, 3, \dots, n-1\}$ 
such that
$c_{1,j} + c_{1,-j} \le 0$  for all $j \in \mathcal{I}_l$ and
$c_{i,j} \ge c_{i+2,j}$ for all $j \in \mathcal{I}_l$,
$i \in \{1, \dots, n-3\}$.
Now associate to this data the skew-symmetric $n \times l$ tableaux 
$A=(a_{i,j})_{i \in \mathcal{I}_n, j \in \mathcal{I}_j}$
indicated by \eqref{lofa}, that is
$a_{i,j} = c_{i,j} -\frac{i}{2}$ for $i \in \{1, 3, \dots, n-1\}$,
$j \in \mathcal{I}_l$, and
$a_{i,j} = - a_{-i,-j}$ for $i  \in \{1-n, 3-n, \dots, -1\}$, $j \in \mathcal{I}_l$.
Now it is clear that $A \in \Col_{n,l}^+$.

It is also easy to see that given a 
skew-symmetric $n \times l$ tableaux 
$A=(a_{i,j})_{i \in \mathcal{I}_n, j \in \mathcal{I}_j} \in \Col_{n,l}^+$,
that 
$L(A) = L(\bar \mu(u))$, where
$\bar \mu(u)$ is given
by \eqref{lofa},
is finite dimensional
by
Theorem \ref{y_n_minus},
Lemma \ref{larrow}, and
Lemma \ref{lRightarrow}. 
\end{proof}

\begin{Lemma}
Theorem \ref{t_class} holds in the case that $\phi = -, \eps = -$, $n$ is even,
and $l$ is odd.
\end{Lemma}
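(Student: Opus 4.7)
The plan is to mirror the strategy of the preceding lemma, while carefully tracking the extra subtlety introduced by the middle column. In the present case $\phi = -$, so by \eqref{ker2} an irreducible highest-weight $Y_n^-$-module $L(\bar \mu(u))$ factors through $\kappa_l$ precisely when $(1 - \frac{1}{2}u^{-1})\mu_i(u)$ is a polynomial in $u^{-1}$ of degree at most $l$. Combining this with the formula defining $L(A)$ for odd $l$ (and noting that since $n$ is even there is no row $i = 0$, so the $\delta_{i,0}$ never activates), I would write $\mu_i(u) = (1 - \frac{1}{2}u^{-1})^{-1}\prod_{j \in \mathcal{I}_l}(1 + c_{i, j}u^{-1})$ for $i \in \{1, 3, \ldots, n-1\}$, with $c_{i, j} = a_{i, j} + i/2$.

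Next I would apply Theorem \ref{y_n_minus}: $L(\bar \mu(u))$ is finite dimensional if and only if $\mu_1(-u) \Rightarrow \mu_1(u) \to \mu_3(u) \to \cdots \to \mu_{n-1}(u)$. For each arrow $\mu_i \to \mu_{i+2}$ with $i \in \{1, 3, \ldots, n-3\}$, Lemma \ref{lpoly} applied with $\gamma(u) = 1 - \frac{1}{2}u^{-1}$ produces a bijection between the multisets $\{c_{i, j}\}_{j \in \mathcal{I}_l}$ and $\{c_{i+2, j}\}_{j \in \mathcal{I}_l}$ satisfying $c_{i, \cdot} \geq c_{i+2, \cdot}$; since $c_{i, j} - c_{i+2, j} = a_{i, j} - a_{i+2, j} - 1$, this translates to the strict inequality $a_{i, \cdot} > a_{i+2, \cdot}$ between paired entries. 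For the twisted arrow $\mu_1(-u) \Rightarrow \mu_1(u)$ I would take $\gamma(u) = 1 - \frac{1}{4}u^{-2} = (1 - \frac{1}{2}u^{-1})(1 + \frac{1}{2}u^{-1}) \in 1 + u^{-2}\C[[u^{-2}]]$; this yields $\gamma(u)\mu_1(u) = (1 + \frac{1}{2}u^{-1})\prod_j(1 + c_{1, j}u^{-1})$, a polynomial of even degree $l+1$ in $u^{-1}$, and Lemmas \ref{lRightarrow} and \ref{lanyevenpoly} provide an equivalence between $\mu_1(-u) \Rightarrow \mu_1(u)$ and the existence of a pairing of the multiset $\{-\frac{1}{2}\} \cup \{-c_{1, j} : j \in \mathcal{I}_l\}$ with each pair summing to $\geq 0$. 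In terms of the original tableau this says one entry $a_{1, j_0}$ in row $1$ satisfies $a_{1, j_0} \leq -1$, while the remaining entries split into pairs $(a_{1, j}, a_{1, j'})$ with $a_{1, j} + a_{1, j'} \leq -1$.

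Finally I would assemble these conditions into a row-equivalent representative of $A$ in $\Col_{n, l}^-$. Place $a_{1, j_0}$ in column $0$ of row $1$ and distribute each pair into a column pair $\{k, -k\}$ with $k > 0$; then $a_{1, 0} \leq -1$ yields $a_{-1, 0} = -a_{1, 0} \geq 1 > 0$, matching the $\eps = -$ middle-column constraint, and the sum condition combined with $a_{-1, k} = -a_{1, -k}$ gives column strictness $a_{-1, k} > a_{1, k}$ for $k \neq 0$. Then I would permute rows $3, 5, \ldots, n-1$ inductively using the bijections supplied by the $\mu_i \to \mu_{i+2}$ conditions to align each row column-wise with its predecessor, propagating strict decrease down the positive half of every column (including the middle one). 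Skew-symmetry extends all strictness to the negative rows. Conversely, given any $A \in \Col_{n, l}^-$, reversing these polynomial identities directly verifies the hypotheses of Theorem \ref{y_n_minus}. The main obstacle, and the genuine divergence from the even-$l$ case, is the appearance of the ``orphan'' value $-\frac{1}{2}$ in the pairing problem for $\mu_1$: the sign of $\eps$ dictates whether this orphan forces $a_{-1, 0} > 0$ (as here) or something weaker, and so all the delicacy of the middle-column definition of $\Col^\eps_{n, l}$ gets encoded in the $\mu_1(-u) \Rightarrow \mu_1(u)$ condition via the $(1 + \frac{1}{2}u^{-1})$ factor of $\gamma(u)$.
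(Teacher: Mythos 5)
Your proposal is correct and follows essentially the same route as the paper: both use \eqref{ker2} to get the polynomiality condition $(1-\tfrac{1}{2}u^{-1})\mu_i(u)\in\C[u^{-1}]$ of degree $\le l$, invoke Theorem \ref{y_n_minus} together with Lemmas \ref{lpoly}, \ref{lanyevenpoly} (and \ref{larrow}, \ref{lRightarrow} for the converse) with $\gamma(u)=1-\tfrac{1}{4}u^{-2}$ for the $\Rightarrow$ condition, and translate the resulting inequalities $c_{1,0}\le-\tfrac{1}{2}$, $c_{1,j}+c_{1,-j}\le 0$, $c_{i,j}\ge c_{i+2,j}$ into membership in $\Col_{n,l}^-$. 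Your write-up is in fact more detailed than the paper's at the step of assembling the re-indexed entries into a column-strict representative.
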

\begin{proof}
In this case, by \eqref{ker2}, an irreducible highest weight
$Y_n^-$-module  $L(\bar \mu(u))$
factors through
$\kappa_l$ 
if
$(1-\frac{1}{2}u^{-1}) \mu_i(u)$ is a polynomial of degree at most $l$ 
for all $i \in \{1, 3, \dots, n-1\}$.  
If $L(\bar \mu(u))$ is finite dimensional
then 
by Theorem \ref{y_n_minus},
Lemma \ref{lpoly}, and
Lemma \ref{lanyevenpoly}
for $i \in \{1, 3, \dots, n-1\}$
we can write 
\[
  (1-\frac{1}{4} u^{-2}) \mu_i(u) = 
   (1+c_{i,1-l} u^{-1})(1+c_{i,3-l}u^{-1}) \dots (1+c_{i,l-1} u^{-1})
   (1+\frac{1}{2} u^{-1})
\]
such that
$c_{1,0} \le - \frac{1}{2}$, 
$c_{1,j} + c_{1,-j} \le 0$  for $0 \ne j \in \mathcal{I}_l$ and
$c_{i,j} \ge c_{i+2,j}$ for $j \in \mathcal{I}_l$,
$i \in \{1, \dots, n-3\}$.
Now associate to this data the skew-symmetric $n \times l$ tableaux 
$A=(a_{i,j})_{i \in \mathcal{I}_n, j \in \mathcal{I}_j}$
indicated by \eqref{lofa}, that is
$a_{i,j} = c_{i,j} -\frac{i}{2}$ for $i \in \{1, 3, \dots, n-1\}$,
$j \in \mathcal{I}_l$, and
$a_{i,j} = - a_{-i,-j}$ for $i  \in \{1-n, 3-n, \dots, -1\}$, $j \in \mathcal{I}_l$.
Now it is clear that $A \in \Col_{n,l}^-$.

It is also easy to see that given a 
skew-symmetric $n \times l$ tableaux 
$A=(a_{i,j})_{i \in \mathcal{I}_n, j \in \mathcal{I}_j} \in \Col_{n,l}^-$,
that 
$L(A) = L(\bar \mu(u))$, where
$\bar \mu(u)$ is given
by \eqref{lofa},
is finite dimensional
by
Theorem \ref{y_n_minus},
Lemma \ref{larrow}, and
Lemma \ref{lRightarrow}.
\end{proof}

\begin{Lemma}
Theorem \ref{t_class} holds in the case that $\phi = +, \eps = +$, $n=2$,
and $l$ is odd.
\end{Lemma}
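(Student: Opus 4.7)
My plan is to follow the same structure as the two preceding lemmas in this section but to invoke Theorem~\ref{y_2} (together with Lemma~\ref{molev5.3}) in place of Theorem~\ref{y_n_minus}. In the present case $Y=Y_2^+$ and $\phi=+$, so by~(\ref{ker2}) an irreducible highest weight $Y_2^+$-module $L((\mu_1(u)))$ factors through $\kappa_l$ exactly when $(1+\tfrac12 u^{-1})\mu_1(u)$ is a polynomial in $u^{-1}$ of degree at most $l$. Using the parametrisation recalled after~(\ref{lofa}), this polynomial factors as $\prod_{j\in\mathcal{I}_l}(1+c_{1,j}u^{-1})$ with $c_{1,j}=a_{1,j}+\tfrac12=\tfrac12-a_{-1,-j}$, so the data of the row-$(-1)$ entries of $A$ is exactly the multiset $\{-c_{1,j}+\tfrac12:j\in\mathcal{I}_l\}$.

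For the reverse implication I would start from a representative $A\in\Col_{2,l}^+$. Because $n=2$ and $\eps=+$ the only column-strict constraint is $a_{-1,j}-a_{1,j}=a_{-1,j}+a_{-1,-j}\in\Z_{>0}$ for each $0<j\in\mathcal{I}_l$; the single middle entry $a_{-1,0}$ is unconstrained, since the middle-column bullet ``$a_{-3,0}+a_{-1,0}>0$ if $\eps=+$'' requires $n\ge 4$. Pairing the column indices $(j,-j)$ with $j>0$, leaving $j=0$ isolated, and indexing the set $\{-c_{1,j}:j\in\mathcal{I}_l\}$ accordingly as $(a_1,\dots,a_l)$, a direct computation gives $a_{2i-1}+a_{2i}=a_{-1,j_i}+a_{-1,-j_i}-1\in\Z_{\ge 0}$. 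Hence Theorem~\ref{y_2} applied with $\gamma(u)=1$ shows that $L(A)$ is finite dimensional.

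For the forward direction I would assume $L(A)$ is finite dimensional. Since $(1+\tfrac12 u^{-1})\mu_1(u)$ is already a polynomial in $u^{-1}$, Lemma~\ref{molev5.3} with $\gamma(u)=1$ supplies a re-indexing of the multiset $\{-c_{1,j}:j\in\mathcal{I}_l\}$ as $(a_1,\dots,a_l)$ satisfying $a_{2i-1}+a_{2i}\in\Z_{\ge 0}$ for $i=1,\dots,(l-1)/2$. Translating through $-c_{1,j}=a_{-1,j}-\tfrac12$ gives a re-indexing $(b_1,\dots,b_l)$ of the row-$(-1)$ entries with $b_{2i-1}+b_{2i}\in\Z_{>0}$. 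Placing the pair $b_{2i-1},b_{2i}$ in positions $(j_i,-j_i)$ of row $-1$ (for any choice of positive $j_i\in\mathcal{I}_l$), dropping $b_l$ into the middle column, and filling row $1$ by skew-symmetry, yields a row-equivalent representative of $A$ lying in $\Col_{2,l}^+$.

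The only subtle point I need to handle carefully is the definition of $\Col_{2,l}^+$ itself in the degenerate $n=2$ situation: one must confirm that the middle-column bullets impose no condition at all when $\eps=+$ and $n=2$, so that the leftover element produced by Lemma~\ref{molev5.3} can be dropped in the centre without further checking. Once that is verified, matching the pairing condition from Molev's classification to the column-strict condition on the tableau is a routine bookkeeping exercise, and Theorem~\ref{t_class}(1) follows in this case.
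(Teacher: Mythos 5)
Your proposal is correct and follows essentially the same route as the paper: characterise the modules factoring through $\kappa_l$ via \eqref{ker2}, apply Theorem \ref{y_2} and Lemma \ref{molev5.3} for the forward direction and Theorem \ref{y_2} directly for the converse, and translate the pairing condition $c_{1,j}+c_{1,-j}\le 0$ into the column-strictness of the associated tableau via \eqref{lofa}. Your explicit note that the middle-column conditions are vacuous when $n=2$ and $\eps=+$ is a detail the paper leaves implicit, but the argument is the same.
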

\begin{proof}
By \eqref{ker2}, an irreducible highest weight
$Y_2^+$-module  $L((\mu_1(u)))$
factors through
$\kappa_l$ 
if
$(1+\frac{1}{2} u^{-1})
\mu_1(u)$ 
 is a polynomial of degree $l$ or less.
Now if $L((\mu_1(u)))$ is finite dimensional then by Theorem \ref{y_2} and
Lemma \ref{molev5.3}
$(1+\frac{1}{2} u^{-1})
\mu_1(u) = (1+c_{1-l}u^{-1}) (1+c_{3-l} u^{-1}) \dots (1+c_{l-1} u^{-1})$
where 
$c_{j} + c_{-j} \le 0$ for $0 \ne j \in \mathcal{I}_l$.
Now associate to this data the skew-symmetric $2 \times l$ tableaux 
$A=(a_{i,j})_{i \in \mathcal{I}_2, j \in \mathcal{I}_j}$
indicated by \eqref{lofa}, that is
$a_{1,j} = c_{1,j} -\frac{1}{2}$ for 
$j \in \mathcal{I}_l$, and
$a_{-1,j} = - a_{1,-j}$ for $j \in \mathcal{I}_l$.
Now it is clear that $A \in \Col_{2,l}^+$.

It is also easy to see that given a
skew-symmetric
$2 \times l$ tableaux 
$A=(a_{i,j})_{i \in \mathcal{I}_2, j \in \mathcal{I}_j} \in \Col_{2,l}^+$,
that
$L(A) = L((\mu_1(u)))$ where
$\mu_1(u)$ is given by
by \eqref{lofa}
is a finite dimensional
$Y_n^-$-module by Theorem \ref{y_2}.
\end{proof}

\begin{Lemma}
Theorem \ref{t_class} holds in the case that $\phi = +, \eps = +$, $n>2$ is even,
and $l$ is odd.
\end{Lemma}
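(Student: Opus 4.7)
The plan is to mirror the strategy of the three preceding lemmas, but with two new complications: the well-definedness of $\mu_1^\sharp(u)$ must be unpacked as a combinatorial condition on row~$1$, and Theorem~\ref{y_n_plus} offers four alternative finite-dimensionality criteria (i)--(iv) which must be collapsed into the single notion of $+$-column strictness.

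First, I will use \eqref{ker2} with $\phi=+$ and $l$ odd to observe that $L(\bar \mu(u))$ factors through $\kappa_l$ exactly when $(1+\tfrac12 u^{-1}) \mu_i(u)$ is a polynomial in $u^{-1}$ of degree at most $l$ for every $i \in \{1,3,\dots,n-1\}$; modules of the form $L(A)$ manifestly satisfy this, with $(1+\tfrac12 u^{-1})\mu_i(u) = \prod_{j \in \mathcal{I}_l}(1+c_{i,j} u^{-1})$ where $c_{i,j}=a_{i,j}+i/2$ (after absorbing the $\delta_{i,0}\delta_{j,0}/2$ shift, which does not occur here since $i\ge 1$). Thus the module $L(\bar\mu(u))$ factoring through $\kappa_l$ is the same data as a row equivalence class $A \in \Row_{n,l}$.

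Assume $L(A)$ is finite-dimensional. By Theorem~\ref{y_n_plus}, $\mu_1^\sharp(u)$ is well-defined and one of the conditions (i)--(iv) holds. Applying Lemma~\ref{molev5.3} to $\mu_1(u)$, the list $(-c_{1,j})_{j \in \mathcal{I}_l}$ can be reordered as $(a_1,\dots,a_l)$ satisfying $a_{2i-1}+a_{2i} \ge 0$ and the $\sharp'$-special condition \eqref{eq_sharp}. The arrow chain $\mu_1^{(\sharp)}(u) \rightarrow \mu_3(u) \rightarrow \dots \rightarrow \mu_{n-1}(u)$ will be translated, via Lemmas~\ref{larrow} and \ref{lpoly}, into entry-by-entry inequalities $c_{i,j} \ge c_{i+2,j}$ between adjacent odd rows, which (after subtracting $i/2$ on each row) become the non-middle column strictness of $\Col_{n,l}^+$. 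Recall from \eqref{eq_musharp} that replacing $\mu_1$ by $\mu_1^\sharp$ replaces the factor indexed by the $\sharp'$-special position by $(1+(1+a_{2k+1})u^{-1})$; since $a_{2k+1} = -c_{1,j^\ast}$, a short computation shows this corresponds to negating the single entry $a_{1,j^\ast}$ of row~$1$ (and by skew-symmetry negating $a_{-1,-j^\ast}$ in row~$-1$), i.e.\ to the $c$-action described in the introduction. So cases (i)--(ii) amount to $A$ itself being column-strict, and cases (iii)--(iv) to $c\cdot A$ being column-strict.

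The heart of the argument is the middle column. Applying Lemmas~\ref{l_sharp1}--\ref{l_sharp4}, cases (i),(ii),(iii),(iv) are respectively equivalent to $a_{2k+1} \ge -\tfrac12$, $\ge 0$, $\le -\tfrac12$, $\le -1$, where $a_{2k+1}$ is the $\sharp'$-special entry in the list coming from row~$1$. Since either $a_{2k+1} \ge -\tfrac12$ or $a_{2k+1} \le -\tfrac12$ always holds, the only remaining constraint beyond the non-middle columns being strict is an inequality on the middle column: exactly the hypothesis $a_{-3,0}+a_{-1,0} > 0$ from the definition of $\Col_{n,l}^+$. The main obstacle, and the bulk of the bookkeeping, will be to check that once the non-middle column chain is set up via Lemma~\ref{lcompat}, the stronger $\Rightarrow$-conditions (ii) and (iv) in Theorem~\ref{y_n_plus} become the strict inequality on the pair $a_{-3,0}+a_{-1,0}$ in the middle column, together with the strict chain $a_{1-n,0}>\dots>a_{-1,0}$. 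Once this translation is verified, the converse direction (starting from $A \in \Col_{n,l}^+$) follows mechanically by choosing condition (i) or (iii) of Theorem~\ref{y_n_plus} according to the sign of the middle-column top entries, and assembling the required polynomial identities from Lemmas~\ref{larrow}, \ref{lRightarrow}, \ref{lanyevenpoly}, and \ref{l_cond}.
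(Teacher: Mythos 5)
Your overall strategy matches the paper's: factor through $\kappa_l$ via \eqref{ker2}, then run through the four alternatives of Theorem \ref{y_n_plus}, using Lemmas \ref{larrow}, \ref{lpoly}, \ref{lanyevenpoly} and \ref{l_sharp1}--\ref{l_sharp4} to translate each into inequalities on the $c_{i,j}=a_{i,j}+i/2$. But the pivotal step is mishandled. The theorem asserts that $A$ \emph{itself} has a representative in $\Col_{n,l}^+$, whereas your reduction of cases (iii)--(iv) to ``$c\cdot A$ is column-strict'' yields only the weaker disjunction ``$A$ or $c\cdot A$ is column-strict,'' unless you separately prove that $\Std_{n,l}^+$ is stable under the $c$-action. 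The paper avoids this detour by extracting, in cases (iii)--(iv), \emph{both} $c_{1,0}+c_{3,0}\le 1$ (from the first link $\mu_1^\sharp(u)\rightarrow\mu_3(u)$, whose modified factor is $(1+(1-c_{1,0})u^{-1})$ by \eqref{eq_musharp}) \emph{and} $c_{1,0}-c_{3,0}\ge 2c_{1,0}-1\ge 0$ (from Lemma \ref{l_sharp3} or \ref{l_sharp4}); together these give precisely the middle-column conditions on $A$, not on $c\cdot A$. Relatedly, your attribution of $a_{-3,0}+a_{-1,0}>0$ to ``the stronger $\Rightarrow$-conditions (ii) and (iv)'' cannot be right: only one of (i)--(iv) need hold, so (ii) and (iv) impose nothing on their own. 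In each case the inequality comes from combining whichever $\Rightarrow$-condition holds with the arrow into row $3$.

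The converse is also not ``mechanical.'' When $c_{1,0}>1/2$ one must invoke condition (iii) or (iv) and therefore verify $\mu_1^\sharp(u)\rightarrow\mu_3(u)$. The $\sharp'$-special element of $(-c_{1,1-l},\dots,-c_{1,l-1})$ need not be the middle-column entry $-c_{1,0}$; if it is some $-c_{1,k}$ with $k\ne 0$ and $c_{3,k}>c_{3,0}$, the required bound $c_{1,k}+c_{3,k}\le 1$ does not follow directly from column-strictness, and one must first interchange the entries in columns $k$ and $0$ of rows $3,5,\dots,n-1$ (as in the final paragraph of the paper's proof) before Lemma \ref{larrow} can be applied. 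Without that re-indexing argument the converse direction is incomplete.
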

\begin{proof}
 By \eqref{ker2} the irreducible highest weight
$Y_n^+$-module  $L(\bar \mu(u))$
factors through
$\kappa_l$ 
if
$\mu_i(u) (1+\frac{1}{2} u^{-1})$ 
 is a polynomial of degree $l$ or less
for all $i \in \{1, 3, \dots, n-1\}$. 
So we can write
$\mu_i(u) = (1+c_{i,1-l}u^{-1}) (1+c_{i,3-l} u^{-1}) \dots (1+c_{i, l-1} u^{-1}) 
 (1+\frac{1}{2} u^{-1})^{-1}$ for all $i \in \{1, 3, \dots, n-1\}$.

If $L(\bar \mu(u))$ is finite dimensional
then we need to examine the implications from the four conditions in
Theorem \ref{y_n_plus} separately.

If condition (i) or condition (ii) holds from Theorem \ref{y_n_plus} then
by Lemma \ref{lpoly} 
we can re-index each row of the matrix $(c_{i,j})_{i \in \{1, 3, \dots, n-1\}, j \in \mathcal{I}_l}$
so that
\begin{equation} \label{eq_1}
-c_{1,0} \quad \text{ is the $\sharp'$-special element of}
 \quad (-c_{1,1-l}, -c_{1,3-l}, \dots, -c_{1,l-1})
\end{equation}
and
\begin{equation} \label{eq_2}
c_{i,j} \ge c_{i+2,j}  \quad \text{ for }  \quad j \in \mathcal{I}_l,
i \in \{1, \dots, n-3\}.
\end{equation}

If condition (i)  from Theorem \ref{y_n_plus} holds 
then since 
\[
(1-\frac{1}{4} u^{-2}) \mu_i(u) = 
(1+c_{i,1-l}u^{-1}) (1+c_{i,3-l} u^{-1}) \dots (1+c_{i, l-1} u^{-1}) 
 (1-\frac{1}{2} u^{-1})
\]
for all $i \in \{1, 3, \dots, n-1\}$,
we have by 
Lemmas \ref{lanyevenpoly} and \ref{l_sharp1} 
that can further re-index 
so that \eqref{eq_1} and \eqref{eq_2} still hold, 
$c_{1,j} + c_{1,-j} \le 0$  for $0 \ne j \in \mathcal{I}_l$, and
$c_{1,0} \le \frac{1}{2}$.
Since
$c_{3,0} \le c_{1,0}$  
we now have that
$c_{1,0} + c_{3,0} \le 1$ 

If condition (ii) holds from Theorem \ref{y_n_plus} then 
since
$(1-\frac{1}{2} u^{-1}) \mu_1(-u) \Rightarrow (1+\frac{1}{2} u^{-1}) \mu_1(u)
 = (1+c_{i,1-l}u^{-1}) (1+c_{i,3-l} u^{-1}) \dots (1+c_{i, l-1} u^{-1})$,
by Lemmas \ref{lanyevenpoly} and \ref{l_sharp2} we
 can further re-index so that
\eqref{eq_1} and \eqref{eq_2} still hold, 
$c_{1,j} + c_{1,-j} \le 0$  for $0 \ne j \in \mathcal{I}_l$,
and $c_{1,0} \le 0$.
Since
$c_{3,0} \le c_{1,0}$
we now have that
$c_{1,0} + c_{3,0} \le 0$.

If condition (iii) or condition (iv) holds from Theorem \ref{y_n_plus} then
by Lemma \ref{lpoly}  and \eqref{eq_musharp} 
we can re-index each row of the matrix $(c_{i,j})_{i \in \{1, 3, \dots, n-1\}, j \in \mathcal{I}_l}$
so that
\begin{equation} \label{eq_3}
-c_{1,0} \quad \text{ is the $\sharp'$-special element of}
 \quad (-c_{1,1-l}, -c_{1,3-l}, \dots, -c_{1,l-1}),
\end{equation}
\begin{equation} \label{eq_4}
c_{i,j} \ge c_{i+2,j}  \quad \text{ for }  
\quad 0 \ne j \in \mathcal{I}_l,
i \in \{1, \dots, n-3\},
\end{equation}
\begin{equation} \label{eq_5}
c_{i,0} \ge c_{i+2,0} \quad \text{ for } \quad 
i \in \{3, \dots, n-3\},
\end{equation}
and
\begin{equation} \label{eq_6}
c_{1,0} + c_{3,0} \le 1
\end{equation}
(here \eqref{eq_6} holds since $\mu_1(u)^\sharp \rightarrow \mu_3(u)$).

If condition (iii) holds from Theorem \ref{y_n_plus} then since
$(1-\frac{1}{4} u^{-1}) \mu_1^\sharp(u) = 
(1+c_{i,1-l}u^{-1})  \dots 
(1+(1-c_{1,0}) u^{-1}) \dots
(1+c_{i, l-1} u^{-1}) 
 (1-\frac{1}{2} u^{-1})$,
 by Lemmas \ref{lanyevenpoly} and \ref{l_sharp3} 
we can  further re-index so that
\eqref{eq_3}, \eqref{eq_4}, \eqref{eq_5}, and \eqref{eq_6} still hold, 
$c_{1,j} + c_{1,-j} \le 0$  for $0 \ne j \in \mathcal{I}_l$,
and $c_{1,0} \ge \frac{1}{2}$.
Since
$-c_{3,0} \ge c_{1,0} -1$ we now have that
$c_{1,0} - c_{3,0} \ge c_{1,0} + c_{1,0} - 1 \ge 0$.

If condition (iv) holds from Theorem \ref{y_n_plus} then
since
$(1-\frac{1}{2} u^{-1}) \mu_1^\sharp(-u) \Rightarrow (1+\frac{1}{2} u^{-1}) \mu_1^\sharp(u)
 = (1+c_{i,1-l}u^{-1}) \dots (1+(1-c_{1,0}) u^{-1})) \dots (1+c_{i, l-1} u^{-1})$,
by Lemmas \ref{lanyevenpoly} and \ref{l_sharp4} 
we can  further re-index so that
\eqref{eq_3}, \eqref{eq_4}, \eqref{eq_5}, and \eqref{eq_6} still hold,
$c_{1,j} + c_{1,-j} \le 0$  for $0 \ne j \in \mathcal{I}_l$,
and $c_{1,0} \ge 1$.
Since
$-c_{3,0} \ge c_{1,0} -1$ we now have that
$c_{1,0} - c_{3,0} \ge c_{1,0} + c_{1,0} - 1 \ge 1$.

Now associate to this data the skew-symmetric $n \times l$ tableaux
$A=(a_{i,j})_{i \in \mathcal{I}_n, j \in \mathcal{I}_j}$
indicated by \eqref{lofa}, that is
$a_{i,j} = c_{i,j} -\frac{i}{2}$ for $i \in \{1, 3, \dots, n-1\}$,
$j \in \mathcal{I}_l$, and
$a_{i,j} = - a_{-i,-j}$ for $i  \in \{1-n, 3-n, \dots, -1\}$, $j \in \mathcal{I}_l$,
and now it is clear that $A \in \Col_{n,l}^+$.

Now suppose that we are given $A=(a_{i,j})_{i \in \mathcal{I}_n, j \in \mathcal{I}_j} \in \Col_{n,l}^+$.
Associate to $A$ 
the tuple
$\bar \mu(u) \in (1+u^{-1}\C[[u^{-1}]])^{n/2}$
as indicated
by \eqref{lofa}, where
$\mu_i(u) = (1+c_{i,1-l}u^{-1}) (1+c_{i,3-l} u^{-1}) 
\dots (1+c_{i, l-1} u^{-1}) 
(1+\frac{1}{2} u^{-1})^{-1}$ for all $i \in \{1, 3, \dots, n-1\}$.
So we have that 
$c_{1,j} + c_{1,-j} \le 0$ for $0 \ne j \in \mathcal{I}_l$,
$c_{1,0} + c_{3,0} \le 1$, and
$c_{i,j} \ge c_{i+2,j}$ for $i \in \{1, 3, \dots, n-3\}, j \in \mathcal{I}_l$.
Now it is clear by Lemma \ref{larrow} that
$\mu_1(u) \rightarrow \mu_3(u) \rightarrow \dots \rightarrow \mu_{n-1}(u)$.
Since $c_{1,0} + c_{3,0} \le 1$ and $c_{1,0} - c_{3,0} \ge 0$, we have that
$2c_{1,0} \in \Z$.
If $c_{1,0} \le \frac{1}{2}$ then by Lemma \ref{l_sharp1} 
$\mu_1(-u) \Rightarrow \mu_1(u)$.
If $c_{1,0} \le 0$ then by Lemma \ref{l_sharp2} 
$\frac{2u-1}{2u+1} \mu_1(-u) \Rightarrow \mu_1(u)$.
If $c_{1,0} > \frac{1}{2}$ then by Lemma \ref{l_sharp3}
$\mu_1^\sharp(-u) \Rightarrow \mu_1^\sharp(u)$.
If $c_{1,0} > 0$ then by Lemma \ref{l_sharp4} 
$\frac{2u-1}{2u+1} \mu_1^\sharp(-u) \Rightarrow \mu_1^\sharp(u)$.
So in order to apply Theorem \ref{y_n_plus} to prove that
$L(\bar \mu(u))$ is
finite dimensional, we need to establish that 
$\mu_1^\sharp(u) \rightarrow \mu_3(u)$ in the cases that $c_{1,0} > \frac{1}{2}$
and $c_{1,0} > 0$.
By \eqref{eq_musharp} the only thing we still need to establish is that
$c_{1,k} + c_{3,k} \le 1$ where $-c_{1,k}$ is the $\sharp'$-special element of
$(-c_{1,1-l}, -c_{3-l}, \dots, -c_{l-1} )$.
If $k=0$ then we are done, so assume $k \ne 0$.  By Lemma \ref{l_sharp_max}
we have that $c_{1,k} \le c_{1,0}$.  If $c_{3,k} \le c_{3,0}$, then
$c_{1,k} + c_{3,k} \le c_{1,0} + c_{3,0} \le 1$, hence the lemma is proved.
If $c_{3,k} > c_{3,0}$ then we can re-index the rows of 
 $(c_{i,j})_{i \in \{1, 3, \dots, n-1\}, j \in \mathcal{I}_l}$
by interchanging $c_{i,k}$ with $c_{i,0}$ for all $i \in \mathcal{I}_n$,
$i > 1$.
Now we have that $c_{1,k} + c_{3,k} \le 1$, so
$\mu_1^\sharp(u) \rightarrow \mu_3(u)$,
so $L(A)$ is finite dimensional
by Theorem \ref{y_n_plus}.
\end{proof}

\begin{Lemma} \label{plus1}
Theorem \ref{t_class} holds in the case that $\phi = +, \eps = -$, $n$ is even,
and $l$ is even.
\end{Lemma}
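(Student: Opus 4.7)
The strategy is to reduce this case to the already-handled cases of $\phi=+$, $\eps=+$, $n$ even, $l$ odd (Lemmas for $n=2$ and $n>2$ above) by exploiting Corollary \ref{pluswalg}. Since $l$ is even and $\eps=-$, the corollary supplies a surjective algebra homomorphism
\[
\zeta : U(\mf{g}', e') \twoheadrightarrow U(\mf{g}, e),
\]
where $\mf{g}' = \mf{g}_{n(l+1)}^{+}$ and $e' \in \mf{g}'$ is of Jordan type $((l+1)^n)$. Under $\zeta$, every irreducible $U(\mf{g},e)$-module pulls back to an irreducible $U(\mf{g}',e')$-module, and conversely an irreducible $U(\mf{g}',e')$-module descends precisely when $\ker\zeta$ acts trivially; in any event, finite dimensionality is preserved and detected by this correspondence.

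The key step is a direct comparison of the $Y$-module structures on $L(A)$ (via $\kappa_l$) and $L(A^+)$ (via $\kappa_{l+1}$). For $i \in \{1,3,\ldots,n-1\}$, the prescribed middle-column entry of $A^+$ is $a^+_{i,0} = -(i-1)/2$, which is engineered so that the odd-$l+1$ defining formula
\[
(u-\tfrac{i}{2})^{l}(u+\tfrac{1-i}{2})\, S_{i,i}(u-\tfrac{i}{2})\, v \;=\; \Bigl(\prod_{j \in \mathcal{I}_l}(u+a_{i,j})\Bigr)\,(u - \tfrac{i-1}{2})
\]
reduces, after cancelling the common factor $u+\tfrac{1-i}{2}$, to exactly the even-$l$ defining formula
\[
(u-\tfrac{i}{2})^{l}\, S_{i,i}(u-\tfrac{i}{2})\, v \;=\; \prod_{j \in \mathcal{I}_l}(u+a_{i,j})
\]
for $L(A)$. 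Thus $L(A)$ and $L(A^+)$ coincide as irreducible $Y_n^+$-modules, and by the commutativity of the diagram in Corollary \ref{pluswalg} the corresponding $U(\mf{g}',e')$-module $L(A^+)$ factors through $\zeta$ to give precisely the $U(\mf{g},e)$-module $L(A)$.

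It follows that $L(A)$ is finite dimensional as a $U(\mf{g},e)$-module if and only if $L(A^+)$ is finite dimensional as a $U(\mf{g}',e')$-module. Applying the previously established cases of Theorem \ref{t_class} in the $\phi=+,\eps=+$, $n$ even, $l+1$ odd setting, the latter holds if and only if $A^+$ has a representative in $\Col_{n,l+1}^+$, which is exactly the content of Theorem \ref{t_class}(2). The only point requiring any delicacy is the identification of the two highest weights, where one must carefully track the shift $(u-i/2)$ and verify that the inserted middle column of $A^+$ always meets the middle-column conditions of $\Col_{n,l+1}^+$ automatically, so that row-equivalence of $A^+$ into $\Col_{n,l+1}^+$ is governed entirely by the non-middle columns, i.e.\ by $A$ itself; this is a routine check using the explicit values $\tfrac{n}{2}-1,\tfrac{n}{2}-2,\ldots,1-\tfrac{n}{2}$.
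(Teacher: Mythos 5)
Your proposal is correct and follows essentially the same route as the paper: both reduce via Corollary \ref{pluswalg} to the already-established case of $\mf{g}_{n(l+1)}^{+}$ with $l+1$ odd, and both rest on the observation that the prescribed middle-column entry $-(i-1)/2$ of $A^+$ corresponds to $c_{i,0}=\tfrac12$, which is exactly the cancellation of the $(1+\tfrac12 u^{-1})^{-1}$ factor needed for $\mu_i(u)$ to be a polynomial of degree at most $l$, i.e.\ for $L(A^+)$ to factor through $\zeta$. The paper states this from the descent side (a finite dimensional irreducible $U(\mf{g}',e')$-module factors through $\zeta$ iff each row contains an entry making $c_{i,k}=\tfrac12$, so its tableau is row-equivalent to some $A^+$), while you state it from the given-$A$ side, but the computation is identical.
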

\begin{proof}
Let $e'$ be a nilpotent element in $\mf{g}_{n(l+1)}^+$ so that
the Jordan type of $e'$ is $((l+1)^n)$.
By Corollary \ref{pluswalg} 
every $U(\mf{g},e)$-module is a  $U(\mf{g}_{n(l+1)}^+,e')$-module, so we 
need only determine which finite dimensional irreducible 
 $U(\mf{g}_{n(l+1)}^+,e')$-modules factor through $\zeta$.
By \eqref{ker2}
the finite dimensional
$U(\mf{g}_{n(l+1)}^+,e')$-module  $L(\bar \mu(u))$
where
$\mu_i(u) = (1+c_{i,-l}u^{-1}) (1+c_{i,2-l} u^{-1}) \dots 
(1+c_{i, l} u^{-1}) 
 (1+\frac{1}{2} u^{-1})^{-1}$
factors through 
$\zeta$  
precisely when
$\mu_i(u)$ is a polynomial of degree $l$ or less
for all $i \in \{1, 3, \dots, n-1\}$. 
This implies that for all $i \in \{1, 3, \dots, n-1\}$ 
there exists $k \in \mathcal{I}_{l+1}$ such that $c_{i,k} = 1/2$. 
Now if $A^+$ is the skew symmetric $n \times (l+1)$ tableaux associated to
this data as in \eqref{lofa}, then we can permute entries within rows
so that $A^+$ has middle column
\[
  \frac{n}{2} - 1, \frac{n}{2} -2, \dots, 1, 0,0, -1, -2, \dots, 1-\frac{n}{2
},
\] 
which implies the lemma.
\end{proof}

\begin{Lemma}
Theorem \ref{t_class} holds in the case that $\phi = +, \eps = +$, $n$ is odd,
and $l$ is odd.
\end{Lemma}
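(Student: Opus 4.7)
The argument follows the same pattern as the previous lemmas but with extra care for the middle row $i=0$. By \eqref{ker2} (odd $l$, $\phi=+$), a highest weight $Y_n^+$-module $L(\bar\mu(u))$ with $\bar\mu(u)=(\mu_0(u),\mu_2(u),\ldots,\mu_{n-1}(u))$ factors through $\kappa_l$ exactly when $(1+\tfrac12 u^{-1})\mu_i(u)$ is a polynomial in $u^{-1}$ of degree $\le l$ for each $i\in\{0,2,\ldots,n-1\}$, so I can write $\mu_i(u)=(1+c_{i,1-l}u^{-1})\cdots(1+c_{i,l-1}u^{-1})(1+\tfrac12 u^{-1})^{-1}$. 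The relation \eqref{eq_y_rel2} forces $\mu_0\in 1+u^{-2}\C[[u^{-2}]]$, so some numerator factor in $\mu_0$ must cancel the denominator and the remaining factors pair as $(1+cu^{-1})(1-cu^{-1})$; after permuting row $0$ I may take $c_{0,0}=\tfrac12$ and $c_{0,-j}=-c_{0,j}$ for $j\neq 0$.

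Now suppose $L(\bar\mu(u))$ is finite dimensional. By Theorem \ref{y_n_odd}, either (i) $\mu_0(u)\rightarrow\mu_2(u)\rightarrow\cdots\rightarrow\mu_{n-1}(u)$ or (ii) $\tfrac{2u}{2u+1}\mu_0(u)\rightarrow\mu_2(u)\rightarrow\cdots\rightarrow\mu_{n-1}(u)$. In case (i), taking $\gamma(u)=(1+\tfrac12 u^{-1})$ makes each $\gamma\mu_i=\prod_{j\in\mathcal{I}_l}(1+c_{i,j}u^{-1})$ into a polynomial of degree $l$ (exploiting $c_{0,0}=\tfrac12$ for $i=0$), so Lemmas \ref{lcompat} and \ref{lpoly} yield, after reordering within each row, $c_{i,j}\ge c_{i+2,j}$ for $i\in\{0,2,\ldots,n-3\}$ and $j\in\mathcal{I}_l$. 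The specialization $(i,j)=(0,0)$ gives $c_{0,0}-c_{2,0}=-a_{2,0}-\tfrac12\in\Z_{\ge 0}$, i.e.\ $a_{-2,0}\in\tfrac12+\Z_{\ge 0}$. In case (ii) the same $\gamma$ makes $\gamma\cdot\tfrac{2u}{2u+1}\mu_0=\prod_{j\neq 0}(1+c_{0,j}u^{-1})$, which I pad to a length-$l$ product by inserting a trivial factor $(1+0\cdot u^{-1})$; matching this zero factor with $c_{2,0}$ forces $c_{2,0}\in-\Z_{\ge 0}$, i.e.\ $a_{-2,0}\in\Z_{\ge 1}$, while the remaining pairings yield $c_{0,j}\ge c_{2,j}$ for $j\neq 0$. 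The union of the two cases is exactly $2a_{-2,0}>0$, and combined with column strictness on the non-middle columns and skew-symmetry this gives $A\in\Col^+_{n,l}$.

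Conversely, for $A\in\Col^+_{n,l}$ I would form $\bar\mu(u)$ from \eqref{lofa}: if $a_{-2,0}\in\tfrac12+\Z_{\ge 0}$ then condition (i) of Theorem \ref{y_n_odd} holds by Lemma \ref{larrow} (pair $c_{0,j}$ with $c_{2,j}$ for each $j$); if $a_{-2,0}\in\Z_{\ge 1}$ then condition (ii) holds (pair the zero factor from $\tfrac{2u}{2u+1}$ with $c_{2,0}\in-\Z_{\ge 0}$ and $c_{0,j}$ with $c_{2,j}$ for $j\neq 0$). Either way $L(A)$ is finite dimensional.

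The main obstacle is the dichotomy at the middle column: since $\ge$ on $\C$ only compares elements differing by an integer, the half-integer and integer values of $a_{-2,0}$ correspond to disjoint cases (i) and (ii) of Theorem \ref{y_n_odd}; checking that the two cases together recover precisely the combinatorial condition $2a_{-2,0}>0$ in $\Col^+_{n,l}$ is the crux of the argument.
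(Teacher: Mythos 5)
Your proposal is correct and follows essentially the same route as the paper: the same normalization of the middle row via $\mu_0(u)\in 1+u^{-2}\C[[u^{-2}]]$ (forcing $c_{0,0}=\tfrac12$ and $c_{0,-j}=-c_{0,j}$), the same case split along conditions (i) and (ii) of Theorem \ref{y_n_odd} using Lemmas \ref{lpoly} and \ref{larrow}, and the same identification of the union of the two cases with the condition $2a_{-2,0}>0$ defining $\Col^+_{n,l}$ for odd $n$. The only cosmetic difference is that you pad the degree-$(l-1)$ product for $\tfrac{2u}{2u+1}\mu_0(u)$ with a factor $(1+0\cdot u^{-1})$ before matching, whereas the paper phrases case (ii) as $\mu_0(u)\rightarrow(1+\tfrac12 u^{-1})\mu_2(u)\rightarrow\cdots$; these are the same computation.
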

\begin{proof}
 By \eqref{ker2} we have that an irreducible highest weight
$Y_n^+$-module  $L(\bar \mu(u))$
factors through
$\kappa_l$ 
if
$\mu_i(u) (1+\frac{1}{2} u^{-1})$ 
 is a polynomial of degree $l$ or less
for all $i \in \{1, 3, \dots, n-1\}$. 
So for all $i \in \{1, 3, \dots, n-1\}$
we can write
$\mu_i(u) = (1+c_{i,1-l}u^{-1}) (1+c_{i,3-l} u^{-1}) 
\dots (1+c_{i, l-1} u^{-1}) 
 (1+\frac{1}{2} u^{-1})^{-1}$ for all $i \in \{1, 3, \dots, n-1\}$.
Additionally, since 
$\mu_0(u) \in 1 + u^{-2} \C[[u^{-2}]]$, 
we must have that $\mu_0(u)$ is a polynomial of degree $l-1$ or less,
and we can re-index so that
\begin{equation} \label{eq_case5}
c_{0,j} = - c_{0,-j} \quad \text{for} \quad  0 \ne j \in \mathcal{I}_l
\quad \text{and} \quad c_{0,0} = \frac{1}{2}.
\end{equation}

If condition (i) holds from Theorem \ref{y_n_odd} then by
Lemma \ref{lpoly}  
we can re-index so that \eqref{eq_case5} holds and 
$c_{i,j} \ge c_{i+2,j}$ 
for $i \in \{0, 2, \dots, n-3\}$, $j \in \mathcal{I}_l$.
In particular, we now have that
$\frac{1}{2} \ge c_{2,0} \ge c_{4,0} \ge \dots \ge c_{n-1,0}$.
Associate to this data the skew-symmetric $n \times l$ tableaux 
$A=(a_{i,j})_{i \in \mathcal{I}_n, j \in \mathcal{I}_j}$
where 
$a_{0,0} = 0$,
$a_{i,j} = c_{i,j} -\frac{i}{2}$ for $i \in \{0, 2, \dots, n-1\}$,
$j \in \mathcal{I}_l$, $(i,j) \ne (0,0)$, and
$a_{i,j} = - a_{-i,-j}$ for $i  \in \{1-n, 3-n, \dots, -1\}$, $j \in \mathcal{I}_l$.
Now it is clear that $A \in \Col_{n,l}^+$.

If condition (ii) holds from Theorem \ref{y_n_odd} then 
$\mu_0(u) \rightarrow (1+\frac{1}{2} u^{-1}) \mu_2(u) 
    \rightarrow (1+\frac{1}{2} u^{-1}) \mu_4(u) \rightarrow 
\dots  \rightarrow (1+\frac{1}{2} u^{-1}) \mu_{n-1}(u)$.
So by  Lemma \ref{lpoly}
we can re-index so that \eqref{eq_case5} holds,
$0 \ge c_{2,0} \ge c_{4,0} \ge \dots \ge c_{n-1,0}$,
and
$c_{i,j} \ge c_{i+2,j}$ 
for $i \in \{0, 2, \dots, n-3\}$, $0 \ne j \in \mathcal{I}_l$.
Associate to this data the skew-symmetric $n \times l$ tableaux 
$A=(a_{i,j})_{i \in \mathcal{I}_n, j \in \mathcal{I}_j}$
where 
$a_{0,0} = 0$,
$a_{i,j} = c_{i,j} -\frac{i}{2}$ for $i \in \{0, 2, \dots, n-1\}$,
$j \in \mathcal{I}_l$, $(i,j) \ne (0,0)$, and
$a_{i,j} = - a_{-i,-j}$ for $i  \in \{1-n, 3-n, \dots, -1\}$, $j \in \mathcal{I}_l$.
Now it is clear that $A \in \Col_{n,l}^+$.

Now suppose we are given 
$A=(a_{i,j})_{i \in \mathcal{I}_n, j \in \mathcal{I}_j} \in \Col_{n,l}^+$.
Associate to $A$ 
the tuple
$\bar \mu(u) \in (1 + u^{-1} \C[[u^{-1}]])^{(n+1)/2}$
as indicated
by \eqref{lofa}, where
$\mu_i(u) = (1+c_{i,1-l}u^{-1}) (1+c_{i,3-l} u^{-1}) 
\dots (1+c_{i, l-1} u^{-1}) 
(1+\frac{1}{2} u^{-1})^{-1}$ for all $i \in \{0, 2, \dots, n-1\}$.
Now if $c_{2,0} \in \frac{1}{2} \Z \setminus \Z$
then by Lemma \ref{larrow}
$\mu_0(u) \rightarrow \mu_2(u) \rightarrow
\dots  \rightarrow \mu_{n-1}(u)$,
and if $c_{2,0} \in \Z$ then by Lemma \ref{larrow}
$\frac{2u}{2u+1} \mu_0(u) \rightarrow \mu_2(u) \rightarrow \mu_4(u) 
  \rightarrow \dots  \rightarrow \mu_{n-1}(u)$,
so
$L(\bar \mu(u))$ is a finite dimensional
$Y_n^-$-module by Theorem \ref{y_n_odd}.
\end{proof}

\begin{Lemma}
Theorem \ref{t_class} holds in the case that $\phi = +, \eps = -$, $n$ is odd,
and $l$ is even.
\end{Lemma}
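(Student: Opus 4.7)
The plan is to mirror the strategy of Lemma~\ref{plus1}, using Corollary~\ref{pluswalg} to reduce to the already-proved case of the previous lemma (where $\phi=+,\eps=+,n$ odd, $l$ odd). Let $\mf{g}' = \mf{g}_{n(l+1)}^{+}$ and let $e' \in \mf{g}'$ be a nilpotent element of Jordan type $((l+1)^n)$. Since $\eps = -$ (not $+$), the exceptional clause in Corollary~\ref{pluswalg} does not apply, so we do have a surjection $\zeta : U(\mf{g}', e') \twoheadrightarrow U(\mf{g}, e)$. Every finite dimensional irreducible $U(\mf{g}, e)$-module is thus obtained from a finite dimensional irreducible $U(\mf{g}', e')$-module which factors through $\zeta$, and conversely.

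Next I would identify the precise factorization condition. Because $l+1$ is odd, \eqref{ker2} tells us that the highest weight $Y_n^+$-module $L(\bar\mu(u))$ (viewed as a $U(\mf{g}',e')$-module via $\kappa_{l+1}$) factors through $\zeta$ exactly when each $(1+\tfrac12 u^{-1})\mu_i(u)$ is a polynomial of degree $\leq l+1$, equivalently, when $\mu_i(u)$ itself is a polynomial of degree $\leq l$ for every $i \in \{0,2,\dots,n-1\}$. Writing $\mu_i(u) = (1+c_{i,-l}u^{-1})\cdots(1+c_{i,l}u^{-1})\,(1+\tfrac12 u^{-1})^{-1}$ in the odd-$(l+1)$ form, this polynomial condition forces $c_{i,k_i} = \tfrac12$ for some $k_i \in \mathcal{I}_{l+1}$ (the factor $(1+\tfrac12 u^{-1})$ must cancel from the denominator). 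For $i=0$ this is automatic: the relation \eqref{eq_y_rel2} forces $\mu_0(u) \in 1 + u^{-2}\C[[u^{-2}]]$, whence $c_{0,j}=-c_{0,-j}$ and $c_{0,0}=\tfrac12$ (remembering the extra $\delta_{i,0}\delta_{j,0}/2$ adjustment in the definition of $L(A)$ for odd $l+1$, which sends $a^+_{0,0}=0$ to $c_{0,0}=\tfrac12$).

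Now I translate to skew-symmetric tableaux. A direct check using the definition $c^+_{i,j} = a^+_{i,j} + i/2$ (for $(i,j)\ne(0,0)$) shows that the prescribed middle column of $A^+$ for $n$ odd, namely $\tfrac{n}{2}-1,\dots,\tfrac12,0,-\tfrac12,\dots,1-\tfrac{n}{2}$, corresponds to $c^+_{i,0} = \tfrac12$ for every $i \in \{0,2,\dots,n-1\}$. Therefore, given a finite dimensional $L(\bar\mu(u))$ factoring through $\zeta$, the element $c_{i,k_i}=\tfrac12$ that we extracted above can, by row equivalence, be moved to column $0$ in row $i$; this produces a skew-symmetric $n\times(l+1)$ tableau equal (as a row equivalence class) to $A^+$, where $A\in\Row_{n,l}$ is the tableau with these same rows minus the inserted entry $(1-i)/2$ in row $i$. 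By the previous lemma (the $\phi=+,\eps=+,n$ odd, $l$ odd case), $L(A^+)$ is finite dimensional iff $A^+ \in \Std^+_{n,l+1}$, giving the desired classification.

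The converse direction is straightforward: if $A \in \Row_{n,l}$ and $A^+ \in \Std^+_{n,l+1}$, then by the previous lemma $L(A^+)$ is a finite dimensional $U(\mf{g}',e')$-module, and since $A^+$ has the prescribed middle column it produces some $\mu_i(u)$ of degree at most $l$ for every $i$, so $L(A^+)$ factors through $\zeta$ to yield a finite dimensional $U(\mf{g},e)$-module isomorphic to $L(A)$. The only subtle step is matching the odd-$l+1$ parametrization of $L(\bar\mu(u))$ (with the $\delta_{i,0}\delta_{j,0}/2$ shift in row $0$) against the even-$l$ parametrization of $L(A)$, so that inserting the prescribed middle column into $A$ to obtain $A^+$ corresponds exactly to multiplying the numerator polynomials by $(1+\tfrac12 u^{-1})$; once this bookkeeping is sorted, the argument is entirely parallel to Lemma~\ref{plus1}.
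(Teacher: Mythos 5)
Your proposal is correct and follows exactly the route the paper intends: the paper's own proof of this lemma is literally ``Mimic the proof of Lemma~\ref{plus1},'' and your argument is that mimicry carried out in detail (reduction via Corollary~\ref{pluswalg} to the $\phi=+$, $\eps=+$, $n$ odd, $l+1$ odd case, the degree-$\le l$ polynomiality condition from \eqref{ker2} forcing a $c_{i,k_i}=\tfrac12$ in each row, and the translation to the prescribed middle column of $A^+$). The only quibble is the phrase ``equivalently'' linking ``$(1+\tfrac12 u^{-1})\mu_i(u)$ of degree $\le l+1$'' with ``$\mu_i(u)$ of degree $\le l$'' --- the former is the condition for factoring through $\kappa_{l+1}$ and the latter the strictly stronger condition for factoring through $\zeta$ --- but you use the correct (stronger) condition in the rest of the argument, so nothing is affected.
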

\begin{proof}
Mimic the proof of Lemma \ref{plus1}.
\end{proof}

\section{BGK highest weight theory for rectangular finite $W$-algebras}
\label{section5}

In this section we show that certain
irreducible highest weight $U(\mf{g},e)$-modules as defined in \cite{BGK}
are isomorphic to the $U(\mf{g},e)$-modules $L(A)$ for
$A \in \Row_{n,l}$, provided one makes the right choices in 
defining the irreducible highest weight $U(\mf{g},e)$-modules from \cite{BGK}.
Throughout this section, unless otherwise indicated, $\mf{g}$ denotes  an
arbitrary reductive Lie algebra over $\C$, and $e \in \mf{g}$ is  
a nilpotent element for which the grading from \eqref{eq_grading} is even,
though all the results we mention
hold in general.  Refer to \cite{BGK} for the general results.

\subsection{Highest weight theory for $U(\mf{g},e)$} \label{s_II_4}
In \cite{BGK}
Brundan, Goodwin, and Klesh\-chev define the notion of a highest weight 
$U(\mf{g},e)$-module.
The key to this is a reductive subalgebra
$\mf{g}_0$ of $\mf{g}$ which contains $e$.
This leads to the ``smaller'' finite $W$-algebra $U(\mf{g}_0,e)$
which plays the role
of a Cartan subalgebra in defining highest weight modules.

To define $\mf{g}_0$, first choose $\mf{t}$, a maximal toral subalgebra of
$\mf{g}$, so that it contains $h$ and so that $\mf{t}^e$ is a maximal toral subalgebra of
$\mf{g}^e \cap \mf{g}(0)$.  For $\alpha \in (\mf{t}^e)^*$ let $\mf{g}_\alpha$ denote the
$\alpha$-weight space of $\mf{g}$.  So
\[
  \mf{g} = \mf{g}_0 \oplus \bigoplus_{\alpha \in \Phi^e} \mf{g}_\alpha,
\]
where $\mf{g}_0$ is the centralizer of $\mf{t}^e$ in $\mf{g}$ and $\Phi^e \subset (\mf{t}^e)^*$ denotes the set of nonzero weights of
$\mf{t}^e$ on $\mf{g}$. Thus we have defined $\mf{g}_0$,
which is now a minimal Levi subalgebra of $\mf{g}$ containing $e$.

Next we choose a Borel subalgebra $\mf{b}$ of $\mf{g}$ containing
$\mf{t}$, and let $\Phi^+$ denote the corresponding set of positive roots.
Let
$\mf{q} = \mf{g}_0 + \mf{b}$, which is a parabolic subalgebra of $\mf{g}$
with Levi factor $\mf{g}_0$.
For each simple root $\alpha \in \Phi^+$, the corresponding root
space of $\mf{g}$ must lie in $\mf{g}_0$ or the nil-radical of $\mf{q}$.
It follows that
$\mf{g}_\alpha \subseteq \mf{q}$ or $\mf{g}_{-\alpha} \subseteq \mf{q}$ for each 
$\alpha \in \Phi^e$.
Define
$\Phi_+^e = \set{\alpha \in \Phi^e }{ \mf{g}_\alpha \subseteq \mf{q}}$.
This defines the {\em dominance order $\ge$ } on $(\mf{t}^e)^*$:
$\lambda \ge \mu$ if $\lambda - \mu \in \Z_{\ge 0} \Phi_+^e$,
and it is now the case that $\Phi^e = -\Phi_+^e \sqcup \Phi_+^e$.

Let $\mf{a}$ be $\mf{g}, \mf{g}^e$, or $\mf{p}$,
and for $\alpha \in (\mf{t}^e)^*$ let $\mf{a}_\alpha$ denote the
$\alpha$-weight space of $\mf{a}$.
Let $\mf{a}_\pm = \bigoplus_{\alpha \in \Phi_\pm^e} \mf{a}_\alpha$,
so $\mf{a} = \mf{a}_- \oplus \mf{a}_0 \oplus \mf{a}_+$ and
$U(\mf{a}) = \bigoplus_{\alpha \in \Z \Phi^e} U(\mf{a})_\alpha$.
In particular, $U(\mf{a})_0$ is a subalgebra.
Let $U(\mf{a})_\sharp$ denote the {\em left} ideal of $U(\mf{a})$ generated by the
roots spaces $\mf{a}_\alpha$ for $\alpha \in \Phi_+^e$.  Similarly
let
$U(\mf{a})_\flat$ denote the {\em right} ideal of $U(\mf{a})$ generated by the
roots spaces $\mf{a}_\alpha$ for $\alpha \in \Phi_-^e$.
Let $U(\mf{a})_{0,\sharp} = U(\mf{a})_0 \cap U(\mf{a})_\sharp$, and
 $U(\mf{a})_{\flat,0} = U(\mf{a})_0 \cap U(\mf{a})_\flat$.  Now the PBW theorem
implies that $U(\mf{a})_{0,\sharp} = U(\mf{a})_{\flat,0}$, hence
$U(\mf{a})_{0,\sharp}$ is a two-sided ideal of $U(\mf{a})_0$.  Moreover,
$\mf{a}_0$ is a subalgebra of $\mf{a}$, and we actually have that
$U(\mf{a})_0 = U(\mf{a}_0) \oplus U(\mf{a})_{0,\sharp}$.
Let
\[
  \pi : U(\mf{a})_0 \twoheadrightarrow U(\mf{a}_0)
\]
be the algebra homomorphism defined by projection along this decomposition.

It is easy to see that
\begin{equation} \label{eq_te_embed}
\mf{t}^e \subseteq U(\mf{g},e) 
\end{equation}
since
$([m, t],e) = 0$ for all $m \in \mf{m}, t \in \mf{t}^e$.
Recall that  the good filtration on $U(\mf{g},e)$ is
defined in $\S$\ref{section2.1}, and that $\gr U(\mf{g},e) = U(\mf{g}^e)$.
The following
theorem is due to Premet in \cite{P1}:
\begin{Theorem} \label{t_premet}
  There exists a $\mf{t}^e$-equivariant injection
$\Theta: \mf{g}^e \hookrightarrow U(\mf{g},e)$ such that
$\gr \Theta : \mf{g}^e \hookrightarrow U(\mf{g}^e)$ is the natural embedding.
\end{Theorem}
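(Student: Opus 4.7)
The plan is to build $\Theta$ by lifting a homogeneous $\mf{t}^e$-weight basis of $\mf{g}^e$ into $U(\mf{g},e)$ weight-by-weight and then extending linearly. The starting observation is that $\mf{t}^e \subseteq \mf{g}(0)$, so $\mf{t}^e$ centralizes the grading \eqref{eq_grading} and stabilizes $\mf{p}$, $\mf{m}$, and $\chi$; consequently the adjoint action of $\mf{t}^e$ on $U(\mf{g})$ restricts to an action on $U(\mf{g},e) \subseteq U(\mf{p})$ that preserves the good filtration. Passing to the associated graded via \eqref{assgr}, this induced $\mf{t}^e$-action on $\gr U(\mf{g},e) = U(\mf{g}^e)$ coincides with the natural adjoint action (both are computed as $[t,-]$ inside $U(\mf{p})$), so for each $d \ge 0$ the symbol map
\[
  \pi_d : U(\mf{g},e)_{\le d} \twoheadrightarrow \gr_d U(\mf{g},e) = U(\mf{g}^e)_d
\]
is $\mf{t}^e$-equivariant.

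Next I would fix a basis $x_1, \dots, x_r$ of $\mf{g}^e$ consisting of simultaneous $\mf{t}^e$-weight vectors that are also homogeneous for the grading \eqref{eq_grading}; write $d_i$ for the degree of $x_i$ in $\mf{p}$ and $\alpha_i$ for its $\mf{t}^e$-weight. Choose any preimage $y_i \in U(\mf{g},e)_{\le d_i}$ of $x_i$ under $\pi_{d_i}$ and decompose $y_i = \sum_\beta y_{i,\beta}$ into $\mf{t}^e$-weight components. Since $\pi_{d_i}$ is weight-preserving and $\pi_{d_i}(y_i) = x_i$ has weight $\alpha_i$, one has $\pi_{d_i}(y_{i,\alpha_i}) = x_i$ while $\pi_{d_i}(y_{i,\beta}) = 0$ for $\beta \ne \alpha_i$. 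Setting $\Theta(x_i) := y_{i,\alpha_i}$ and extending linearly yields a linear map $\Theta : \mf{g}^e \to U(\mf{g},e)$. By construction $\Theta(x_i)$ has $\mf{t}^e$-weight $\alpha_i$, so $\Theta$ is $\mf{t}^e$-equivariant; and $\gr \Theta$ sends each $x_i$ to itself, so it coincides with the natural embedding $\mf{g}^e \hookrightarrow U(\mf{g}^e)$. Injectivity of $\Theta$ follows at once from injectivity of $\gr \Theta$.

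The only nontrivial input is the identification $\gr U(\mf{g},e) = U(\mf{g}^e)$ recorded in \eqref{assgr} (proved in \cite[(2.1.2)]{P2}); once that is in hand, the $\mf{t}^e$-equivariance of the filtration and of the symbol map is automatic from $\mf{t}^e \subseteq \mf{g}(0)$, and the rest is a routine weight-by-weight lifting argument. The principal point to emphasize is that $\Theta$ is only asserted to be linear, not a Lie algebra (let alone algebra) homomorphism, which is precisely what allows the construction to be carried out basis element by basis element without any obstruction.
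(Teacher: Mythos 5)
Your argument is correct. Note that the paper itself offers no proof of this statement --- it is quoted as Premet's theorem from \cite{P1} (see also \cite[Theorem 3.6]{BGK}, where exactly this $\mf{t}^e$-equivariant lifting is established) --- so there is nothing in the text to compare against; but your route is the standard one: since $\mf{t}^e\subseteq\mf{g}(0)\cap\mf{g}^e$, the adjoint action of $\mf{t}^e$ preserves $\mf{p}$, $\mf{m}$, the character $\chi$ (as $\chi([t,x])=-(x,[t,e])=0$), hence $U(\mf{g},e)$ and each step of the good filtration, and acts semisimply and locally finitely, so the surjection $F_dU(\mf{g},e)\twoheadrightarrow U(\mf{g}^e)_d$ of \eqref{assgr} splits weight-by-weight. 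The only point worth making explicit, which you gesture at but do not write out, is why the weight components $y_{i,\beta}$ of a chosen preimage again lie in $F_{d_i}U(\mf{g},e)$: this is because $F_{d_i}U(\mf{g},e)$ is an $\ad\mf{t}^e$-submodule of the semisimple locally finite module $U(\mf{p})$, so it is the direct sum of its weight subspaces. With that observed, the construction, the equivariance, and the injectivity (read off from $\gr\Theta$ by looking at the top-degree part of $\Theta(x)$) all go through as you say.
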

It should be noted that $\Theta$ is not a Lie algebra homomorphism.

Let $h_1, \dots, h_l$ be a basis of $\mf{g}_0^e$.
Let $f_1, \dots, f_m$, and $e_1, \dots, e_m$ be $\mf{t}^e$-weight bases of
$\mf{g}^e_-$ and $\mf{g}^e_+$ respectively, such that
$f_i$ is of weight $-\gamma_i$, and $e_i$ is of weight $\gamma_i$ for
$\gamma_1, \dots, \gamma_m \in \Phi^e_+$.
For $i = 1, \dots, m$, $j = 1, \dots, l$, let
$F_i = \Theta(f_i), E_i = \Theta(e_i)$, and $H_j = \Theta(h_j)$.
For $\bf{a} \in \Z_{\ge 0}^m$, let $F^{\bf{a}} = F_1^{a_1} \dots F_m^{a_m}$.
For $\bf{b} \in \Z^l_{\ge 0}, \bf{c} \in \Z^m_{\ge 0}$ define $H^{\bf{b}}, E^{\bf{c}}$
similarly.
Theorem \ref{t_premet} implies that the
following is a PBW basis of $U(\mf{g},e)$:
\[
  \set{F^{\bf{a}} H^{\bf{b}} E^{\bf{c}}}{ \bf{a}, \bf{c} \in \Z^m_{\ge 0}, 
  \bf{b} \in \Z^l_{\ge 0} }.
\]

Let $U(\mf{g},e)_\sharp$ be the {\em left} ideal of $U(\mf{g},e)$ generated by
$\{E_1, \dots, E_m\}$.
Let $U(\mf{g},e)_\flat$ be the {\em right} ideal of $U(\mf{g},e)$ generated by
$\{F_1, \dots, F_m\}$.
Let $U(\mf{g},e)_{0, \sharp} = U(\mf{g},e)_\sharp \cap U(\mf{g},e)_0$.
Let $U(\mf{g},e)_{\flat,0} = U(\mf{g},e)_\flat \cap U(\mf{g},e)_0$.
Now from the above PBW basis it is clear that
$U(\mf{g},e)_{0, \sharp} = U(\mf{g},e)_{\flat, 0}$, and so
$U(\mf{g},e)_{0, \sharp}$ is a {\em two-sided} ideal of $U(\mf{g},e)_0$.

Let $b_1, \dots, b_r$ be a homogeneous basis for $\mf{m}$ such that
$b_i$ is of degree $-d_i$ and $\mf{t}$-weight $\beta_i \in \mf{t}^*$, and let
\begin{equation} \label{eq_gamma}
  \gamma = \sum_{\substack{ 1 \le i \le r  \\ \beta_i|_{\mf{t}^e} \in \Phi^e_-}} \beta_i.
\end{equation}
By \cite[Lemma 4.1]{BGK}, $\gamma$ extends uniquely to a
character of $\mf{p}_0$.
Let $S_{-\gamma} : U(\mf{p}_0) \to U(\mf{p}_0)$ be defined by $S_{-\gamma}(x) = x -\gamma(x)$
for $x \in \mf{p}_0$,
so $S_{-\gamma}$ an algebra isomorphism.

\begin{Theorem} 
[{\cite[Theorem 4.3]{BGK}}]
\label{t_bgk_even}
The restriction of
$S_{-\gamma} \circ \pi : U(\mf{p})_0 \twoheadrightarrow U(\mf{p}_0)$
to $U(\mf{g},e)_0$
defines a surjective algebra homomorphism
\[
  \pi_{-\gamma} : U(\mf{g},e)_0 \twoheadrightarrow U(\mf{g}_0,e)
\]
with $\ker \pi_{-\gamma} = U(\mf{g},e)_{0, \sharp}$.
\end{Theorem}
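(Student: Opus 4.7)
The plan is to establish the three pieces of the theorem in order: well-definedness of $\pi_{-\gamma}$ as a map into $U(\mf{g}_0,e)$, identification of the kernel, and surjectivity. All three rely on the PBW basis $\{F^{\bf a} H^{\bf b} E^{\bf c}\}$ of $U(\mf{g},e)$ coming from Theorem \ref{t_premet}, together with a bookkeeping argument tracking how the $\mf{t}^e$-grading interacts with the projection $\pi$ and the character $\gamma$.

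For well-definedness, I would take $u \in U(\mf{g},e)_0$ and $m_0 \in \mf{m}_0 := \mf{m} \cap \mf{g}_0$ and show that $\pr_0([m_0, S_{-\gamma}(\pi(u))]) = 0$, where $\pr_0 : U(\mf{g}_0) \twoheadrightarrow U(\mf{p}_0)$ is the analog of $\pr$ for $\mf{g}_0$. Writing $u = \pi(u) + u_\sharp$ with $u_\sharp \in U(\mf{p})_{0,\sharp}$, the defining property $\pr([m_0, u]) = 0$ in $U(\mf{g})$ can be refined by projecting the identity $[m_0, u] = [m_0, \pi(u)] + [m_0, u_\sharp]$ through the composite $U(\mf{g}) \twoheadrightarrow U(\mf{p}) \twoheadrightarrow U(\mf{p}_0)$. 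The key point is that when commuting $m_0$ past a monomial in $U(\mf{p})_{0,\sharp}$, one picks up, after projection through $\pi$ and application of $\chi$ to the $\mf{m}$-pieces that appear, exactly the constant $\gamma(m_0)$ multiplied by $\pi(u)$, where $\gamma$ is the character of \eqref{eq_gamma}. This term is precisely the correction removed by $S_{-\gamma}$, so $\pr_0([m_0, S_{-\gamma}(\pi(u))]) = 0$ as required.

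For the kernel, I would use the PBW decomposition of $U(\mf{g},e)_0$. Any basis monomial $F^{\bf a} H^{\bf b} E^{\bf c}$ has $\mf{t}^e$-weight $\sum (c_i - a_i)\gamma_i$, so the weight-zero basis splits into the pure $H^{\bf b}$ monomials and those with $\bf a \ne 0 \ne \bf c$. The latter lie in $U(\mf{g},e)_{0,\sharp} = U(\mf{g},e)_{\flat,0}$, and a leading-term analysis using the good filtration shows $\gr H_j = h_j \in \mf{g}_0^e \subseteq \mf{p}_0$ while $\gr E_i = e_i \in \mf{p}_+$ and $\gr F_i = f_i \in \mf{p}_-$; hence every monomial in $U(\mf{g},e)_{\flat,0}$ with a factor of $F_i$ (equivalently a factor of $E_i$ by weight zero) sits in $U(\mf{p})_{0,\sharp}$ and is killed by $\pi$. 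Combined with the direct-sum decomposition $U(\mf{g},e)_0 = \mathrm{span}\{H^{\bf b}\} \oplus U(\mf{g},e)_{0,\sharp}$, this yields $\ker \pi_{-\gamma} = U(\mf{g},e)_{0,\sharp}$ once the restriction of $\pi_{-\gamma}$ to the $H^{\bf b}$-span is shown injective.

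For surjectivity, I would argue by induction on the good filtration. Applying Theorem \ref{t_premet} to the pair $(\mf{g}_0, e)$ gives a $\mf{t}^e$-equivariant injection $\Theta_0 : \mf{g}_0^e \hookrightarrow U(\mf{g}_0, e)$ whose symbol is the identity on $\mf{g}_0^e$, and the elements $\Theta_0(h_j)$ generate a PBW basis of $U(\mf{g}_0, e)$. I would then verify that $\pi_{-\gamma}(H_j)$ agrees with $\Theta_0(h_j)$ modulo lower filtration terms, which follows because $\gr H_j = h_j \in \mf{g}_0^e$, so $\gr \pi_{-\gamma}(H_j) = h_j = \gr \Theta_0(h_j)$. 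A standard filtered-to-graded argument then lifts every element of $U(\mf{g}_0, e)$ to a polynomial in the $H_j$, proving surjectivity. The main obstacle is the calculation in the well-definedness step that pins down the character $\gamma$: this requires a careful accounting of the mixing of weight subspaces when commuting $\mf{m}_0$ past $U(\mf{p})_{0,\sharp}$ and then evaluating via $\chi$, and it is here that the choice of $\gamma$ as the sum of negative-weighted $\mf{m}$-weights becomes forced.
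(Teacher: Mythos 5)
First, note that this article does not prove the statement at all: it is quoted from \cite[Theorem 4.3]{BGK}, so your proposal must be measured against the original proof there. Your overall architecture (well-definedness of the target, kernel via the PBW basis $F^{\bf a}H^{\bf b}E^{\bf c}$, surjectivity by a filtered-to-graded argument using $\gr U(\mf{g}_0,e)=U(\mf{g}_0^e)$) does match the shape of that argument, and the kernel and surjectivity steps are essentially sound \emph{granted} well-definedness: once the image is known to lie in $U(\mf{g}_0,e)$, comparison of associated graded algebras forces surjectivity, and the leading terms $h^{\bf b}$ supply the injectivity on the span of the $H^{\bf b}$ that your kernel step defers.

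The genuine gap is in the well-definedness step, which is where the entire content of the theorem lives. Your proposed mechanism for the shift --- that commuting $m_0\in\mf{m}_0$ past $U(\mf{p})_{0,\sharp}$ and applying $\chi$ produces a correction $\gamma(m_0)\,\pi(u)$ cancelled by $S_{-\gamma}$ --- cannot be right. The weight $\gamma$ of \eqref{eq_gamma} lies in $\mf{t}^*$ and extends to a character of $\mf{p}_0$ by \cite[Lemma 4.1]{BGK}, but $\mf{m}_0=\mf{m}\cap\mf{g}_0$ meets $\mf{p}_0$ trivially; moreover $\mf{m}_0\subseteq[\mf{t},\mf{g}_0]$ (its elements are sums of $\ad h$-eigenvectors with nonzero eigenvalue), so any character of a Lie algebra containing $\mf{t}+\mf{m}_0$ vanishes on $\mf{m}_0$. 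Hence the correction you describe is identically zero and cannot account for a nontrivial shift. The first-order constants you invoke also vanish for a separate reason: writing $u_\sharp=\sum y_\alpha w_\alpha$ with $y_\alpha$ of $\mf{t}^e$-weight $-\alpha\neq 0$, the commutators $[m_0,y_\alpha]$ again have nonzero $\mf{t}^e$-weight, and $\chi=(\,\cdot\,,e)$ kills every nonzero $\mf{t}^e$-weight space because $e$ is centralized by $\mf{t}^e$. The shift by $\gamma$ (the $\mf{t}$-weight of the top exterior power of $\mf{m}\cap\mf{g}_-$) is forced only at the next order of the normal-ordering recursion --- equivalently, by comparing the subleading terms of $\pi(\Theta(x))$ for $x\in\mf{g}_0^e$ against the corresponding generators of $U(\mf{g}_0,e)$ --- and that computation, which is the heart of \cite[Theorem 4.3]{BGK}, is absent from your argument.
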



For a $U(\mf{g},e)$-module $V$ and $\lambda \in (\mf{t}^e)^*$
let
\begin{equation} \label{eq_vlambda}
  V_\lambda = \set{v \in V }
    { (t+\gamma(t))v = \lambda(t) \text{ for all } t \in \mf{t}^e},
\end{equation}
recalling that $\mf{t}^e$ is naturally a subalgebra of $U(\mf{g},e)$ by \eqref{eq_te_embed}.
Now it is the case that
$U(\mf{g},e)_\alpha V_\lambda  \subseteq V_{\lambda + \alpha}$, so
$V_\lambda$ is preserved by $U(\mf{g},e)_0$.
We say that $V_\lambda$ is a {\em maximal weight space of
$V$} if $U(\mf{g},e)_\sharp V_\lambda = 0$.
Assuming this is the case, the action of $U(\mf{g},e)_0$ factors through the homomorphism   $\pi_{-\gamma}$ from
Theorem \ref{t_bgk_even}, thus $V_\lambda$ is also a $U(\mf{g}_0,e)$-module.
Since $\mf{t}^e$ can naturally be considered a subalgebra of $U(\mf{g}_0,e)$
by \eqref{eq_te_embed} again,
restricting the action of $U(\mf{g}_0,e)$ on $V_\lambda$  to
$\mf{t}^e$ gives a new action of $\mf{t}^e$ on $V_\lambda$ satisfying
\[
   t.v = \lambda(t)v \quad \text{for all} \quad t \in \mf{t}^e
\]
(which is why the shift by $\gamma$ is included in the definition of the
$\lambda$-weight space of a $U(\mf{g},e)$-module from \eqref{eq_vlambda}).

A $U(\mf{g},e)$-module is a {\em highest weight module} if it is generated by
a maximal weight space $V_\lambda$ such that $V_\lambda$ is finite dimensional
and irreducible as a $U(\mf{g}_0,e)$-module.
Let
\[
  \set{V_\Lambda }{ \Lambda \in \mathcal{L}}
\]
be a complete set of isomorphism classes of finite dimensional irreducible
$U(\mf{g}_0,e)$-modules for some indexing set $\mathcal{L}$.
Since $U(\mf{g},e)_\sharp$ is invariant under left multiplication by $U(\mf{g},e)$
and right multiplication by $U(\mf{g},e)_0$, we have that
$U(\mf{g},e) / U(\mf{g},e)_\sharp$ is a $(U(\mf{g},e), U(\mf{g},e)_0)$-bimodule.  Moreover
the right action of $U(\mf{g},e)_0$ factors through the homomorphism
$\pi_{-\gamma}$
from Theorem \ref{t_bgk_even}.
 Thus we have that
$U(\mf{g},e) / U(\mf{g},e)_\sharp$ is a $(U(\mf{g},e), U(\mf{g}_0,e))$-bimodule.
For $\Lambda \in \mathcal{L}$, define $M(\Lambda, \mf{q})$, the {\em Verma module of type $\Lambda$} via
\[
  M(\Lambda, \mf{q}) = U(\mf{g},e)/U(\mf{g},e)_\sharp \otimes_{U(\mf{g}_0,e)} V_\Lambda.
\]

By \cite[Theorem 4.5]{BGK} $M(\Lambda, \mf{q})$ has a unique maximal proper submodule
$R(\Lambda, \mf{q})$.
Let $L(\Lambda, \mf{q}) = M(\Lambda,\mf{q}) / R(\Lambda, \mf{q})$.
Now also by
\cite[Theorem 4.5]{BGK}
we have that
$\set{L(\Lambda, \mf{q}) }{ \Lambda
 \in \mathcal{L}}$
is a complete set of isomorphism classes of irreducible highest weight modules for $U(\mf{g},e)$.
Let
\[
\mathcal{L}^+ = \set{\Lambda \in \mathcal{L} }{ 
\dim L(\Lambda,\mf{q}) < \infty}.
\]
By \cite[Corollary 4.6]{BGK},
$\set{ L(\Lambda,\mf{q}) }{ \Lambda \in \mathcal{L}^+ }$
 is a complete set of isomorphism classes of
finite dimensional irreducible $U(\mf{g},e)$-modules.

Unfortunately, an explicit set $\mathcal{L}$ parameterizing the finite dimensional
irreducible $U(\mf{g}_0,e)$-modules is unknown in general.  In the next subsection, we
focus on a special case in which such a parameterization is available.

\subsection{The case that $e$ is regular in $\mf{g}_0$} \label{s_II_5}
We assume in this section that $e$ is a regular nilpotent element of $\mf{g}_0$.
 In this case,
Kostant showed in \cite[$\S$2]{Ko} that
$U(\mf{g}_0,e) \cong Z(\mf{g}_0)$.
In turn, by the Harish-Chandra Isomorphism,
$Z(\mf{g}_0) \cong S(\mf{t})^{W_0}$ where $W_0$ is the Weyl group associated
to $\mf{g}_0$.
We state this more precisely in the following lemma.
Let
\[
  \eta = \frac{1}{2} \sum_{\substack{\alpha \in \Phi \\ \alpha|_{\mf{t}^e} \in \Phi^e_+}} \alpha + 
   \frac{1}{2} \sum_{\substack{1 \le i \le r \\ \beta_i|_{\mf{t}^e} =0}} \beta_i,
\]
where the $\beta_i$ are defined as in \eqref{eq_gamma}.
The following lemma is essentially \cite[Lemma 5.1]{BGK}:
\begin{Lemma} \label{l_hc}
 Let $\xi:U(\mf{p}_0)\to S(\mf{t})$ be the homomorphism induced by the
natural projection $\mf{p}_0 \twoheadrightarrow \mf{t}$.
Let $S_{-\eta}: S(\mf{t}) \rightarrow S(\mf{t}), x \mapsto x - \eta(x)$
for $x \in \mf{t}$.
Then the map $\xi_{-\eta} := S_{-\eta} \circ \xi$ defines an
algebra isomorphism
$U(\mf{g}_0,e) \stackrel{\sim}{\rightarrow} S(\mf{t})^{W_0}$.
\end{Lemma}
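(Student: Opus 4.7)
The plan is to adapt the proof of \cite[Lemma 5.1]{BGK} to the present setting, combining Kostant's classical theorem for regular nilpotents with the Harish-Chandra isomorphism.

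First I would observe that since $e$ is a regular (principal) nilpotent of $\mf{g}_0$ and $h \in \mf{t} \subseteq \mf{g}_0$, the $\ad h$-grading on $\mf{g}_0$ is a principal grading, and in particular $\mf{g}_0(0) = \mf{t}$. This ensures $\mf{p}_0 = \mf{t} \oplus \bigoplus_{i>0}\mf{g}_0(i)$ and makes $\xi$ a well-defined algebra homomorphism $U(\mf{p}_0) \twoheadrightarrow S(\mf{t})$ given by projection along the positively graded part of $\mf{g}_0$.

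Next, Kostant's theorem \cite{Ko} provides an algebra isomorphism $U(\mf{g}_0, e) \xrightarrow{\sim} Z(U(\mf{g}_0))$ (sending $u \in U(\mf{g}_0,e)\subseteq U(\mf{p}_0)$ to the unique central element of $U(\mf{g}_0)$ congruent to $u$ modulo the Whittaker left ideal), and the classical Harish-Chandra isomorphism identifies $Z(U(\mf{g}_0)) \xrightarrow{\sim} S(\mf{t})^{W_0}$. Composition gives an algebra isomorphism $U(\mf{g}_0, e) \xrightarrow{\sim} S(\mf{t})^{W_0}$. Unravelling the definitions, this composite has the form $S_{-\eta'} \circ \xi|_{U(\mf{g}_0,e)}$ for a specific character $\eta'$ of $\mf{t}$, built up from the Miura-style shift implicit in Kostant's identification and the $\rho_{\mf{g}_0}$-shift of Harish-Chandra.

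The main step, and the chief obstacle, is then to verify the character identity $\eta' = \eta$. This is a direct calculation with the two half-sums in the definition of $\eta$: the second summand $\tfrac{1}{2}\sum_{\beta_i|_{\mf{t}^e}=0}\beta_i$ reproduces the half-sum of $\mf{t}$-weights of $\mf{m}_0 = \mf{m} \cap \mf{g}_0$ coming from the Kostant identification, while the first summand $\tfrac{1}{2}\sum_{\alpha|_{\mf{t}^e}\in \Phi^e_+}\alpha$ absorbs the $W_0$-invariant correction needed because $\xi$ is defined with respect to the parabolic $\mf{p}_0$ inside $\mf{g}_0$ whose Borel comes from $\mf{b}$, rather than with respect to $\mf{b} \cap \mf{g}_0$. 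The full verification is entirely parallel to the one in the proof of \cite[Lemma 5.1]{BGK}, so I would appeal to that argument for the final bookkeeping; no new ideas are required.
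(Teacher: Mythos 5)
Your proposal is correct and follows the same route as the paper, which in fact offers no proof beyond the citation: it invokes Kostant's isomorphism $U(\mf{g}_0,e)\cong Z(\mf{g}_0)$ for the regular nilpotent $e$ in $\mf{g}_0$, composes with the Harish-Chandra isomorphism onto $S(\mf{t})^{W_0}$, and refers the precise identification of the shift $\eta$ to \cite[Lemma 5.1]{BGK}, exactly as you do.
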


Since $S(\mf{t})^{W_0}$ is a free polynomial algebra,
we have by the isomorphism from Lemma \ref{l_hc} that $\mathcal{L} = \mf{t}^* / W_0 = \operatorname{m-Spec}(S(\mf{t})^{W_0})$.
In this case 
we can describe
the subset $\mathcal{L}^+$ of $\mathcal{L}$,
corresponding to the finite dimensional irreducible $U(\mf{g}_0, e)$-modules
$\Lambda$ for which $L(\Lambda, \mf{q})$ is finite dimensional,
in combinatorial terms.
Recall that we have
fixed a Borel subalgebra $\mf{b}$ of $\mf{g}$ containing $\mf{t}$, and $\Phi^+$ is the
corresponding set of positive roots. Let
$\Phi_0^+ = \{\alpha \in \Phi^+\:|\:\mf{g}_\alpha \subseteq \mf{g}_0\}$
denote the resulting system of
positive roots for the Levi subalgebra $\mf{g}_0$ of $\mf{g}$.
For $\lambda \in \mf{t}^*$ let
$L(\lambda)$ denote the irreducible $U(\mf{g})$-module of highest weight
$\lambda - \rho$, where $\rho = \frac{1}{2} \sum_{\alpha \in \Phi^+} \alpha$.

\begin{Theorem}[{\cite[Conjecture 5.2]{BGK}, proved by Losev in \cite{Lo3}}]
\label{c_bgk}
  For $\Lambda \in \mathcal{L}$ pick $\lambda \in \Lambda$ such that
  $(\lambda,  \alpha^\vee) \notin \Z_{>0}$ for all $\alpha \in \Phi_0^+$.
  Then $L(\Lambda, \mf{q})$ is finite dimensional if and only if
$\mathcal{VA}(\Ann_{U(\mf{g})} L(\lambda)) = \overline {G.e}$.
\end{Theorem}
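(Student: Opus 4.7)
The plan is to prove this via Losev's machinery relating primitive ideals of $U(\mf{g},e)$ to those of $U(\mf{g})$, combined with the BGK highest weight theory recalled in $\S$\ref{s_II_4}--\ref{s_II_5}. The underlying principle is that the BGK category $\mathcal{O}$ for $U(\mf{g},e)$ should match up, via a Losev-style restriction functor, with a suitable parabolic category $\mathcal{O}^{\mf{q}}$ for $U(\mf{g})$, and that under this match irreducibles correspond to irreducibles with matching annihilators.

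First I would use Skryabin's equivalence to translate $L(\Lambda,\mf{q})$ into a generalized Whittaker $U(\mf{g})$-module, say $\widetilde L(\Lambda,\mf{q})$. Next, using Lemma \ref{l_hc}, the action of $U(\mf{g}_0,e)\cong S(\mf{t})^{W_0}$ on the maximal weight space $V_\Lambda$ is by the central character associated to the $W_0$-orbit of $\lambda$. I would identify $\widetilde L(\Lambda,\mf{q})$ as the simple head of a parabolically induced Whittaker module $\operatorname{Ind}_{\mf{q}}^{\mf{g}} M$, where $M$ is a Kostant--Whittaker $\mf{g}_0$-module whose central character is determined by $\lambda$. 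The hypothesis $(\lambda,\alpha^\vee)\notin \Z_{>0}$ for all $\alpha\in \Phi_0^+$ is precisely what selects the representative of the $W_0$-orbit $\Lambda$ for which $L(\lambda)$ arises as the simple quotient on the $U(\mf{g})$ side; this is the normalization at which Losev's restriction functor from category $\mathcal{O}$ to $W$-algebra modules is known to behave predictably on simples.

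With this identification in hand, I would invoke the annihilator-preservation theorem of Losev: the surjection
\[
\dagger : \Prim_{\operatorname{fin}} U(\mf{g},e) \twoheadrightarrow \Prim_{\overline{G.e}} U(\mf{g})
\]
sends $\Ann_{U(\mf{g},e)} L(\Lambda,\mf{q})$ to $\Ann_{U(\mf{g})} L(\lambda)$ for the normalized $\lambda$. Since $\mathcal{VA}$ is constant on $\dagger$-fibers and equals $\overline{G.e}$ on every primitive ideal in its image, finite dimensionality of $L(\Lambda,\mf{q})$ immediately forces $\mathcal{VA}(\Ann_{U(\mf{g})} L(\lambda))=\overline{G.e}$. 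Conversely, every primitive ideal with that associated variety lies in the image of $\dagger$, and one must check that among all preimages the one attached to the antidominant representative $\lambda$ is exactly the annihilator of some finite dimensional $L(\Lambda,\mf{q})$; this uses that simples in the BGK category $\mathcal O$ have distinct annihilators, so the bijection of simples is forced.

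The main obstacle is the first identification: showing that the Losev category-$\mathcal O$-to-$W$-algebra-module functor sends $L(\lambda)$ to $L(\Lambda,\mf{q})$ with $\lambda \mapsto \Lambda$ under the antidominance normalization. This requires matching the shift $S_{-\gamma}$ of Theorem \ref{t_bgk_even} and the shift $S_{-\eta}$ of Lemma \ref{l_hc} with the $\rho$-shift on the $U(\mf{g})$ side, and establishing exactness and simple-preservation of the functor on the relevant block. Once these bookkeeping details are settled, the equivalence with $\mathcal{VA}(\Ann L(\lambda))=\overline{G.e}$ is a formal consequence of the properties of $\dagger$ recalled in the introduction.
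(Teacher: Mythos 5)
The paper offers no proof of this statement to compare against: it is quoted as \cite[Conjecture 5.2]{BGK} with the proof attributed entirely to Losev \cite{Lo3}, and the subsequent Remark only observes that one could independently verify it in the rectangular cases using the algorithms of \cite{BV}. So the only question is whether your sketch would itself constitute a proof, and it would not.

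The central problem is circularity. You propose to \emph{invoke} that $\dagger$ sends $\Ann_{U(\mf{g},e)}L(\Lambda,\mf{q})$ to $\Ann_{U(\mf{g})}L(\lambda)$ for the representative $\lambda$ normalized by $(\lambda,\alpha^\vee)\notin\Z_{>0}$ for $\alpha\in\Phi_0^+$ --- but this compatibility between Losev's map $\dagger$ and the BGK highest weight parameterization \emph{is} the content of the theorem; it is not an off-the-shelf input. You then name this very identification as ``the main obstacle'' and defer it as bookkeeping, which it is not: identifying the Skryabin transform of $L(\Lambda,\mf{q})$ with the simple head of the induced Whittaker module, matching the shifts $S_{-\gamma}$ and $S_{-\eta}$ against the $\rho$-shift, and proving that the relevant restriction functor is exact and behaves predictably on simples in the block, is precisely the technical core of \cite{Lo3}. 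Both directions of the theorem collapse onto this one unproved identification: the forward direction is only ``immediate'' once you know the image of $\Ann L(\Lambda,\mf{q})$ under $\dagger$ is $\Ann L(\lambda)$ for \emph{your} $\lambda$, and the converse likewise. Your attempt to force the correspondence by noting that simples in the BGK category $\mathcal{O}$ have distinct annihilators does not help even where that distinctness holds (it does for finite dimensional simples, since a primitive ideal of finite codimension has a unique simple quotient module, but is unjustified for the infinite dimensional ones): distinctness of annihilators on one side cannot by itself single out \emph{which} $\Lambda$ pairs with \emph{which} $\lambda$, which is exactly the normalization question the hypothesis on $\lambda$ is there to resolve.
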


\begin{Remark}
In the course of this work we also understood how to apply the results of
this chapter and the algorithms 
for calculating $\mathcal{VA}(\Ann_{U(\mf{g})} L(\lambda))$ from \cite{BV}
to independently verify that Theorem \ref{c_bgk} holds in these cases. 
In fact it is possible to use Losev's proof of Conjecture \ref{c_bgk} and
\eqref{ker2}
 to recover Molev's classification of the finite dimensional irreducible
representations of $Y$ from the classification of the finite dimensional irreducible
representations of $U(\mf{g},e)$ obtained via BGK highest weight theory.
\end{Remark}

\subsection{BGK highest weight theory for rectangular finite $W$-algebras}
In this subsection we show how to identify the irreducible 
$U(\mf{g},e)$-module $L(A)$ for $A \in \Row_{n,l}$ with 
a BGK highest weight module.
For a rectangular finite $W$-algebra $U(\mf{g},e)$ we have that
$e$ is regular in $\mf{g}_0$, 
so Theorem \ref{c_bgk} applies to these finite $W$-algebras.

First we need to fix choices of $\mf{t}$, a Cartan subalgebra of $\mf{g}$, 
and $\mf{b}$, a Borel subalgebra of $\mf{g}$ as in $\S$\ref{s_II_5}.
We let
$\mf{t}$ be the span of diagonal matrices in $\mf{g}$.
We choose our Borel subalgebra $\mf{b}$ by specifying a 
system of positive roots.
For $a \in \mathcal{I}_{nl}$ 
let $\eps_a \in \mf{t}^*$ be the restriction to $\mf{t}$ of the
diagonal coordinate function of $\mf{g}_{nl}$ given by
$\eps_a(e_{b,b}) = \delta_{a,b}$.
If $\eps = -$ (so $n l$ is even) our positive root system is
\begin{align*}
          \Phi^+ &= \set{\eps_a - \eps_b }{ a,b \in \mathcal{I}_{nl}, 
                   \row(a) < \row(b)}    \\
          & \quad \quad \cup 
          \set{\eps_a - \eps_b }{ a,b \in \mathcal{I}_{nl}, 
                   \row(a) = \row(b), \col(a) < \col(b)}.  
\end{align*}
If $\eps = +$ then 
\begin{align*}
          \Phi^+ &= \set{\eps_a - \eps_b }{ a,b \in \mathcal{I}_{nl}, 
                   \row(a) < \row(b), a \ne -b}    \\
          & \quad \quad \cup 
          \set{\eps_a - \eps_b }{ a,b \in \mathcal{I}_{nl}, 
                   \row(a) = \row(b), \col(a) < \col(b), a \ne -b}.
\end{align*}
Let $\mf{b}$ be the Borel subalgebra of $\mf{g}$ 
corresponding to this
choice of positive roots.

Next we give an explicit basis for $\mf{t}^e$, 
the centralizer of $e$ in $\mf{t}$.  
In \cite[Lemma 3.2]{Br} a basis for $\mf{g}^e$ is given in terms
of certain elements $\{f_{i,j;r}\}$, where $f_{i,j;r}$ is nilpotent
unless $r=0$.
So by \cite[Lemma 3.2]{Br}
a basis for $\mf{t}^e$ is given by 
\[
     \set{f_{i,i;0} }{ i \in \mathcal{I}_n \cap \Z_{<0}}.
\]
More explicitly, for $i \in \mathcal{I}_n$ we have that
\[
  f_{i,i;0} = \sum_{\substack{ a \in \mathcal{I}_{nl}\\ \row(a) = i}} f_{a,a}.
\]

Next we give basis for $(\mf{t}^e)^*$.  
Let $\delta_i \in (\mf{t}^e)^*$
be defined via $\delta_i(f_{j,j,;0}) = \delta_{i,j}$ 
for $i, j \in \mathcal{I}_n$, $i, j < 0$,
and for $i > 0$ let $\delta_i = -\delta_{-i}$.

Now $\mf{g}_0$ is the span of 
$\set{f_{a,b} }{ a, b \in \mathcal{I}_{nl}, \row(a) = \row(b) }$,
so
\[
  \mf{g}_0 \cong 
  \begin{cases}
      \mf{g}_l ^ {\oplus n/2} & \text{ if $n$ is even;} \\
      \mf{g}_l^\eps \oplus \mf{g}_l^{\oplus (n-1)/2} & \text{if $n$ is odd.}
  \end{cases}
\]
We also have that the parabolic $\mf{q} = \mf{b} + \mf{g}_0$ 
is the span of
\[  
   \set{ f_{a,b} }{ a,b \in \mathcal{I}_{nl}, \row(a) \le \row(b) }.
\]

Note that for $a,b \in \mathcal{I}_{nl}$, we have that 
$f_{a,b} \in \mf{g}_{\delta_{\row(b)} - \delta_{\row(a)}}$.
Thus 
\[
  \Phi^e_+ = \set{ \delta_{i} - \delta_j }{ i, j \in \mathcal{I}_n, i < j }.
\]

Recall for $i,j \in \mathcal{I}_n$ that there is a map 
$s_{i,j} : T(\mf{g}_l) \to U(\mf{g})$ defined in \eqref{eq_sij}.
This definition makes it clear that
for any $v \in T(\mf{g}_l)$,
$s_{i,j}(v) \in U(\mf{g})_{\delta_j - \delta_i}$.
Thus we can explicitly state a choice for the  $\mf{t}^e$-equivariant map 
$\Theta : \mf{g}^e \to U(\mf{g},e)$ from \eqref{eq_te_embed}:
For $i,j \in \mathcal{I}_n$, $r \ge 0$ we set
$\Theta(f_{i,j;r}) = s_{i,j}(\omega_{r+1})$.
Thus $\set{s_{i,j}(\omega_{r+1}) }{ r \ge 0, i,j \in \mathcal{I}_n, i < j}$ generates the left $U(\mf{g},e)$ ideal $U(\mf{g},e)_\sharp$.

Recall the homomorphisms
$\pi_{-\gamma} : U(\mf{g},e)_0 \twoheadrightarrow U(\mf{g}_0,e)$ 
from Theorem \ref{t_bgk_even}
and
$\xi_{-\eta}: U(\mf{g}_0,e) \widetilde{\to} S(\mf{t})^{W_0}$ from Lemma \ref{l_hc}.
These maps make every $S(\mf{t})^{W_0}$-module a $U(\mf{g},e)_0$-module.
We need to calculate the action of 
$s_{i,i}(\omega_{r+1})$ on a $S(\mf{t})^{W_0}$-module,
so we need to calculate $\xi_{-\eta} \circ \pi_{-\gamma} (s_{i,i}(\omega_r))$.
We do this with a series of lemmas.

For $i \in \Z$ let $\tilde \imath = \widehat{- \imath}$.
This lemma is a special case of \cite[Lemma 4.1]{Br}:
\begin{Lemma} \label{l_comm}
For $i, j \in \mathcal{I}_n, p,q \in \mathcal{I}_l$
\begin{align*}
  [&s_{i,j}(e_{p,q}), s_{h,k}(e_{v,w})] \\
   &=  \delta_{h,j} \delta_{q,v} s_{i,k}(e_{p,w}) -  \delta_{i,k} \delta_{p,w} s_{h,j}(e_{v,q}) \\
   &\quad + \iota(-\delta_{h,-i} \delta_{v,-p}s_{-j,k}(e_{-q,w}) + \delta_{-j,k} \delta_{w,-q}  s_{h,-i}(e_{v,-p}) ),
\end{align*}
where
\begin{equation} \label{iota}
  \iota = \begin{cases}
     \phi^{\hat \imath \hat p + \tilde \imath \tilde{p} +
            \hat \jmath \hat q + \tilde \jmath \tilde{q}}
     \eps^{\hat p + \hat q} & \text{if $p,q \ne 0$;} \\
     \phi^{\hat \jmath \hat q + \tilde \jmath \tilde{q}}
     \eps^{\hat \imath + \hat q} & \text{if $p = 0$, $q \ne 0$;} \\
     \phi^{\hat \imath \hat p + \tilde \imath \tilde{p}}
      \eps^{\hat p + \hat \jmath} & \text{if $p \ne 0$, $q = 0$;} \\
      \eps^{\hat \imath + \hat \jmath} & \text{if $p,q = 0$.} 
   \end{cases}
\end{equation}
\end{Lemma}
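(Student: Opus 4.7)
The plan is to reduce the computation to a direct calculation in $\mf{g} = \mf{g}_{nl}^\eps$. By the definition \eqref{sij}, we have $s_{i,j}(e_{p,q}) = \phi^{\hat\imath\hat p + \hat\jmath\hat q} f_{a,b}$, where $a,b \in \mathcal I_{nl}$ are the unique elements with $(\row(a),\col(a)) = (i,p)$ and $(\row(b),\col(b)) = (j,q)$; similarly $s_{h,k}(e_{v,w}) = \phi^{\hat h\hat v + \hat k\hat w} f_{c,d}$ with $(\row(c),\col(c)) = (h,v)$ and $(\row(d),\col(d)) = (k,w)$. Thus $[s_{i,j}(e_{p,q}), s_{h,k}(e_{v,w})]$ equals $\phi^{\hat\imath\hat p + \hat\jmath\hat q + \hat h\hat v + \hat k\hat w}\,[f_{a,b},f_{c,d}]$.

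First I would establish the ambient commutation formula
\[
[f_{a,b},f_{c,d}] = \delta_{b,c}f_{a,d} - \delta_{a,d}f_{c,b} + \eps^{\hat a+\hat b}\bigl(\delta_{-b,d}f_{c,-a} - \delta_{-a,c}f_{-b,d}\bigr),
\]
which follows at once from $f_{a,b} = e_{a,b} - \eps^{\hat a+\hat b}e_{-b,-a}$ and the standard $\mf{gl}_{nl}$ relation $[e_{a,b},e_{c,d}] = \delta_{b,c}e_{a,d} - \delta_{a,d}e_{c,b}$, after collecting the four resulting Kronecker deltas into pairs using $\hat b+\hat c \equiv \hat a+\hat d \equiv 0 \pmod 2$ whenever the corresponding delta is nonzero.

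Second, I would translate each delta back through the row/column bijection: because $(\row,\col)$ is injective on $\mathcal I_{nl}$, $b=c$ is equivalent to $j=h$ and $q=v$; $a=d$ to $i=k$ and $p=w$; $-b=d$ to $k=-j$ and $w=-q$; and $-a=c$ to $h=-i$ and $v=-p$. Each of the four surviving terms is a single $f_{?,?}$ which I rewrite as $\phi^{-(\dots)}$ times the appropriate $s_{?,?}(e_{?,?})$ using \eqref{sij} again. Combining with the overall $\phi^{\hat\imath\hat p + \hat\jmath\hat q + \hat h\hat v + \hat k\hat w}$ prefactor, and with the $\eps^{\hat a + \hat b}$ produced by the ambient bracket on the third and fourth terms, the formula of the lemma will emerge with an overall scalar matching $\iota$.

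The main obstacle is the sign bookkeeping in this third step, because the exponents involve the composite identities $\hat a = \tilde\imath$ if $p<0$ vs.\ $\hat\imath$ if $p>0$ (and correspondingly for $b,c,d$), combined with the rule $\hat{-r} = 1-\hat r$ that holds only when $r \ne 0$. Precisely the failure of that rule at $r=0$ produces the four case distinctions in \eqref{iota}: when $p$ or $q$ equals $0$, some factors $\phi^{\hat r \hat p + \tilde r \tilde p}$ collapse because $\hat 0 = \tilde 0 = 0$, while the relative signs of the remaining $\eps$-factors can be grouped into a single $\eps^{\hat p + \hat q}$ (or its degenerate analogue). I would verify each of the four subcases of \eqref{iota} separately, carefully tracking the substitutions $(a,b)\mapsto(-b,-a)$ forced when we apply the defining relation $f_{-b,-a} = -\eps^{\hat a + \hat b}f_{a,b}$ to put terms like $f_{c,-a}$ into canonical $s$-form. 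Since the result is declared a specialization of \cite[Lemma 4.1]{Br}, the final step is to check that the sign conventions used here agree with those in [Br] and cite the computation there.
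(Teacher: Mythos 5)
Your proposal is correct: the paper itself gives no proof of this lemma, deferring entirely to the citation of \cite[Lemma 4.1]{Br}, and your direct computation --- expanding each $s_{i,j}(e_{p,q})$ as a $\phi$-power times $f_{a,b}$, applying the classical bracket $[f_{a,b},f_{c,d}] = \delta_{b,c}f_{a,d}-\delta_{a,d}f_{c,b}+\eps^{\hat a+\hat b}(\delta_{-b,d}f_{c,-a}-\delta_{-a,c}f_{-b,d})$, and translating the Kronecker deltas and signs back through the box-labeling bijection --- is exactly the verification that the cited reference carries out. Your diagnosis of the four cases of $\iota$ is also right: for $\eps=-$ the labeling convention forces $\hat a=\hat p$ when $p\ne 0$ and $\hat a=\hat\imath$ when $p=0$ (while for $\eps=+$ the factor $\eps^{\hat a+\hat b}$ is $1$ regardless), which converts $\eps^{\hat a+\hat b}$ into the case-dependent $\eps$-factor, and the $\phi$-exponents collapse when $p$ or $q$ is $0$ because $\hat 0=\tilde 0=0$.
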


Note that $s_{i,i}(\omega_r)$ is a linear combination of monomials of the form
\begin{equation} \label{eq_sijmon}
   s_{i,i_1}(e_{p_1,q_1}) s_{i_1, i_2}(e_{p_2,q_2}) 
    \dots s_{i_{m-1}, i}(e_{p_m, q_m}),
\end{equation}
where $i_j \in \mathcal{I}_n$ for $j=1,\dots, m-1$, $p_i \le q_i$ for 
$i=1,\dots, m$, and $q_i < p_{i+1}$ for $i=1,\dots, m-1$.  
So to calculate $\xi_{-\eta} \circ \pi_{-\gamma} (s_{i,i}(\omega_r))$
we first prove a lemma about applying 
$\pi: U(\mf{p})_0 \to U(\mf{p}_0)$ to such monomials.

\begin{Lemma} \label{l_pi}
Let 
\[
   v = s_{i,i_1}(e_{p_1,q_1}) s_{i_1, i_2}(e_{p_2,q_2}) 
    \dots s_{i_{m-1}, i}(e_{p_m, q_m})
\]
be as in \eqref{eq_sijmon}.
If $i \ge 0$ then $\pi(v) = 0$ unless $i_1 = i_2 = \dots = i_{m-1} = i$.
\end{Lemma}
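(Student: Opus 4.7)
The plan is to show $v \in U(\mathfrak{p})_{0,\sharp}$, the kernel of $\pi$. By the identification $U(\mathfrak{p})_{0,\sharp} = U(\mathfrak{p})_{\flat,0}$, it suffices to realise the (weight-zero) element $v$ as a member of either $U(\mathfrak{p})\cdot\mathfrak{p}_+$ or $\mathfrak{p}_-\cdot U(\mathfrak{p})$. The underlying weight computation is that each factor $v_k := s_{i_{k-1},i_k}(e_{p_k,q_k})$ is a $\mathfrak{t}^e$-weight vector, and (consistent with the fact that the $s_{i,j}(\omega_{r+1})$ with $i<j$ generate $U(\mathfrak{g},e)_\sharp$) $v_k$ lies in $\mathfrak{p}_+$, $\mathfrak{p}_0$, or $\mathfrak{p}_-$ according to whether $i_{k-1} < i_k$, $i_{k-1} = i_k$, or $i_{k-1} > i_k$.

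I would argue by induction on $m$, relying on the following two boundary reductions. If $i_1 < i$, then $v_1 \in \mathfrak{p}_-$, so $v \in \mathfrak{p}_-\cdot U(\mathfrak{p}) \subseteq U(\mathfrak{p})_\flat$; symmetrically if $i_{m-1} < i$, then $v_m \in \mathfrak{p}_+$ and $v \in U(\mathfrak{p})\cdot\mathfrak{p}_+ \subseteq U(\mathfrak{p})_\sharp$. Either case gives $\pi(v) = 0$. Moreover if $i_k = i$ for some $1 \le k \le m-1$, the factorisation $v = (v_1\cdots v_k)\cdot(v_{k+1}\cdots v_m)$ splits $v$ into two pieces, each in $U(\mathfrak{p})_0$ and of the lemma's form with length less than $m$; since $\pi$ is an algebra homomorphism on $U(\mathfrak{p})_0$, the inductive hypothesis applied to whichever piece still contains a non-$i$ intermediate index finishes this sub-case.

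It remains to handle $i_1, i_{m-1} > i$ with no intermediate $i_k$ equal to $i$. The plan is to commute $v_1 \in \mathfrak{p}_+$ to the right, so that
\[
v = v_2\cdots v_m\cdot v_1 + \sum_{k=2}^{m} v_2\cdots v_{k-1}\,[v_1,v_k]\,v_{k+1}\cdots v_m;
\]
the first summand lies in $U(\mathfrak{p})\cdot\mathfrak{p}_+ \subseteq U(\mathfrak{p})_\sharp$. For each commutator $[v_1,v_k]$, Lemma \ref{l_comm} contributes two main terms with Kronecker factors $\delta_{q_1,p_k}$ and $\delta_{p_1,q_k}$, both of which vanish by the strict column inequalities $q_1 < p_k$ and $p_1 \le q_1 < q_k$. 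What remain are the two ``$\iota$-terms'', nonzero only under the rigid coincidences $(i_{k-1},p_k) = (-i,-p_1)$ or $(-i_1,q_k) = (i_k,-q_1)$.

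The main obstacle will be showing that any surviving $\iota$-contribution still lies in $U(\mathfrak{p})_{0,\sharp}$. The idea is that a nonzero $\iota$-coincidence forces $p_1,q_1$ to lie strictly on the negative side and $p_k,q_k$ strictly on the positive side of the pyramid, and inserts a new factor $s_{-i_1,i_k}(e_{-q_1,q_k})$ or $s_{i_{k-1},-i}(e_{p_k,-p_1})$ in place of $v_k$. One can then re-apply the boundary reductions above to this modified product, since the new factor has a row index that is either negative (allowing a left-ideal absorption) or of smaller absolute value. I expect to close this step by a secondary induction on a complexity measure such as $\sum_j|i_j|$, which strictly decreases whenever an $\iota$-substitution actually occurs.
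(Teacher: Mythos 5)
Your setup is sound and uses the same toolkit as the paper: the identification $U(\mf{p})_{0,\sharp}=U(\mf{p})_{\flat,0}$, the fact that a leading factor in $\mf{p}_-$ or a trailing factor in $\mf{p}_+$ is killed by $\pi$, the splitting at an intermediate index equal to $i$, and the observation that the column interlacing $q_1<p_k\le q_k$ kills the first two terms of Lemma~\ref{l_comm}. The genuine gap is exactly where you flag ``the main obstacle'': the surviving $\iota$-terms. After the substitution you obtain, say, $v_2\cdots v_{k-1}\,s_{-i_1,i_k}(e_{-q_1,q_k})\,v_{k+1}\cdots v_m$ (when $i_{k-1}=-i$ and $p_k=-p_1$). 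This monomial is no longer of the form \eqref{eq_sijmon}: the row chain is broken (the factor to its left ends at $i_{k-1}=-i$ while the new factor starts at $-i_1$), and the column index $-q_1$, which satisfies only $p_1<-q_1\le p_k$, need not interlace with the intervening $p_j,q_j$. Consequently neither your boundary reductions nor the inductive hypothesis applies to it, and a secondary induction on $\sum_j|i_j|$ has nothing to recurse into unless you first prove a strictly more general statement about such non-chained products.

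Concretely, the new factor does commute with every $v_{k'}$ for $k'>k$, so the term dies if $s_{-i_1,i_k}\in\mf{p}_+$, i.e.\ if $i_k>-i_1$; but nothing in your argument controls $i_k$ at this point (e.g.\ $n=6$, $i=1$, $i_1=3$, $i_{k-1}=-1$, $i_k=-5$ yields a trailing factor $s_{-3,-5}\in\mf{p}_-$, which is useless), and pushing it leftward instead can fail because the coincidence $q_{k'}=-q_1$ with $2\le k'\le k-1$ is possible. The paper's proof avoids this by first running a case analysis on the signs of $(p_j,q_j)$ for each factor with $i_{j-1}\ne i_j$: a factor with $p_j,q_j>0$ and $i_{j-1}<i_j$ commutes to the far right, one with $p_j,q_j<0$ and $i_{j-1}>i_j$ to the far left, and the straddling factors are handled separately. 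This forces monotonicity of the sequence $(i_k)$ along the negative and positive column ranges, and only with those sign constraints in hand is every secondary commutator guaranteed to be a positive (resp.\ negative) root vector that commutes all the way to the appropriate end. You would need to import that analysis, or some equivalent control on the signs of the $i_k$, before your final step closes. A smaller inaccuracy: a surviving coincidence forces $p_1<0<p_k$ (resp.\ $q_1<0<q_k$), but not that both of $p_1,q_1$ are negative and both of $p_k,q_k$ positive.
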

\begin{proof}
For uniformity, let $i_0, i_m = i$.  
The key fact used repeatedly in this proof is that
if 
$w = s_{j_1, j_2}(e_{r_1, r_2}) 
   \dots s_{j_{k}, j_{k+1}}(e_{r_k, r_{k+1}}) \in U(\mf{p})_0$ 
satisfies $j_1 > j_2$ or $j_k < j_{k+1}$ then
$w \in U(\mf{p})_{0,\sharp} = U(\mf{p})_{\flat,0}$, so
$\pi(w) = 0$.

By Lemma \ref{l_comm} we see that each term $s_{i_{j-1}, i_{j}}(e_{p_j,q_j})$ 
of $v$
commutes
with all terms 
$s_{i_{k-1}, i_{k}}(e_{p_k,q_k})$ in $v$ unless $p_k=q_j$, $q_k=p_j$, 
$p_k=-p_j$, or $q_k=-q_j$.

Suppose that there exists $j$ such that 
$i_{j-1} < i_j$ and $p_j,q_j > 0$.  Then 
$s_{i_{j-1}, i_j}(e_{p_j,q_j})$ commutes with every term to its right, so 
$\pi(v)=0$.
Next suppose that there exists a $j$ such that
$i_{j-1} > i_j$ and $p_j,q_j < 0$.  Then 
$s_{i_{j-1}, i_j}(e_{p_j,q_j})$ commutes with every term to its left
in $v$, so $\pi(v) = 0$.  
So $\pi(v) = 0$ unless $v$ satisfies
$i_{j-1} \le i_j$ if $q_j \le 0$ and $i_{j-1}  \ge i_j$ if $p_j \ge 0$, 
so {\em for the rest of this proof we assume that this is the case}.

Now suppose that there exists a $j$ such that $p_j < 0$, $q_j > 0$ and $i_{j-1} < i_j$. 
Then for all $k$ we must have that $i_k \ge 0$.  Note that
$s_{i_{j-1}, i_j} (e_{p_j,q_j})$ 
must commute with every term to its right unless there exists
$k>j$ such that $p_k = -p_j$.  In this case, 
$[s_{i_{j-1}, i_j} (e_{p_j,q_j}), s_{i_{k-1}, i_k} (e_{p_k,q_k})]$ 
is a multiple of 
$s_{-i_j,i_k}(e_{-q_j, q_k})$, which commutes with every term to the right of 
$s_{i_{k-1}, i_k} (e_{p_k,q_k})$.  
Furthermore since $i_j > 0$ and $i_k \ge 0$ we have that $-i_j < i_k$.  Thus
$\pi(v)=0$.  

Next suppose that there exists a $j$ such that 
$p_j < 0$, $q_j > 0$ and $i_{j-1} > i_j$.
Then for all $k$ we must have that $i_k \ge 0$.  Note that 
$s_{i_{j-1}, i_j} (e_{p_j,q_j})$ 
must commute with every term to its left unless there exists
$k<j$ such that $q_k = -q_j$.  In this case,
$[s_{i_{k-1}, i_k} (e_{p_k,q_k}), s_{i_{j-1}, i_j} (e_{p_j,q_j})]$ 
is a multiple of 
$s_{i_{j-1}, -i_{k-1}} (e_{p_j, -p_k})$, 
which commutes with every term to the left of
$s_{i_{k-1}, i_k} (e_{p_k,q_k})$, and it also satisfies $i_{j-1} > -i_{k-1}$.  
Thus $\pi(v) = 0$.

So for the rest of the proof we will assume that 
if there exists a $j$ such that $p_j <0$ and
$q_j > 0$ then $i_{j-1} = i_j$.  

Let $j$ be such that $i_{j-1} < i_j$ and $i_j$ is 
maximal in $\{i_1, \dots, i_m\}$.  Now it must be the case that $q_j < 0$.
Since $i_k \ge 0$ for all $k$,  by Lemma \ref{l_comm}
$s_{i_{j-1}, i_j}(e_{p_j, q_j})$ 
must commute with every term to its right unless $i=0$.
So if $i \ne 0$, then $\pi(v)=0$, so assume that $i=0$.
Even in the case that $i=0$, since $i_j > 0$ we still have that 
$s_{i_{j-1}, i_j}(e_{p_j, q_j})$ 
commutes with every term to its right unless there 
exists a $k > j$ such that $p_k = - p_j$.  In this case,
$[s_{i_{j-1}, i_j} (e_{p_j,q_j}), s_{i_{k-1}, i_k} (e_{p_k,q_k})]$ 
is a multiple of 
$s_{-i_j,i_k}(e_{-q_j, q_k})$
which commutes with every term to the right of 
$s_{i_{k-1}, i_k} (e_{p_k,q_k})$.  Furthermore, $-i_j < i_k$ since $i_j > 0$,
so
$\pi(v)=0$.

Thus we have proven that
$\pi(v) \ne 0 $ if and only if $i_0 = i_1 = i_2 = \dots = i_m = i$.
\end{proof}

Observe
that if
\[
   v = s_{i,i_1}(e_{p_1,q_1}) s_{i_1, i_2}(e_{p_2,q_2}) 
    \dots s_{i_{m-1}, i}(e_{p_m, q_m}) \in U(\mf{p}_0)
\]
then
\begin{equation} \label{l_xi}
\xi(v)=0  \text{ unless } p_j = q_j \text{ for } j=1, \dots, m.
\end{equation}

\begin{Lemma} \label{l_shift}
Let $i \in \mathcal{I}_n \cap \Z_{\ge 0}, p \in \mathcal{I}_l$.
Then
\[
  S_{-\eta}(S_{-\gamma}(s_{i,i}(e_{p,p} + \rho_p))) = 
  \begin{cases}
     s_{i,i}(e_{p,p})+ \frac{i}{2} & \text{ if $p \ne 0$;} \\
        s_{i,i}(e_{p,p}) + \frac{i}{2} - \frac{\eps}{2} & \text{ if $p=0$, $i \ne 0$;} \\
        s_{i,i}(e_{p,p}) & \text{ if $p,i=0$;}
   \end{cases}
\]
\end{Lemma}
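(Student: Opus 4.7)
The plan is to reduce the lemma to computing the values of two linear functionals on a single element of $\mf{t}$, and then to perform that computation by a combinatorial count over the pyramid labeling boxes of $\mathcal{I}_{nl}$.

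First I would observe that, by the definition of $s_{i,j}$ in \eqref{sij}, we have $s_{i,i}(e_{p,p}) = \phi^{2\hat\imath\hat p}f_{a,a} = f_{a,a}$, where $a \in \mathcal{I}_{nl}$ is the unique box with $\row(a)=i$ and $\col(a)=p$. Thus $s_{i,i}(e_{p,p}+\rho_p) = f_{a,a}+\rho_p$ lies in $\mf{t}\subseteq U(\mf{p}_0)$. In particular the projection $\pi$ (and, in the implicit context of $\xi_{-\eta}$, the Harish-Chandra projection $\xi$) acts as the identity, while $S_{-\gamma}$ and $S_{-\eta}$ simply subtract the scalars $\gamma(f_{a,a})$ and $\eta(f_{a,a})$; the constant $\rho_p$ passes through unchanged. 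So the lemma reduces to the numerical identity
\[
\rho_p - \gamma(f_{a,a}) - \eta(f_{a,a}) = \begin{cases} i/2 & \text{if } p\neq 0, \\ i/2 - \eps/2 & \text{if } p=0,\ i\neq 0, \\ 0 & \text{if } p=i=0. \end{cases}
\]

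Next I would compute $\gamma(f_{a,a})$. Using the basis $\{f_{j,j;0}\}_{j<0}$ of $\mf{t}^e$, a direct check shows $\eps_c|_{\mf{t}^e} = \delta_{\row(c)}$ under the conventions $\delta_0 = 0$ and $\delta_{j} = -\delta_{-j}$. Therefore a root vector $f_{c,d}$ in the basis of $\mf{m}$ (equivalently, $\col(c) > \col(d)$, one representative per pair $\{f_{c,d}, f_{-d,-c}\}$) contributes to $\gamma$ precisely when $\row(c) > \row(d)$. Since
\[
(\eps_c - \eps_d)(f_{a,a}) = \delta_{c,a} - \delta_{c,-a} - \delta_{d,a} + \delta_{d,-a},
\]
only basis vectors of $\mf{m}$ touching $\pm a$ contribute. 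I would enumerate these as pairs $\{f_{a,d}, f_{-d,-a}\}$ with $\col(d)<p$ and $\row(d)<i$ (contributing $+1$) and pairs $\{f_{c,a}, f_{-a,-c}\}$ with $\col(c)>p$ and $\row(c)>i$ (contributing $-1$), the two kinds being in bijection via $(c,d)\mapsto(-d,-c)$ except at the self-paired vectors $f_{c,-c}$. An entirely parallel enumeration handles $\eta(f_{a,a})$: I sum half the positive roots of $\mf{g}$ whose restriction to $\mf{t}^e$ is in $\Phi^e_+$, plus half the weights of those basis vectors of $\mf{m}$ whose $\mf{t}^e$-weight is zero (i.e.\ $f_{c,d}$ with $\row(c)=\row(d)$).

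Once these character values are in hand, the explicit formula \eqref{rho} for $\rho_p$ produces the three claimed shifts after cancellation. The main obstacle will be the careful bookkeeping of sign conventions and the handling of self-paired root vectors: vectors of the form $f_{c,-c}$ and boxes with $\row(a)=0$ or $\col(a)=0$ behave slightly differently from generic boxes, and it is precisely the contributions of these self-paired vectors, together with our convention in $\S$\ref{section2.1} for labeling boxes when $\eps=-$, that produce the correction $-\eps/2$ in the middle-column case $p=0, i\neq 0$. The case $p=i=0$ is trivial since then $a=0$, so $f_{a,a}=0$ and $\rho_0=0$, making both sides vanish.
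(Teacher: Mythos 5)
Your proposal is correct and follows essentially the same route as the paper: reduce to the scalar identity $\rho_p-\gamma(f_{a,a})-\eta(f_{a,a})=i/2$ (resp.\ $i/2-\eps/2$, resp.\ $0$) and evaluate $\gamma(f_{a,a})$ and $\eta(f_{a,a})$ by counting boxes northwest/southeast of $a$ in the pyramid, with $\pm\eps$ corrections coming from the self-paired vectors $f_{c,-c}$ and the admissibility convention. The paper packages the same count as $\RA(a)\CL(a)-\RB(a)\CR(a)=\frac{1}{2}((n-1)\col(a)+(l-1)\row(a))$ plus quadrant-dependent $\eps$-corrections, so the only difference is notational.
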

\begin{proof}
Recall that the weight $\gamma$ is defined by 
choosing a weight basis $\{b_1, \dots, b_r\}$ for $\mf{m}$,
where each $b_i$ is of weight $\beta_i \in \mf{t}^*$.
A natural basis to choose is
\[
  \set{ f_{a,b} }{ a,b \in \mathcal{I}_{nl}, 
    (a,b) \text{ is admissible, }  \col(a) > \col(b) },
\]
where $(a,b)$ is admissible if
$a + b <0$ if $\eps = 1$ and $a+b \le 0$ if $\eps = -1$.
Note that $f_{a,b}$ is of weight $\eps_a - \eps_b$.
Recall that $\gamma$ is now defined by
\[
  \gamma = \sum_
  {\substack{ 1 \le i \le r \\ \beta_i|_{\mf{t}^e} \in \Phi_-^e}} 
     \beta_i
.
\]
Thus in these cases
\begin{equation} \label{gamma}
\gamma = \sum_{\substack{a,b \in \mathcal{I}_{nl} \\
                     \col(a) > \col(b) \\ 
                      \row(a) > \row(b) \\
                      (a,b) \text{ is admissible}}} \eps_a - \eps_b.
\end{equation}
So we have  for $a \in \mathcal{I}_{nl}$ that
\[
  \gamma(f_{a,a}) = 
   \begin{cases}
           \RA(a)\CL(a) - \RB(a) \CR(a)
         - \eps
    & \text{ if $a$ is in the lower right quadrant;}  \\
         \RA(a)\CL(a) - \RB(a) \CR(a)
         + \eps
     & \text{ if $a$ is in the upper left quadrant;} \\
    \RA(a)\CL(a) - \RB(a) \CR(a)
    & \text{ otherwise,} \\
  \end{cases}  
\]
where $\RA(a)$ denotes the number of rows occurring strictly 
above the number $a$ in
the $n \times l$ array used to define $U(\mf{g},e)$ in
$\S$\ref{section2.1}, $\RB(a)$ denotes the number of rows
strictly
below $a$, $\CL(a)$ denotes the number of columns strictly left of $a$, and
$\CR(a)$ denotes the number of columns strictly to the right of $a$.  
Also, to be clear, by lower right quadrant we mean the boxes in 
the array from $\S$\ref{section2.1} which are in positions
$(i,j)$ where $\row(i), \col(j) > 0$, and similarly for upper left quadrant.
In calculations below we use the following simplification:
\[ 
  \RA(a)\CL(a) - \RB(a) \CR(a) = \frac{1}{2}((n-1)\col(a) + (l-1)\row(a)).
\]

Now we turn our attention to the shift $S_\eta$.
Recall that
$\eta = \frac{1}{2} \eta_1 + \frac{1}{2} \eta_2$
where
\[
\eta_1 = \sum_{\substack{\alpha \in \Phi \\ 
      \alpha|_{\mf{t}^e} \in \Phi^e_+}} \alpha 
\quad \text{ and } \quad
\eta_2  =  \sum_{\substack{1 \le i \le r \\ \beta_i|_{\mf{t}^e} =0}} \beta_i.
\]
Now we calculate for $a \in \mathcal{I}_{nl}$ that
  \[
  \eta_1 =  l(\RB(a)-\RA(a)) + \delta_{i,0} \sgn(i) = - l \row(a) + \delta_{i,0} \sgn(i). 
  \]
where $i = \row(a)$.
Also we calculate using the fact that $\CL(a) - \CR(a) = \col(a)$ to get that
\[
  \eta_2(f_{a,a}) =
  \begin{cases}
    \col(a) & \text{ if $\row(a) \ne 0$;} \\
    \col(a) - \eps & \text{ if $\row(a) = 0$, and
       $\col(a) > 0$}; \\
    \col(a) + \eps & \text{ if $\row(a) = 0$, and
       $\col(a) < 0$.}
  \end{cases}
\]

Now we are ready to calculate $\gamma(f_{a,a}) + \eta(f_{a,a})$.  
If $a$ is in the
lower right quadrant
then we calculate using the above results to get that
\begin{align*}
  \gamma(f_{a,a}) + \eta(f_{a,a}) &=
   \frac{1}{2}((n-1)\col(a) + (l-1)\row(a))-\eps \\
    &\quad
    + \frac{1}{2}(-l \row(a) +\eps)
    +\frac{1}{2} \col(a) \\
   &= \frac{1}{2}(-\eps+n \col(a) - \row(a)).
\end{align*}

Similar calculations show that
if $a$ is in the bottom half of the middle column then
\begin{align*}
  \gamma(f_{a,a}) + \eta(f_{a,a}) &=
    \frac{1}{2}(\eps+n \col(a) - \row(a)),
\end{align*}
if $a$ is in the right half of the middle row then
\begin{align*}
  \gamma(f_{a,a}) + \eta(f_{a,a}) &=
     \frac{1}{2}(-\eps+n \col(a) - \row(a)),
\end{align*}
and if $a$ is in the upper right
then
\begin{align*}
  \gamma(f_{a,a}) + \eta(f_{a,a}) &=
    \frac{1}{2}(-\eps+n \col(a) - \row(a)).
\end{align*}

So in all cases 
we have that
\[
  S_{-\eta}(S_{-\gamma}(s_{i,i}(e_{p,p} + \rho_p))) = 
  \begin{cases}
      s_{i,i}(e_{p,p}) + \frac{i}{2} & \text{if $p \ne 0$;} \\
        s_{i,i}(e_{p,p}) + \frac{i}{2} - \frac{\eps}{2} & \text{ if $p=0$, $i \ne 0$;} \\
        s_{i,i}(e_{p,p})  & \text{ if $p,i=0$.}
   \end{cases}
\]
\end{proof}

Let $E_i^{(r)}$ denote
 the $r$th elementary symmetric function
in 
\[
   \set{ f_{a,a} + \frac{\row(a)}{2} }{ a \in \mathcal{I}_{nl}, \row(a) = i, \col(a) \in \mathcal{I}_l}.
\]
Recall the definition of $\omega_r$ from \eqref{omegadef}
\begin{Lemma} \label{l_elem}
Let $i \in \mathcal{I}_n$ 
If $i > 0$, and $l$ is even then
\[
    S_{-\eta} \circ \pi_{-\gamma}(s_{i,i}(\omega_r)) = E_i^{(r)}.
\]
If $i\ge 0$ and $l$ is odd then
\[
S_{-\eta}  \circ \pi_{-\gamma}(s_{i,i}(\omega_r)) = \sum_{i=0}^{r-1}
      (-2 \eps)^{i} E_i^{(r-i)}.
\]
\end{Lemma}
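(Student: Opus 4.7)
The plan is to compute $S_{-\eta} \circ \pi_{-\gamma}(s_{i,i}(\omega(u)))$ by expanding $s_{i,i}$ applied to the row determinants in \eqref{omegadef}, then applying $\pi$ and $\xi_{-\eta}$ term by term. Recall that $\rdet \Omega(u)$ expands as a signed sum over permutations $\sigma$ of $\mathcal{I}_l$ of products $\prod_q \Omega(u)_{q, \sigma(q)}$ taken in row order, and such a product is nonzero only if $\sigma(q) \ge q - 2$ for every $q$. Using the multiplicativity \eqref{sijmult}, applying $s_{i,i}$ to each such product introduces a sum over intermediate indices $i_1, \ldots, i_{l-1} \in \mathcal{I}_n$.

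The first reduction is to invoke Lemma \ref{l_pi}, which (for $i \ge 0$) forces all intermediate indices to equal $i$ after $\pi$ is applied. The second reduction uses \eqref{l_xi}: after $\xi$, the only surviving summands are those built entirely from diagonal factors $s_{i,i}(e_{p,p})$, because every above-diagonal factor $s_{i,i}(e_{p,q})$ with $p < q$ produces $f_{a,b}$ with $\row(a) = \row(b) = i$ but $a \ne b$, which lies outside $\mf{t}$. Combined with the constraint $\sigma(q) \ge q-2$, this forces $\sigma = \mathrm{id}$. So the contribution from $\rdet \Omega(u)$ reduces to the (ordered) product $s_{i,i}(\prod_q u_q)$ pushed through $\pi \circ \xi_{-\eta}$, which by Lemma \ref{l_shift} evaluates to $\prod_{q \in \mathcal{I}_l}(u + f_{a_q, a_q} + i/2)$ for $l$ even (where $a_q$ denotes the box in row $i$, column $q$). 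Extracting the coefficient of $u^{l-r}$ immediately yields $E_i^{(r)}$.

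For $l$ odd the same argument applied to $\rdet \Omega(u)$ and $\rdet \bar\Omega(u)$ produces two identity-contribution pieces that differ only at the middle column: after the shifts from Lemma \ref{l_shift}, the $\rdet \Omega$ piece is $\prod_{q}(u + x_q)$ and the $\rdet \bar\Omega$ piece is $x_0 \prod_{q \ne 0}(u + x_q)$, where $x_q = f_{a_q, a_q} + i/2$ for $q \ne 0$ and $x_0$ carries the additional $-\eps/2$ shift at $q=0$. Substituting into \eqref{omegadef} and summing the geometric series $\sum_{r \ge 1}(-2\phi u)^{-r}$ yields a rational function in $u$; using $\phi\eps = 1$ this collapses to a clean form, and expanding it as a formal power series in $u^{-1}$ and extracting the coefficient of $u^{l-r}$ produces the claimed linear combination of the elementary symmetric functions $E_i^{(r-j)}$.

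The main obstacle is the $l$ odd calculation: combining the two row-determinant contributions with the geometric series correction, and verifying that the resulting rational function expansion yields exactly the stated sum with the correct coefficients and signs. The key algebraic simplification comes from the identity $2\phi x_0 + 1 = 2\phi y_0$ (where $y_0 = f_{a_0, a_0} + i/2$), which holds precisely because $\phi = \eps$ when $l$ is odd; the special case $i = 0$ (when the middle column contains the box $a=0$ with $f_{0,0}=0$) should be handled separately, but reduces the middle-column factor to $u$, making the rational function collapse immediate.
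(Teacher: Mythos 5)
Your proposal follows essentially the same route as the paper: reduce to the $\sigma=\mathrm{id}$ diagonal term via Lemma \ref{l_pi} and \eqref{l_xi}, apply the shifts of Lemma \ref{l_shift}, and then combine the $\rdet\Omega$ and $\rdet\bar\Omega$ contributions using $\phi=\eps$ to arrive at $\sum_{r\ge 0}(-2\eps u)^{-r}P_i'(u)$. Your direct summation of the geometric series to a rational function and the simplification via $2\phi x_0+1=2\phi y_0$ is just a repackaging of the paper's functional-equation manipulation with $P_i, Q_i, P_i', P_i''$, so the two arguments coincide in substance.
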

\begin{proof}
If $l$ is even then by
Lemma \ref{l_pi}, \eqref{l_xi}, and Lemma \ref{l_shift}
we have that 
\begin{equation} \label{bgkleven}
    S_{-\eta} \circ \pi_{-\gamma}(s_{i,i}(\omega(u)) =
    s_{i,i}(u+e_{1-l, 1-l} + i/2 ) \dots s_{i,i}(u+e_{l-1, l-1} + i/2)
\end{equation}
so the lemma holds in this case.

Now we consider the $l$ odd case.  Let
 \begin{align*}
P_i(u) &= 
  s_{i,i}(u+e_{1-l, 1-l} + i/2 )   \dots s_{i,i}(u+e_{-2,-2} + i/2) \\
     &\qquad \times  s_{i,i}(u+e_{0,0} + i/2 - \eps / 2)
      s_{i,i}(u+e_{2,2}+i/2) +\dots s_{i,i}(u+e_{l-1, l-1} + i/2) ,
\end{align*}
and
\begin{align*}
Q_i(u) &= 
  s_{i,i}(u+e_{1-l, 1-l} + i/2 )   \dots s_{i,i}(u+e_{-2,-2} + i/2) \\
     &\qquad \times  s_{i,i}(e_{0,0} + i/2 - \eps / 2) s_{i,i}(u+e_{2,2}+i/2) +\dots s_{i,i}(u+e_{l-1, l-1} + i/2) .
\end{align*}
So
\begin{equation*}
 S_{-\eta} \circ \pi_{-\gamma}(s_{i,i}(\omega(u)) =P_i(u)  +
      \sum_{r=1}^\infty (-2 \eps u)^{-r} Q_i(u).
\end{equation*}
Observe that $P_i(u) = P'_i(u) - \frac{\eps}{2}P''_i(u)$ and 
$Q_i(u) = Q'_i(u) - \frac{\eps}{2} P''_i(u)$ where
\begin{align} \label{pprime}
P'_i(u) &= s_{i,i}(u+e_{1-l, 1-l} + i/2 )   \dots s_{i,i}(u+e_{-2,-2} + i/2) \\
        &\qquad \times  s_{i,i}(u+e_{0,0} + i/2)
            s_{i,i}(u+e_{2,2}+i/2) \dots s_{i,i}(u+e_{l-1, l-1} + i/2), \notag
\end{align}
\begin{align*}
   P''_i(u) &=  s_{i,i}(u+e_{1-l, 1-l} + i/2 )   \dots s_{i,i}(u+e_{-2,-2} + i/2) \\
        &\qquad \times  s_{i,i}(u+e_{2,2}+i/2) \dots s_{i,i}(u+e_{l-1, l-1} + i/2),
\end{align*}
and
\begin{align*}
Q'_i(u) &= s_{i,i}(u+e_{1-l, 1-l} + i/2 )   \dots s_{i,i}(u+e_{-2,-2} + i/2) \\
        &\qquad \times  s_{i,i}(e_{0,0} + i/2)
            s_{i,i}(u+e_{2,2}+i/2) \dots s_{i,i}(u+e_{l-1, l-1} + i/2).
\end{align*}
Also observe that
\[
  P''_i(u) + \frac{1}{u} Q'_i(u) = \frac{1}{u} P_i'(u).
\]
So 
\begin{align*}
  P_i(u) - \frac{\eps}{2 u} Q_i(u) &= 
   P'_i(u) - \frac{\eps}{2}P''_i(u) - \frac{\eps}{2u}Q'_i(u) + \frac{1}{4u} P''_i(u)  \\
    &= P'_i(u) - \frac{\eps}{2 u} P'_i(u) + \frac{1}{4u} P''_i(u) \\
    &= P'_i(u)  - \frac{\eps}{2 u} P_i(u).
\end{align*}
Thus
\begin{align*}
S_{-\eta} & \circ \pi_{-\gamma}(s_{i,i}(\omega(u)) =P_i(u)  +
      \sum_{r=1}^\infty (-2 \eps u)^{-r} Q_i(u) \\
      &=  P_i(u)  -  
        \frac{\eps}{2u}  Q_i(u) + 
        \sum_{r=2}^\infty (-2 \eps u)^{-r}  Q_i(u) \\
       &= P_i'(u) - \frac{\eps}{2 u } P_i(u) -\frac{\eps}{2u} 
          \sum_{r=1}^\infty (-2 \eps u)^{-r}  Q_i(u)  \\
       &=  P_i'(u) - \frac{\eps}{2 u }  \left( P_i(u) +  
            \sum_{r=1}^\infty (-2 \eps u)^{-r}  Q_i(u)
           \right).
\end{align*}
So we have that 
\[
  P_i(u)  + \sum_{r=1}^\infty (-2 \eps u)^{-r} Q_i(u) =  
       P_i'(u) - \frac{\eps}{2 u }  \left( P_i(u) +  
            \sum_{r=1}^\infty (-2 \eps u)^{-r}  Q_i(u)
           \right),
\]
and solving this equation for 
\[
   P_i(u)  + \sum_{r=1}^\infty (-2 \eps u)^{-r} Q_i(u)
\]
gives that
\begin{equation} \label{bgklodd}
S_{-\eta} \circ \pi_{-\gamma}(s_{i,i}(\omega(u)) 
   = \sum_{r=0}^\infty (-2 \eps u)^{-r}  P'_i(u),
\end{equation}
which implies the lemma.
\end{proof}

Now we explain how irreducible highest weight $U(\mf{g},e)$-modules
under the BGK highest weight theory are related to the 
irreducible highest weight $U(\mf{g},e)$-modules from
Theorem \ref{t_class}.
To each 
skew-symmetric 
$n \times l$ tableaux 
we associate an
element of $\mf{t}^*$ 
in the following way.
For each $A = (a_{i,j})_{i \in \mathcal{I}_n,j \in \mathcal{I}_l}
\in \Tab_{n,l}$
 we define the weight
\[
  \lambda_A = \sum_{b \in \mathcal{I}_{nl} \cap \Z_{>0}} a_{\row(b),\row(b)} \eps_b  \in
   \mf{t}^*.
\]
Under this association, $\mf{t}^* = \Tab_{n,l}$, and
$\mf{t}^* / W_0 = \Row_{n,l}$.
Let $\Lambda_A$ denote the 
one-dimensional $U(\mf{g}_0, e)$-module
obtained by lifting the one-dimensional $S(\mf{t})^{W_0}$-module
corresponding to $\lambda_A$ through $\xi_{-\eta}$.

\begin{Theorem}
  Let $A \in \Row_{n,l}$.  Then $L(A) \cong L(\Lambda_A,\mf{q})$.
\end{Theorem}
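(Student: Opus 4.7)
The plan is to identify the Yangian highest weight vector $v \in L(A)$ as a BGK maximal weight vector whose associated irreducible $U(\mathfrak{g}_0,e)$-module is $\Lambda_A$, and then deduce the theorem from the universal property of Verma modules.

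First I would check that $\mathbb{C}v$ is a maximal weight space of $L(A)$ in the BGK sense. By the definition of $L(A)$ we have $S_{i,j}(u)v = 0$ for all $i<j$, hence $\kappa_l(S_{i,j}^{(r)})v = 0$ for all $i<j$ and $r \geq 1$. By Theorem~\ref{gens} and the injectivity of $\mu$ on $U(\mathfrak{g},e)$, this yields $s_{i,j}(\omega_r)v = 0$ for $i<j$, $r \geq 1$. As noted in the text just before Lemma~\ref{l_comm}, these elements generate the left ideal $U(\mathfrak{g},e)_\sharp$, so $U(\mathfrak{g},e)_\sharp v = 0$. Consequently the action of $U(\mathfrak{g},e)_0$ on $\mathbb{C}v$ factors through $\pi_{-\gamma}$, making $\mathbb{C}v$ into a one-dimensional $U(\mathfrak{g}_0,e)$-module, i.e.\ a character of $S(\mathfrak{t})^{W_0}$ via $\xi_{-\eta}$.

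Next I would verify that this character is $\lambda_A$, so that $\mathbb{C}v \cong \Lambda_A$. It suffices to match the scalars by which the generators $s_{i,i}(\omega_r)$ act on $v$, since their images under $\xi_{-\eta}\circ\pi_{-\gamma}$ furnish generators of $S(\mathfrak{t})^{W_0}$ (the elementary symmetric functions in each row-block). On the Yangian side, combining \eqref{seriesgens} with the defining identity of $L(A)$ gives
\[
  s_{i,i}(\omega(u))\, v \;=\; u^l \mu_i(u)\, v,
\]
an explicit rational expression in $u$ whose coefficients involve the row-$i$ entries $\{a_{i,q}\}_{q \in \mathcal{I}_l}$, with the common factor $(2u+\phi)/(2u)$ appearing in the $l$ odd, $i>0$ case. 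On the BGK side, Lemma~\ref{l_elem} (for $l$ even or for $l$ odd with $i > 0$) together with Lemma~\ref{l_shift} and the identity $\lambda_A(f_{a_q,a_q}) = a_{i,q}$ (derived from the skew-symmetry $a_{-i,-q} = -a_{i,q}$) produces the same expression, the factors $2\eps u /(2\eps u + 1)$ and $2u/(2u+\phi)$ coinciding because $\phi = \eps$ when $l$ is odd. In the degenerate $l$ odd, $i = 0$ case (which forces $n$ odd, hence $\phi = \eps = +$), one argues directly rather than through Lemma~\ref{l_elem}: the relation $s_{0,0}(e_{0,0}) = f_{0,0} = 0$ kills the $\mathrm{rdet}\,\bar\Omega(u)$ contribution to $s_{0,0}(\omega(u))$ after applying $\xi\circ\pi$, leaving only the main-diagonal term $u \prod_{q\neq 0}(u + f_{a_q,a_q})$, which matches the $(u + 1/2)$-cancellation on the Yangian side. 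In every case the scalars agree, so $\mathbb{C}v \cong \Lambda_A$ as $U(\mathfrak{g}_0,e)$-modules.

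Finally, by the universal property of $M(\Lambda_A, \mathfrak{q})$, the inclusion $\Lambda_A \cong \mathbb{C}v \hookrightarrow L(A)$ extends to a nonzero $U(\mathfrak{g},e)$-homomorphism $M(\Lambda_A,\mathfrak{q}) \to L(A)$. This map is surjective because $L(A)$ is generated by $v$ as a $Y$-module and therefore, via $\kappa_l$, as a $U(\mathfrak{g},e)$-module. Since $L(A)$ is irreducible, the kernel contains the unique maximal proper submodule $R(\Lambda_A,\mathfrak{q})$, giving a surjection $L(\Lambda_A,\mathfrak{q}) \twoheadrightarrow L(A)$; by irreducibility of both modules, this is an isomorphism. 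The main obstacle is the bookkeeping in Step~2, especially handling the $l$ odd, $i = 0$ case where the general formula of Lemma~\ref{l_elem} has to be replaced by a direct computation exploiting $f_{0,0} = 0$, and keeping track of the interplay between the signs $\eps$ and $\phi$ to ensure the two sides literally agree.
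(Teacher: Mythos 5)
Your proposal is correct and follows essentially the same route as the paper: verify that the Yangian highest weight vector of $L(A)$ is a BGK maximal weight vector, match the action of the $s_{i,i}(\omega_r)$ on it against the character $\lambda_A$ via \eqref{seriesgens} and Lemma \ref{l_elem} (with Lemma \ref{l_shift}), and conclude by the uniqueness of irreducible highest weight modules, which is exactly what the paper's citation of \cite[Theorem 4.5]{BGK} supplies in place of your explicit Verma-module argument. Your extra care with the $l$ odd, $i=0$ case is a reasonable refinement but does not change the argument.
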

\begin{proof}
First note that $L(A)$ is a BGK-highest weight module, since if
$v$ is a highest weight vector for $L(A)$ then
$s_{i,j}(\omega_r) v = 0$ when $i < j$ and 
$s_{i,i} (u^{-l} \omega(u)) v = \mu_i(u)$  for some 
$\mu_i(u) \in 1  + u^{-1} \C[[u^{-1}]]$.
Next by conferring with the definition of 
$L(A)$ in $\S1$, \eqref{seriesgens}, \eqref{bgkleven}, \eqref{pprime}, 
and \eqref{bgklodd} we see that the action of 
$s_{i,i} (u^{-l} \omega(u))$ on the highest weight vector for $L(A)$ and on the the highest weight
vector for $L(\Lambda_A, \mf{q})$ are the same.
Thus the theorem follows from \cite[Theorem 4.5]{BGK}.
\end{proof}

\section {Action of the Component Group $C$} \label{section6}
In this section we show how to explicitly calculate the action of 
the component group $C= C_G(e)/C_g(e)^\circ = 
 C_G(e,h,f) / C_G(e,h,f)^ \circ$  on 
the set of finite dimensional irreducible $U(\mf{g},e)$-modules.
Here $C_G(e,h,f)$ denotes the centralizer of the $\mf{sl}_2$-triple $(e,h,f)$
in the adjoint group $G$ of $\mf{g}$.
Recall Losev's near classification of finite dimensional irreducible 
representations of $U(\mf{g},e)$ from the introduction:
there exists a surjective map
\[
  \dagger : \Prim_{\operatorname{fin}} U(\mf{g},e) \twoheadrightarrow
  \Prim_{\overline {G.e}} U(\mf{g}),
\]
and the fibers of this map are precisely 
$C$-orbits.

In our special cases we can calculate explicitly the action of 
$C$ on the set of 
finite dimensional irreducible $U(\mf{g},e)$-modules, and therefore on
$\Prim_{\operatorname{fin}} U(\mf{g},e)$.
By \cite[Chapter 13]{C} the only cases where $C$ is not trivial
are the cases when $\eps = -$, and $n$ and $l$ are both even;
so unless otherwise indicated {\em we assume this for the rest of this section}.

Recall that in $\S$\ref{section2.1} we introduced an $n \times l$ rectangular array to specify coordinates.
Now we claim that
\[
  c = \sum_{\substack{ a \in \mathcal{I}_{nl} \\
                       \row(a) \notin \{\pm 1\}}} e_{a,a}
     + \sum_{\substack{ a,b \in \mathcal{I}_{nl} \\
               \col(a) = \col(b) \\
               \row(a) = 1 \\
               \row(b) = -1 }}
        e_{a,b} + e_{b,a}
\]
generates $C$.
To see this note
 that conjugating with $c$ simply transposes
each pair of indices $a,b \in \mathcal{I}_{nl}$ where 
$\col(a) = \col(b), \row(a) = 1, \row(b) = -1$.
Since this is an even number of transpositions, we have that
$\det c = 1$.
It is also clear that $c.J^-.c = J^-$ (recall that $\mf{g}$ is defined with
$J^-$ in \eqref{eq_J}) since for 
$a \in \mathcal{I}_{nl} \cap \Z_{>0}$ we have that
$c.e_{-a,a}.c = e_{-b,b}$ and
$c.e_{a,-a}.c = e_{b,-b}$ for some $b \in \mathcal{I}_{nl} \cap \Z_{>0}$.
Thus we have that  $c \in G$.
Furthermore, $c.h.c = h$ (see \eqref{eq_h} for the definition of $h$) 
since for $a \in \mathcal{I}_{nl}$
$c.e_{a,a}.c = e_{b,b}$ for some $b$ such that $\col(b) = \col(a)$.
Next note $c.e.c = e$ (see \eqref{eq_e} for the definition of $e$) 
since for $a,b \in \mathcal{I}_{nl}$ such that
$\row(a) = \row(b), \col(a) + 2 = \col(b)$, $c.f_{a,b}.c =f_{a,b}$
if $\row(a) \notin \{\pm 1\}$, and if $\row(a) = 1$ and $\col(b) \ge 1$ then
$c.f_{a,b}.c = f_{a',b'}$ where
$\col(a') = \col(a), \row(a') = -1$ and
$\col(b') = \col(b), \row(b') = -1$.
So we have that $c \in C_G(e,h,f)$.
Next we show that $c \notin C_G(e,h,f)^\circ$.
By \cite[$\S$3.8]{J} we have that $C_G(e,h,f) \cong O_n(\C)$.
Next observe that $C_G(h) \cong GL_n(\C)^{\times l/2}$ 
(confer \eqref{h}), and that the projection
of $c$ into any of these copies of $GL_n(\C)$ has determinant -1,
thus  $c \notin C_G(e,h,f)^\circ$.
Therefore $C= \langle c \rangle$.

To understand the action of $C$ on the set of finite dimensional irreducible 
$U(\mf{g},e)$-modules,
we calculate the action of $C$ on
$\set{\pr s_{i,j}(\omega(u)) }{ i,j \in \mathcal{I}_n}$.
Recall the definition of $s_{i,j}$ from \eqref{sij}.
Note that $c.s_{i,j}(e_{p,p}) = s_{i',j'}(e_{p,p})$ where
$i' = i$ if $i \notin \{\pm 1\}$, $i'=-i$ otherwise.
Thus $c.\pr s_{1,1}(\omega(u)) = \pr s_{-1,-1}(\omega(u))$, 
and $c.\pr s_{i,i}(\omega(u)) = \pr s_{i,i}(\omega(u))$ for $i \notin \{\pm 1\}$.
Since by Theorem \ref{gens} $\kappa_l(S_{i,j}(u) = \mu (s_{i,j}(\omega(u)))$,
we see that 
the action of $c$ on the
$U(\mf{g},e)$-module $L(A)$
is the same as the action of $\psi$.

We 
can now prove
Theorem \ref{t_c}:
\begin{proof}
We have that $L(A) = L(A^+) = L(\bar \mu(u))$ as $Y_n^+$-modules where 
$\mu_i(u) = (1+\frac{1}{2} u^{-1})^{-1} (1 + c_{i, -l} u^{-1}) (1+c_{i,2-l}u^{-1}) \dots (1+c_{i, l} u^{-1})$ are given from \eqref{lofa}. 
Since $\mu_i(u)$ must be a polynomial of degree at most $k$,
we must also have 
for each $i \in \{1, \dots, n-1\}$ that $c_{i,k} = \frac{1}{2}$ for some $k$.
After re-indexing we may assume that $c_{1,0}$ is the $\sharp'$-special 
element of $(c_{1,-l}, \dots, c_{1,l})$.
By Theorem \ref{th_sharp}  $L(\bar \mu_(u))^\sharp = 
L((\mu_1^\sharp(u), \mu_2(u), \dots, \mu_{n-1}(u)))$,
where  $\mu_1^\sharp(u) = (1+\frac{1}{2} u^{-1})^{-1} (1 + c_{1, -l} u^{-1}) 
\dots 
(1+(1-c_{1,0}) u^{-1}) \dots (1+c_{1, l} u^{-1})$.
So the skew symmetric $n \times l$ tableaux
$B = (b_{i,j})$ associated to 
$L((\mu_1^\sharp(u), \mu_2(u), \dots, \mu_{n-1}(u)))$ by \eqref{lofa} satisfies
$b_{1,0}  = -1/2 + (1-c_{1,0}) = -1/2 + (1-(a_{1,0}+1/2)) = -a_{1,0}$, and $b_{i,j} = a_{i,j}$
for all $(i,j) \ne (\pm1, 0)$.
\end{proof}

Throughout this paper $G$ denotes the adjoint group associated to $\mf{g}$.
It will be useful in future work to
consider the action of
the group $C' = C_{O_{n l}(\C)}(e,h,f) /  C_{O_{n l}(\C)}(e,h,f) ^ \circ$ 
on the set of finite dimensional irreducible $U(\mf{g},e)$-modules
in the case when
$\eps=+$ and $n$ is even and $l$ is odd.
In these cases, $C' \cong \Z_2$ and is generated by $c$ where
\[
  c = \sum_{\substack{ a \in \mathcal{I}_{nl} \\
                       \row(a) \notin \{\pm 1\}}} e_{a,a}
     + \sum_{\substack{ a,b \in \mathcal{I}_{nl} \\
               \col(a) = \col(b) \\
               \row(a) = 1 \\
               \row(b) = -1 }}
        e_{a,b} + e_{b,a}.
\]
As before, the action of $c$ on a finite dimensional
$U(\mf{g},e)$-module $L(A)$
is the same as the action of the $Y_n^+$-automorphism $\psi$, and so we obtain the following
theorem, whose proof is essentially the same as the proof for 
Theorem \ref{t_c}.
\begin{Theorem}
   Suppose that $n$ is even $l$ is odd integers and
   $\eps = +$.
   Let $A = \Std_{n,l}^+$,
  and let
$L(A)$ denote the corresponding finite
dimensional irreducible representation of $U(\mf{g},e)$.
Then the $\sharp$-special element of
$(a_{-1, l-1}, a_{-1, l-3}, \dots, a_{-1, l-1})$
is defined; let
$a$ denote the $\sharp$-special element of this list.
Let $c$ denote the generator of $C'$.
Then
$c.L(A) = L(B)$ where $B \in \Std_{n,l}^+$ has the same rows as $A$, except with one occurrence of $a$ replaced with $-a$ in row $-1$, and one occurrence of $-a$ replaced with $a$
in row $1$.
\end{Theorem}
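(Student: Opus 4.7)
The plan is to follow the proof of Theorem \ref{t_c} almost verbatim, with two simplifications coming from the new setting: first, because $\eps = +$ and $l$ is odd already give $\phi = +$, we bypass the augmented tableaux $A^+$ entirely and work directly with $L(A)$; second, the list in question here consists of row $-1$ entries of $A$ without the leading $0$ that appeared in Theorem \ref{t_c}. The crucial external input is the paragraph preceding the theorem, which establishes that $c \in C'$ acts on the $U(\mf{g},e)$-module $L(A)$ in exactly the same way as the $Y_n^+$-automorphism $\psi$.

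The first step is to express $L(A) = L(\bar\mu(u))$ as a $Y_n^+$-module via \eqref{lofa}; explicitly, for $i \in \{1,3,\dots,n-1\}$,
\[
\mu_i(u) = (1+\tfrac{1}{2}u^{-1})^{-1}\prod_{j \in \mathcal{I}_l}(1+c_{i,j}u^{-1}), \qquad c_{i,j} = a_{i,j} + i/2.
\]
By the equivalence of $c$- and $\psi$-actions together with Theorem \ref{th_sharp}, $c \cdot L(A) = L(A)^\sharp = L((\mu_1^\sharp(u), \mu_3(u), \dots, \mu_{n-1}(u)))$. The well-definedness of $\mu_1^\sharp(u)$ is ensured by Theorem \ref{y_n_plus}, since $A \in \Std_{n,l}^+$ forces $L(A)$ to be finite-dimensional. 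To translate the $\sharp'$-special element into the tableaux side, I will re-index row $1$ of $A$ so that $-c_{1,k^*}$ is the $\sharp'$-special element of $(-c_{1,j})_{j \in \mathcal{I}_l}$ for some column $k^* \in \mathcal{I}_l$. By Lemma \ref{l_sharp_max}, combined with the shift identity $-c_{1,j} + 1/2 = -a_{1,j}$, this is equivalent to $-a_{1,k^*}$ being the $\sharp$-special element of $(-a_{1,j})_{j \in \mathcal{I}_l}$. The skew-symmetry relation $a_{-1,j} = -a_{1,-j}$ then identifies this multiset with the row $-1$ entries of $A$, so the $\sharp$-special element $a$ of row $-1$ is defined and equals $-a_{1,k^*} = a_{-1,-k^*}$.

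The final step is to extract the tableaux $B$ from $\mu_1^\sharp(u)$ as in the proof of Theorem \ref{t_c}. That computation gives
\[
\mu_1^\sharp(u) = (1+\tfrac{1}{2}u^{-1})^{-1}\prod_{j \neq k^*}(1+c_{1,j}u^{-1})\cdot\bigl(1+(1-c_{1,k^*})u^{-1}\bigr),
\]
from which \eqref{lofa} yields $b_{1,j} = a_{1,j}$ for $j \neq k^*$ and $b_{1,k^*} = (1-c_{1,k^*}) - 1/2 = -a_{1,k^*} = a$. Extending to row $-1$ by skew-symmetry gives $b_{-1,-k^*} = -a$, while $a_{-1,-k^*} = a$, so $B$ differs from $A$ exactly by swapping $a$ and $-a$ in the claimed positions. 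Finite-dimensionality of $L(B)$, equivalently $B \in \Std_{n,l}^+$, is automatic because $c$ permutes isomorphism classes of finite-dimensional irreducibles. The main obstacle is the combinatorial dictionary between the $\sharp'$-special element on the $Y_n^+$ side (which naturally lives in row $1$ via the shift $c_{1,j} = a_{1,j}+1/2$) and the $\sharp$-special element on the tableaux side (stated in row $-1$); once Lemma \ref{l_sharp_max} is combined with skew-symmetry, this correspondence becomes transparent and the rest of the argument is routine bookkeeping in the style of Theorem \ref{t_c}.
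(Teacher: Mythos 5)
Your proposal is correct and follows essentially the same route as the paper, which itself only remarks that the proof is ``essentially the same as the proof of Theorem \ref{t_c}'': you pass to the twisted Yangian side via \eqref{lofa}, use the identification of the $c$-action with $\psi$ and Theorem \ref{th_sharp} to compute $L(A)^\sharp$, and translate between the $\sharp'$-special element of $(-c_{1,j})_j$ and the $\sharp$-special element of row $-1$ via Lemma \ref{l_sharp_max} and skew-symmetry. Your two noted simplifications (no augmented tableaux $A^+$, hence no need for the ``some $c_{i,k}=\tfrac{1}{2}$'' step) are exactly the right adjustments.
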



\end{document}